\documentclass[11pt, reqno]{amsart}

\usepackage[a4paper, top=1.7cm, bottom=1.7cm, hmargin=3.2cm, twoside=false]{geometry}

\usepackage{amsmath, amssymb, amsthm}

\usepackage{xcolor}

\usepackage[colorlinks=true, urlcolor=blue, linkcolor=blue, citecolor=blue, pdfstartview=FitH]{hyperref}

\newtheorem{theorem}{Theorem}[section]
\newtheorem{proposition}[theorem]{Proposition}
\newtheorem{lemma}[theorem]{Lemma}
\newtheorem{corollary}[theorem]{Corollary}

\newtheorem{remark}[theorem]{Remark}
\newtheorem{example}[theorem]{Example}

\newcommand{\vol}{\operatorname{vol}}

\title [Jacobian estimates and geometric inequalities]{Jacobian estimates and geometric inequalities under intermediate Ricci curvature}
\author[K.-K. Kwong]{Kwok-Kun Kwong}
\address{School of Mathematics and Physics\\
University of Wollongong\\
Northfields Avenue, Wollongong, NSW 2522\\
Australia}
\email{kwongk@uow.edu.au}

\author[J. Lee]{Jihye Lee}
\address{School of Mathematical and Physical Sciences\\
Macquarie University\\
Sydney, NSW 2109\\
Australia}
\email{jihye.lee@mq.edu.au}

\author[F. Ricci]{Fabio Ricci}
\address{Department of Mathematics\\
University of California, Santa Cruz\\
1156 High Street\\
Santa Cruz, CA 95064\\
USA}
\email{fabio@ucsc.edu}

\subjclass[2020]{53C20, 53C21, 53C40}

\keywords{Jacobian estimates, intermediate Ricci curvature,
normal exponential map, tube-volume comparison, Willmore inequality, Sobolev inequality}

\date{}
\begin{document}
\begin{abstract}

We prove a Jacobian estimate underlying both the Heintze--Karcher comparison and the Alexandrov--Bakelman--Pucci method for submanifolds of arbitrary codimension. The estimate retains the contributions of ambient curvature along geodesics emanating from the submanifold and requires no curvature sign assumption. As applications, we obtain a quantitative Fenchel--Willmore inequality with explicit curvature remainders under nonnegative $n$-intermediate Ricci curvature, Michael--Simon Sobolev and isoperimetric inequalities under nonnegative $(n-1)$-intermediate Ricci curvature, and corresponding extensions under quadratic curvature decay. In each result, the required intermediate Ricci curvature condition depends only on the dimension of the submanifold and is independent of its codimension.
\end{abstract}

\maketitle

\section{Introduction}
\label{sec:introduction}
The Jacobian of the normal exponential map is the central quantity in the classical Heintze--Karcher volume comparison theorem for submanifolds. Integrating its Jacobian estimate over the normal bundle yields tube-volume bounds and, in turn, a Willmore inequality; see, for example, \cite{HeintzeKarcher1978}.
A closely related mechanism underlies the Alexandrov--Bakelman--Pucci (ABP) approach to Sobolev inequalities for submanifolds, where Jacobian estimates for gradient--normal exponential maps on suitable contact sets are combined with the area formula to obtain geometric inequalities \cite{Brendle2023}.

The aim of this paper is to place these arguments in a common framework by establishing a unified Jacobian estimate for gradient--normal exponential maps that retains the curvature contributions along each geodesic.
This estimate leads to a number of quantitative geometric inequalities.

Let $\left(M^{n+m},\overline g\right)$ be a complete Riemannian manifold and $\Sigma^n\subset M$ be a smooth immersed submanifold.
For $u\in C^2(\Sigma)$ and $t>0$, consider the gradient--normal exponential map
$$
\Phi_t^u:T^\perp\Sigma\longrightarrow M,
\qquad
\Phi_t^u(x,y)
:=
\exp_x\bigl(t(\nabla^\Sigma u(x)+y)\bigr).
$$
On the associated contact set, the differential of $\Phi_t^u$ admits a factorization through the differential of the exponential map $\exp_x:T_xM\to M$ at $x$; see \eqref{eq:factorization DPhi^t_u} and Proposition~\ref{prop:gradient-normal-decomposition}. Taking determinants separates the Jacobian determinant of $\Phi_t^u$ into a tangential factor and the Jacobian determinant of the exponential map. The former is controlled by the Laplacian of $u$, the mean curvature vector of $\Sigma$, and an $n$-dimensional curvature trace along the corresponding geodesic (Proposition~\ref{prop:gradient-normal-main}). The latter is controlled by a weighted Ricci curvature term (Lemma~\ref{lem:gradient-normal-signed-ambient}). Combining these two estimates yields one of the main results of this paper, the general Jacobian estimate in Theorem~\ref{thm:gradient-normal-signed-main}. To avoid introducing the technical notation, we refer the reader to that result for the precise statement.

Let us describe the important special case $u\equiv0$ in slightly more detail.
In this case, the map $\Phi_t^u$ reduces to the normal exponential map. The general Jacobian estimate then yields a quantitative Heintze--Karcher comparison for the normal polar Jacobian.
More precisely, let
$$
\Phi_\Sigma(x,\omega,t):=\exp_x(t\omega),
\qquad
x\in\Sigma,\quad \omega\in S_x^\perp\Sigma,\quad t>0,
$$
be the normal exponential map in polar coordinates (or simply normal polar map), and let $J_\Sigma(x,\omega,t)$ denote its Jacobian with respect to
$d\operatorname{vol}_\Sigma\,d\omega\,dt$.
Set $\gamma_{x,\omega}(s):=\exp_x(s\omega)$, and let $E_1(s),\ldots,E_n(s)$ be the parallel translates along $\gamma_{x,\omega}$ of an orthonormal basis of $T_x\Sigma$.
Define
\begin{equation}\label{eq:R_n intro}
\mathcal{R}_n(x,\omega,s) := \sum_{i=1}^n \left\langle \overline{R}\bigl(\gamma_{x,\omega}'(s),E_i(s)\bigr)E_i(s), \gamma_{x,\omega}'(s)   \right\rangle
\end{equation}
and
$$ \mathcal{B}_{\overline{\operatorname{Ric}}}(x,\omega,t) := \int_0^t s\left(1-\frac{s}{t}\right) \overline{\operatorname{Ric}} \bigl(\gamma_{x,\omega}'(s),\gamma_{x,\omega}'(s)\bigr) \,ds. $$
With $\sigma:=\frac{H}{n}$ denoting the normalized mean curvature vector, our Jacobian estimate takes the following form before the normal cut time (Corollary \ref{cor:quantitative-Heintze-Karcher}):
$$
J_\Sigma(x,\omega,t)
\leq
t^{m-1}
e^{-\mathcal{B}_{\overline{\operatorname{Ric}}}(x,\omega,t)}
\left[
1-t\langle\sigma(x),\omega\rangle
-\frac{t}{n}
\int_0^t
\left(1-\frac{s}{t}\right)^2
\mathcal{R}_n(x,\omega,s)
\,ds
\right]_{}^n.
$$
When $\Sigma$ is closed,
integrating this estimate over the unit normal bundle $S^\perp \Sigma$ yields a quantitative tube-volume comparison; see Corollary~\ref{cor:quantitative-tube-volume}.

The two curvature terms in the preceding Jacobian estimate are naturally controlled by the intermediate Ricci curvature.
Recall that, for a unit vector
$v\in T_pM$ and a $k$-dimensional subspace $W\subset v^\perp$, the
$k$-Ricci curvature in the direction $v$ with respect to $W$ is defined by
$$
\overline{\operatorname{Ric}}_k(v, W):= \sum_{i=1}^{k} \left\langle \overline{R}(v, e_i)e_i, v \right\rangle,
$$
where $\{e_i\}_{i=1}^{k}$ is any orthonormal basis of $W$.
We write
$$\overline{\operatorname{Ric}}_k\ge 0$$
if $\overline{\operatorname{Ric}}_k(v, W)\ge 0$
for every $p\in M$, every unit vector $v\in T_pM$, and every $k$-dimensional subspace $W\subset v^\perp$.
It is monotone in the following sense: if
$\overline{\operatorname{Ric}}_k\geq 0$, then
$\overline{\operatorname{Ric}}_\ell\geq 0$ for every $k\leq \ell\leq \dim M-1$.

To relate this notion to the preceding Jacobian estimate, observe that $\mathcal R_n(x,\omega,s)$ defined in \eqref{eq:R_n intro} is the $n$-Ricci curvature in the direction $\gamma_{x, \omega}^{\prime}(s)$ with respect to the parallel transport of $T_x\Sigma$ along $\gamma_{x,\omega}$:
$$
\mathcal{R}_n(x,\omega,s)
=
\overline{\operatorname{Ric}}_n
\bigl(
\gamma_{x,\omega}'(s),
\operatorname{span}\{E_1(s),\ldots,E_n(s)\}
\bigr).
$$
Hence $\overline{\operatorname{Ric}}_n\geq 0$ implies $\mathcal{R}_n\geq 0$.
Moreover, by the monotonicity above,
$\overline{\operatorname{Ric}}_n\geq 0$ implies
$\overline{\operatorname{Ric}}\geq 0$, and therefore
$\mathcal{B}_{\overline{\operatorname{Ric}}}\geq 0$.

The quantitative tube-volume comparison in Corollary~\ref{cor:quantitative-tube-volume} is closely related to Fenchel--Willmore inequalities.
For smooth boundaries of bounded domains in complete noncompact manifolds
with nonnegative Ricci curvature and Euclidean volume growth,
Agostiniani, Fogagnolo, and Mazzieri \cite{AFM}
proved $$ \int_\Sigma |\sigma|^n\,d\operatorname{vol}_\Sigma \geq \theta|\mathbb S^n|, $$ where $\theta$ denotes the asymptotic volume ratio of the ambient manifold. Wang \cite{wang} subsequently gave a short proof based on Heintze--Karcher comparison. Ji and Kwong \cite{JiKwong} extended this result to submanifolds of arbitrary codimension under nonnegative $k$-Ricci curvature, where $k$ depends on both the dimension and the codimension of the submanifold. More recently, Pan and Yi \cite{PanYi} removed this codimension dependence and established the same inequality under
$\overline{\operatorname{Ric}}_n\geq 0$.

Our quantitative tube-volume comparison retains the curvature terms that
are discarded in deriving the standard Fenchel--Willmore inequality. This yields
the following quantitative refinement.

\begin{theorem}[Quantitative Fenchel--Willmore inequality]
\label{thm:intro-quantitative-Willmore}
Let $(M^{n+m},\overline g)$ be a complete noncompact Riemannian manifold
satisfying
$\overline{\operatorname{Ric}}_n\geq 0$,
and let $\Sigma^n\subset M$ be a closed immersed submanifold. Then
$$
\begin{aligned}
\int_\Sigma |\sigma|^n\,d\operatorname{vol}_\Sigma
-\theta|\mathbb S^n|
\geq\;&
\frac{|\mathbb S^n|}{|\mathbb S^{n+m-1}|}
\int_\Sigma\int_{S_x^\perp\Sigma}
\langle-\sigma(x),z\rangle_+^n
\left(
1-e^{-\mathcal B_{\overline{\operatorname{Ric}}}(x,z)}
\right)
\,dS_z\,d\operatorname{vol}_\Sigma
\\
&+
\frac{|\mathbb S^n|}{|\mathbb S^{n+m-1}|}
\int_\Sigma\int_{S_x^\perp\Sigma}
\langle-\sigma(x),z\rangle_+^{n-1}
\mathcal T_n(x,z)
\,dS_z\,d\operatorname{vol}_\Sigma,
\end{aligned}
$$
where $\mathcal B_{\overline{\operatorname{Ric}}}(x,z)$ and
$\mathcal T_n(x,z)$ are defined in
Section~\ref{sec:quantitative-Willmore}; under the curvature assumption
above, the two terms on the right-hand side are nonnegative.
\end{theorem}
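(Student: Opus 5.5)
The argument follows Wang's tube-volume proof of the Fenchel--Willmore inequality, but keeps the two curvature remainders of the quantitative Heintze--Karcher estimate instead of discarding them. The definitions of Section~\ref{sec:quantitative-Willmore} are not yet visible. I therefore read $\mathcal B_{\overline{\operatorname{Ric}}}(x,z)$ as the limit as $t\to\infty$ of the weighted Ricci integral, i.e. $\int_0^\infty s\,\overline{\operatorname{Ric}}(\gamma',\gamma')\,ds$, possibly restricted to the segment before the cut time. I read $\mathcal T_n(x,z)$ as a multiple of $\int_0^\infty \mathcal R_n(x,z,s)\,ds$, again up to the cut time. The precise normalizations will be whatever falls out of the limit below.

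\textbf{Step 1.} Fix $r>0$. Every point of $M$ at distance less than $r$ from $\Sigma$ is reached by a minimizing normal geodesic, which is defined up to its normal cut time. Hence
$$\operatorname{vol}(B_r(\Sigma))\le \int_\Sigma\int_{S^\perp_x\Sigma}\int_0^{\min(r,c(x,z))} J_\Sigma(x,z,t)\,dt\,dS_z\,d\operatorname{vol}_\Sigma .$$
Apply Corollary~\ref{cor:quantitative-Heintze-Karcher}. Under $\overline{\operatorname{Ric}}_n\ge0$ we have $\mathcal R_n\ge0$, so the bracket is at most $1+t\langle -\sigma,z\rangle$. Write $a:=\langle-\sigma(x),z\rangle$.

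Along the focal-free segment $J_\Sigma>0$, so the bracket stays positive. If $a\le0$, the bracket decays and its contribution is $o(r^{n+m})$, as in the standard argument. If $a>0$, the bracket is at most $ta-\frac tn\int_0^t(1-s/t)^2\mathcal R_n\,ds+1$.

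\textbf{Step 2 (asymptotics).} Divide by $r^{n+m}$ and let $r\to\infty$. The left-hand side tends to $\theta|\mathbb B^{n+m}|$, since the tube is comparable to a geodesic ball because $\Sigma$ is compact.

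For the right-hand side, rescale $t=r\tau$. Pointwise we have $e^{-\mathcal B(x,z,r\tau)}\to e^{-\mathcal B(x,z)}$ by monotone convergence, because the weight $s(1-s/t)$ increases in $t$. Likewise
$$\frac1n\int_0^{r\tau}(1-s/(r\tau))^2\mathcal R_n\,ds\to\frac1n\int_0^{\infty}\mathcal R_n\,ds=:\tilde{\mathcal T}.$$
For the leading behaviour, expand
$$(a-\tilde{\mathcal T}+O(1/t))^n\le a^n-n\,a^{n-1}\min(\tilde{\mathcal T},a)+\dots ,$$
using $(a-b)_+^n\le a^n- n a^{n-1}b$ for $0\le b\le a$, which holds by convexity. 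Dominated convergence, with dominating function $a_+^n\tau^{n+m-1}$, then gives
$$\theta\frac{|\mathbb S^{n+m-1}|}{n+m}\le \frac1{n+m}\int_\Sigma\int_{S^\perp_x\Sigma}\Big(a_+^n e^{-\mathcal B}-a_+^{n-1}\mathcal T_n\Big)\,dS_z\,d\operatorname{vol}_\Sigma ,$$
with $\mathcal T_n$ equal to $n\min(\tilde{\mathcal T},a)$ times $e^{-\mathcal B}$, or whatever the paper's definition is.

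\textbf{Step 3.} The fibre integral satisfies $\int_{S^\perp_x\Sigma}a_+^n\,dS_z=\frac{|\mathbb S^{n+m-1}|}{|\mathbb S^n|}|\sigma|^n$. This is the classical identity used by Wang and Pan--Yi, computed by integrating the $n$-th power of a linear function over $\mathbb S^{m-1}$. Substitute it and write $a_+^ne^{-\mathcal B}=a_+^n-a_+^n(1-e^{-\mathcal B})$. Rearranging and multiplying by $|\mathbb S^n|/|\mathbb S^{n+m-1}|$ gives exactly the stated inequality. Both remainder terms are nonnegative because $\mathcal B\ge0$ and $\mathcal R_n\ge0$.

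\textbf{Main obstacle.} The delicate step is justifying the limit in Step 2 with the cut time. The curvature integrals must be taken only up to $c(x,z)$, and the $r\to\infty$ limit must be exchanged with the integral while still producing a nonnegative remainder. I would handle this by defining $\mathcal B$ and $\mathcal T_n$ with integrals truncated at the cut time, and treating directions with finite cut time separately. For those directions $\int_0^{c}J\,dt$ is bounded, so they contribute nothing after dividing by $r^{n+m}$. Then $a_+^n$ times the remainder is trivially bounded by its full value, and the inequality only improves.
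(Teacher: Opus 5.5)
\textbf{The gap: the linearization in Step~2 is reversed.} Your overall plan is the paper's: the tube-volume bound of Corollary~\ref{cor:quantitative-tube-volume}, the rescaling $t=r\tau$, the monotone limits $\mathcal B_{\overline{\operatorname{Ric}},t}(x,z)\uparrow\int_0^\infty s\,\overline{\operatorname{Ric}}(\gamma',\gamma')\,ds$ and $\mathcal H_{n,t}(x,z)\uparrow\tilde{\mathcal T}=\frac1n\int_0^\infty\mathcal R_n\,ds$, and the spherical identity of Lemma~\ref{lem:spherical-beta-integral}. The problem is the inequality $(a-b)_+^n\le a^n-na^{n-1}b$. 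Convexity of $s\mapsto s^n$ gives the tangent-line inequality $(a-b)^n\ge a^n-na^{n-1}b$, which is the \emph{reverse}. For $n\ge2$ and $b=a>0$ your inequality would read $0\le(1-n)a^n$, which is false. So the remainder $n\,e^{-\mathcal B}\min\{\tilde{\mathcal T},a\}$ you would extract is unjustified and too large by a factor $n$. Moreover, the integrand $a^n-na^{n-1}\min\{\tilde{\mathcal T},a\}$ can be negative, which also breaks your closing ``the inequality only improves'' step.

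What convexity actually gives is the chord bound. The map $b\mapsto(a-b)^n$ is convex on $[0,a]$, so it lies below the chord from $(0,a^n)$ to $(a,0)$:
\[
(a-b)_+^n\le a^{n-1}\bigl(a-\min\{a,b\}\bigr)=a^n-a^{n-1}\min\{a,b\},\qquad a,b\ge0 .
\]
This is Lemma~\ref{lem:quantitative-binomial} with $q=1$. It yields coefficient one, i.e.\ exactly $\mathcal T_n(x,z)=e^{-\mathcal B_{\overline{\operatorname{Ric}}}(x,z)}\min\{\tilde{\mathcal T},\langle-\sigma(x),z\rangle\}$; the $\liminf$ in the paper's definition is a limit by monotonicity.

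With this fix, your ordering is a valid and slightly cleaner variant of the paper's. First pass to the limit in the whole integrand $\tau^{m-1}e^{-\mathcal B_{\overline{\operatorname{Ric}},r\tau}}\bigl(\tfrac1r+\tau a-\tau\mathcal H_{n,r\tau}\bigr)_+^n$ by dominated convergence. Dominate by $(1+|\sigma|)^n$, not by $a_+^n\tau^{n+m-1}$, because of the $1/r$ term. This gives $\theta|\mathbb B^N|\le\frac1N\iint e^{-\mathcal B_{\overline{\operatorname{Ric}}}(x,z)}(a-\tilde{\mathcal T})_+^n\,dS_z\,d\operatorname{vol}_\Sigma$. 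Only then linearize. The paper instead linearizes at each finite $t$ with the $q$-version of the lemma, and therefore needs Fatou's lemma for the term $\mathcal E_R^n$.

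\textbf{The cut-time discussion is also off target.} The quantities $\mathcal B_{\overline{\operatorname{Ric}}}(x,z)$ and $\mathcal T_n(x,z)$ in the statement are integrals along the entire geodesic $\gamma_{x,z}$, which exists for all time by completeness. Truncating them at $\tau_\Sigma(x,z)$ would prove a different, a priori weaker, statement than the one claimed. No truncation is needed. The upper bound $t^{m-1}e^{-\mathcal B_{\overline{\operatorname{Ric}},t}}[\,\cdots]_+^n$ is defined and nonnegative for every $t>0$. So after using the Jacobian estimate on $(0,\min\{R,\tau_\Sigma\})$, you simply enlarge the $t$-interval to $(0,R]$, as the paper does.

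Your alternative also works: discard the finite-cut directions by dominated convergence, then re-add them with full-geodesic quantities. But it works only because the corrected limiting integrand $e^{-\mathcal B}\bigl(a_+^n-a_+^{n-1}\min\{a_+,\tilde{\mathcal T}\}\bigr)$ is nonnegative. Finally, $J_\Sigma>0$ does not force the bracket to be positive when $n$ is even. The correct reason is that the bracket dominates $\frac1n\operatorname{tr}B_t>0$, although with the positive part $(\cdot)_+$ you never need this.
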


The equality case is characterized in
Theorem~\ref{thm:metric-rigidity}. Equality in the
quantitative inequality need not imply equality in the standard
Fenchel--Willmore inequality, as shown by
Example~\ref{ex:equality} on
$\mathbb S^{n+1}\times\mathbb R$.

We also give a complementary derivation that avoids the Jacobian factorization above. Using only a scalar Riccati comparison, we obtain a Jacobian estimate which, under $\overline{\operatorname{Ric}}_n\geq 0$, is sufficient to recover the standard Fenchel--Willmore inequality; see Section~\ref{subsec:direct-riccati}.

We next apply the preceding Jacobian estimate to the gradient--normal
exponential maps arising in the ABP method.
The main point is that, although the corresponding geodesic direction
need not be normal to $\Sigma$, the condition
$\overline{\operatorname{Ric}}_{n-1}\geq 0$ implies
$\mathcal{R}_n^u\geq 0$ and
$\mathcal{B}_{\overline{\operatorname{Ric}}}^u\geq 0$
along the geodesic.
Combining this with the ABP construction yields a Michael--Simon
Sobolev inequality whose curvature assumption depends only on the
dimension of the submanifold, and not on its codimension.

\begin{theorem}[Michael--Simon Sobolev inequality]
Let $(M^{n+m},\overline g)$ be a complete noncompact Riemannian manifold
satisfying
$
\overline{\operatorname{Ric}}_{n-1}\geq 0,
$
and let $\theta$ be its asymptotic volume ratio. Let
$\Sigma^n\subset M^{n+m}$ be a compact submanifold, possibly with smooth
boundary, and let $f$ be a positive smooth function on $\Sigma$. If
$n\geq 2$ and $m\geq 2$, then
$$
\int_\Sigma
\sqrt{|\nabla^\Sigma f|^2+f^2|H|^2}
\,d\operatorname{vol}_\Sigma
+
\int_{\partial\Sigma}
f\,d\operatorname{vol}_{\partial\Sigma}
\geq
n
\left(
\frac{(n+m)|\mathbb B^{n+m}|}
{m|\mathbb B^m|}
\right)^{\frac{1}{n}}
\theta^{\frac{1}{n}}
\left(
\int_\Sigma
f^{\frac{n}{n-1}}
\,d\operatorname{vol}_\Sigma
\right)^{\frac{n-1}{n}}.
$$
\end{theorem}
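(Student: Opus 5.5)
Following Brendle's ABP argument, adapted to the gradient--normal exponential map $\Phi_t^u$. By scaling, assume
$$
\int_\Sigma \sqrt{|\nabla^\Sigma f|^2+f^2|H|^2}\,d\operatorname{vol}_\Sigma+\int_{\partial\Sigma}f\,d\operatorname{vol}_{\partial\Sigma}=n\int_\Sigma f^{\frac{n}{n-1}}\,d\operatorname{vol}_\Sigma .
$$
For connected $\Sigma$, solve the Neumann problem
$$
\operatorname{div}_\Sigma(f\nabla^\Sigma u)=n f^{\frac{n}{n-1}}-\sqrt{|\nabla^\Sigma f|^2+f^2|H|^2}
$$
on $\Sigma$, with $\langle\nabla^\Sigma u,\eta\rangle=1$ on $\partial\Sigma$. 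The normalization is exactly the compatibility condition. The disconnected case reduces to the connected one by the usual convexity argument.

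\textbf{Contact sets.} Let $U=\{x\in\Sigma\setminus\partial\Sigma:|\nabla^\Sigma u|<1\}$. For $r>0$ let $A_r$ be the set of $(\bar x,\bar y)\in T^\perp\Sigma$ with $\bar x\in U$, $|\nabla^\Sigma u(\bar x)|^2+|\bar y|^2<1$, and
$$
r u(x)+\tfrac12 d\bigl(x,\exp_{\bar x}(r\nabla^\Sigma u(\bar x)+r\bar y)\bigr)^2\ge r u(\bar x)+\tfrac12 r^2\bigl(|\nabla^\Sigma u(\bar x)|^2+|\bar y|^2\bigr)
$$
for all $x\in\Sigma$. A minimization argument gives:
\begin{itemize}
\item[(i)] the set $\{p\in M: \sigma r<d(x,p)<r\ \forall x\in\Sigma\}$ is contained in $\Phi_r^u(A_r)$;
\item[(ii)] on $A_r$ the geodesic $s\mapsto\Phi^u_s(\bar x,\bar y)$, $s\in[0,r]$, is minimizing;
\item[(iii)] the second-order condition holds: $D^2_\Sigma u(\bar x)-\langle \mathrm{II}(\bar x),\bar y\rangle\ge$ the Jacobi-field index form term. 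This is what the factorization in Proposition~\ref{prop:gradient-normal-decomposition} needs.
\end{itemize}

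\textbf{Jacobian estimate.} On $A_r$, apply Theorem~\ref{thm:gradient-normal-signed-main}. Under $\overline{\operatorname{Ric}}_{n-1}\ge0$ we have $\mathcal R_n^u\ge0$ and $\mathcal B^u_{\overline{\operatorname{Ric}}}\ge0$. The reason is that the velocity $\nabla^\Sigma u+y$ has a component in $T\Sigma$, so the $n$-plane spanned by the parallel frame $E_i$ need not be orthogonal to $\gamma'$. Projecting that plane onto $\gamma'^\perp$ leaves an $(n-1)$-dimensional subspace plus a contribution weighted by $|\gamma'|^2-\langle\gamma',E\rangle^2\ge0$. I would check that $\mathcal R_n^u$ is a nonnegative combination of $(n-1)$-Ricci terms; this is precisely the content of the remark before the theorem. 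Dropping these terms gives
$$
|\det D\Phi^u_r(\bar x,\bar y)|\le r^m\Bigl(1+\tfrac rn(\Delta_\Sigma u-\langle H,\bar y\rangle)\Bigr)^n .
$$
Using the PDE and the Cauchy--Schwarz bound
$$
-f\langle H,\bar y\rangle-\langle\nabla f,\nabla u\rangle\le \sqrt{|\nabla f|^2+f^2|H|^2}\,\sqrt{|\nabla u|^2+|\bar y|^2}<\sqrt{|\nabla f|^2+f^2|H|^2},
$$
we get $\Delta_\Sigma u-\langle H,\bar y\rangle\le n f^{\frac{1}{n-1}}$. Hence
$$
|\det D\Phi^u_r|\le r^m\bigl(1+rf^{\frac1{n-1}}\bigr)^n .
$$

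\textbf{Area formula and limit.} Apply the area formula together with (i):
$$
|\mathbb B^{n+m}|(1-\sigma^{n+m})r^{n+m}\,\theta\lesssim \vol\{\sigma r<d(\cdot,\Sigma)<r\}\le\int_U\int_{\{|y|^2<1-|\nabla u|^2\}}\mathbf 1_{\{\sigma^2<|\nabla u|^2+|y|^2\}}\,r^m\bigl(1+rf^{\frac1{n-1}}\bigr)^n\,dy\,dx .
$$
The left inequality holds asymptotically, by Bishop--Gromov and the definition of $\theta$, using $\overline{\operatorname{Ric}}\ge0$, which follows from $\overline{\operatorname{Ric}}_{n-1}\ge0$ by monotonicity. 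Divide by $r^{n+m}$, let $r\to\infty$, and bound the $y$-integral as in Brendle. This gives
$$
|\mathbb B^{n+m}|(1-\sigma^{n+m})\theta\le|\mathbb B^m|\tfrac{n+m}{2}(1-\sigma^2)\int_\Sigma f^{\frac n{n-1}} .
$$
Then divide by $1-\sigma$ and let $\sigma\to1$ to obtain
$$
(n+m)|\mathbb B^{n+m}|\theta\le m|\mathbb B^m|\int_\Sigma f^{\frac n{n-1}},
$$
which rearranges to the stated inequality. The hypothesis $m\ge2$ is needed in this $y$-integral step.

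\textbf{Main obstacle.} The delicate step is checking that the Jacobian of Theorem~\ref{thm:gradient-normal-signed-main} applies on $A_r$ with nonnegative curvature remainders. This requires that $\overline{\operatorname{Ric}}_{n-1}\ge0$ controls $\mathcal R_n^u$ for the non-normal direction $\nabla^\Sigma u+\bar y$. It also requires that the second-order contact condition supplies the positivity needed to take the $n$-th root in the tangential factor. I would first try writing the tangential block along an orthonormal basis adapted to $\nabla^\Sigma u$, so that at most one frame vector fails to be orthogonal to $\gamma'$.
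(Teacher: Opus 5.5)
Your proposal follows the paper's proof: normalization and Neumann problem, the contact set and the covering of $\{p:\sigma r<d(p,x)<r\ \forall x\in\Sigma\}$, an adapted frame giving $\mathcal R_n^u\ge0$ and $\overline{\operatorname{Ric}}(\gamma',\gamma')\ge0$, the bound $|\det D\Phi_r^u|\le r^m(1+rf^{\frac1{n-1}})^n$, the area formula, and the limits $r\to\infty$, $\sigma\to1$. You invoke Theorem~\ref{thm:gradient-normal-signed-main} where the paper cites Proposition~\ref{prop:direct-riccati-normal-jacobian}; this makes no difference, since both give the same bound once the nonnegative remainders are dropped.

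There are three slips to fix.

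\textbf{The fiber constant.} The region $\{y:\sigma^2<|\nabla^\Sigma u|^2+|y|^2<1\}$ has volume at most $\frac m2|\mathbb B^m|(1-\sigma^2)$, not $\frac{n+m}{2}|\mathbb B^m|(1-\sigma^2)$. This follows from $b^{m/2}-a^{m/2}\le\frac m2(b-a)$ for $0\le a\le b\le1$, and this is exactly where $m\ge2$ is used. With your constant, the limit $\sigma\to1$ gives $(n+m)|\mathbb B^m|$ on the right rather than $m|\mathbb B^m|$, so the final inequality would not follow.

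\textbf{The set in the area-formula display.} It must be the set from your item (i), not $\{\sigma r<d(\cdot,\Sigma)<r\}$. The latter need not be covered by $\Phi_r^u$, because the minimizer $\bar x$ of $ru+\frac12 d(\cdot,p)^2$ might satisfy $d(\bar x,p)\ge r$. Your asymptotic lower bound is unaffected, since the two sets have the same volume growth.

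\textbf{The curvature step.} The $E_1$ contribution is $|\gamma'|^2-\langle\gamma',E_1\rangle^2=|v|^2\sin^2\vartheta$ times a single sectional curvature, and this is not controlled on its own by $\overline{\operatorname{Ric}}_{n-1}\ge0$. Borrow $\sin^2\vartheta$ times the $(n-1)$-trace over $E_2,\dots,E_n$. This turns $\mathcal R_n^u/|v|^2$ into $\sin^2\vartheta$ times an $n$-Ricci trace plus $\cos^2\vartheta$ times an $(n-1)$-Ricci trace, as in Lemma~\ref{lem:mixed-direction-nonnegative}.
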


As applications, we obtain an isoperimetric inequality for minimal submanifolds and an intrinsic diameter estimate under the same intermediate Ricci curvature assumption.

We finally extend the preceding results by allowing a radially decaying
negative part of the intermediate Ricci curvature.
Extensions of geometric
inequalities from nonnegative curvature to asymptotically nonnegative or
radially decaying curvature settings have been studied in several works;
see, for example,
\cite{Abresch1985,ChongLuoLu2025,DongLinLu2024,MaWu2024,LeeRicci2024LogSobolev}
and the references therein. Here we focus on the quadratic-decay condition.
Quadratic curvature decay by itself is a rather weak geometric condition:
Lott and Shen observed that every connected smooth paracompact manifold
admits a complete Riemannian metric with two-sided quadratic sectional-curvature
decay \cite{LottShen2000}.

Let $M^N$ be complete and
noncompact, and let $\lambda:[0,\infty)\to[0,\infty)$ be continuous and
nonincreasing.  Assume
\begin{equation}\label{eq:b1}
b_1:=\int_0^\infty\lambda(t)\,dt<\infty,
\qquad
B:=\limsup_{t\to\infty}t^2\lambda(t)<\infty.
\end{equation}
For $1\leq k\leq N-1$, we say that $M^N$ satisfies asymptotically nonnegative $k$-Ricci curvature with quadratic decay, denoted by $(\mathrm{QD})_k$, with respect to a base point $o\in M$ if
\begin{equation*}
    \overline{\operatorname{Ric}}_k(v,W)
    \geq -k\lambda(d(o,q))
\end{equation*}
for every point $q \in M$, every unit vector $v \in T_qM$, and every $k$-dimensional subspace $W \subset v^\perp$.

Let $h_1$ be the solution of
$$
h_1''(t)=\lambda(t)h_1(t),
\qquad
h_1(0)=0,
\qquad
h_1'(0)=1,
$$
and define the associated generalized asymptotic volume ratio by
$$
\theta_\lambda
:=
\lim_{r\to\infty}
\frac{\operatorname{vol}(B_o(r))}
{N|\mathbb B^N|\int_0^r h_1(t)^{N-1}\,dt}.
$$

Using the same Jacobian factorization, we estimate the tangential and
ambient factors under the quadratic-decay assumptions. Applying the
resulting comparison to the normal and gradient--normal exponential maps
yields, respectively, the following Willmore and Michael--Simon Sobolev
inequalities.

\begin{theorem}[Willmore-type inequality under quadratic curvature decay]
Let $M^N$ be a complete noncompact Riemannian manifold satisfying $(\mathrm{QD})_n$, and let $\Sigma^n\subset M^N$ be a closed immersed submanifold. Then
\begin{equation*}
\int_\Sigma|\sigma|^n\,d\operatorname{vol}_\Sigma \geq
\frac{|\mathbb S^n|\theta_\lambda}{L_\Sigma^{N-1}}
-
\mathcal E_{\lambda,\Sigma},
\end{equation*}
where $\mathcal E_{\lambda,\Sigma}\geq 0$ and $L_\Sigma$ are defined in Section~\ref{sec:decay}.
\end{theorem}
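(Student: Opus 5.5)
We follow the Heintze--Karcher/Wang tube-volume strategy, replacing Euclidean comparison functions by ones adapted to $\lambda$. Set $N=n+m$. Write $\Phi_\Sigma(x,\omega,t)=\exp_x(t\omega)$ and let $J_\Sigma$ be its Jacobian. The first step is a Jacobian bound under $(\mathrm{QD})_n$. By monotonicity, $(\mathrm{QD})_n$ gives $\overline{\operatorname{Ric}}\ge -(N-1)\lambda\circ d(o,\cdot)$ as well as $\mathcal R_n\ge -n\lambda\circ d(o,\cdot)$. We use the Jacobian factorization (Proposition~\ref{prop:gradient-normal-decomposition}) with $u\equiv0$ and estimate the two factors separately by Riccati/Sturm comparison along $\gamma_{x,\omega}$. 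The tangential factor satisfies a matrix Riccati inequality with initial data $-\langle \mathrm{II},\omega\rangle$; its trace is controlled by $n$ times a function $h_\Sigma$ solving $h''=\lambda(d(o,\gamma(s)))h$ with $h(0)=1$ and $h'(0)=-\langle\sigma,\omega\rangle$. The normal factor is controlled by $h_1$-type solutions. The difficulty is that $\lambda$ is evaluated at $d(o,\gamma(s))$, not at $s$. By the triangle inequality, $d(o,\gamma(s))\ge |s-d(o,x)|$. So we bound everything by the worst case: take $R_\Sigma:=\max_{x\in\Sigma} d(o,x)$, and replace $\lambda(d(o,\gamma(s)))$ by $\lambda(\max(0,s-R_\Sigma))$, which is larger because $\lambda$ is nonincreasing. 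This gives
$$J_\Sigma(x,\omega,t)\le h_1^{\Sigma}(t)^{m-1}\,\bigl[\,\text{tangential comparison}\,\bigr]_+^n,$$
valid before the cut time. The comparison functions are shifted by $R_\Sigma$, and the constant $L_\Sigma$ will arise as the asymptotic ratio of these shifted solutions to $h_1$. Since $\int\lambda<\infty$, $h_1(t)\sim a\,t$ as $t\to\infty$, so this ratio is finite.

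The second step is the large-$t$ asymptotics of the tangential factor. The tangential comparison solution behaves like $t\,(c(x,\omega)-\langle\sigma,\omega\rangle)$ plus lower-order terms. Here $c$ accounts for the curvature correction accumulated over $[0,\infty)$, and it is bounded by quantities built from $b_1$ and $R_\Sigma$. After positive parts are taken, the excess over $\langle-\sigma,\omega\rangle_+$ is what contributes to $\mathcal E_{\lambda,\Sigma}$.

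The third step is integration and the limit. Every point of $B_o(r)$ lies in the image of $\Phi_\Sigma$, restricted to cut-time-valid parameters, for radii up to $r+R_\Sigma$. Hence
$$\vol(B_o(r))\le \vol(\text{tube})+\dots\le\int_\Sigma\int_{S^\perp_x\Sigma}\int_0^{r+R_\Sigma}J_\Sigma\,dt\,dS\,d\vol_\Sigma .$$
Divide by $N|\mathbb B^N|\int_0^r h_1^{N-1}$ and let $r\to\infty$. Using $h_1(t)\sim a t$ and the Step 2 asymptotics, the right side tends to
$$\frac{L_\Sigma^{N-1}}{N|\mathbb B^N|}\int_\Sigma\int_{S^\perp_x}\bigl(\langle-\sigma,\omega\rangle_++\text{error}\bigr)^n\,dS\,d\vol_\Sigma .$$
The fiber integral $\int_{S^{m-1}}\langle-\sigma,\omega\rangle_+^n\,d\omega$ is computed exactly as in Corollary~\ref{cor:quantitative-tube-volume}: it equals $|\sigma|^n|\mathbb S^n|/|\mathbb S^{N-1}|$ times normalizing constants. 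Combining this with $N|\mathbb B^N|=|\mathbb S^{N-1}|$ gives the main term $\int|\sigma|^n\,|\mathbb S^n|/L_\Sigma^{N-1}$ against $\theta_\lambda$. Expanding $(a+e)^n-a^n$ then defines $\mathcal E_{\lambda,\Sigma}\ge0$, and rearranging gives the inequality.

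The main obstacle is Step 1: making the non-radial decay $\lambda(d(o,\gamma(s)))$ fit a one-dimensional comparison uniformly in $(x,\omega)$. We will first try the triangle-inequality shift described above. We will also need to check carefully that $B$ (the quadratic decay) keeps the shifted comparison solutions asymptotically linear, so that $L_\Sigma<\infty$ and the error terms converge.
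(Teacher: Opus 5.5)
Your overall architecture (factorized Jacobian bound, tube volume, division by $N|\mathbb B^N|\int_0^R h_1^{N-1}$) matches the paper's, but two steps fail as stated.

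\textbf{Asymptotic linearity is false.} You claim that $\int_0^\infty\lambda<\infty$ forces $h_1(t)\sim a\,t$. This fails in exactly the regime the theorem addresses. Take $\lambda(t)=B/(1+t)^2$, which is nonincreasing with $b_1=B$ and $\limsup t^2\lambda=B$. Then $h_1(t)=\bigl((1+t)^{\alpha_+}-(1+t)^{\alpha_-}\bigr)/\sqrt{1+4B}$ with $\alpha_\pm=\tfrac12\bigl(1\pm\sqrt{1+4B}\bigr)$, so $h_1$ grows like $t^{\alpha_+}$ with $\alpha_+>1$. Linearity needs something like $\int_0^\infty t\lambda<\infty$. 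Hence your Step~2 (``the tangential comparison behaves like $t(c-\langle\sigma,\omega\rangle)$'') and the limit computation in Step~3 have no basis, and the check you defer to the end cannot succeed.

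The paper never uses linear asymptotics. It writes the pointwise bound as
\[
J_\Sigma(x,\omega,t)\le h_x(t)^{N-1}\Bigl(\mu_x-\langle\sigma(x),\omega\rangle+\frac1t\Bigr)_+^n ,
\]
so all growth sits in $h_x^{N-1}$ and the remaining factor converges uniformly. Then $h_x\le(L_\Sigma+\varepsilon)h_1$ for $t\ge T_\varepsilon$, together with the ratio fact $h_1(t+c)/h_1(t)\to1$, lets one divide by $F(R)=\int_0^R h_1^{N-1}$ whatever the growth rate of $h_1$. This form comes from estimating $\det B_t$ with the linear fixed-endpoint test fields $(1-s/t)E_i$, which keeps it of the shape $[1+t(\cdots)]^n$. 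The curvature growth is absorbed entirely by $|\det D(\exp_x)_{t\omega}|\le(h_x(t)/t)^{N-1}$.

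\textbf{The constants are not the statement's.} Even after that repair, your argument does not prove the stated inequality. The statement's $L_\Sigma=2e^{b_1R_0}-1$ (with $R_0=R_\Sigma$) and $\mathcal E_{\lambda,\Sigma}$ (built from $\mu_x=b_1+\int_0^{d(o,x)}\lambda$) are fixed quantities; you cannot define them at the end of the proof. Your one-sided shift $\lambda(\max\{0,s-R_\Sigma\})$ places a plateau of height $\lambda(0)$ on $[0,R_\Sigma]$, which gives $\liminf_{t\to\infty}\tilde h_1(t)/h_1(t)\ge\cosh\bigl(\sqrt{\lambda(0)}\,R_\Sigma\bigr)$. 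Taking $\lambda(t)=K/(1+Kt/\eta)^2$ keeps $b_1=\eta$ fixed while $\lambda(0)=K\to\infty$. So your constants are not dominated by $2e^{b_1R_0}-1$ and $\mu_x$, and your inequality does not imply the theorem.

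The missing idea is the two-sided triangle inequality $d(o,\gamma(s))\ge|d(o,x)-s|$. It gives the reflected profile $\lambda_x(s)=\lambda(|d(o,x)-s|)$, whose total mass is $\mu_x\le2b_1$. Combined with $h_x\le\frac{e^{R_0b_1}-1}{b_1}h_2+e^{R_0b_1}h_1$ and $\limsup h_2/h_1\le b_1$, this produces exactly the paper's $L_\Sigma$.

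\textbf{Step~1 misidentifies the factors.} The tangential factor in the factorization is the fixed-endpoint object $\langle B_ta,b\rangle=tI^t(Y_a,Y_b)-t\langle\mathrm{II}(a,b),\omega\rangle$. It is controlled through the index lemma by $\mathcal R_n$ alone, not by an initial-value Riccati problem with data $-\langle\mathrm{II},\omega\rangle$. If you instead split the $\Sigma$-Jacobi tensor into tangential and normal partial traces, the normal trace cannot be estimated on its own: it involves an $(m-1)$-dimensional curvature trace that $(\mathrm{QD})_n$ does not control when $m-1<n$. That separation, not the non-radial dependence of $\lambda$, is the real obstacle in Step~1, and it is what the factorization resolves.
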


The correction term $\mathcal E_{\lambda,\Sigma}$ cannot in general be omitted under $(\mathrm{QD})_n$.
In Remark~\ref{rem:decay-correction-necessary}, the ambient manifold
satisfies this condition and has $\theta_\lambda>0$, but contains a closed
totally geodesic submanifold whose Willmore integral $\int_\Sigma|\sigma|^n\,d\operatorname{vol}_\Sigma$ is zero.

\begin{theorem}[Michael--Simon Sobolev inequality under quadratic decay]
Let $M^N$ be a complete noncompact Riemannian manifold satisfying
$(\mathrm{QD})_{n-1}$, where $N=n+m$, $n\geq2$, and $m\geq2$.
Let $\Sigma^n\subset M^N$ be a compact embedded submanifold,
possibly with smooth boundary. Then every positive smooth function $f$
on $\Sigma$ satisfies
\begin{align*}
&
\int_\Sigma
\sqrt{|\nabla^\Sigma f|^2+f^2|H|^2}\,d\vol_\Sigma
+
\int_{\partial\Sigma}f\,d\vol_{\partial\Sigma}
+
n\mu_\Sigma\int_\Sigma f\,d\vol_\Sigma
\\
&\qquad\ge
n\left(
\frac{N|\mathbb B^N|\theta_\lambda}
{m|\mathbb B^m|L_\Sigma^{N-1}}
\right)^{\frac{1}{n}}
\left(
\int_\Sigma f^{\frac n{n-1}}\,d\vol_\Sigma
\right)^{\frac{n-1}{n}}.
\end{align*}
The quantities $\mu_\Sigma$ and
$L_\Sigma$ are defined in Section~\ref{sec:decay}.
\end{theorem}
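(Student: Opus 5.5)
We follow Brendle's ABP scheme, but with the curvature input supplied by the general Jacobian estimate (Theorem~\ref{thm:gradient-normal-signed-main}) in its quadratic-decay form. By scaling we normalize
\[
\int_\Sigma \sqrt{|\nabla^\Sigma f|^2+f^2|H|^2}+\int_{\partial\Sigma}f+n\mu_\Sigma\int_\Sigma f=n\int_\Sigma f^{\frac n{n-1}},
\]
where we may take $\Sigma$ connected.

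\textbf{Step 1: the PDE.} We solve the Neumann problem
\[
\operatorname{div}_\Sigma(f\nabla^\Sigma u)=n f^{\frac n{n-1}}-\sqrt{|\nabla^\Sigma f|^2+f^2|H|^2}-n\mu_\Sigma f
\]
with $\langle\nabla^\Sigma u,\nu\rangle=1$ on $\partial\Sigma$. The normalization is exactly the solvability condition, and we obtain $u\in C^{2,\gamma}$. We then define the sets
\[
U=\{(x,y)\in T^\perp\Sigma: |\nabla^\Sigma u(x)|^2+|y|^2<1\}
\]
and $A_r$, the set of $(x,y)\in U$ with $ru(z)+\tfrac12 d(z,\Phi^u_r(x,y))^2\ge ru(x)+\tfrac12 r^2(|\nabla^\Sigma u(x)|^2+|y|^2)$ for all $z\in\Sigma$.

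\textbf{Step 2: surjectivity.} We show that for every $p\in M$ with $d(p,\Sigma)<r$, a minimizer of $z\mapsto ru(z)+\tfrac12 d(z,p)^2$ gives a point of $A_r$ mapping to $p$. The boundary condition forces the minimizer to be interior.

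\textbf{Step 3: Jacobian bound on $A_r$.} On $A_r$ the contact condition gives
\[
\operatorname{Hess}u-\langle \mathrm{II},y\rangle+(\text{curvature trace along }\gamma)\ge0 .
\]
Proposition~\ref{prop:gradient-normal-main} and Lemma~\ref{lem:gradient-normal-signed-ambient} then bound $|\det D\Phi^u_r|$ by
\[
r^m\,(\text{tangential factor})^n\,(\text{exponential factor}).
\]
Here we need the decay versions. The geodesic $\gamma(s)=\exp_x(s(\nabla u+y))$ has speed less than $1$, so
\[
d(o,\gamma(s))\ge d(o,x)-s\ge (s-R_\Sigma)_+ \text{-type bounds},
\]
with $R_\Sigma$ the distance from $o$ to $\Sigma$. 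We apply $(\mathrm{QD})_{n-1}$ to $\mathcal R_n^u$ and to $\overline{\operatorname{Ric}}$ along $\gamma$. The key point is that the $(n-1)$-condition controls $\mathcal R_n^u$ because, after removing the direction of $\gamma'$ from the $n$-plane spanned by the transported tangent frame, what remains is an $(n-1)$-dimensional subspace orthogonal to $\gamma'$. The Riccati/Jacobi comparison against $h''=\lambda h$ then gives factors of the form
\[
\Bigl[\tfrac{h(r)}{r}\Bigr]^{N-1}\Bigl(\Delta u-\langle H,y\rangle\cdot\tfrac1n+\mu_\Sigma\Bigr)^n\text{-type},
\]
where the term $\mu_\Sigma$ absorbs the first-order curvature correction $\int_0^r\lambda$ (of size at most $b_1$) that arises in the tangential factor. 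The ratio of comparison functions is bounded by $L_\Sigma^{N-1}$. Using the PDE together with
\[
-\langle H,y\rangle f\le \sqrt{|\nabla f|^2+f^2|H|^2}\,\sqrt{1-|\nabla u|^2},
\]
the AM--GM inequality gives
\[
|\det D\Phi^u_r|\le r^m L_\Sigma^{N-1}\frac{h_1(r)^{N-1}}{r^{N-1}}\,f^{\frac n{n-1}}(x)+O(r^{m-1})
\]
as $r\to\infty$.

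\textbf{Step 4: area formula and limit.} By the area formula,
\[
\vol\{p:d(p,\Sigma)<r\}\le\int_\Sigma\int_{\{|y|^2<1-|\nabla u|^2\}}|\det D\Phi^u_r|\,dy .
\]
Integrating over $y$ produces $|\mathbb B^m|(1-|\nabla u|^2)^{m/2}\le|\mathbb B^m|$. The integration in $y$ must be done with Brendle's refinement, as for the case $m\ge2$, so that the constant $m|\mathbb B^m|$ emerges rather than $|\mathbb B^m|$. Concretely, we integrate over the shells $\sigma<|y|^2<1-|\nabla u|^2$, divide by the leading growth $N|\mathbb B^N|\int_0^r h_1^{N-1}$ of $\vol B_o(r)$, and let $r\to\infty$. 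The left side tends to $\theta_\lambda$, since the tube and the ball differ by a bounded shift. Because $\int_0^rh_1^{N-1}\sim h_1(r)^{N-1}r/N$, the quadratic decay ensures that $h_1(r)/r$ is asymptotically controlled. This yields
\[
\theta_\lambda N|\mathbb B^N|\le m|\mathbb B^m|L_\Sigma^{N-1}\int_\Sigma f^{\frac n{n-1}},
\]
which is the claim after undoing the normalization.

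\textbf{Main obstacle.} The delicate step is Step~3: making the tangential factor's decay correction exactly match the term $n\mu_\Sigma f$, and making the ambient factor match $h_1(r)^{N-1}L_\Sigma^{N-1}/r^{N-1}$ uniformly in $(x,y)$. For this we first try the comparison $\psi''=\lambda(( s-R_\Sigma)_+)\psi$, which yields $L_\Sigma$ as a supremum of ratios of such solutions.
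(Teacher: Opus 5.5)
Your overall scheme is the paper's: normalize with the extra term $n\mu_\Sigma\int_\Sigma f$, put $-n\mu_\Sigma f$ into the Neumann problem, and split $|\det D\Phi^u_r|$ into the tangential factor and $|\det D(\exp_x)_{rv}|$. The gap is in the limiting step, which does not produce the constant. You replace $h_x(r|v|)/(r|v|)$ by $h_1(r)/r$ and then use $\int_0^r h_1^{N-1}\sim r\,h_1(r)^{N-1}/N$. That asymptotic is false under quadratic decay. The admissible choice $\lambda(t)=B(1+t)^{-2}$ gives $h_1(t)=\bigl((1+t)^{\beta}-(1+t)^{1-\beta}\bigr)/(2\beta-1)$ with $\beta=\frac12\bigl(1+\sqrt{1+4B}\bigr)>1$. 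Hence $r\,h_1(r)^{N-1}/\int_0^r h_1^{N-1}\to\beta(N-1)+1>N$, and comparing a Jacobian bound of size $r\,h_1(r)^{N-1}$ with the volume growth costs the factor $(\beta(N-1)+1)/N$. You give no argument excluding this regime. The step is also internally inconsistent. The shell must be $\alpha^2<|\nabla^\Sigma u|^2+|y|^2<1$ (in $|v|$, not in $|y|$), and once you integrate only over it, the set you cover is the annular region $\mathcal C_{\alpha,r}$, not the tube. Its volume must be normalized by $\int_{\alpha r}^r h_1^{N-1}$ (Lemma~\ref{lem:annulus-decay-limit}), not by $\int_0^r h_1^{N-1}$. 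The paper needs no growth information on $h_1$ because it keeps $h_x(r|v|)$ inside the fiber integral. In polar coordinates $a=|v|$, using $(a^2-|\nabla^\Sigma u|^2)^{\frac{m-2}{2}}\le a^{m-2}$ (this is where $m\ge2$ enters),
\[
\int_{\{\alpha^2<|\nabla^\Sigma u|^2+|y|^2<1\}}\Bigl(\frac{h_x(r|v|)}{|v|}\Bigr)^{N-1}dy\le\frac{|\mathbb S^{m-1}|}{\alpha^n r}\int_{\alpha r}^{r}h_x(s)^{N-1}\,ds .
\]
After $h_x\le(L_\Sigma+\varepsilon)h_1$ on $[\alpha r,r]$, this matches the annulus normalization exactly; $\alpha\uparrow1$ then only removes $\alpha^{-n}$, and $|\mathbb S^{m-1}|=m|\mathbb B^m|$. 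With your cruder bound $h_x(r|v|)/(r|v|)\le h_x(r)/r$, you would additionally need $\liminf_{r\to\infty}h_1(\alpha r)/h_1(r)\to1$ as $\alpha\uparrow1$, which requires a separate Riccati argument using $B<\infty$.

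Step~3 likewise does not produce the stated $\mu_\Sigma$ and $L_\Sigma$. The geodesic $\gamma(s)=\exp_x(sv)$ has speed $|v|<1$, so $(s-R_\Sigma)_+$ is not a lower bound for $d(o,\gamma(s))$; moreover $R_\Sigma$ would have to be $\max_\Sigma d(o,\cdot)$, not $d(o,\Sigma)$. The constants require the two-sided bound $d(o,\gamma(s))\ge\bigl|d(o,x)-s|v|\bigr|$, which holds because $\gamma$ is minimizing (Lemma~\ref{lem:general-contact-propagation}). Equivalently, use the potential $\lambda_x(\rho)=\lambda(|d(o,x)-\rho|)$, whose total integral is exactly $\mu_x\le\mu_\Sigma$. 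Together with $\mathcal R^u_n\ge-n|v|^2\lambda(d(o,\gamma))$, this gives $\det B^u_t\le[1+tc_u+t\mu_x|v|]_+^n$. The PDE gives $c_u\le f^{\frac1{n-1}}-\mu_\Sigma$, so $c_u+\mu_x|v|\le f^{\frac1{n-1}}$ precisely because $|v|<1$; the correction can be as large as $\mu_x\le 2b_1$, not $b_1$. Your one-sided potential $\lambda((\rho-R_0)_+)$ instead gives a correction up to $R_0\lambda(0)+b_1\ge\mu_\Sigma$, which $n\mu_\Sigma f$ cannot absorb in general. It also gives a model $\psi$ with $\liminf_{t\to\infty}\psi(t)/h_1(t)\ge\cosh\bigl(\sqrt{\lambda(0)}\,R_0\bigr)$, which can far exceed $L_\Sigma=2e^{b_1R_0}-1$. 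The uniform comparison $h_x\le(L_\Sigma+\varepsilon)h_1$ for $t\ge T_\varepsilon$ (Lemma~\ref{lem:decay-shifted-model}, via \cite{ChongLuoLu2025} and \cite{DongLinLu2024}) is the real analytic input, and you only assert it.

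Two smaller points. First, your argument that $(\mathrm{QD})_{n-1}$ controls $\mathcal R^u_n$ ignores the sectional term $K(\gamma',E_1)$. The fix is to write $\gamma'/|v|=\cos\vartheta\,E_1+\sin\vartheta\,E_{n+1}$ and split $\mathcal R^u_n/|v|^2$ into $\sin^2\vartheta$ times an $n$-trace plus $\cos^2\vartheta$ times an $(n-1)$-trace (Lemma~\ref{lem:mixed-direction-nonnegative}). Second, Step~2 must cover $\{p:\alpha r<d(p,x)<r\text{ for all }x\in\Sigma\}$. With only $d(p,\Sigma)<r$, the minimizer $\bar x$ may have $d(\bar x,p)\ge r$, and then neither interiority (via the Neumann condition) nor $|v|<1$ follows.
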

When $\lambda\equiv0$, we have
$\mu_x=\mu_\Sigma=0$, $L_\Sigma=1$, and $\theta_\lambda=\theta$,
so the two quadratic-decay results reduce to their counterparts under nonnegative
intermediate Ricci curvature.

The paper is organized as follows. Section~\ref{sec:HK} develops the Jacobian comparison framework: we first establish a factorization of the differential of the gradient--normal exponential map and derive the resulting Jacobian estimate, then obtain the quantitative Heintze--Karcher and tube-volume comparisons, and finally give a complementary derivation based on a scalar Riccati comparison.
Section~\ref{sec:quantitative-Willmore} applies the tube-volume comparison to prove the quantitative Fenchel--Willmore inequality and characterize its equality case. Section~\ref{sec:isoperimetric} applies the gradient--normal comparison in the ABP framework to obtain the Michael--Simon Sobolev and isoperimetric inequalities under nonnegative intermediate Ricci curvature, together with intrinsic diameter estimates. Section~\ref{sec:decay} extends the Willmore and Michael--Simon inequalities to the quadratic curvature-decay setting.

\noindent
\textbf{Acknowledgements.}
J. Lee is supported by Discovery Projects Grant DP250103808
of the Australian Research Council.
F. Ricci thanks Guofang Wei for her constant support
and helpful discussions during the preparation of this work.
Part of this work was carried out while he was at UC Santa Barbara,
where he was partially supported by NSF grant DMS-2403557.
\section{Jacobian comparison and volume estimates}
\label{sec:HK}
Let $n,m\ge1$, set $N:=n+m$, and let $(M^N,\overline g)$ be a complete Riemannian manifold. Let $\Sigma^n\subset M^N$ be a smooth immersed submanifold, possibly with smooth boundary.  We first factor the differential of the map arising in the Alexandrov--Bakelman--Pucci construction and use this factorization to derive a Jacobian estimate that retains the curvature contributions explicitly. The same estimate applies to the normal exponential map and yields quantitative Heintze--Karcher and tube-volume comparisons. In Subsection~\ref{subsec:direct-riccati}, we give a complementary derivation of the normal Jacobian estimate using a direct Riccati comparison argument, without relying on the factorization.

Throughout this paper, $\overline\nabla$ and $\nabla^\Sigma$ denote the Levi-Civita connections of $M$ and $\Sigma$, respectively, and we use the convention $\overline R(X,Y)Z=\overline\nabla_X\overline\nabla_YZ-\overline\nabla_Y\overline\nabla_XZ-\overline\nabla_{[X,Y]}Z$ for the curvature tensor of $\overline g$. We write $\mathrm{II}(X,Y):=(\overline\nabla_XY)^\perp$ for the second fundamental form of $\Sigma$, where $X,Y$ are tangential vector fields.

\subsection{Factorization of the differential of the gradient--normal exponential map}\label{subsec:main-factor}
Let $u\in C^2(\Sigma)$ and, for $t>0$, define
the gradient--normal exponential map as
\begin{equation*}
\Phi_t^u:T^\perp\Sigma\longrightarrow M,
\qquad
\Phi_t^u(x,y)
:=
\exp_x\bigl(t(\nabla^\Sigma u(x)+y)\bigr).
\end{equation*}
When $u\equiv0$, the map $\Phi_t^u$ reduces to the ordinary fixed-time normal exponential map.

For $r>0$, define the contact set
\begin{align*}
A_r^u
:=
\bigg\{(x,y)\in T^\perp\Sigma|_{\operatorname{int}\Sigma}:\;&
ru(p)
+\frac12 d\bigl(p,\Phi_r^u(x,y)\bigr)^2
\notag\\
&\ge
ru(x)
+\frac{r^2}{2}
\bigl|\nabla^\Sigma u(x)+y\bigr|^2,
\quad\text{for every }p\in\Sigma
\bigg\}
\end{align*}
where $d$ denotes the distance on $M$.

For the geometric motivation and interpretation of this contact set, we refer to
\cite[Definition~1.1 and the discussion thereafter]{wang2013}.
The following lemma shows that the contact condition persists for all earlier times and that the associated radial geodesic is minimizing.

\begin{lemma}
\label{lem:general-contact-propagation}
Let $(x,y)\in A_r^u$, set $v:=\nabla^\Sigma u(x)+y$ and $a:=|v|$, and let
$q_t:=\exp_x(tv)$ for $0\le t\le r$. Then, for every $0<t<r$ and every
$p\in\Sigma$,
\begin{equation}
\label{eq:propagated-contact-condition}
        tu(p)+\frac12d(p,q_t)^2
        \ge
        tu(x)+\frac{t^2}{2}a^2.
\end{equation}
In particular, $x$ is a global minimum point on $\Sigma$ of
$F_t^u(p):=tu(p)+\frac12d(p,q_t)^2$. Moreover, the geodesic
$s\mapsto\exp_x(sv)$ is minimizing from $x$ on $[0,r]$, and therefore
$q_t$ is not in the cut locus of $x$ for $0<t<r$.
\end{lemma}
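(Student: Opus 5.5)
The plan is the standard triangle-inequality argument from the ABP literature (cf.\ \cite{wang2013}, \cite{Brendle2023}). Fix $0<t<r$ and $p\in\Sigma$. The contact condition at time $r$ gives
\[
ru(p)+\tfrac12 d(p,q_r)^2\ge ru(x)+\tfrac{r^2}{2}a^2 .
\]
The goal is to transfer this inequality to time $t$, using that $q_t$ lies on the geodesic from $x$ to $q_r$.

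The first step is to bound $d(p,q_r)$ from above. By the triangle inequality, $d(p,q_r)\le d(p,q_t)+d(q_t,q_r)$. Since $s\mapsto \exp_x(sv)$ has speed $a$, we have $d(q_t,q_r)\le (r-t)a$. Hence $d(p,q_r)\le d(p,q_t)+(r-t)a$.

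Next, I control the cross term by Young's inequality with weights adapted to $t/r$. For $D:=d(p,q_t)$ and $c:=(r-t)a$,
\[
(D+c)^2\le \tfrac{r}{t}D^2+\tfrac{r}{r-t}c^2 ,
\]
which holds because $2Dc\le \frac{r-t}{t}D^2+\frac{t}{r-t}c^2$. This gives
\[
\tfrac12 d(p,q_r)^2\le \tfrac{r}{2t}d(p,q_t)^2+\tfrac{r(r-t)}{2}a^2 .
\]
Substituting into the contact condition yields
\[
ru(p)+\tfrac{r}{2t}d(p,q_t)^2\ge ru(x)+\tfrac{r^2}{2}a^2-\tfrac{r(r-t)}{2}a^2=ru(x)+\tfrac{rt}{2}a^2 .
\]
Multiplying by $t/r$ gives exactly \eqref{eq:propagated-contact-condition}.

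For the minimality statement, take $p=x$ in the time-$t$ inequality. The $u$ terms cancel and we get $d(x,q_t)\ge ta$, which is the length of $\exp_x(sv)$ on $[0,t]$; combined with the trivial reverse inequality, $d(x,q_t)=ta$. The same choice $p=x$ in the original contact condition at $r$ gives $d(x,q_r)\ge ra$, so $d(x,q_r)=ra$ and the geodesic is minimizing on all of $[0,r]$. The value of $F_t^u$ at $x$ is then $tu(x)+\frac{t^2}{2}a^2$, so \eqref{eq:propagated-contact-condition} says precisely that $x$ is a global minimizer of $F_t^u$.

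Finally, a geodesic that is minimizing on $[0,r]$ cannot have a cut point of $x$ strictly before time $r$, since past the first cut point a geodesic ceases to minimize. Therefore $q_t$ is not in the cut locus of $x$ for $0<t<r$. If $a=0$, all statements are trivial.

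The only delicate point is choosing the Young weights so that the constants match exactly. Apart from that, and from recalling the standard characterization of the cut locus, the argument is routine.
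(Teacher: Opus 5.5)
Your proposal is correct and is essentially the paper's argument: the triangle inequality through $q_t$, followed by an elementary quadratic inequality. Your Young's inequality with weights $r/t$ and $r/(r-t)$ is exactly the paper's completing-the-square step, whose slack is $\frac{r-t}{2r}\bigl(d(p,q_t)-ta\bigr)^2$; the only minor difference is that you use only the trivial bound $d(q_t,q_r)\le (r-t)a$, whereas the paper first proves minimality to get equality there.
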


\begin{proof}
Taking $p=x$ in the definition of $A_r^u$ gives
$d(x,q_r)^2\ge r^2a^2.$
The curve $s\mapsto\exp_x(sv)$ has length $ra$ on $[0,r]$, and hence
$d(x,q_r)\le ra$.  Thus
$d(x,q_r)=ra,$
so this geodesic is minimizing on $[0,r]$.  In particular,
\[
        d(q_t,q_r)=(r-t)a
        \qquad (0<t<r).
\]

Fix $p\in\Sigma$.  By the triangle inequality,
\[
        d(p,q_r)
        \le
        d(p,q_t)+(r-t)a.
\]
The contact condition at time $r$ therefore gives
\begin{align*}
r\bigl(u(p)-u(x)\bigr)
&\ge
\frac12\left(r^2a^2-d(p,q_r)^2\right)
\\
&\ge
\frac12
\left[
        r^2a^2-
        \bigl(d(p,q_t)+(r-t)a\bigr)^2
\right].
\end{align*}
Multiplying by $t/r$ and adding
$\frac12d(p,q_t)^2-\frac12t^2a^2$, we obtain
\begin{align*}
&t\bigl(u(p)-u(x)\bigr)
+\frac12d(p,q_t)^2
-\frac12t^2a^2
\\
\ge&
\frac{t}{2r}
\left[
        r^2a^2-
        \bigl(d(p,q_t)+(r-t)a\bigr)^2
\right]
+\frac12d(p,q_t)^2
-\frac12t^2a^2
\\
=&
\frac{r-t}{2r}
\bigl(d(p,q_t)-ta\bigr)^2
\ge0.
\end{align*}
This is exactly \eqref{eq:propagated-contact-condition}.  The final
assertion follows from the minimizing property proved above.
\end{proof}

Fix $(x,y) \in A_r^u$ and set $v:= \nabla^\Sigma u(x) + y$.
For $0 <t <r$, define
$q_t : = \exp_x (tv)$
and
$$E_t : = D(\exp_x)_{tv} :T_x M \longrightarrow T_{q_t}M.$$
Then by the previous lemma, $E_t$ is nonsingular.
Equip $T^\perp\Sigma$ with the connection metric, so that the normal
connection gives the orthogonal identification
\[
        T_{(x,y)}T^\perp\Sigma
        \simeq
        T_x\Sigma\oplus T_x^\perp\Sigma.
\]
Define
\begin{equation}\label{eq:splitting}
L_t^u: = E_t^{-1}\circ D\Phi_t^u|_{(x,y)}: T_x\Sigma \oplus T_x^\perp \Sigma \longrightarrow T_x M = T_x \Sigma \oplus T_x^\perp \Sigma.
\end{equation}
Then
\begin{equation}\label{eq:factorization DPhi^t_u}
D \Phi_t^u|_{(x,y)} = E_t \circ L_t^u.
\end{equation}
The following proposition gives the basic decomposition of $L_t^u$ used in this paper.

\begin{proposition}
\label{prop:gradient-normal-decomposition}
\begin{enumerate}
    \item With respect to the splitting in \eqref{eq:splitting}, $L_t^u$ has the block form
    \begin{equation}\label{eq:block-gradient-normal-decomposition}
        L_t^u = \begin{pmatrix}
        B_t^u & 0 \\ C_t^u & t I_m
    \end{pmatrix},
    \end{equation}
    where $B_t^u : T_x\Sigma \to T_x\Sigma$ is a symmetric and positive semidefinite linear map.
    Consequently,
    \begin{equation}
    \label{eq:gradient-normal-det-factorization}
        \left|\det D\Phi_t^u(x,y)\right|
        =
        t^m\det B_t^u
        \left|\det D(\exp_x)_{tv}\right|.
    \end{equation}
    \item Moreover, if $a,b \in T_x \Sigma$, then
    \begin{equation}
    \label{eq:gradient-normal-B-index-form}
    \left\langle B_t^ua,b\right\rangle
    =
     tI^t(Y_a,Y_b)
    +t\nabla_\Sigma^2u(a,b)
    -t\langle\mathrm{II}(a,b),y\rangle,
    \end{equation}
    where $Y_a$ is the unique Jacobi field along the geodesic $\gamma(s):=\exp_x(sv)$, $0 \leq s \leq t$, satisfying
    $$Y_a(0) = a, \quad Y_a(t) = 0,$$
    and
    \[
    I^t(U,V)
    :=
    \int_0^t
    \left(
    \langle\overline\nabla_sU,\overline\nabla_sV\rangle
    -
    \left\langle
    \overline R(\gamma',U)V,\gamma'
    \right\rangle
    \right)ds
    \]
    is the index form along the geodesic $\gamma|_{[0,t]}$.
    Equivalently,
    \begin{equation}
    \label{eq:gradient-normal-B-Hessian}
        \left\langle B_t^u a,b\right\rangle
        =
        \nabla_\Sigma^2F_t^u(x)(a,b),
        \qquad a,b\in T_x\Sigma,
    \end{equation}
    where
    \[
        F_t^u(p)=tu(p)+\frac12d(p,q_t)^2.
    \]
    \end{enumerate}

\end{proposition}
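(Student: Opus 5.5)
Write $J(s):=D(\exp_x)_{sv}$. The plan is to compute $D\Phi_t^u|_{(x,y)}$ through variations of geodesics. The block form then follows from reading off the components of Jacobi fields at $s=0$, and the formula for $B_t^u$ follows from the first and second variation of $F_t^u$.

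\textbf{Normal directions.} Take $w\in T_x^\perp\Sigma$ and the curve $\epsilon\mapsto (x,y+\epsilon w)$, which is horizontal-free: its base point stays at $x$. Then $\Phi_t^u(x,y+\epsilon w)=\exp_x(t(v+\epsilon w))$, so
\[
D\Phi_t^u(0,w)=E_t(tw).
\]
Hence $L_t^u(0,w)=tw$, which gives the right column $(0,tI_m)^T$.

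\textbf{Tangential directions.} Take $a\in T_x\Sigma$ and a curve $x(\epsilon)$ in $\Sigma$ with $x'(0)=a$. Let $y(\epsilon)$ be the normal-parallel section along it, so the curve is horizontal for the connection metric. The geodesic variation
\[
\gamma_\epsilon(s)=\exp_{x(\epsilon)}\bigl(s\,v(\epsilon)\bigr),\qquad v(\epsilon)=\nabla^\Sigma u(x(\epsilon))+y(\epsilon),
\]
has variation field a Jacobi field $Z_a$ with
\[
Z_a(0)=a,\qquad \overline\nabla_sZ_a(0)=\overline\nabla_a v .
\]
Here $\overline\nabla_a v=\nabla^2_\Sigma u(a,\cdot)^\sharp+\mathrm{II}(a,\nabla^\Sigma u)-A_y(a)$, using $\nabla^\perp_a y=0$. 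Its value at $s=t$ is $D\Phi_t^u(a,0)$.

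To pull back by $E_t^{-1}$, I will write $Z_a=Y_a+\widetilde Z$, where $Y_a$ is the Jacobi field with $Y_a(0)=a$ and $Y_a(t)=0$ (it exists since $q_t$ is not conjugate, by Lemma~\ref{lem:general-contact-propagation}). Then $\widetilde Z(0)=0$, so $\widetilde Z(s)=J(s)\,(s\,\xi)$ with
\[
\xi=\overline\nabla_sZ_a(0)-\overline\nabla_sY_a(0).
\]
It follows that $L_t^u(a,0)=t\,\xi$.

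\textbf{The $B$ block.} Its tangential part is
\[
\langle B_t^ua,b\rangle=t\langle \overline\nabla_a v-\overline\nabla_sY_a(0),b\rangle
=t\nabla_\Sigma^2u(a,b)-t\langle \mathrm{II}(a,b),y\rangle-t\langle\overline\nabla_sY_a(0),b\rangle .
\]
Integrating by parts with the Jacobi equation and $Y_b(t)=0$ gives
\[
I^t(Y_a,Y_b)=-\langle \overline\nabla_sY_a(0),Y_b(0)\rangle,
\]
which is exactly \eqref{eq:gradient-normal-B-index-form}. Symmetry of $B_t^u$ follows from the symmetry of $I^t$, $\nabla^2_\Sigma u$ and $\mathrm{II}$.

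Since $L_t^u$ is then block lower triangular, $\det L_t^u=t^m\det B_t^u$, and \eqref{eq:gradient-normal-det-factorization} follows from \eqref{eq:factorization DPhi^t_u}. The orthonormal identification makes the determinants of $D\Phi$ and $L$ agree in absolute value with $\det E_t$.

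\textbf{Hessian identity and positivity.} Near $x$, the function $\tfrac12 d(\cdot,q_t)^2$ is smooth on $M$, since $q_t$ is not in the cut locus of $x$.

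\begin{itemize}
\item Its ambient gradient at $x$ is $-tv$.
\item Its ambient Hessian at $x$ is $-\langle \overline\nabla_s Y_a(0),b\rangle$ multiplied by the length normalization; with the parametrization on $[0,t]$ by speed $|v|$, this is $tI^t(Y_a,Y_b)$.
\end{itemize}

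The intrinsic Hessian on $\Sigma$ adds $\langle \mathrm{II}(a,b),\text{gradient}\rangle=-t\langle\mathrm{II}(a,b),v\rangle=-t\langle\mathrm{II}(a,b),y\rangle$, because $\nabla^\Sigma u$ is tangential. Adding $t\nabla^2_\Sigma u$ yields \eqref{eq:gradient-normal-B-Hessian}. Lemma~\ref{lem:general-contact-propagation} says $x$ is a global minimum of $F_t^u$, so this Hessian is positive semidefinite.

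\textbf{Main obstacle.} The step needing the most care is the normalization in the Hessian of $\tfrac12d^2$. The distance-squared Hessian equals $\langle\overline\nabla_sY(t')\ldots\rangle$ for the unit-time geodesic, and I need to check the factor $t$ against the index form over $[0,t]$ with speed $|v|$. I also need to check sign conventions in $\overline\nabla_a v$, namely the normal versus tangential parts of $\overline\nabla_a y$. I would verify both in Euclidean space, where
\[
Y_a(s)=(1-s/t)a,\qquad tI^t=|a|^2,
\]
matching the Euclidean Hessian of $\tfrac12|p-q|^2$.
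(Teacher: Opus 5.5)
Your proposal is correct and follows essentially the same route as the paper. The vertical block comes from $\exp_x(t(v+\epsilon w))$. The horizontal block is identified as $t\bigl(\overline\nabla_a v-\overline\nabla_sY_a(0)\bigr)$ and paired against $Y_b$ via integration by parts. The Hessian identity comes from the fixed-endpoint Hessian of $\tfrac12 d_{q_t}^2$ together with the restriction formula, and positivity from Lemma~\ref{lem:general-contact-propagation}.

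The only difference is cosmetic. The paper obtains $Y_a$ as the variation field of $\exp_{x(\tau)}\bigl(\tfrac{s}{t}W(x(\tau))\bigr)$, where $W$ is an implicit-function vector field with $\exp_p W(p)=q_t$. You instead subtract $Y_a$ directly from the variation Jacobi field $Z_a$ and use $\widetilde Z(s)=D(\exp_x)_{sv}(s\xi)$, which is slightly more economical. Your normalization $\overline\nabla^2\bigl(\tfrac12 d_{q_t}^2\bigr)(a,b)=tI^t(Y_a,Y_b)$ is also right.
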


\begin{proof}
    We first determine the action of $L_t$ on the vertical directions.
    For a vertical variation $y_\tau=y+\tau\eta$, with
    $\eta\in T_x^\perp\Sigma$, we have
    \[
            \Phi_t^u(x,y_\tau)
            =
            \exp_x(tv+\tau t\eta).
    \]
    Differentiating at $\tau=0$ gives
    \[
        D\Phi_t^u(x,y)(0,\eta)
        = D(\exp_{x})_{tv}(t\eta)=
        E_t(t\eta),
    \]
    and therefore
    \begin{equation}\label{eq:Lt-vertical}
        L_t^u(0,\eta)=t\eta.
    \end{equation}
    For a horizontal vector $a \in T_x\Sigma$, write
    $$L_t^u(a,0) = B_t^ua + C_t^ua,$$
    where $B_t^ua \in T_x\Sigma$ and $C_t^u a \in T_x^\perp \Sigma$.
    Together with \eqref{eq:Lt-vertical}, this gives the block form \eqref{eq:block-gradient-normal-decomposition}.
    It follows that
    $$\det L_t^u = t^m \det B_t^u.$$
    Since $D\Phi_t^u = E_t \circ L_t^u$,
    we obtain \eqref{eq:gradient-normal-det-factorization}.

    It remains to identify $B_t^u$, and in particular to show that it is symmetric and positive semidefinite.
    Because $E_t = D(\exp_x)_{tv}$ is nonsingular, the implicit function theorem applied to
    $$\mathrm{Exp}: TM \longrightarrow M, \quad \mathrm{Exp}(p,w) : = \exp_p (w)$$
    gives a smooth vector field $W$, defined in a neighborhood of $x$ in $M$, such that for all $p$ in this neighborhood,
    $$W(x) = tv, \quad \exp_p(W(p)) = q_t.$$
    Let $Z$ be a local normal vector field along $\Sigma$ satisfying
    $$Z(x) = y, \quad \nabla^\perp Z(x) = 0.$$
    Let $a \in T_x\Sigma$. Choose a smooth curve $x(\tau) \subset \Sigma$ with $x(0) = x$ and $x'(0) = a$.
    Consider the curves in $TM$
    \[
        \mathcal A(\tau)
        :=
        \left(
        x(\tau),
        t\bigl(\nabla^\Sigma u(x(\tau))+Z(x(\tau))\bigr)
        \right), \qquad  \mathcal C(\tau):=(x(\tau),W(x(\tau))).
    \]
    The two curves satisfy
    $ \mathcal A (0) =  \mathcal C(0) = (x,tv)$,
    while
    $\mathrm{Exp}( \mathcal C(\tau)) = q_t.$
    Hence, in the connection splitting of $T_{(x,y)}(TM)\simeq T_x M \oplus T_x M$,
    \[
        \mathcal A'(0)-\mathcal C'(0)
        =
        \left(
        0,
        t\overline\nabla_a(\nabla^\Sigma u+Z)-\overline\nabla_aW
        \right).
    \]
    Since $D \mathrm{Exp} (C'(0)) = 0$, we obtain
    \begin{align*}
        D\Phi_t(x,y) (a,0) & = D \mathrm{Exp}_{(x,tv)}(A'(0)) \\
        &= D \mathrm{Exp}_{(x, tv)} (A'(0) - C'(0))\\
        &= E_t (t\overline\nabla_a(\nabla^\Sigma u+Z)-\overline\nabla_aW.).
    \end{align*}
    Applying $E_t^{-1}$ gives
    \[
            L_t^u(a,0)
            =
            t\overline\nabla_a(\nabla^\Sigma u+Z)-\overline\nabla_aW.
    \]
    Taking the tangential component, we find
    \begin{equation}\label{eq:Bt-preliminary}
        B_t^ua = (t\overline\nabla_a(\nabla^\Sigma u+Z)-\overline\nabla_aW)^T.
    \end{equation}
    We now express the second term in \eqref{eq:Bt-preliminary} through a fixed-endpoint Jacobi field.
    Define
    \[
        \mathcal F(\tau,s)
        :=
        \exp_{x(\tau)}\left(\frac{s}{t}W(x(\tau))\right),
        \qquad 0\le s\le t,
    \]
    and let
    \[
        Y_a(s)
        :=
        \left.\frac{\partial\mathcal F}{\partial\tau}\right|_{\tau=0}.
    \]
    Then $Y_a$ is a Jacobi field along $\gamma(s)$ satisfying
    \[
        Y_a(0)=a,
        \qquad
        Y_a(t)=0,
    \]
    and the torsion-free property gives
    \[
        \overline\nabla_sY_a(0)
        =
        \frac1t\overline\nabla_aW.
    \]
    Therefore
    $$B_t^u a = t(\overline\nabla_a(\nabla^\Sigma u+Z)-\overline\nabla_sY_a)^T.$$
    Pairing this identity with $b \in T_x \Sigma$, and using
    \[
        \langle\overline\nabla_aZ,b\rangle
        =
        -\langle\mathrm{II}(a,b),y\rangle,
    \]
    we obtain
    \begin{equation}\label{eq:Bt-boundary}
        \langle B_t^u a , b\rangle  = t\nabla_\Sigma^2u(a,b)
    -t\langle\mathrm{II}(a,b),y\rangle - t \langle \overline\nabla_sY_a(0) , b\rangle.
    \end{equation}
    Because $Y_a$ and $Y_b$ are Jacobi fields and $Y_b(t) = 0$, integration by parts gives
    $$I^t (Y_a, Y_b) = \langle \overline\nabla_sY_a, Y_b \rangle |_{0}^t = - \langle \overline\nabla_sY_a (0) , b \rangle.$$
    By plugging this into \eqref{eq:Bt-boundary}, we obtain \eqref{eq:gradient-normal-B-index-form}.

    We now verify \eqref{eq:gradient-normal-B-Hessian}.  Since $q_t$ is not in
the cut locus of $x$, the squared-distance function $d_{q_t}^2$ to $q_t$ is smooth near
$x$.  The fixed-endpoint Hessian identity (\cite[Eqn 6.6.9]{Jost2017}) gives
\[
        \frac12\overline\nabla^2d_{q_t}^2(a,b)
        =
        tI^t(Y_a,Y_b).
\]
Moreover,
\[
        \overline\nabla\left(\frac12d_{q_t}^2\right)(x)
        =
        -tv
        =
        -t\bigl(\nabla^\Sigma u(x)+y\bigr).
\]
The restriction formula for Hessians therefore yields
\begin{align*}
\nabla_\Sigma^2F_t^u(a,b)
&=
 t\nabla_\Sigma^2u(a,b)
 +\frac12\overline\nabla^2d_{q_t}^2(a,b)
 +\left\langle
 \overline\nabla\left(\frac12d_{q_t}^2\right),
 \mathrm{II}(a,b)
 \right\rangle
\\
&=
 t\nabla_\Sigma^2u(a,b)
 +tI^t(Y_a,Y_b)
 -t\langle\mathrm{II}(a,b),y\rangle,
\end{align*}
because $\nabla^\Sigma u(x)$ is tangent and $\mathrm{II}(a,b)$ is normal.
This is \eqref{eq:gradient-normal-B-Hessian}.  By
Lemma~\ref{lem:general-contact-propagation}, $x$ is a minimum point of
$F_t^u$, and hence $B_t^u\ge0$.  Symmetry of $B_t^u$ also follows from the above expression because both the index form and the second fundamental form are symmetric.
\end{proof}

In view of \eqref{eq:gradient-normal-det-factorization}, we want to estimate $\det B_t^u$ and $\det D\left(\exp _x\right)_{t v}$. The estimate of $\det B_t^u$ is given by the following proposition.
\begin{proposition}
\label{prop:gradient-normal-main}
Let
\[
        c_u(x,y)
        :=
        \frac1n
        \bigl(
        \Delta_\Sigma u(x)-\langle H(x),y\rangle
        \bigr).
\]
Let $e_1,\ldots,e_n$ be an orthonormal basis of $T_x\Sigma$, let
$E_i(s)$ be its parallel transport along $\gamma(s)=\exp_x(sv)$, and define
\[
        \mathcal R_n^u(x,y,s)
        :=
        \sum_{i=1}^n
        \left\langle
        \overline R(\gamma'(s),E_i(s))E_i(s),
        \gamma'(s)
        \right\rangle.
\]

Then, for {$0<t<r$},
\begin{align}\label{eq:det Bt}
        0\le\det B_t^u
        \le
        \left[1+t c_u(x, y)-\frac{t}{n} \int_0^t\left(1-\frac{s}{t}\right)^2 \mathcal{R}_n^u(x, y, s) d s\right]^n.
\end{align}
\end{proposition}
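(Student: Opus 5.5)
Since $B_t^u$ is symmetric and positive semidefinite by Proposition~\ref{prop:gradient-normal-decomposition}, the lower bound $\det B_t^u\ge0$ is immediate. For the upper bound we use the arithmetic--geometric mean inequality on the nonnegative eigenvalues of $B_t^u$:
\[
\det B_t^u\le\Bigl(\tfrac1n\operatorname{tr}B_t^u\Bigr)^n .
\]
It therefore suffices to prove
\[
\tfrac1n\operatorname{tr}B_t^u\le 1+tc_u(x,y)-\tfrac tn\int_0^t\bigl(1-\tfrac st\bigr)^2\mathcal R_n^u(x,y,s)\,ds .
\]
Note that when the right-hand side is nonnegative this gives the bound directly; since the trace is nonnegative, the right-hand side is automatically nonnegative under this inequality, so there is no sign issue.

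To compute the trace, take the orthonormal basis $e_1,\dots,e_n$ and apply \eqref{eq:gradient-normal-B-index-form} with $a=b=e_i$. Summing over $i$, the Hessian term gives $t\Delta_\Sigma u(x)$ and the second fundamental form term gives $-t\langle H(x),y\rangle$, so
\[
\operatorname{tr}B_t^u=t\sum_i I^t(Y_{e_i},Y_{e_i})+n\,t\,c_u(x,y).
\]
The remaining task is to bound $\sum_i I^t(Y_{e_i},Y_{e_i})$ by $\frac nt-\int_0^t(1-\frac st)^2\mathcal R_n^u\,ds$.

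For this we use the index lemma. Since $q_t$ is not conjugate to $x$ along $\gamma|_{[0,t]}$ (Lemma~\ref{lem:general-contact-propagation}: the geodesic minimizes on $[0,r]$ and $t<r$), the Jacobi field $Y_a$ minimizes $I^t$ among all vector fields $V$ along $\gamma$ with $V(0)=a$ and $V(t)=0$. As a competitor we take
\[
V_i(s):=\bigl(1-\tfrac st\bigr)E_i(s).
\]
Then $\overline\nabla_sV_i=-\frac1tE_i$, so
\[
I^t(V_i,V_i)=\frac1t-\int_0^t\bigl(1-\tfrac st\bigr)^2\langle\overline R(\gamma',E_i)E_i,\gamma'\rangle\,ds .
\]
Summing over $i$ gives exactly the needed bound, and multiplying by $t$ and dividing by $n$ produces the stated inequality. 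Note that $\gamma'$ need not be orthogonal to the $E_i$, but this does not matter because $\mathcal R_n^u$ is defined by exactly this sum.

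The main point needing care is justifying the index-form minimization, which requires the absence of conjugate points on $[0,t]$. This follows from minimality of the geodesic on the longer interval $[0,r]$ with $t<r$. The minimization then follows from the standard index lemma applied to fixed-endpoint variations: the difference $V_i-Y_{e_i}$ vanishes at both endpoints, and $I^t$ is positive definite on such fields.
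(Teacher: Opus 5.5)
Your proposal is correct and follows the paper's proof essentially step for step. Both arguments trace the index-form identity \eqref{eq:gradient-normal-B-index-form}, compare each $Y_{e_i}$ with $(1-\frac{s}{t})E_i(s)$ via the index lemma (justified by the absence of conjugate points coming from minimality on $[0,r]$), and conclude with the arithmetic--geometric mean inequality using $B_t^u\ge0$. Your decomposition $V_i = Y_{e_i} + W$, with $W$ vanishing at both endpoints, is just a self-contained justification of that index-lemma step.
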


\begin{proof}
{Fix $0<t<r$.}
For each $i$, let $Y_i$ be the Jacobi field along $\gamma$ satisfying
$$Y_i(0) = e_i, \qquad Y_i(t) =0.$$
Taking the trace in
\eqref{eq:gradient-normal-B-index-form} gives
\begin{align*}
        \operatorname{tr}B_t^u
        =
        t\sum_{i=1}^n I^t(Y_i,Y_i)
        +t\Delta_\Sigma u(x)
        -t\langle H(x),y\rangle.
\end{align*}
Consider the comparison vector fields
\[
        V_i(s):=\left(1-\frac{s}{t}\right)E_i(s).
\]
The two vector fields have the same endpoint values.  Since $\gamma|_{(0,t]}$
has no conjugate point to $x$, the index lemma implies
\[
        I^t(Y_i,Y_i)\le I^t(V_i,V_i).
\]
As $E_i$ is parallel,
\[
        \overline\nabla_sV_i=-\frac1tE_i,
\]
and therefore
\[
I^t(V_i,V_i)
=
\frac1t
-
\int_0^t
\left(1-\frac{s}{t}\right)^2
\left\langle
\overline R(\gamma'(s),E_i(s))E_i(s),
\gamma'(s)
\right\rangle ds.
\]
Summing over $i$ gives
\begin{equation*}
\frac1n\operatorname{tr}B_t^u
\le
1+t c_u(x,y)
-
\frac{t}{n}
\int_0^t
\left(1-\frac{s}{t}\right)^2
\mathcal R_n^u(x,y,s)\,ds.
\end{equation*}
Since $B_t^u$ is positive semidefinite, the arithmetic--geometric mean inequality yields the desired inequality \eqref{eq:det Bt}.
\end{proof}

We next estimate the ambient exponential factor $|\det D (\exp_x)_{tv}|$.

\begin{lemma}
\label{lem:gradient-normal-signed-ambient}
Let $\gamma(s):=\exp_x(sv)$ for $0\leq s\leq r$,
and define
\begin{equation*}
        \mathcal B_{\overline{\operatorname{Ric}}}^{u}(x,y,t)
        :=
        \int_0^t
        s\left(1-\frac{s}{t}\right)
        \overline{\operatorname{Ric}}
        \bigl(\gamma'(s),\gamma'(s)\bigr)\,ds.
\end{equation*}
Then, for every $0<t\le r$,
\begin{equation}
\label{eq:gradient-normal-signed-ambient}
        \left|\det D(\exp_x)_{tv}\right|
        \le
        \exp\left(
        -\mathcal B_{\overline{\operatorname{Ric}}}^{u}(x,y,t)
        \right).
\end{equation}
\end{lemma}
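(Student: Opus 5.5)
The plan is to reduce the estimate to a scalar Riccati-type inequality along $\gamma$, using the classical Jacobi-field description of $D(\exp_x)_{tv}$.

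\textbf{Setup.} Let $\{e_j\}_{j=1}^N$ be an orthonormal basis of $T_xM$. Let $J_j$ be the Jacobi fields along $\gamma$ with $J_j(0)=0$ and $\overline\nabla_sJ_j(0)=e_j$. Then $D(\exp_x)_{sv}(se_j)=J_j(s)$, so
\[
\left|\det D(\exp_x)_{sv}\right|=s^{-N}\,\mathcal J(s),
\qquad
\mathcal J(s):=\left|\det\bigl(J_1(s),\dots,J_N(s)\bigr)\right|,
\]
where the determinant is computed in a parallel orthonormal frame. By Lemma~\ref{lem:general-contact-propagation}, $\gamma|_{[0,r]}$ is minimizing. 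Hence there are no conjugate points on $(0,r)$, and $\mathcal J>0$ there. The endpoint $t=r$ is handled by continuity, since both sides of \eqref{eq:gradient-normal-signed-ambient} are continuous in $t$.

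\textbf{Riccati step.} Write $\mathcal J=s^N e^{-\phi(s)}$, so that $|\det D(\exp_x)_{sv}|=e^{-\phi(s)}$ and $\phi(0)=0$. The Jacobi tensor $A(s)$ with columns $J_j$ gives $U:=A'A^{-1}$, which is symmetric and satisfies
\[
U'+U^2+R_\gamma=0,\qquad R_\gamma:=\overline R(\cdot,\gamma')\gamma'.
\]
Moreover $(\log\mathcal J)'=\operatorname{tr}U$ and $\operatorname{tr}R_\gamma=\overline{\operatorname{Ric}}(\gamma',\gamma')$. Set $f:=\operatorname{tr}U-N/s=-\phi'$. Near $s=0$ one has $U=\tfrac1s I+O(s)$, so $f(s)\to0$ and $\phi'(0)=0$.

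\textbf{Key inequality.} Writing $U=\tfrac1sI+\hat U$, we get $\operatorname{tr}U^2=\tfrac{N}{s^2}+\tfrac{2}{s}f+|\hat U|^2\ge \tfrac N{s^2}+\tfrac2sf$. Hence
\[
f'+\frac2sf\le-\overline{\operatorname{Ric}}(\gamma',\gamma'),
\qquad\text{equivalently}\qquad
(s^2f)'\le -s^2\,\overline{\operatorname{Ric}}(\gamma',\gamma').
\]
This is the point where no curvature sign assumption is used: we drop only the square $|\hat U|^2$, not any curvature term.

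\textbf{Integration.} Integrating from $0$, where $s^2f\to0$, gives
\[
f(s)\le -s^{-2}\int_0^s\tau^2\rho(\tau)\,d\tau,\qquad \rho:=\overline{\operatorname{Ric}}(\gamma',\gamma').
\]
Therefore
\[
\phi(t)=-\int_0^t f\,ds\ge\int_0^t s^{-2}\int_0^s\tau^2\rho(\tau)\,d\tau\,ds .
\]
Swapping the order of integration,
\[
\int_\tau^t s^{-2}\,ds=\frac1\tau-\frac1t,
\qquad\text{so}\qquad
\tau^2\Bigl(\frac1\tau-\frac1t\Bigr)=\tau\Bigl(1-\frac\tau t\Bigr).
\]
This yields $\phi(t)\ge\mathcal B^u_{\overline{\operatorname{Ric}}}(x,y,t)$, and exponentiating gives \eqref{eq:gradient-normal-signed-ambient}.

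\textbf{Main obstacle.} The delicate part is the behaviour at $s=0$. One must justify $U=\tfrac1sI+O(s)$, which makes $s^2f\to0$ and keeps the Fubini interchange legitimate. Both follow from the Taylor expansion $A(s)=sI-\tfrac{s^3}{6}R_\gamma(0)+O(s^4)$, so $f=O(s)$ is integrable.
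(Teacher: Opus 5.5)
Your proof is correct, but you obtain the key bound on the logarithmic derivative of the Jacobian by a different mechanism from the paper.

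\textbf{The paper's route.} The paper works with the $(N-1)$-dimensional Jacobi tensor $\mathcal J_v$ on $v^\perp$, treating $v=0$ separately. It writes $\frac{d}{ds}\log\det\mathcal J_v(s)=\sum_i I^s\bigl(Y_i^{(s)},Y_i^{(s)}\bigr)$ as in \eqref{eq:gradient-normal-log-J-index}. It then applies the index lemma with the comparison fields $V_i^{(s)}(\tau)=\frac{\tau}{s}E_i(\tau)$.

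\textbf{Your route.} You trace the matrix Riccati equation for the full $N$-dimensional Jacobi tensor. You then discard $|\hat U|^2$, where $\hat U=U-\frac1sI$ is the deviation from the flat model.

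\textbf{They give the same inequality.} Write $V_i^{(s)}-Y_i^{(s)}=A\eta_i$ and use $Y'=UY$. One finds $I^s\bigl(V_i^{(s)}-Y_i^{(s)},V_i^{(s)}-Y_i^{(s)}\bigr)=\int_0^s|A\eta_i'|^2\,d\tau$, with $A\eta_i'=-\frac{\tau}{s}\hat U E_i$. So the total amount you drop from $\operatorname{tr}U(s)$, namely $s^{-2}\int_0^s\tau^2|\hat U(\tau)|^2\,d\tau$, is exactly the index-lemma defect that the paper drops.

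\textbf{What your route buys.} It is more self-contained: it needs neither the index lemma nor the first-variation formula. It needs no case distinction at $v=0$, since you never split off the radial direction, on which $\hat U$ vanishes anyway. It is essentially the scalar Riccati mechanism the paper itself uses in Subsection~\ref{subsec:direct-riccati}; compare the bound $q_m\le\frac1t-E_m$ there. Its equality case is also transparent: $\hat U\equiv0$ forces $U=\frac1sI$, and then $R_\gamma\equiv0$ by the Riccati equation.

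\textbf{What the paper's route buys.} It parallels the tangential estimate of Proposition~\ref{prop:gradient-normal-main}, which uses the same index-lemma comparison, there with $(1-\frac st)E_i$. So both factors in the Jacobian factorization are handled by one technique. Its equality statement $Y_i^{(s)}=V_i^{(s)}$ is also the form quoted directly in the proof of Theorem~\ref{thm:metric-rigidity}.
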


\begin{proof}
If $v=0$, then $D(\exp_x)_0=I$, the curve $\gamma$ is constant, and both
sides of \eqref{eq:gradient-normal-signed-ambient} equal $1$.  We may
therefore assume that
\[
        a:=|v|>0.
\]
By continuity, it suffices to assume $t<r$.
By Lemma~\ref{lem:general-contact-propagation}, the geodesic $\gamma$
remains minimizing on $[0,r]$.  In particular, for every $0<t<r$ there is
no point conjugate to $x$ along $\gamma|_{(0,t]}$.

Choose a parallel orthonormal frame
\[
        E_1(s),\ldots,E_{N-1}(s)
\]
along $\gamma$, orthogonal to $\gamma'(s)$.  Let
\[
        \mathcal J_v(s):v^\perp\longrightarrow\gamma'(s)^\perp
\]
be the Jacobi tensor defined by
\[
        \mathcal J_v(s)w=X_w(s),
\]
where $X_w$ is the Jacobi field along $\gamma$ satisfying
\[
        X_w(0)=0,
        \qquad
        \overline\nabla_sX_w(0)=w.
\]
Thus
\[
        \mathcal J_v(0)=0,
        \qquad
        \mathcal J_v'(0)=I,
\]
and $\mathcal J_v(s)$ is nonsingular for $0<s\le t$.
We compute determinants using parallel oriented orthonormal frames.  Since
\[
        \lim_{s\to0^+}
        \frac{\det\mathcal J_v(s)}{s^{N-1}}=1,
\]
the determinant is positive for small $s>0$.  It cannot change sign
without vanishing, and hence
\[
        \det\mathcal J_v(s)>0
        \qquad\text{for }0<s\le t.
\]

We first relate this tensor to the differential of the exponential map.
Put $\omega:=v/a$.  The radial vector $\omega$ is mapped by
$D(\exp_x)_{sv}$ to $\gamma'(s)/a$, which has unit length.  If
$w\in v^\perp$, then the variation
\[
        (\varepsilon,\tau)
        \longmapsto
        \exp_x\left(\tau\left(v+\frac{\varepsilon}{s}w\right)\right)
\]
shows that
\[
        D(\exp_x)_{sv}(w)
        =
        \frac1s\mathcal J_v(s)w.
\]
Consequently,
\begin{equation}
\label{eq:gradient-normal-point-exp-density}
        \left|\det D(\exp_x)_{sv}\right|
        =
        \frac{\det\mathcal J_v(s)}{s^{N-1}}.
\end{equation}
For each fixed $s\in(0,t]$, define the Jacobi fields
\[
        Y_i^{(s)}(\tau)
        :=
        \mathcal J_v(\tau)\mathcal J_v(s)^{-1}E_i(s),
        \qquad 0\le\tau\le s.
\]
Then
\[
        Y_i^{(s)}(0)=0,
        \qquad
        Y_i^{(s)}(s)=E_i(s),
\]
and the standard first-variation identity for the Jacobi tensor gives
\begin{equation}
\label{eq:gradient-normal-log-J-index}
        \frac{d}{ds}\log\det\mathcal J_v(s)
        =
        \sum_{i=1}^{N-1}
        I^s\bigl(Y_i^{(s)},Y_i^{(s)}\bigr).
\end{equation}
Compare these fields with
\begin{equation}\label{eq:comparison-field}
V_i^{(s)}(\tau) := \frac{\tau}{s}E_i(\tau).
\end{equation}
They have the same endpoint values as $Y_i^{(s)}$. Since there is no conjugate point on
$\gamma|_{(0,s]}$, the index lemma and
\eqref{eq:gradient-normal-log-J-index} imply
\[
        \frac{d}{ds}\log\det\mathcal J_v(s)
        \le
        \sum_{i=1}^{N-1}I^s\bigl(V_i^{(s)},V_i^{(s)}\bigr).
\]
We have
\[
\begin{aligned}
I^s\bigl(V_i^{(s)},V_i^{(s)}\bigr)
={}&
\int_0^s
\left[
\frac1{s^2}
-
\left(\frac{\tau}{s}\right)^2
\left\langle
\overline R\bigl(\gamma'(\tau),E_i(\tau)\bigr)E_i(\tau),
\gamma'(\tau)
\right\rangle
\right]d\tau.
\end{aligned}
\]
Summing over $i$ yields
\[
        \frac{d}{ds}\log\det\mathcal J_v(s)
        \le
        \frac{N-1}{s}
        -
        \int_0^s
        \left(\frac{\tau}{s}\right)^2
        \overline{\operatorname{Ric}}
        \bigl(\gamma'(\tau),\gamma'(\tau)\bigr)
        \,d\tau.
\]
Equivalently,
\[
        \frac{d}{ds}
        \log\left(
        \frac{\det\mathcal J_v(s)}{s^{N-1}}
        \right)
        \le
        -
        \int_0^s
        \left(\frac{\tau}{s}\right)^2
        \overline{\operatorname{Ric}}
        \bigl(\gamma'(\tau),\gamma'(\tau)\bigr)
        \,d\tau.
\]
Integrating from $0$ to $t$, using $\lim _{s \rightarrow 0^{+}} \frac{\det \mathcal{J}_v(s)}{s^{N-1}}=1$,
and changing the order of integration, we obtain
\begin{align*}
\log\left(
        \frac{\det\mathcal J_v(t)}{t^{N-1}}
        \right)
&\le
-
\int_0^t
\int_0^s
\left(\frac{\tau}{s}\right)^2
\overline{\operatorname{Ric}}
\bigl(\gamma'(\tau),\gamma'(\tau)\bigr)
\,d\tau\,ds
\\
&=
-
\int_0^t
\tau\left(1-\frac{\tau}{t}\right)
\overline{\operatorname{Ric}}
\bigl(\gamma'(\tau),\gamma'(\tau)\bigr)
\,d\tau.
\end{align*}
Together with \eqref{eq:gradient-normal-point-exp-density}, this proves
\eqref{eq:gradient-normal-signed-ambient} for $0<t<r$. By continuity, it also holds at $t=r$.

\end{proof}

\begin{theorem}
\label{thm:gradient-normal-signed-main}
Let $(x,y)\in A_r^u$ with $x\in\Sigma\setminus\partial\Sigma$, set
\[
        v:=\nabla^\Sigma u(x)+y,
        \qquad
        \gamma(s):=\exp_x(sv),
\]
and let $c_u$ and $\mathcal R_n^u$ be as in
Proposition~\ref{prop:gradient-normal-main}.  Then, for every $0<t\le r$,
\begin{align*}
\left|\det D\Phi_t^u(x,y)\right|
\le&
 t^m
 \exp\left(
 -\mathcal B_{\overline{\operatorname{Ric}}}^{u}(x,y,t)
 \right)
\notag\\
&\times
\left[
1+t c_u(x,y)
-
\frac{t}{n}
\int_0^t
\left(1-\frac{s}{t}\right)^2
\mathcal R_n^u(x,y,s)\,ds
\right]^n.
\end{align*}
\end{theorem}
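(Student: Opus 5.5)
The estimate follows by combining the factorization \eqref{eq:gradient-normal-det-factorization} of Proposition~\ref{prop:gradient-normal-decomposition} with the two separate bounds already established. Fix $(x,y)\in A_r^u$ with $x$ in the interior of $\Sigma$, and first take $0<t<r$. By Lemma~\ref{lem:general-contact-propagation}, $q_t=\exp_x(tv)$ is not in the cut locus of $x$, so $E_t=D(\exp_x)_{tv}$ is nonsingular. The factorization $D\Phi_t^u=E_t\circ L_t^u$ is therefore available and gives
\[
\left|\det D\Phi_t^u(x,y)\right| = t^m\det B_t^u\left|\det D(\exp_x)_{tv}\right|.
\]
Here $\det B_t^u\ge 0$, because $B_t^u$ is symmetric and positive semidefinite by Proposition~\ref{prop:gradient-normal-decomposition}.

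We then bound each factor.
\begin{itemize}
\item[(i)] Proposition~\ref{prop:gradient-normal-main} gives $0\le\det B_t^u\le[\,1+tc_u-\frac tn\int_0^t(1-\frac st)^2\mathcal R_n^u\,ds\,]^n$.
\item[(ii)] Lemma~\ref{lem:gradient-normal-signed-ambient} gives $|\det D(\exp_x)_{tv}|\le \exp(-\mathcal B^u_{\overline{\operatorname{Ric}}}(x,y,t))$.
\end{itemize}
Both factors are nonnegative, so the two inequalities can be multiplied. We keep the bracket exactly as in (i), without taking a positive part. This is legitimate because (i) asserts $\det B_t^u\le[\cdots]^n$ directly. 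When $n$ is even the right-hand side is automatically nonnegative. When $n$ is odd, (i) together with $\det B_t^u\ge0$ forces the bracket to be nonnegative, since the bound comes from AM--GM applied to $\tfrac1n\operatorname{tr}B_t^u\ge0$. In either case the product inequality holds and yields the stated estimate for $0<t<r$.

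The only delicate point is the endpoint $t=r$. Proposition~\ref{prop:gradient-normal-main} is stated only for $t<r$, since $q_r$ may be conjugate to $x$, in which case $E_r$ could be singular. We handle it by continuity. The left side $|\det D\Phi_t^u(x,y)|$ is continuous in $t$, since $\Phi_t^u$ is smooth jointly in $t$ and $(x,y)$. On the right side, $\mathcal B^u_{\overline{\operatorname{Ric}}}(x,y,t)$ and the bracket are continuous functions of $t\in(0,r]$, as integrals of continuous integrands with continuous weights. Letting $t\uparrow r$ in the inequality valid for $t<r$ therefore gives the case $t=r$, and no factorization at $t=r$ is needed. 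We expect no real obstacle: the proof is an assembly of the preceding results, and the only care needed is the nonnegativity bookkeeping when multiplying the bounds and the limiting argument at $t=r$.
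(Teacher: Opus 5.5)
Your proposal is correct and matches the paper's proof: for $0<t<r$, combine the factorization \eqref{eq:gradient-normal-det-factorization} with Proposition~\ref{prop:gradient-normal-main} and Lemma~\ref{lem:gradient-normal-signed-ambient}, then pass to $t=r$ by continuity. Your extra sign bookkeeping, which the paper leaves implicit, is sound and can be simplified: the bracket is nonnegative for every $n$, not just odd $n$, because it dominates $\frac1n\operatorname{tr}B_t^u\ge0$.
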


\begin{proof}
Combine Proposition~\ref{prop:gradient-normal-main} with
Lemma~\ref{lem:gradient-normal-signed-ambient}.
The estimate at $t=r$ follows by continuity of
$D\Phi_t^u(x,y)$ and the right-hand side.
\end{proof}

\subsection{Tube-volume comparison}

Assume in this subsection that $\Sigma$ has no boundary, and define the unit normal bundle
\[
S^\perp\Sigma
:=
\left\{(x,\omega):x\in\Sigma,\ \omega\in T_x^\perp\Sigma,\ |\omega|=1\right\}.
\]
For $(x,\omega)\in S^\perp\Sigma$, define
\[
        \gamma_{x,\omega}(s):=\exp_x(s\omega)
\]
and the normal cut time
\[
\tau_\Sigma(x,\omega)
:=
\sup\left\{
T\ge0:
\operatorname{dist}\bigl(\gamma_{x,\omega}(s),\Sigma\bigr)=s
\text{ for every }0\le s\le T
\right\}
\in[0,\infty].
\]
We write
\[
        \Phi_t:=\Phi_t^0,
        \qquad
        \Phi_t(x,z)=\exp_x(tz),
\]
and recall that
\[
\Phi_\Sigma:S^\perp\Sigma\times(0,\infty)\longrightarrow M,
\qquad
\Phi_\Sigma(x,\omega,t):=\exp_x(t\omega),
\]
is the normal polar map.  Its Jacobian with respect to
$d\operatorname{vol}_\Sigma\,d\omega\,dt$ is denoted by
$J_\Sigma(x,\omega,t)$.

Let $E_1(s),\ldots,E_n(s)$ be the parallel transports along
$\gamma_{x,\omega}$ of an orthonormal basis of $T_x\Sigma$, and define
\begin{equation}\label{eq:B Ric}
\mathcal B_{\overline{\operatorname{Ric}}}(x,\omega,t)
:=
\mathcal B^0_{\overline{\operatorname{Ric}}}(x,\omega,t)=
\int_0^t
s\left(1-\frac{s}{t}\right)
\overline{\operatorname{Ric}}
\bigl(\gamma_{x,\omega}'(s),\gamma_{x,\omega}'(s)\bigr)\,ds
\end{equation}
and
\begin{equation}\label{eq:R_n}
\mathcal R_n(x,\omega,s)
:=
\mathcal R_n^0(x,\omega,s)=
\sum_{i=1}^n
\left\langle
\overline R\bigl(\gamma_{x,\omega}'(s),E_i(s)\bigr)E_i(s),
\gamma_{x,\omega}'(s)
\right\rangle.
\end{equation}
Equivalently, $\mathcal{R}_n(x,\omega,s)=\overline{\operatorname{Ric}}_n\bigl(\gamma_{x,\omega}'(s),W_s\bigr)$, where $W_s$ is the parallel transport of $T_x\Sigma$ along $\gamma_{x,\omega}$.

\begin{proposition}
For every $(x,\omega)\in S^\perp\Sigma$ and every
$0<t<\tau_\Sigma(x,\omega)$, set
\[
        E_t:=D(\exp_x)_{t\omega},
        \qquad
        L_t:=E_t^{-1}\circ D\Phi_t|_{(x,\omega)}.
\]
Then, with respect to
$T_xM=T_x\Sigma\oplus T_x^\perp\Sigma$,
\[
        L_t=
        \begin{pmatrix}
        B_t&0\\
        C_t&tI_m
        \end{pmatrix},
\]
where $B_t=B_t^0$ is symmetric and positive definite.  In particular,
\begin{enumerate}
    \item
\begin{equation}
\label{eq:det-factorization}
\left|\det D\Phi_t(x,\omega)\right|
=
 t^m\det B_t\,
 \left|\det D(\exp_x)_{t\omega}\right|.
\end{equation}
\item
\begin{align}\label{eq:normal-tangential-factor}
\det B_t
\le
\left[
1-t\langle\sigma(x),\omega\rangle
-
\frac{t}{n}
\int_0^t
\left(1-\frac{s}{t}\right)^2
\mathcal R_n(x,\omega,s)\,ds
\right]^n.
\end{align}
\end{enumerate}
\end{proposition}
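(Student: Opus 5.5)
The plan is to reduce everything to the gradient--normal case with $u\equiv0$ and then upgrade semidefiniteness to definiteness. I would not verify the contact-set condition $(x,\omega)\in A_r^0$ directly. Instead I would redo the relevant parts of Proposition~\ref{prop:gradient-normal-decomposition} and Proposition~\ref{prop:gradient-normal-main} using only the normal cut-time hypothesis.

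\textbf{Step 1: minimality and non-degeneracy along the geodesic.} Fix $0<t<\tau_\Sigma(x,\omega)$ and choose $t<r<\tau_\Sigma(x,\omega)$. By the definition of $\tau_\Sigma$, $d(\gamma_{x,\omega}(r),\Sigma)=r$. Consequently, for every $p\in\Sigma$,
\[
\tfrac12 d(p,\gamma_{x,\omega}(r))^2\ge \tfrac12 r^2=\tfrac{r^2}{2}|\omega|^2 .
\]
This is exactly the contact condition defining $A_r^0$ with $u\equiv0$, $y=\omega$. So $(x,\omega)\in A_r^0$. Lemma~\ref{lem:general-contact-propagation} then shows that $\gamma_{x,\omega}$ is minimizing on $[0,r]$, and $E_t$ is nonsingular. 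Proposition~\ref{prop:gradient-normal-decomposition} applies verbatim and gives the block form, the symmetry and positive semidefiniteness of $B_t=B_t^0$, and the factorization \eqref{eq:det-factorization}.

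\textbf{Step 2: the tangential estimate.} Proposition~\ref{prop:gradient-normal-main} applies with $u\equiv0$. In that case
\[
c_0(x,\omega)=-\tfrac1n\langle H(x),\omega\rangle=-\langle\sigma(x),\omega\rangle ,
\]
and $\mathcal R_n^0=\mathcal R_n$. This gives \eqref{eq:normal-tangential-factor} directly.

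\textbf{Step 3: positive definiteness (the main obstacle).} Semidefiniteness came from $x$ being a minimum of $F_t^0(p)=\frac12 d(p,\gamma_{x,\omega}(t))^2$. I would get strictness from the slack between $t$ and $r$. The contact computation in the proof of Lemma~\ref{lem:general-contact-propagation} yields
\[
F_t^0(p)-\tfrac{t^2}{2}\ge\frac{r-t}{2r}\bigl(d(p,q_t)-t\bigr)^2 .
\]
The aim is to show this lower bound is quadratic in $d_\Sigma(p,x)$ near $x$.

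Suppose instead that $\nabla_\Sigma^2F_t^0(x)(a,a)=0$ for some unit $a\in T_x\Sigma$. Take the curve $p(\tau)=\exp^\Sigma_x(\tau a)$. Since $x$ is a critical point and the Hessian vanishes in direction $a$, we get $F_t^0(p(\tau))-\frac{t^2}{2}=o(\tau^2)$. The displayed bound then forces $d(p(\tau),q_t)-t=o(\tau)$.

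On the other hand, the first variation gives $\frac{d}{d\tau}d(p(\tau),q_t)\big|_{\tau=0}=0$, because $\gamma'(0)$ is orthogonal to $a$. So this conclusion alone is not contradictory, and a second-order argument is needed.

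\emph{First attempt.} Compare with the point $q_r$ instead. Since $d(p(\tau),q_r)\ge r$ and $d(p(\tau),q_r)\le d(p(\tau),q_t)+(r-t)$, vanishing of the second-order term at $t$ would make the second variation of $d(\cdot,q_r)$ nonpositive in direction $a$. Combined with the minimum property of $\frac12 d(\cdot,q_r)^2$ on $\Sigma$, this should force equality in the triangle inequality to second order. That would yield a broken geodesic of length $r$ from $p(\tau)$ to $q_r$, contradicting smoothness and minimality.

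\emph{Fallback.} If this gets messy, argue via Jacobi fields. The index form $I^t(Y_a,Y_a)-\langle \mathrm{II}(a,a),\omega\rangle$ is strictly smaller than the corresponding $\Sigma$-focal index form on $[0,r]$ evaluated on the extension of $Y_a$ by zero. The latter form is nonnegative because there are no focal points before $\tau_\Sigma$. Strict inequality holds because the broken field is not smooth.

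Once $B_t>0$ is established, the proposition follows.
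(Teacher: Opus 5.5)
Your Steps 1 and 2 are exactly the paper's proof. The paper chooses $t<r<\tau_\Sigma(x,\omega)$, notes $(x,\omega)\in A_r^0$, and applies Proposition~\ref{prop:gradient-normal-decomposition} and \eqref{eq:det Bt} with $u\equiv0$, $y=\omega$. Your observation $d(\gamma_{x,\omega}(r),\Sigma)=r$ is the justification of $(x,\omega)\in A_r^0$ that the paper leaves implicit. This already gives the block form, symmetry, \eqref{eq:det-factorization} and \eqref{eq:normal-tangential-factor}, and all of these need only $B_t\ge0$. The paper gives no separate argument for definiteness, since the cited proposition yields only semidefiniteness. So your Step~3 addresses a point the paper leaves implicit, but as written it does not close it.

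\emph{First attempt.} From $\langle B_ta,a\rangle=0$ you only get
\[
r\le d(p(\tau),q_r)\le d(p(\tau),q_t)+(r-t)=r+o(\tau^2).
\]
A second-order coincidence in the triangle inequality is not an actual broken curve of length $r$, so there is nothing to contradict. You would need a quantitative corner-cutting estimate, and that is precisely the strict index lemma.

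\emph{Fallback.} The two quantities you compare are equal, not strictly ordered. If $\tilde Y_a$ is the zero extension of $Y_a$ to $[0,r]$, then
\[
I^r(\tilde Y_a,\tilde Y_a)-\langle\mathrm{II}(a,a),\omega\rangle=I^t(Y_a,Y_a)-\langle\mathrm{II}(a,a),\omega\rangle=\tfrac1t\langle B_ta,a\rangle .
\]
Even if the strict inequality held, being ``strictly smaller than a nonnegative number'' would not give positivity. Also, justifying nonnegativity by ``no focal points before $\tau_\Sigma$'' is circular, since that fact is equivalent to what you want. If you are willing to cite it, Step~3 is one line: $D\Phi_t(x,\omega)$ is nonsingular, so $\det B_t\neq0$ by \eqref{eq:det-factorization}.

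\emph{Fix.} The comparison must run the other way, against a Jacobi field on a longer interval. Take $t<t'<r$. Let $Z_a$ be the Jacobi field on $[0,t']$ with $Z_a(0)=a$ and $Z_a(t')=0$; it exists because there are no conjugate points on $(0,t']$. Let $\tilde Y_a$ be the zero extension of $Y_a$ to $[0,t']$. Then
\[
\frac1t\langle B_ta,a\rangle
=I^{t'}(\tilde Y_a,\tilde Y_a)-\langle\mathrm{II}(a,a),\omega\rangle
>I^{t'}(Z_a,Z_a)-\langle\mathrm{II}(a,a),\omega\rangle
=\frac1{t'}\langle B_{t'}a,a\rangle\ge0 .
\]
\begin{itemize}
\item The strict inequality is the strict index lemma. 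The field $W:=\tilde Y_a-Z_a$ vanishes at $0$ and $t'$. The cross term $I^{t'}(Z_a,W)$ vanishes by integration by parts, since $Z_a$ is a smooth Jacobi field. The form $I^{t'}$ is positive definite on nonzero fields vanishing at both ends.
\item We have $W\neq0$: a Jacobi field vanishing on $[t,t']$ is identically zero, whereas $Z_a(0)=a\neq0$.
\item The last inequality is Proposition~\ref{prop:gradient-normal-decomposition} applied at time $t'<r$.
\end{itemize}
This gives $B_t>0$ using only ingredients already in the paper.
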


\begin{proof}
Choose $r$ with $t<r<\tau_\Sigma(x,\omega)$.  Then
$(x,\omega)\in A_r^0$, and the claim follows
from Proposition~\ref{prop:gradient-normal-decomposition} and  \eqref{eq:det Bt} with
$u\equiv0$ and $y=\omega$.
\end{proof}

\begin{corollary}[Quantitative Heintze--Karcher Jacobian comparison]
\label{cor:quantitative-Heintze-Karcher}
For every $(x,\omega)\in S^\perp\Sigma$ and every
$0<t<\tau_\Sigma(x,\omega)$,
\begin{align}
\label{eq:quantitative-HK-Jacobian}
J_\Sigma(x,\omega,t)
\le{}&
 t^{m-1}
 \exp\left(
 -\mathcal B_{\overline{\operatorname{Ric}}}(x,\omega,t)
 \right)
\notag\\
&\times
\left[
1-t\langle\sigma(x),\omega\rangle
-
\frac{t}{n}
\int_0^t
\left(1-\frac{s}{t}\right)^2
\mathcal R_n(x,\omega,s)\,ds
\right]^n.
\end{align}
\end{corollary}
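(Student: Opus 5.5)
The plan is to deduce the corollary directly from the preceding proposition by relating the polar Jacobian $J_\Sigma$ to the fixed-time Jacobian $|\det D\Phi_t(x,\omega)|$ of the map $\Phi_t:T^\perp\Sigma\to M$. Then we insert the factorization \eqref{eq:det-factorization}, the tangential bound \eqref{eq:normal-tangential-factor}, and the ambient bound of Lemma~\ref{lem:gradient-normal-signed-ambient} with $u\equiv0$ and $y=\omega$.

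The first step is the passage from Cartesian to polar coordinates on the normal bundle. Write a normal vector as $z=\rho\omega$ with $\rho>0$ and $\omega\in S_x^\perp\Sigma$. The connection metric on $T^\perp\Sigma$ splits orthogonally into the horizontal space $T_x\Sigma$ and the fibre $T_x^\perp\Sigma\cong\mathbb R^m$. The fibre volume element is $\rho^{m-1}\,d\rho\,d\omega$, so $d\operatorname{vol}_{T^\perp\Sigma}=\rho^{m-1}\,d\rho\,d\omega\,d\operatorname{vol}_\Sigma$.

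Now consider the map $\Psi(x,z):=\exp_x(z)=\Phi_1(x,z)$. It satisfies $\Phi_\Sigma(x,\omega,t)=\Psi(x,t\omega)$, and hence $J_\Sigma(x,\omega,t)=t^{m-1}|\det D\Psi(x,t\omega)|$. On the other hand, $\Phi_t(x,z)=\Psi(x,tz)$. The fibre dilation $(x,z)\mapsto(x,tz)$ is an isometry on horizontal directions, because the normal connection commutes with scaling, and multiplies fibre directions by $t$. Therefore $|\det D\Phi_t(x,\omega)|=t^m|\det D\Psi(x,t\omega)|$. Combining the two identities gives
\[
J_\Sigma(x,\omega,t)=t^{-1}\,\bigl|\det D\Phi_t(x,\omega)\bigr|.
\]
As a cross-check, one can read off the same relation from the block form $L_t=\begin{pmatrix}B_t&0\\C_t&tI_m\end{pmatrix}$: the radial direction $\partial_t$ corresponds to the fibre vector $\omega$ scaled by $1/t$, and the sphere directions to fibre vectors tangent to the sphere.

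For the second step, fix $0<t<\tau_\Sigma(x,\omega)$ and choose $r$ with $t<r<\tau_\Sigma$, so that $(x,\omega)\in A_r^0$. Then \eqref{eq:det-factorization} gives
\[
J_\Sigma=t^{m-1}\det B_t\,\bigl|\det D(\exp_x)_{t\omega}\bigr|.
\]
Lemma~\ref{lem:gradient-normal-signed-ambient}, applied with $u\equiv0$ and $y=\omega$, bounds the last factor by $e^{-\mathcal B_{\overline{\operatorname{Ric}}}(x,\omega,t)}$, since $\mathcal B^0_{\overline{\operatorname{Ric}}}=\mathcal B_{\overline{\operatorname{Ric}}}$ by definition \eqref{eq:B Ric}. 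The factor $\det B_t$ is controlled by \eqref{eq:normal-tangential-factor}. That bound comes from \eqref{eq:det Bt} with $c_0(x,\omega)=-\frac1n\langle H,\omega\rangle=-\langle\sigma,\omega\rangle$. All factors are nonnegative, so multiplying the inequalities yields \eqref{eq:quantitative-HK-Jacobian}.

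The only delicate point is the first step: getting the power $t^{m-1}$ correctly and justifying that the horizontal part of the dilation has unit Jacobian in the connection metric. I would handle this by working in a local orthonormal normal frame that is parallel at $x$ with respect to $\nabla^\perp$. In such a frame the dilation is $(x,\xi)\mapsto(x,t\xi)$ in coordinates, and the connection metric at points over $x$ is the product metric, so the determinant computation is immediate.
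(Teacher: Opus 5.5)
Your proposal is correct and essentially the paper's proof: the paper applies Theorem~\ref{thm:gradient-normal-signed-main} with $u\equiv0$ and $y=\omega$, which packages the factorization, the tangential bound and Lemma~\ref{lem:gradient-normal-signed-ambient} that you assemble by hand, and then converts via $|\det D\Phi_t^0(x,\omega)|=tJ_\Sigma(x,\omega,t)$. The paper obtains this identity by the direction-by-direction comparison you give as a cross-check (tangential and angular vectors have the same image, while $\omega\mapsto t\gamma_{x,\omega}'(t)$ versus $\partial_t\mapsto\gamma_{x,\omega}'(t)$), which is equivalent to your dilation/polar-coordinates computation.
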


\begin{proof}
Fix $0<t<\tau_\Sigma(x,\omega)$ and choose
$r$ with
$       t<r<\tau_\Sigma(x,\omega).
$
Then $(x,\omega)\in A_r^0$.  Applying
Theorem~\ref{thm:gradient-normal-signed-main} with
$u\equiv0$ and $y=\omega$ gives
\begin{equation} \label{eq:det DPhi_t^0}
\begin{aligned}
 \left|\det D\Phi_t^0(x,\omega)\right|
\le{}&
t^m
e^{-\mathcal B_{\overline{\operatorname{Ric}}}(x,\omega,t)}
\\
&\times
\left[
1-t\langle\sigma(x),\omega\rangle
-
\frac{t}{n}
\int_0^t
\left(1-\frac{s}{t}\right)^2
\mathcal R_n(x,\omega,s)\,ds
\right]^n.
\end{aligned}
\end{equation}
Recall that $\Phi_t^0:T^\perp\Sigma\to M$ and $\Phi_\Sigma:S^\perp\Sigma\times (0,\infty)\to M$. We have the isometric identification $T_{(x, \omega)}\left(T^{\perp} \Sigma\right) \simeq T_x \Sigma \oplus T_x^{\perp} \Sigma =T_x\Sigma\oplus \omega^\perp \oplus \mathbb R \omega$ and $T_{(x,\omega)}(S^\perp \Sigma\times(0,\infty))\simeq T_x\Sigma\oplus \omega^\perp \oplus \mathbb R\partial_t$. Here $\omega^\perp=\{v\in T_x^\perp\Sigma: v\perp \omega\}$ is the subspace of angular vectors. Note that $D\Phi_t^0|_{(x,\omega)}(\eta)= D\Phi_\Sigma|_{(x,\omega, t)}(\eta)$ for any tangential or angular vector $\eta$. On the other hand, $ D \Phi_{\Sigma}|_{(x, \omega, t)}\left(\partial_t\right)=\gamma_{x, \omega}^{\prime}(t)$ and $D \Phi_t^0|_{(x, \omega)}(0, \omega)=t \gamma_{x, \omega}^{\prime}(t)$. Therefore
\[
        \left|\det D\Phi_t^0(x,\omega)\right|
        =t|\det D\Phi_\Sigma(x,\omega,t)|
        =tJ_\Sigma(x,\omega,t).
\]

In view of \eqref{eq:det DPhi_t^0}, we can obtain \eqref{eq:quantitative-HK-Jacobian}.
\end{proof}

Integrating the pointwise estimate over the minimizing part of the normal
bundle gives the tube-volume comparison.

\begin{corollary}[Quantitative tube-volume comparison]
\label{cor:quantitative-tube-volume}
Suppose that $\Sigma$ is closed, and set
\[
        T_R(\Sigma):=\{q\in M:d(q,\Sigma)<R\}.
\]
Then, for every $R>0$,
\begin{align*}
\operatorname{vol}\bigl(T_R(\Sigma)\bigr)
\le{}&
\int_\Sigma\int_{S_x^\perp\Sigma}
\int_0^{\min\{R,\tau_\Sigma(x,\omega)\}}
 t^{m-1}
 e^{-\mathcal B_{\overline{\operatorname{Ric}}}(x,\omega,t)}
\notag\\
&\qquad\times
\left[
1-t\langle\sigma(x),\omega\rangle
-
\frac{t}{n}
\int_0^t
\left(1-\frac{s}{t}\right)^2
\mathcal R_n(x,\omega,s)\,ds
\right]_+^n
\,dt\,d\omega\,d\operatorname{vol}_\Sigma.
\end{align*}
\end{corollary}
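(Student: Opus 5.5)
The plan is to write $T_R(\Sigma)$ as the image under the normal polar map $\Phi_\Sigma$ of the region
\[
U_R:=\{(x,\omega,t): (x,\omega)\in S^\perp\Sigma,\ 0<t<\min\{R,\tau_\Sigma(x,\omega)\}\},
\]
up to a set of measure zero. Then we apply the area formula together with Corollary~\ref{cor:quantitative-Heintze-Karcher}.

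\textbf{Covering step.} Take $q\in T_R(\Sigma)\setminus\Sigma$. Since $\Sigma$ is closed (compact), there is a point $x\in\Sigma$ realizing $d(q,\Sigma)=:t\in(0,R)$. Let $\gamma$ be a minimizing geodesic from $x$ to $q$. By the first variation formula, $\gamma$ leaves $x$ in a unit normal direction $\omega$, so $q=\exp_x(t\omega)$. For each $s\le t$ we have $\operatorname{dist}(\gamma_{x,\omega}(s),\Sigma)=s$: otherwise concatenation would give a shorter path from $q$ to $\Sigma$. Hence $t\le\tau_\Sigma(x,\omega)$, and so $q\in\Phi_\Sigma(\overline{U_R})$.

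It remains to see that the points with $t=\tau_\Sigma(x,\omega)$ contribute a null set. The set $\{(x,\omega,\tau_\Sigma(x,\omega))\}$ is a graph over the $(N-1)$-dimensional manifold $S^\perp\Sigma$. Its image under the smooth map $\Phi_\Sigma$ therefore has measure zero, provided the graph is measurable; continuity of $\tau_\Sigma$, or at least the graph having measure zero in the $N$-dimensional space $S^\perp\Sigma\times(0,\infty)$, suffices. Alternatively, I would cite the standard fact that the normal cut locus of a closed submanifold has measure zero. In addition, $\Sigma$ itself has measure zero because $n<N$.

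\textbf{Integration step.} By the area formula,
\[
\operatorname{vol}(T_R(\Sigma))\le\int_{U_R}J_\Sigma\,dt\,d\omega\,d\operatorname{vol}_\Sigma,
\]
where multiplicity is harmless since it only improves the inequality. On $U_R$ we have $t<\tau_\Sigma$, so Corollary~\ref{cor:quantitative-Heintze-Karcher} bounds $J_\Sigma$ pointwise. Inside that bound we may replace the bracket by its positive part. The justification is that $\det B_t\ge0$ and $\det B_t\le(\tfrac1n\operatorname{tr}B_t)^n$ with $\operatorname{tr}B_t\ge0$, so the AM--GM step in Proposition~\ref{prop:gradient-normal-main} actually gives $\det B_t\le[\cdot]_+^n$. (If $n$ is even, $[\cdot]^n$ already dominates, but the positive-part form is the one needed.) Integrating over $U_R$ gives the stated bound.

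\textbf{Main obstacle.} The only delicate point is the measure-zero claim for the cut points $t=\tau_\Sigma$. My first attempt would be to prove that $\tau_\Sigma$ is continuous, by the standard argument: cut points are either focal points or points reached by two minimizing normal geodesics, and both conditions are closed. Continuity makes the graph a null set in $S^\perp\Sigma\times(0,\infty)$, and the Lipschitz image of a null set is null.

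If continuity is awkward, a fallback avoids it. Note that $\Phi_\Sigma$ restricted to $\{t<\tau_\Sigma\}$ is injective. For the inequality, it is enough that every $q$ off a null set has some preimage in $U_R$, and this holds because the set of $q$ whose nearest point is reached only at the cut time has measure zero. That set lies in the non-differentiability set of $d(\cdot,\Sigma)$, which is null by Rademacher's theorem, together with the focal-value set, which is null by Sard's theorem.
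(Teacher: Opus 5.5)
Your proposal is correct and follows essentially the same route as the paper. Both arguments cover $T_R(\Sigma)$ by the normal polar map up to the normal cut locus, show that this locus is null, apply the area formula over $\{t<\min\{R,\tau_\Sigma\}\}$, and bound $J_\Sigma$ pointwise by Corollary~\ref{cor:quantitative-Heintze-Karcher}; your fallback null-set argument via Sard and Rademacher is exactly the paper's.

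Two refinements you supply, which the paper leaves implicit, are worth keeping:
\begin{enumerate}
\item The graph of $\tau_\Sigma$ is null by Tonelli once $\tau_\Sigma$ is measurable. It is upper semicontinuous, so measurable, and continuity is not needed.
\item The positive part is justified because, for $t<\tau_\Sigma$, the bracket bounds $\frac1n\operatorname{tr}B_t\ge0$ from above and is therefore itself nonnegative.
\end{enumerate}
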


\begin{proof}
The normal polar map covers $T_R(\Sigma)$ up to the normal cut locus.
Its critical values have zero volume by Sard's theorem, while points
admitting more than one minimizing normal representation lie in the
nondifferentiability set of the distance function $d_\Sigma$, which also
has zero volume.  Thus the normal cut locus has zero Riemannian volume.
The area formula, with multiplicity when $\Sigma$ is immersed, gives
\[
\operatorname{vol}\bigl(T_R(\Sigma)\bigr)
\le
\int_\Sigma\int_{S_x^\perp\Sigma}
\int_0^{\min\{R,\tau_\Sigma(x,\omega)\}}
J_\Sigma(x,\omega,t)
\,dt\,d\omega\,d\operatorname{vol}_\Sigma.
\]
Applying Corollary~\ref{cor:quantitative-Heintze-Karcher} then gives the result.
\end{proof}

\begin{corollary}
\label{cor:HK-Ricn-nonnegative}
Suppose that $\overline{\operatorname{Ric}}_n\ge0$.  Then, for every
$0<t<\tau_\Sigma(x,\omega)$,
\[
J_\Sigma(x,\omega,t)
\le
 t^{m-1}
 \left(1-t\langle\sigma(x),\omega\rangle\right)_+^n.
\]
If $\Sigma$ is compact, then
\begin{equation*}
\operatorname{vol}\bigl(T_R(\Sigma)\bigr)
\le
\int_\Sigma\int_{S_x^\perp\Sigma}\int_0^R
 t^{m-1}
 \left(1-t\langle\sigma(x),\omega\rangle\right)_+^n
\,dt\,d\omega\,d\operatorname{vol}_\Sigma.
\end{equation*}
\end{corollary}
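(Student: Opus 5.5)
The plan is to specialize Corollary~\ref{cor:quantitative-Heintze-Karcher} and Corollary~\ref{cor:quantitative-tube-volume} by discarding the curvature terms, which have a favorable sign under $\overline{\operatorname{Ric}}_n\ge0$.

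\emph{Sign of the curvature terms.} For the Jacobian bound, fix $(x,\omega)\in S^\perp\Sigma$ and $0<t<\tau_\Sigma(x,\omega)$. As observed in the introduction, $\mathcal R_n(x,\omega,s)=\overline{\operatorname{Ric}}_n(\gamma_{x,\omega}'(s),W_s)$, where $W_s\subset\gamma_{x,\omega}'(s)^\perp$ is $n$-dimensional, since parallel transport preserves orthogonality to $\gamma'$. Hence $\mathcal R_n\ge0$. By monotonicity, $\overline{\operatorname{Ric}}_n\ge0$ implies $\overline{\operatorname{Ric}}=\overline{\operatorname{Ric}}_{N-1}\ge0$; when $n=N-1$ this holds trivially. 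Because the weight $s(1-s/t)$ is nonnegative on $[0,t]$, we get $\mathcal B_{\overline{\operatorname{Ric}}}\ge0$, and therefore $e^{-\mathcal B}\le1$.

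\emph{Removing the bracket term.} The remaining point is the bracket raised to the $n$-th power, where the sign needs care. By \eqref{eq:normal-tangential-factor} together with $\det B_t>0$, the bracket $\beta:=1-t\langle\sigma,\omega\rangle-\frac tn\int_0^t(1-s/t)^2\mathcal R_n\,ds$ satisfies $\beta^n\ge\det B_t>0$. Moreover $\beta\ge\frac1n\operatorname{tr}B_t>0$ by the proof of Proposition~\ref{prop:gradient-normal-main}, so $\beta$ is positive. Since the integral term is nonnegative, $0<\beta\le(1-t\langle\sigma,\omega\rangle)_+$. Monotonicity of $r\mapsto r^n$ on $[0,\infty)$ then gives $\beta^n\le(1-t\langle\sigma,\omega\rangle)_+^n$. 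Combining this with $e^{-\mathcal B}\le1$ and the corollary yields the pointwise Jacobian estimate.

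\emph{Tube-volume bound.} I would start from the area-formula bound in the proof of Corollary~\ref{cor:quantitative-tube-volume}, namely $\operatorname{vol}(T_R(\Sigma))\le\int_\Sigma\int_{S^\perp_x\Sigma}\int_0^{\min\{R,\tau_\Sigma\}}J_\Sigma\,dt\,d\omega\,d\operatorname{vol}_\Sigma$. Inserting the pointwise estimate and then enlarging the $t$-range from $\min\{R,\tau_\Sigma\}$ to $R$ is legitimate, because the integrand $t^{m-1}(1-t\langle\sigma,\omega\rangle)_+^n$ is nonnegative. The only delicate step in the whole argument is the positivity of the bracket, which ensures that the $n$-th power is monotone. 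I handle it via the trace inequality rather than by taking positive parts, since for even $n$ a negative bracket could otherwise make $\beta^n$ large.
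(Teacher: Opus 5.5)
Your proposal is correct and follows the paper's argument: under $\overline{\operatorname{Ric}}_n\ge0$ you discard the two favorable-sign curvature terms in Corollary~\ref{cor:quantitative-Heintze-Karcher}, then integrate over the normal cut domain and enlarge the $t$-range to $[0,R]$. Your explicit check that the bracket is nonnegative, via $\frac1n\operatorname{tr}B_t\le\beta$ from the proof of Proposition~\ref{prop:gradient-normal-main}, supplies a detail the paper's short proof leaves implicit.
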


\begin{proof}
The hypothesis gives $\mathcal R_n\ge0$
and
$\overline{\operatorname{Ric}}\ge0$.  Thus the two
curvature corrections in
\eqref{eq:quantitative-HK-Jacobian} have the favorable sign and may be
discarded.  Integrating over the normal cut domain and then enlarging the
$t$-integration interval to $[0,R]$ proves the result.
\end{proof}

\subsection{A Riccati estimate for the normal Jacobian}
\label{subsec:direct-riccati}
In this Subsection, we prove another Heintze--Karcher-type volume comparison theorem using a scalar Riccati comparison argument.
Although our main result, Proposition~\ref{prop:direct-riccati-normal-jacobian}, is slightly weaker than Theorem~\ref{thm:gradient-normal-signed-main}, its proof is considerably simpler. We therefore believe that the method may be of independent interest. We will use the notation introduced in Subsection~\ref{subsec:main-factor}.
We begin with a scalar comparison result.

\begin{proposition}[Quantitative scalar Riccati comparison]
\label{prop:quantitative-riccati-comparison}
Let $q_1,q_2:(a,b]\to\mathbb R$ be smooth, let
$\delta:[a,b]\to\mathbb R$ be continuous, and suppose that
\[
q_1'(s)+q_1(s)^2+\delta(s)
\le q_2'(s)+q_2(s)^2
\qquad\text{on }(a,b].
\]
Fix $T\in(a,b]$ and assume that
\[
\lim_{s\to a^+}\bigl(q_2(s)-q_1(s)\bigr)=0,
\qquad
\exp\left(-\int_s^Tq_2(\tau)\,d\tau\right)=O(1)
\quad\text{as }s\to a^+.
\]
Then
\[
q_1(T)
\le q_2(T)
-
\int_a^T
\exp\left(-2\int_r^Tq_2(\tau)\,d\tau\right)
\delta(r)\,dr.
\]
\end{proposition}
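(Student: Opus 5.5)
Set $w:=q_2-q_1$, so that $w(s)\to0$ as $s\to a^+$. Subtracting the two sides of the hypothesis gives
\[
w'+(q_2^2-q_1^2)\ge\delta .
\]
Since $q_2^2-q_1^2=(q_2+q_1)w=2q_2w-w^2$, this becomes
\[
w'+2q_2w\ge\delta+w^2\ge\delta .
\]
Dropping the nonnegative term $w^2$ is what makes the comparison work without any sign assumption on $w$. The inequality is now linear in $w$, and we solve it with an integrating factor.

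Put
\[
\mu(s):=\exp\Bigl(-2\int_s^Tq_2(\tau)\,d\tau\Bigr).
\]
Then $\mu'=2q_2\mu$ and $\mu(T)=1$, so
\[
(\mu w)'=\mu(w'+2q_2w)\ge\mu\delta\qquad\text{on }(a,T].
\]
Integrate this from $s$ to $T$ for $a<s<T$:
\[
w(T)-\mu(s)w(s)\ge\int_s^T\mu(r)\delta(r)\,dr .
\]
Now let $s\to a^+$. Because $\mu(s)=\bigl(\exp(-\int_s^Tq_2)\bigr)^2=O(1)$ and $w(s)\to0$, the boundary term $\mu(s)w(s)$ tends to $0$. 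This yields
\[
q_2(T)-q_1(T)\ge\int_a^T\mu(r)\delta(r)\,dr,
\]
which is the claim after rearranging.

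The only delicate step is passing to the limit on the right-hand side. The integral $\int_a^T\mu\delta$ must make sense as a limit. Here $\delta$ is continuous on the compact interval $[a,b]$, hence bounded. The hypothesis $\exp(-\int_s^Tq_2)=O(1)$ near $a$ gives $\mu$ bounded near $a$. Also $\mu$ is continuous on $(a,T]$, so it is bounded on all of $(a,T]$. Hence $\mu\delta$ is bounded and continuous on $(a,T]$, and $\int_s^T\mu\delta\to\int_a^T\mu\delta$ as $s\to a^+$ by dominated convergence, the improper integral being absolutely convergent. With that in place the argument closes. I expect nothing else to be an obstacle beyond keeping the signs of the integrating factor straight.
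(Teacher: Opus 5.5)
Your proof is correct and follows the paper's argument: the same substitution $w=q_2-q_1$, the same discarding of $w^2$, and the same integrating factor. You normalize the integrating factor at $T$ where the paper normalizes it at $s$, which makes no difference. Your remark that $\mu\delta$ is bounded on $(a,T]$, so the improper integral $\int_a^T$ converges, is a detail the paper leaves implicit.
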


\begin{proof}
Set $w:=q_2-q_1$.  The differential inequality gives
\[
w'
\ge q_1^2-q_2^2+\delta
=-2q_2w+w^2+\delta
\ge-2q_2w+\delta.
\]
For $s\in(a,T)$ and $r\in[s,T]$, put
\[
\mu(r):=\exp\left(2\int_s^r q_2(\tau)\,d\tau\right).
\]
Then
\[
\frac{d}{dr}\bigl(\mu(r)w(r)\bigr)\ge\mu(r)\delta(r).
\]
Integrating from $s$ to $T$ and dividing by $\mu(T)$ yields
\[
w(T)
\ge
\exp\left(-2\int_s^Tq_2(\tau)\,d\tau\right)w(s)
+
\int_s^T
\exp\left(-2\int_r^Tq_2(\tau)\,d\tau\right)
\delta(r)\,dr.
\]
The first term on the right tends to zero as $s\to a^+$ by the two
hypotheses.  Letting $s\to a^+$ and recalling that $w=q_2-q_1$ proves the
claim.
\end{proof}

\begin{proposition}
\label{prop:direct-riccati-normal-jacobian}
Assume that $\overline{\operatorname{Ric}} \geq 0$.
Let $(\overline x, \overline y)\in A_r^u$ and set
$$
c:= \frac1n \left(\Delta_\Sigma u(\overline x)-\langle H(\overline x), \overline y\rangle \right).
$$
Suppose that $\mathcal R_n^u(\overline x,\overline y,t) \geq 0$ for all $0 < t < r$.

Then,
$1+ct >0$ for $0<t<r$ and
the function
$$t \longmapsto \frac{|\det D \Phi_t^u(\overline x,\overline y)|}{t^m(1+ct)^n e^{-R_t (\overline x, \overline y)}}$$
is nonincreasing,
where
$$R_t(\overline x, \overline y ) = \mathcal{B}_{\overline{\operatorname{Ric}}}^u(\overline x,\overline y,t) + \int_0^t \frac{(t-\rho)^2}{t(1+ct)} \mathcal{R}_n^u(\overline x,\overline y,\rho) d \rho \geq 0.$$
Moreover, $t\mapsto R_t(\overline x,\overline y)$ is
nondecreasing on $(0,r)$.

In particular, for $0 <t < r$,
$$
|\det D\Phi_t^u(\overline x, \overline y)| \le t^m(1+ct)^n \exp \left( - R_t(\overline x, \overline y)\right).
$$
\end{proposition}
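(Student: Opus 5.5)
Using the factorization \eqref{eq:gradient-normal-det-factorization}, $|\det D\Phi_t^u|=t^m\det B_t^u\,|\det D(\exp_{\overline x})_{tv}|$. The exponential factor is handled by the monotonicity already implicit in the proof of Lemma~\ref{lem:gradient-normal-signed-ambient}: there we showed
\[
\frac{d}{ds}\log\frac{\det\mathcal J_v(s)}{s^{N-1}}\le-\int_0^s(\tau/s)^2\overline{\operatorname{Ric}}\,d\tau,
\]
and the right side is exactly $-\frac{d}{ds}\mathcal B^u_{\overline{\operatorname{Ric}}}(\overline x,\overline y,s)$. Indeed, differentiating $\int_0^s\tau(1-\tau/s)\overline{\operatorname{Ric}}\,d\tau$ gives $\int_0^s(\tau^2/s^2)\overline{\operatorname{Ric}}\,d\tau$. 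Hence $e^{\mathcal B^u}|\det D(\exp)_{tv}|$ is nonincreasing, and $\mathcal B^u$ is nondecreasing because $\overline{\operatorname{Ric}}\ge0$. It therefore remains to show that
\[
t\mapsto \frac{\det B_t^u}{(1+ct)^n}\exp\Bigl(\int_0^t\frac{(t-\rho)^2}{t(1+ct)}\mathcal R_n^u\,d\rho\Bigr)
\]
is nonincreasing. The positivity $1+ct>0$ will come out along the way.

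For the tangential factor I would rather not differentiate $B_t^u$ directly, since Proposition~\ref{prop:gradient-normal-decomposition} only gives an index-form expression. Instead, I work with the full Jacobian via a Riccati equation, which is the spirit of the subsection. Along $\gamma$, consider the $N$-dimensional space of Jacobi fields $J_\xi(s)=D\Phi^u_s(\xi)/s$, essentially variations through the family of geodesics $s\mapsto\exp_x(s(\nabla u+y))$. The log-derivative $q_1(s):=\frac{d}{ds}\log|\det D\Phi_s^u|-m/s$ is then a trace of a shape operator $U(s)$ satisfying the matrix Riccati equation $U'+U^2+\overline R_{\gamma'}=0$. I would split $U$ using the parallel transport $W_s$ of $T_{\overline x}\Sigma$, in the same way the block form \eqref{eq:block-gradient-normal-decomposition} splits $L_t^u$. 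By Cauchy--Schwarz, $\operatorname{tr}(U|_{W})^2\ge\frac1n(\operatorname{tr}U|_W)^2$, so the scalar $\phi:=\frac1n\operatorname{tr}(U|_W)$ obeys $\phi'+\phi^2+\frac1n\mathcal R_n^u\le0$. The comparison function is $q_2=c/(1+cs)$, which satisfies $q_2'+q_2^2=0$ and has initial value $c$. This matches $\phi(0^+)$, since at $s=0$ the tangential block of $L$ has derivative $\nabla^2u-\langle\mathrm{II},y\rangle$ with trace $nc$, by \eqref{eq:gradient-normal-B-index-form}.

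Now apply Proposition~\ref{prop:quantitative-riccati-comparison} with $\delta=\frac1n\mathcal R_n^u\ge0$. Here $\exp(-2\int_r^T q_2)=\bigl(\tfrac{1+cr}{1+cT}\bigr)^2$, which yields
\[
\phi(T)\le\frac{c}{1+cT}-\frac1n\int_0^T\frac{(1+c\rho)^2}{(1+cT)^2}\mathcal R_n^u\,d\rho .
\]
Positivity of $1+ct$ follows because the Riccati solution $\phi$ exists on $(0,r)$ without conjugate points (Lemma~\ref{lem:general-contact-propagation}), while $q_2$ blows down at $t=-1/c$. Integrating $n\phi$ and combining it with the normal block (contributing $m/s$) and with the Ricci part of the remaining directions gives the derivative bound. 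A Fubini computation should then convert $\int\!\!\int\frac{(1+c\rho)^2}{(1+cs)^2}\,d\rho\,ds$ into the kernel $\frac{(t-\rho)^2}{t(1+ct)}$, up to a nonnegative discrepancy. Because $\mathcal R_n^u\ge0$, it suffices to have a pointwise inequality between kernels, and monotonicity of $R_t$ would be checked from the same derivative.

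The main obstacle is making the trace splitting rigorous. The $W$-block and the complementary block of $U$ do not decouple exactly, and the ambient Ricci part must be separated from $\mathcal R_n^u$. I would first try the cruder route: use the Riccati inequality only for the tangential-trace part, bound the rest by the proof of Lemma~\ref{lem:gradient-normal-signed-ambient}, and accept whatever kernel emerges, verifying that it dominates the stated one.
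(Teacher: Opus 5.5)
\textbf{There is a genuine gap in how you combine the pieces.} Your tangential step is the same as the paper's. The partial trace $\phi$ over the parallel transport of $T_{\overline x}\Sigma$ satisfies $\phi'+\phi^2+\frac1n\mathcal R_n^u\le0$. Comparison with $c/(1+cs)$ then gives $\phi\le\frac{c}{1+ct}-E_n(t)$, where $E_n(t)=\frac1n\int_0^t\bigl(\frac{1+c\rho}{1+ct}\bigr)^2\mathcal R_n^u\,d\rho$. Your observation that $e^{\mathcal B^u_{\overline{\operatorname{Ric}}}}|\det D(\exp_{\overline x})_{tv}|$ is nonincreasing is also correct.

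\textbf{The reduction and the Riccati argument do not match.} Your first paragraph reduces the proposition to monotonicity of the weighted tangential factor $\det B_t^u(1+ct)^{-n}\exp\bigl(\int_0^t\frac{(t-\rho)^2}{t(1+ct)}\mathcal R_n^u\,d\rho\bigr)$. That is more than is needed, and it is not what your Riccati argument controls. Indeed, $\operatorname{tr}U(t)=\frac{d}{dt}\log|\det D\Phi_t^u|=\frac mt+\frac{d}{dt}\log\det B_t^u+\frac{d}{dt}\log|\det D(\exp_{\overline x})_{tv}|$. So an upper bound on $\operatorname{tr}U$ isolates $\det B_t^u$ only together with a \emph{lower} bound on the growth of the exponential Jacobian, and no lower curvature bound provides one. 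If you instead run the Riccati argument on the full Jacobian, as the paper does, you cannot also import $\mathcal B^u_{\overline{\operatorname{Ric}}}$ from Lemma~\ref{lem:gradient-normal-signed-ambient}: the exponential factor is already inside $|\det D\Phi_t^u|$. Your fallback of bounding ``the rest'' by that lemma's proof has the same defect, since that lemma controls $\det\mathcal J_v$, not the normal block of $U$.

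\textbf{The missing idea is to treat the normal block exactly like the tangential one and then combine kernels.} Build $P_{AB}=\langle X_A,E_B\rangle$ from the $N$ Jacobi fields $s\mapsto D\Phi_s^u(\xi)$; these, not $D\Phi^u_s(\xi)/s$, are the Jacobi fields. Set $Q=P^{-1}P'$.
\begin{itemize}
\item The normal partial trace $q_m=\frac1m\sum_\alpha Q_{\alpha\alpha}$ satisfies $q_m'+q_m^2+\delta_m\le0$, with $\delta_m=\frac1m\sum_\alpha\langle\overline R(E_\alpha,\gamma')\gamma',E_\alpha\rangle$, by the same Cauchy--Schwarz step. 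The off-diagonal block enters $\operatorname{tr}(Q^2)$ with a favorable sign, so the lack of decoupling you worry about is harmless.
\item Comparison with $1/t$ gives $q_m\le\frac1t-E_m(t)$, where $E_m(t)=\int_0^t(\rho/t)^2\delta_m\,d\rho$.
\item Fubini turns $\int_0^t nE_n$ into the kernel $W_n=\frac{(t-\rho)(1+c\rho)}{1+ct}$ against $\mathcal R_n^u$, and $\int_0^t mE_m$ into $W_m=\rho(1-\frac\rho t)$ against $m\delta_m$.
\item Use the identity $W_n=W_m+\frac{(t-\rho)^2}{t(1+ct)}$ together with $\mathcal R_n^u+m\delta_m=\overline{\operatorname{Ric}}(\gamma',\gamma')$. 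The latter holds because it is the full trace of the Jacobi operator in an orthonormal frame, even though $\gamma'$ is not normal. Together they give $\int_0^t(nE_n+mE_m)\,d\tau=R_t$ exactly.
\end{itemize}
So the piece $W_m\mathcal R_n^u$ you propose to drop as a ``nonnegative discrepancy'' is precisely what combines with $W_m\,m\delta_m$ to produce $\mathcal B^u_{\overline{\operatorname{Ric}}}$. On its own, $W_m\,m\delta_m$ has no sign, so dropping that piece loses the result.

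\textbf{Kernel domination is not enough for monotonicity.} A pointwise domination of integrated kernels would not suffice for the monotonicity statement. You need $\frac{d}{dt}R_t\le nE_n+mE_m$, which here holds with equality; regrouping the same expression also gives $\frac{d}{dt}R_t\ge0$.

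\textbf{Two smaller points.}
\begin{itemize}
\item Invertibility of $P(t)$ on $(0,r)$ comes from the contact condition, as in \cite[Proposition~4.6]{Brendle2023}. The absence of conjugate points in Lemma~\ref{lem:general-contact-propagation} gives only invertibility of the exponential factor.
\item You must prove $1+ct>0$ before invoking the comparison with $c/(1+cs)$. For example, $h=\exp\int_0^t\phi$ satisfies $h''\le-\frac1n\mathcal R_n^u h\le0$, $h(0)=1$ and $h'(0)=c$, hence $0<h\le1+ct$.
\end{itemize}
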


\begin{proof}
Fix $(\overline x, \overline y)\in A_r^u$ and set
$$
\overline z:=\nabla^\Sigma u(\overline x)+\overline y,
\qquad
\overline\gamma(t):=\exp_{\overline x}(t\overline z),
\qquad
0\le t\le r.
$$
Choose an orthonormal basis $\{e_i\}_{i=1}^n$ of $T_{\overline x}\Sigma$
and an orthonormal basis $\{e_\alpha\}_{\alpha=n+1}^{n+m}$ of
$T_{\overline x}^\perp\Sigma$. Let $E_A(t)$ denote the parallel transport of
$e_A$ along $\overline\gamma$.

For $1\le i\le n$, let $X_i(t)$ be the Jacobi field satisfying
$ X_i(0)=e_i, $
$$
\langle \overline\nabla_tX_i(0), e_j\rangle
=
(\nabla^2_\Sigma u)(e_i, e_j)-\langle \mathrm{II}(e_i, e_j), \overline y\rangle,
\qquad 1\le j\le n,
$$
and
$$
\langle \overline\nabla_tX_i(0), e_\beta\rangle
=
\langle \mathrm{II}(e_i, \nabla^\Sigma u), e_\beta\rangle,
\qquad n+1\le \beta\le n+m.
$$
For $n+1\le \alpha\le n+m$, let $X_\alpha(t)$ be the Jacobi field
satisfying
$$
X_\alpha(0)=0,
\qquad
\overline\nabla_tX_\alpha(0)=e_\alpha.
$$

Define $P_{AB}(t):=\langle X_A(t), E_B(t)\rangle$ and $S_{AB}(t):= \langle \overline R(E_A(t), \overline\gamma'(t))\overline\gamma'(t), E_B(t)\rangle$.
Then
$$
P''(t)=-P(t)S(t).
$$
{As in the proof of \cite[Proposition~4.6]{Brendle2023},
$P(t)$ is invertible for $0<t<r$.}
Define $Q(t):=P(t)^{-1}P'(t)$.
Then $Q(t)$ is symmetric and satisfies
$$
Q'(t)+Q(t)^2=-S(t).
$$
Moreover,
$$
P(t)
=
\begin{pmatrix}
I_n+O(t) & O(t)\\
O(t) & tI_m+O(t^2)
\end{pmatrix}
\quad
\text{and}
\quad
Q(t)
=
\begin{pmatrix}
A+O(t) & O(1)\\
O(1) & t^{-1}I_m+O(t)
\end{pmatrix},
$$
where $A_{ij}:= (\nabla^2_\Sigma u)(e_i, e_j)-\langle \mathrm{II}(e_i, e_j), \overline y\rangle$.

Set
$$
q_n(t):=\frac1n\sum_{i=1}^n Q_{ii}(t),
\qquad
q_m(t):=\frac1m\sum_{\alpha=n+1}^{n+m}Q_{\alpha\alpha}(t),
$$
and
$$
\delta_n(t):= \frac1n \sum_{i=1}^n \langle \overline R(E_i(t), \overline\gamma'(t))\overline\gamma'(t), E_i(t)\rangle, \quad
\delta_m(t):= \frac1m \sum_{\alpha=n+1}^{n+m} \langle \overline R(E_\alpha(t), \overline\gamma'(t))\overline\gamma'(t), E_\alpha(t)\rangle.
$$

Taking partial traces in the Riccati equation and using the Cauchy--Schwarz inequality gives
$$
q_n'(t)+q_n(t)^2+\delta_n(t)\le 0 \quad \text{and}\quad q_m'(t)+q_m(t)^2+\delta_m(t)\le 0.
$$
Since
$$
\frac1n\operatorname{tr}A
=
\frac1n
\left(
\Delta_\Sigma u(\overline x)-\langle H(\overline x), \overline y\rangle
\right) = c,
$$
we have
$$q_n(t) = c + O(t), \qquad q_m(t) = \frac{1}{t} + O(t)\quad\text{as }t\to0^+.$$

We first show that $1+ct>0$ for $0\leq t<r$.
Define
$$h(t) : = \exp \left( \int_0^t q_n (\tau ) d \tau \right).$$
Then $h(0) = 1$ and $h'(0) = c$.
Since
$$n\delta_n(t) = \mathcal R_n^u(\overline x,\overline y,t) \geq0,$$
the Riccati inequality for $q_n$ gives
$$h'' = (q_n' + q_n^2 ) h \leq - \delta_n h \leq 0.$$
Hence $h$ is concave, and therefore
$$ 0< h(t) \leq h(0) + t h'(0) = 1 + ct.$$
Thus $1+ct>0$ for $0\leq t<r$.

Define comparison functions
$$
\overline q_n(t):=\frac{c}{1+ct},
\qquad
\overline q_m(t):=\frac1t.
$$
They satisfy
$$
\overline q_n'(t)+\overline q_n(t)^2=0,
\qquad
\overline q_m'(t)+\overline q_m(t)^2=0.
$$
By Proposition~\ref{prop:quantitative-riccati-comparison}, we obtain
$$
q_n(t) \le \overline q_n(t)-E_n(t), \quad \text{where } E_n(t):= \int_0^t \left(\frac{1+c\rho}{1+ct} \right)^2 \delta_n(\rho)\, d\rho,
$$
and
$$
q_m(t) \le \overline q_m(t)-E_m(t), \quad \text{where } E_m(t):= \int_0^t \left(\frac{\rho}{t}\right)^2 \delta_m(\rho)\, d\rho.
$$
Therefore,
\begin{align*}
\frac{d}{dt}\log|\det P(t)|
=\operatorname{tr}Q(t)
=& nq_n(t)+mq_m(t)\\
\le& \frac{nc}{1+ct} + \frac{m}{t} - nE_n(t)-mE_m(t).
\end{align*}
Equivalently,
$$
\frac{d}{dt}
\log
\left(
\frac{|\det P(t)|}{t^m(1+ct)^n}
\right)
\le
-nE_n(t)-mE_m(t).
$$

Set
\begin{equation}\label{eq Rs}
R_t(\overline x, \overline y)
:=
\int_0^t
\{nE_n(\tau)+mE_m(\tau)\}\, d\tau.
\end{equation}
It follows that
$$\frac{d}{dt}
\log
\left(
\frac{|\det P(t)|}
{t^m(1+ct)^n e^{-R_t(\overline x,\overline y)}}
\right)
\leq 0.$$
Thus,
$$t \longmapsto \frac{|\det P(t)|}{t^m(1+ct)^n e^{-R_t (\overline x, \overline y)}}$$
is nonincreasing.
Since
$$\lim_{t \to 0^+}  \frac{|\det P(t)|}{t^m(1+ct)^n e^{-R_t (\overline x, \overline y)}} = 1,$$
we obtain
$$
|\det D\Phi_t^u(\overline x, \overline y)|
=
|\det P(t)|
\le
t^m(1+ct)^n
\exp(-{R_t(\overline x, \overline y)}).
$$

It remains to identify $R_s$. For $0\leq\rho\leq s<r$, set
\begin{equation*}
W_n(s, \rho):= \frac{(s-\rho)(1+c\rho)}{1+cs}, \quad W_m(s, \rho):= \rho\left(1-\frac{\rho}{s}\right)
\end{equation*}
Then
$$W_n(s,\rho)-W_m(s,\rho)
    =
    \frac{(s-\rho)^2}{s(1+cs)}
    \geq 0. $$

By Fubini's theorem, \eqref{eq Rs} becomes
\begin{equation}\label{eq:Rs2}
R_s = \int_0^s W_m(s, \rho)\left[n\delta_n(\rho)+m\delta_m(\rho) \right]\, d\rho + \int_0^s \left[W_n(s, \rho)-W_m(s, \rho) \right] n\delta_n(\rho)\, d\rho.
\end{equation}
Since
$$
n\delta_n(\rho)+m\delta_m(\rho) = \overline{\operatorname{Ric}}
\bigl(\overline\gamma'(\rho),\overline\gamma'(\rho)\bigr)
$$
and
$$
n\delta_n(\rho) = \mathcal R_n^u(\overline x,\overline y,\rho),
$$
the definitions of
$\mathcal B_{\overline{\operatorname{Ric}}}^u$
and $\mathcal R_n^u$ give
\begin{equation*}
    R_s(\overline x,\overline y)
=
\mathcal B_{\overline{\operatorname{Ric}}}^u(\overline x,\overline y,s)
+
\int_0^s
\frac{(s-\rho)^2}{s(1+cs)}
\mathcal R_n^u(\overline x,\overline y,\rho)\,d\rho.
\end{equation*}

Since $W_m(s,\rho)\ge0$, $\overline{\operatorname{Ric}}\ge0$, and $\mathcal R_n^u\ge0$, we conclude that
$$R_s(\overline x,\overline y)\ge0.$$

We now prove that $R_t(\overline x,\overline y)$ is nondecreasing in $t$.
By \eqref{eq Rs} and the definitions of $E_n$ and $E_m$,
\begin{align*}
\frac{d}{dt}R_t(\overline x,\overline y)
&=nE_n(t)+mE_m(t)\\
&=
\int_0^t
\left(\frac{\rho}{t}\right)^2
\overline{\operatorname{Ric}}
\bigl(\overline\gamma'(\rho),\overline\gamma'(\rho)\bigr)\,d\rho\\
&\quad+
\int_0^t
\left[
\left(\frac{1+c\rho}{1+ct}\right)^2
-
\left(\frac{\rho}{t}\right)^2
\right]
\mathcal R_n^u(\overline x,\overline y,\rho)\,d\rho.
\end{align*}
For $0\le\rho\le t<r$, the positivity of $1+ct$ gives $ \frac{1+c\rho}{1+ct}-\frac{\rho}{t} = \frac{t-\rho}{t(1+ct)} \ge0 $.
Both fractions are nonnegative, so $\left(\frac{1+c \rho}{1+c t}\right)^2-\left(\frac{\rho}{t}\right)^2\ge0$. Since $\overline{\operatorname{Ric}}\ge0$ and $\mathcal R_n^u\ge0$, both integrals above are nonnegative. Thus $\frac{d}{dt}R_t(\overline x,\overline y)\ge0$.
\end{proof}

\section{A quantitative Fenchel--Willmore inequality}
\label{sec:quantitative-Willmore}

In this section, we prove Theorem~\ref{thm:intro-quantitative-Willmore}, the quantitative Fenchel--Willmore inequality. The main tool is Corollary~\ref{cor:quantitative-tube-volume}, which, under the assumption $\overline{\operatorname{Ric}}_n\ge0$, provides the required volume estimate for tubular neighborhoods of $\Sigma$. If we neglect the remainder terms, this approach yields a simpler proof of the Fenchel--Willmore inequality established by Ji and Kwong~\cite{JiKwong}. The additional difficulty in the present quantitative setting is to retain and estimate the nontrivial curvature remainder terms, which have no counterpart in their result.

Set $N:=n+m$. Throughout this section, let $(M^N,\overline g)$ be a complete noncompact Riemannian manifold satisfying $\overline{\operatorname{Ric}}_n\ge0$, and let $\Sigma^n\subset M^N$ be a closed (compact and without boundary) immersed submanifold.  We write $H=n\sigma$.  As observed in the proof of Corollary~\ref{cor:HK-Ricn-nonnegative}, the intermediate-Ricci hypothesis implies $\overline{\operatorname{Ric}}\ge0$.  Hence the asymptotic volume ratio
\[
\theta
:=
\lim_{R\to\infty}
\frac{\operatorname{vol}(B_o(R))}{|\mathbb B^N|R^N}
\in[0,1]
\]
exists and is independent of the base point $o\in M$. { No positivity assumption on $\theta$ is needed below.}

Let
$S_x^\perp\Sigma$ be the unit normal space of $\Sigma$ at $x$.
For $z\in S_x^\perp\Sigma$, let
\[
\gamma_{x,z}(s):=\exp_x(sz)
\]
be the unit-speed normal geodesic.  For $R>0$, define
\begin{equation}\label{def:B Ric}
\mathcal B_{\overline{\operatorname{Ric}},R}(x,z)
:=\mathcal{B}_{\overline{\operatorname{Ric}}}(x, z, R)= \int_0^R s\left(1-\frac{s}{R}\right) \overline{\operatorname{Ric}} \bigl(\gamma_{x,z}'(s),\gamma_{x,z}'(s)\bigr) \,ds,
\end{equation}
where $\mathcal{B}_{\overline{\operatorname{Ric}}}$ is defined by \eqref{eq:B Ric},
and
\[
\mathcal H_{n,R}(x,z)
:=
\frac1n
\int_0^R
\left(1-\frac{s}{R}\right)^2
\mathcal R_n(x,z,s)
\,ds,
\]
where $\mathcal R_n$ is defined by \eqref{eq:R_n}.
Both quantities are nonnegative.
Moreover, differentiating \eqref{def:B Ric} with respect to $R$ shows that
$\mathcal B_{\overline{\operatorname{Ric}},R}(x,z)$ is nondecreasing in $R$ if $\overline{\operatorname{Ric}}_n \geq 0$.
We may therefore define
\[
\mathcal B_{\overline{\operatorname{Ric}}}(x,z)
:= \lim_{R\to\infty} \mathcal B_{\overline{\operatorname{Ric}},R}(x,z) \in[0,\infty].
\]
Finally, for $z\in S_x^\perp \Sigma$, set
\begin{equation*}
    \mathcal T_n(x,z)
:= \liminf_{R\to\infty} e^{-\mathcal B_{\overline{\operatorname{Ric}},R}(x,z)} \min\left\{ \mathcal H_{n,R}(x,z), \langle -\sigma(x),z\rangle \right\}.
\end{equation*}

\subsection{Quantitative Fenchel--Willmore inequality}
We are now in a position to state the main result of this section, namely, the quantitative Fenchel--Willmore inequality.
\begin{theorem}[Quantitative Fenchel--Willmore inequality]
\label{thm:quantitative-Willmore}
Let $(M^N,\overline g)$ be a complete noncompact Riemannian manifold satisfying
$\overline{\operatorname{Ric}}_n\ge0$, and let
$\Sigma^n\subset M^N$ be a closed
immersed submanifold.  Then
\begin{equation}\label{eq:quantitative-Willmore}
\begin{aligned}
\int_{\Sigma}|\sigma|^n d \operatorname{vol}_{\Sigma}-\theta\left|\mathbb{S}^n\right| \geq & \frac{1}{C_{n, m}} \int_{\Sigma} \int_{S_x^\perp\Sigma}\langle-\sigma(x), z\rangle^n_{{+}}\left(1-e^{-\mathcal{B}_{\overline {\operatorname{Ric}}}(x, z)}\right) d S_z d \operatorname{vol}_{\Sigma} \\
& +\frac{1}{C_{n, m}} \int_{\Sigma} \int_{S_x^\perp\Sigma}\langle-\sigma(x), z\rangle^{n-1}_{{{+}}}\mathcal{T}_n(x, z) d S_z d \operatorname{vol}_{\Sigma},
\end{aligned}
\end{equation}
where $C_{n,m} := \frac{|\mathbb S^{N-1}|}{|\mathbb S^n|}$.
\end{theorem}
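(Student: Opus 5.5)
We start from the quantitative tube-volume comparison, Corollary~\ref{cor:quantitative-tube-volume}, enlarge the $t$-range to $[0,R]$, and divide by $|\mathbb B^N|R^N$. Since $\Sigma$ is compact, $B_o(R-D)\subset T_R(\Sigma)$ for a fixed $D$, so the left side tends to $\theta$ as $R\to\infty$.

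The first step is to localize the integrand. Since $\mathcal B_{\overline{\operatorname{Ric}}}\ge0$ and $\mathcal R_n\ge0$, the bracket is at most $(1-t\langle\sigma,z\rangle)_+$. Substituting $t=R\tau$, the integrand becomes $R^{N}\tau^{m-1}\bigl[\tfrac1R-\tau\langle\sigma,z\rangle-\tau\,(\cdots)\bigr]_+^n$. As $R\to\infty$, directions with $\langle-\sigma,z\rangle\le0$ contribute $O(R^{N-n})$ after division, and this is negligible. For directions with $\langle-\sigma(x),z\rangle>0$ we keep the curvature terms.

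The second step is to bound the remainder. Write $\mathcal H_{n,t}$ for the $n$-curvature term at time $t$, so the bracket is $1+t\langle-\sigma,z\rangle-t\mathcal H_{n,t}$. Use $\mathcal H_{n,t}\ge\min\{\mathcal H_{n,t},\langle-\sigma,z\rangle\}=:\mu_t$ with $0\le\mu_t\le\langle-\sigma,z\rangle=:\kappa$. Then
\[
(1+t\kappa-t\mu_t)^n\le(1+t\kappa)^n-n\,t\mu_t\,(1+t(\kappa-\mu_t))^{n-1}\le(1+t\kappa)^n-n\,t^n\mu_t(\kappa-\mu_t)^{n-1}.
\]
This is awkward, and a cleaner route may be the convexity bound $(a-b)^n\le a^n-n b(a-b)^{n-1}$ with $a=1+t\kappa$ and $b=t\mu_t$. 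I expect the final constant to come out as $\kappa^{n-1}\mu$ after limits, which requires care. An alternative is to restrict to the dominant region $t\in[\epsilon R,R]$. There, by monotonicity of $\mathcal B_{\overline{\operatorname{Ric}},t}$ in $t$, we would like $e^{-\mathcal B_t}$ bounded by something like $e^{-\mathcal B_{\overline{\operatorname{Ric}},\epsilon R}}$. However, $\mathcal B_{\overline{\operatorname{Ric}},t}(x,z)$ with parameter $t$ is not the same as evaluation at $R$, and $\mathcal H_{n,t}$ is likewise not monotone. This is why the statement uses a $\liminf$, and I would handle it by Fatou's lemma.

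The third step is to pass to the limit. After dividing by $|\mathbb B^N|R^N$ and substituting $t=R\tau$, the right side is
\[
\frac{1}{|\mathbb B^N|}\int_\Sigma\int_{S_x^\perp\Sigma}\int_0^1\tau^{m-1}\Bigl(\tfrac1R+\tau\kappa-\tau\mu_{R\tau}\Bigr)_+^n e^{-\mathcal B_{R\tau}}\,d\tau.
\]
Apply reverse Fatou, which is legitimate since the integrand is bounded by $(1+\kappa)^n$ and hence integrable. The limsup of the integrand for fixed $\tau>0$ is at most
\[
\tau^{N-1}\Bigl(\kappa^n e^{-\mathcal B(x,z)}-n\kappa^{n-1}\liminf_{R}\, e^{-\mathcal B_{R}}\mu_R\Bigr),
\]
using $(\kappa-\mu)^n\le\kappa^n-n\kappa^{n-1}\mu$, which holds since $0\le\mu\le\kappa$ by convexity. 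Matching this to the liminf defining $\mathcal T_n(x,z)$ requires passing from $R\tau$ to $R$. Because $\liminf_{R\to\infty}$ of $f(R\tau)$ equals $\liminf_{R\to\infty} f(R)$ for fixed $\tau>0$, this works.

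Integrating $\int_0^1\tau^{N-1}\,d\tau=1/N$ and using $|\mathbb B^N|N=|\mathbb S^{N-1}|$ together with $\int_{S_x^\perp\Sigma}\langle-\sigma,z\rangle_+^n\,dS_z=\frac{|\mathbb S^{N-1}|}{|\mathbb S^{n}|}|\sigma|^n\cdot\frac{1}{C_{n,m}}\cdots$ (the standard spherical identity, with $\int_{S^{m-1}}\langle w,z\rangle_+^n=|\sigma|^n\,|\mathbb S^{N-1}|/|\mathbb S^n|$) yields
\[
\theta\le \frac{1}{|\mathbb S^{N-1}|}\iint\Bigl(\kappa_+^n e^{-\mathcal B}-n\kappa_+^{n-1}\mathcal T_n\Bigr).
\]
Rearranging gives \eqref{eq:quantitative-Willmore}.

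The main obstacle is the bookkeeping of constants, especially whether the factor $n$ in front of $\mathcal T_n$ survives, and the non-monotone dependence of $\mathcal B_t,\mathcal H_{n,t}$ on $t$ inside the Fatou argument.
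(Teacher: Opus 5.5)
Your overall route is the paper's: the tube-volume bound of Corollary~\ref{cor:quantitative-tube-volume}, the rescaling $t=R\tau$, a pointwise algebraic inequality that splits off the $\mathcal T_n$ term, a Fatou-type limit, and the spherical identity of Lemma~\ref{lem:spherical-beta-integral}. The paper instead splits into $\mathcal M_R-\mathcal E_R^{\overline{\operatorname{Ric}}}-\mathcal E_R^n$ and uses dominated convergence plus Fatou; your reverse Fatou on the whole bounded integrand is an equivalent variation.

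\textbf{The gap.} The algebraic step in your limit is false. You claim $(\kappa-\mu)^n\le\kappa^n-n\kappa^{n-1}\mu$ for $0\le\mu\le\kappa$, but for $n\ge2$ and $\mu>0$ the reverse holds: $(\kappa-\mu)^n\ge\kappa^n-n\kappa^{n-1}\mu$. Convexity of $x\mapsto x^n$ puts the graph above its tangent line at $\kappa$; for $n=2$ the difference is exactly $\mu^2$. So your final bound with $n\,\kappa_+^{n-1}\mathcal T_n$ is not established. Moreover, no pointwise bound of this form can give a factor larger than $1$: an estimate $(\kappa-\mu)^n\le\kappa^n-c\,\kappa^{n-1}\mu$ valid for all $0\le\mu\le\kappa$ forces $c\le1$ (take $\mu=\kappa$). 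This answers your concern about whether the factor $n$ survives: it does not.

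\textbf{The fix.} Use $(\kappa-\mu)^n\le\kappa^{n-1}(\kappa-\mu)=\kappa^n-\kappa^{n-1}\mu$, valid since $0\le\kappa-\mu\le\kappa$; this is Lemma~\ref{lem:quantitative-binomial}. To absorb the shift, apply it with $\kappa_R:=\kappa+\frac1{R\tau}$. Since $\mu_{R\tau}\le\kappa<\kappa_R$, you get $(\kappa_R-\mu_{R\tau})^n\le\kappa_R^n-\kappa^{n-1}\mu_{R\tau}$. Now multiply by $e^{-\mathcal B_{\overline{\operatorname{Ric}},R\tau}(x,z)}$, which decreases to $e^{-\mathcal B_{\overline{\operatorname{Ric}}}(x,z)}$ because $\mathcal B_{\overline{\operatorname{Ric}},t}$ is nondecreasing in $t$. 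Combined with your (correct) observation that $\liminf_R f(R\tau)=\liminf_R f(R)$, the pointwise limsup of the rescaled integrand is at most $\tau^{N-1}\bigl(\kappa^ne^{-\mathcal B_{\overline{\operatorname{Ric}}}(x,z)}-\kappa^{n-1}\mathcal T_n(x,z)\bigr)$, with coefficient $1$ on $\mathcal T_n$. The rest of your computation then lands exactly on \eqref{eq:quantitative-Willmore}, which is why the statement carries no factor $n$.

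A minor correction: $\mathcal H_{n,t}$ is in fact nondecreasing in $t$ when $\mathcal R_n\ge0$. The $\liminf$ in $\mathcal T_n$ is needed because of the competing decreasing factor $e^{-\mathcal B_{\overline{\operatorname{Ric}},R}}$, not because $\mathcal H_{n,t}$ fails to be monotone. This does not affect the argument.
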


\begin{lemma}
\label{lem:quantitative-binomial}
For $a,b\ge0$, $0\le q\le1$, and every integer $n\ge1$,
\[
q(a-b)_+^n
\le
 a^n-a^n(1-q)-q a^{n-1}\min\{a,b\}.
\]
\end{lemma}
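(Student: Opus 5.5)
Note first that the right-hand side simplifies: $a^n-a^n(1-q)=qa^n$. So the claim is equivalent, after dividing by $q$ (the case $q=0$ is trivial since both sides vanish), to
\[
(a-b)_+^n\le a^n-a^{n-1}\min\{a,b\}.
\]
The dependence on $q$ therefore drops out, and the plan is to prove this reduced inequality by splitting into the cases $b\ge a$ and $b<a$.

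\emph{Case $b\ge a$.} Here $(a-b)_+=0$ and $\min\{a,b\}=a$, so both sides equal $0$ and there is nothing to prove.

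\emph{Case $b<a$.} Here $\min\{a,b\}=b$, and the right-hand side becomes $a^{n-1}(a-b)$. Since $0<a-b\le a$, we get
\[
(a-b)^n=(a-b)^{n-1}(a-b)\le a^{n-1}(a-b),
\]
using $(a-b)^{n-1}\le a^{n-1}$. This holds for $n\ge1$, with the convention $0^0=1$ when $n=1$; in that case equality holds trivially.

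Multiplying back by $q\ge0$ gives the stated inequality. There is no real obstacle here. The only points needing care are the $q=0$ case and keeping track of the sign convention $(\cdot)_+$ when $a=b$ or when $a=0$, where both sides are zero.
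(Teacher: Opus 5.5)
Your proposal is correct and follows essentially the same route as the paper: you rewrite the right-hand side as $q a^{n-1}\bigl(a-\min\{a,b\}\bigr)$ and then split into the cases $b\ge a$ and $b<a$, using $(a-b)^n\le a^{n-1}(a-b)$ in the second case. One small remark: in that case $a>b\ge0$ gives $a-b>0$ and $a>0$, so the $0^0$ convention you mention never actually comes up.
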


\begin{proof}
The right-hand side equals
$q a^{n-1}\bigl(a-\min\{a,b\}\bigr)$.
If $b\ge a$, both sides vanish.  If $0\le b<a$, then
\[
(a-b)^n\le a^{n-1}(a-b),
\]
which proves the claim.
\end{proof}

\begin{lemma} \label{lem:spherical-beta-integral}
Let $w\in\mathbb R^m$.
Then for every integer $n\ge1$,
$$\int_{\mathbb S^{m-1}} \langle w,z\rangle^n_{{{+}}}\,dS_z = \frac{|\mathbb S^{N-1}|}{|\mathbb S^n|}|w|^n.$$
\end{lemma}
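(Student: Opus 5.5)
The plan is to reduce the integral to a one-dimensional computation by rotational invariance, and then fix the constant by a Gaussian integral.

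If $w=0$ both sides vanish. Otherwise, by invariance of $dS_z$ under $O(m)$, we may take $w=|w|e_1$. Then
\[
\int_{\mathbb S^{m-1}}\langle w,z\rangle_+^n\,dS_z=|w|^n\int_{\mathbb S^{m-1}}(z_1)_+^n\,dS_z ,
\]
so it remains to show $c_{n,m}:=\int_{\mathbb S^{m-1}}(z_1)_+^n\,dS_z=|\mathbb S^{n+m-1}|/|\mathbb S^n|$.

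To compute $c_{n,m}$ I will use the Gaussian trick. Integrating the homogeneous function $(x_1)_+^n$ against $e^{-|x|^2}$ on $\mathbb R^m$ in polar coordinates gives
\[
\int_{\mathbb R^m}(x_1)_+^n e^{-|x|^2}\,dx=c_{n,m}\int_0^\infty r^{n+m-1}e^{-r^2}\,dr=\tfrac12 c_{n,m}\,\Gamma\!\left(\tfrac{n+m}{2}\right).
\]
The left side factorizes as
\[
\int_0^\infty x_1^n e^{-x_1^2}\,dx_1\cdot\pi^{(m-1)/2}=\tfrac12\Gamma\!\left(\tfrac{n+1}{2}\right)\pi^{(m-1)/2}.
\]
Hence
\[
c_{n,m}=\frac{\Gamma\!\left(\frac{n+1}{2}\right)\pi^{(m-1)/2}}{\Gamma\!\left(\frac{n+m}{2}\right)}.
\]

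Finally, I compare this with $|\mathbb S^{k-1}|=2\pi^{k/2}/\Gamma(k/2)$:
\[
\frac{|\mathbb S^{n+m-1}|}{|\mathbb S^n|}=\frac{2\pi^{(n+m)/2}/\Gamma\!\left(\frac{n+m}{2}\right)}{2\pi^{(n+1)/2}/\Gamma\!\left(\frac{n+1}{2}\right)}=\frac{\pi^{(m-1)/2}\,\Gamma\!\left(\frac{n+1}{2}\right)}{\Gamma\!\left(\frac{n+m}{2}\right)}=c_{n,m},
\]
which proves the lemma.

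There is no real obstacle beyond bookkeeping. The one point to check is the case $m=1$, where $\mathbb S^0=\{\pm1\}$ carries counting measure. There $c_{n,1}=1$, and the right-hand side is $|\mathbb S^n|/|\mathbb S^n|=1$, consistent with the general formula. The Gaussian computation above remains valid in this case with the convention $\pi^0=1$.
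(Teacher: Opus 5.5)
Your proof is correct, and it differs from the paper's mainly in where the constant comes from. The paper quotes Brendle's ball identity $\int_{\mathbb B^m(r)}\langle w,z\rangle_+^n\,dz=r^N\frac{|\mathbb B^N|}{|\mathbb S^n|}|w|^n$ and differentiates it at $r=1$, which amounts to writing the ball integral in polar coordinates. It then concludes using $|\mathbb S^{N-1}|=N|\mathbb B^N|$.

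You instead reduce to $w=|w|e_1$ by rotation invariance. You then evaluate $\int_{\mathbb S^{m-1}}(z_1)_+^n\,dS_z$ from scratch by integrating the homogeneous function $(x_1)_+^n$ against the Gaussian weight $e^{-|x|^2}$, which factorizes. Finally you match the resulting value $\Gamma\!\left(\tfrac{n+1}{2}\right)\pi^{(m-1)/2}/\Gamma\!\left(\tfrac{n+m}{2}\right)$ with $|\mathbb S^{k-1}|=2\pi^{k/2}/\Gamma(k/2)$. I checked these computations and they are right.

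Both arguments rest on the same polar decomposition of a degree-$n$ homogeneous function; only the radial weight differs (the indicator of a ball versus a Gaussian). The paper's version is a two-line reduction to a lemma from the ABP literature it already draws on. Yours is self-contained, gives the constant explicitly in Gamma functions, and checks the degenerate cases $w=0$ and $m=1$ directly.
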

\begin{proof}
By \cite[Lemma 2.5]{Brendle-ABP}, $\int_{\mathbb B^m(r)} \langle w, z\rangle_{+}^n dz=r^{N}\frac{\left|\mathbb{B}^{N}\right|}{\left|\mathbb{S}^n\right|}|w|^n$, where  $\mathbb B^m(r)$ is the ball of radius $r$ in $\mathbb R^m$. Differentiating this identity at $r=1$ and Fubini's theorem then gives the result, as $|\mathbb S^{N-1}|=N|\mathbb B^{N}|$.
\end{proof}

\begin{proof}[Proof of Theorem~\ref{thm:quantitative-Willmore}]
Fix $o\in M$ and set $R_0:=\max_{x\in\Sigma}d(o, x)$. Since $B_o(R-R_0)\subset T_R(\Sigma)\subset B_o(R+R_0)$ for all sufficiently large $R$,
it is easy to see that
\begin{equation}
\label{eq:tube-asymptotic-volume}
\lim_{R\to\infty}
\frac{\operatorname{vol}(T_R(\Sigma))}
{|\mathbb B^N|R^N}
=\theta.
\end{equation}
For $t>0$, suppressing the dependence on $(x, \omega)$, define
$$
a_t:=\left(1-t\langle\sigma(x), \omega\rangle\right)_+,
\qquad
b_t:=t\mathcal H_{n, t}(x, \omega),
\qquad
q_t:=e^{-\mathcal B_{\overline{\operatorname{Ric}}, t}(x, \omega)}.
$$
The hypothesis $\overline{\operatorname{Ric}}_n\ge0$ implies
$b_t\ge0$ and $0\le q_t\le1$. Moreover,
$$
\left[ 1-t\langle\sigma(x), \omega\rangle -t\mathcal H_{n, t}(x, \omega) \right]_+^n =(a_t-b_t)_+^n.
$$
By Lemma~\ref{lem:quantitative-binomial},
\begin{equation} \label{eq: q a b}
q_t(a_t-b_t)_+^n
\le a_t^n-a_t^n(1-q_t) -q_ta_t^{n-1}\min\{a_t, b_t\}.
\end{equation}
Corollary \ref{cor:quantitative-tube-volume} and \eqref{eq: q a b} then imply
\begin{equation}
\begin{aligned}\label{eq:vol TR estimate}
& \operatorname{vol}(T_R(\Sigma))\\
\le& \int_{\Sigma} \int_{S_x^{\perp} \Sigma} \int_0^{R} t^{m-1} e^{-\mathcal{B}_{\overline{\operatorname{Ric}}}(x, \omega, t)}
\left[1-t\langle\sigma(x), \omega\rangle-\frac{t}{n} \int_0^t\left(1-\frac{s}{t}\right)^2 \mathcal{R}_n(x, \omega, s) d s\right]_{+}^n d t d \omega d \operatorname{vol}_{\Sigma}\\
\le& \mathcal M_R-\mathcal E_R^{\overline{\operatorname{Ric}}} -\mathcal E_R^n,
\end{aligned}
\end{equation}
where
\begin{align*}
\mathcal M_R
&:= \int_\Sigma\int_{S_x^\perp\Sigma}\int_0^R t^{m-1}a_t^n \, dt\, dS_\omega\, d\operatorname{vol}_\Sigma, \\ \mathcal E_R^{\overline{\operatorname{Ric}}}
&:= \int_\Sigma\int_{S_x^\perp\Sigma}\int_0^R t^{m-1}a_t^n(1-q_t) \, dt\, dS_\omega\, d\operatorname{vol}_\Sigma, \\ \mathcal E_R^n
&:= \int_\Sigma\int_{S_x^\perp\Sigma}\int_0^R t^{m-1}q_ta_t^{n-1}\min\{a_t, b_t\} \, dt\, dS_\omega\, d\operatorname{vol}_\Sigma.
\end{align*}
Let
\begin{align*}
\mathcal I_{\overline{\operatorname{Ric}}}
&:= \int_{\Sigma} \int_{S_x^\perp\Sigma} \langle -\sigma(x),z\rangle ^n_{{{+}}} \left(1-e^{-\mathcal B_{\overline{\operatorname{Ric}}}(x, z)}\right) \, dS_z\, d\operatorname{vol}_\Sigma, \\
\mathcal I_n
&:= \int_{\Sigma} \int_{S_x^\perp\Sigma} \langle-\sigma(x), z\rangle^{n-1}_{{{+}}}\mathcal T_n(x, z) \, dS_z\, d\operatorname{vol}_\Sigma.
\end{align*}

We claim that
\begin{align}
\lim_{R\to\infty}\frac{\mathcal M_R}{R^N}
& = \frac{C_{n, m}}{N} \int_\Sigma|\sigma|^n\, d\operatorname{vol}_\Sigma,
\label{eq:direct-main-limit}\\
\lim_{R\to\infty} \frac{\mathcal E_R^{\overline{\operatorname{Ric}}}}{R^N}
& = \frac{\mathcal I_{\overline{\operatorname{Ric}}}}{N},
\label{eq:direct-Ricci-limit}\\
\liminf_{R\to\infty}\frac{\mathcal E_R^n}{R^N}
& \ge \frac{\mathcal I_n}{N}.
\label{eq:direct-tangential-limit}
\end{align}
To see this, let $t=R\rho$, then we have
$$
\frac{\mathcal M_R}{R^N}
= \int_\Sigma\int_{S_x^\perp\Sigma}\int_0^1 \rho^{m-1} \left(\frac{a_{R\rho}(x, \omega)}{R}\right)^n \, d\rho\, dS_\omega\, d\operatorname{vol}_\Sigma.
$$
Moreover,
$$
\frac{a_{R\rho}(x, \omega)}{R} \longrightarrow \rho\left(-\langle\sigma(x), \omega\rangle\right)_+,
\qquad
0\le \frac{a_{R\rho}(x, \omega)}{R}
\le \frac1R+|\sigma(x)|.
$$
Dominated convergence theorem and Lemma~\ref{lem:spherical-beta-integral} therefore give
\begin{align*}
\lim_{R\to\infty}\frac{\mathcal M_R}{R^N}
& = \int_\Sigma\int_{S_x^\perp\Sigma}\int_0^1 \rho^{N-1} \left(-\langle\sigma(x), \omega\rangle\right)_+^n \, d\rho\, dS_\omega\, d\operatorname{vol}_\Sigma\\
& = \frac1N \int_{\Sigma} \int_{S_x^\perp\Sigma} \left\langle -{\sigma(x)}, z\right\rangle  ^n_{{{+}}}\, dS_z\, d\operatorname{vol}_\Sigma\\
& = \frac{C_{n, m}}{N} \int_\Sigma|\sigma|^n\, d\operatorname{vol}_\Sigma,
\end{align*}
which proves \eqref{eq:direct-main-limit}. Since $q_{R\rho}(x, \omega) \longrightarrow e^{-\mathcal B_{\overline{\operatorname{Ric}}}(x, \omega)}$
for every $\rho>0$, the same argument proves
\eqref{eq:direct-Ricci-limit}.

For the remaining term, fix $x\in\Sigma$ and $z\in \{\zeta \in S_x^\perp\Sigma: \langle -\sigma(x),\zeta \rangle \ge0\}$.
For $t=R\rho$,
$$
a_t=t\left(\langle -\sigma(x), z\rangle+\frac1t\right), \qquad b_t=t\mathcal H_{n, t}(x, z),
$$
and hence
\begin{align*}
\frac{q_{R\rho}a_{R\rho}^{n-1}\min\{a_{R\rho},b_{R\rho}\}}{R^n}
&=\rho^n q_{R\rho}
\left(\langle-\sigma(x),z\rangle+\frac1{R\rho}\right)^{n-1}\\
&\quad\times
\min\left\{\langle-\sigma(x),z\rangle+\frac1{R\rho},
\mathcal H_{n,R\rho}(x,z)\right\}.
\end{align*}
Because
$$
\left| \min\left\{\langle -\sigma(x), z\rangle+\frac1t, \mathcal H_{n, t}\right\} -\min\left\{\langle -\sigma(x), z\rangle, \mathcal H_{n, t}\right\} \right|
\le\frac1t,
$$
the definition of $\mathcal T_n$  gives
$$
\liminf_{R\to\infty} \frac{q_{R\rho}a_{R\rho}^{n-1} \min\{a_{R\rho}, b_{R\rho}\}}{R^n}
\ge \rho^n {\langle -\sigma (x) , z \rangle_+^{n-1}} \mathcal T_n(x, z).
$$
Fatou's lemma applied to $\frac{\mathcal{E}_R^n}{R^N}$, followed by
$\int_0^1\rho^{N-1}\, d\rho=1/N$, proves
\eqref{eq:direct-tangential-limit}.

By dividing \eqref{eq:vol TR estimate} by $R^N$ and then taking the liminf, and using \eqref{eq:tube-asymptotic-volume} together with \eqref{eq:direct-main-limit}--\eqref{eq:direct-tangential-limit}, we arrive at
$$
\theta|\mathbb B^N|
\le \frac1N \left[ C_{n, m}\int_\Sigma|\sigma|^n\, d\operatorname{vol}_\Sigma -\mathcal I_{\overline{\operatorname{Ric}}} -\mathcal I_n \right].
$$
Finally,
$$
N|\mathbb B^N| = |\mathbb S^{N-1}| = C_{n, m}|\mathbb S^n|.
$$
Rearranging the preceding inequality and dividing by $C_{n, m}$ gives
$$
\int_\Sigma|\sigma|^n\, d\operatorname{vol}_\Sigma
-\theta|\mathbb S^n|
\ge
\frac{\mathcal I_{\overline{\operatorname{Ric}}}}{C_{n, m}}
+
\frac{\mathcal I_n}{C_{n, m}},
$$
which is precisely \eqref{eq:quantitative-Willmore}.
\end{proof}

{The two curvature corrections are nonnegative. Discarding them gives the following standard Fenchel--Willmore inequality.}

\begin{corollary}
Under the assumptions of Theorem~\ref{thm:quantitative-Willmore},
\begin{equation}\label{eq:standard-willmore}
\int_\Sigma|\sigma|^n\,d\operatorname{vol}_\Sigma \ge \theta|\mathbb S^n|.
\end{equation}

\end{corollary}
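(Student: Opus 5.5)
This corollary should follow directly from Theorem~\ref{thm:quantitative-Willmore}. The plan is to show that both integrals on the right-hand side of \eqref{eq:quantitative-Willmore} are nonnegative, and then drop them.

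For the first integral, $\overline{\operatorname{Ric}}_n\ge0$ implies $\overline{\operatorname{Ric}}\ge0$ by the monotonicity of intermediate Ricci curvature. Since the weight $s(1-s/R)$ is nonnegative on $[0,R]$, each $\mathcal B_{\overline{\operatorname{Ric}},R}(x,z)$ is nonnegative. Hence the limit $\mathcal B_{\overline{\operatorname{Ric}}}(x,z)\in[0,\infty]$ is nonnegative, with the convention $e^{-\infty}=0$. It follows that $1-e^{-\mathcal B_{\overline{\operatorname{Ric}}}(x,z)}\in[0,1]$, and since $\langle-\sigma(x),z\rangle_+^n\ge0$, the integrand is nonnegative.

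For the second integral, $\overline{\operatorname{Ric}}_n\ge0$ gives $\mathcal R_n(x,z,s)\ge0$, because $\mathcal R_n$ is the $n$-Ricci curvature of $\gamma_{x,z}'(s)$ with respect to the parallel transport of $T_x\Sigma$. Therefore $\mathcal H_{n,R}(x,z)\ge0$. The function $\mathcal T_n(x,z)$ only matters where $\langle-\sigma(x),z\rangle_+^{n-1}\neq0$, which forces $\langle-\sigma(x),z\rangle>0$ when $n\ge2$. There the minimum in its definition is nonnegative, so the liminf defining $\mathcal T_n$ is nonnegative.

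The only point that needs a little care is the case $n=1$. Then $\langle-\sigma,z\rangle_+^{0}$ should be read as the indicator of the set where $\langle-\sigma,z\rangle\ge0$, which matches the set on which the proof of Theorem~\ref{thm:quantitative-Willmore} estimated $\mathcal E_R^n$. On that set the minimum in $\mathcal T_n$ is again nonnegative. Alternatively, one can restrict the second integral to $\{\langle-\sigma,z\rangle\ge0\}$, as in that proof.

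Dropping both nonnegative terms from \eqref{eq:quantitative-Willmore} then gives \eqref{eq:standard-willmore}.
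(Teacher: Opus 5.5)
Your proposal is correct and takes the same approach as the paper, which obtains the corollary by noting that both curvature remainders in \eqref{eq:quantitative-Willmore} are nonnegative under $\overline{\operatorname{Ric}}_n\ge0$ and discarding them. Your check that the minimum in $\mathcal T_n$ only matters where $\langle-\sigma(x),z\rangle\ge0$, including the reading of the $0^0$ factor when $n=1$, supplies a detail the paper leaves implicit.
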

\subsection{Equality case of Theorem \ref{thm:quantitative-Willmore}}
We now investigate the equality case of Theorem \ref{thm:quantitative-Willmore}.

Let us first define the notion of active directions.
For $(x, z)\in S^\perp\Sigma$, set
$$
h(x, z):=\lim_{R\to\infty}\mathcal H_{n, R}(x, z),
$$
and call $(x, z)$ \emph{active} if
$$
\langle-\sigma(x),z\rangle>0, \qquad
\mathcal B_{\overline{\operatorname{Ric}}}(x, z)<\infty,
\qquad
h(x, z)<\langle-\sigma(x),z\rangle.
$$
Let $\mathcal A\subset S^\perp\Sigma$ be the set of active directions and set
$$
C_{\mathcal A}(R)
:=
\{(x, sz):(x, z)\in\mathcal A, \ 0<s<R\}.
$$
We call a geodesic ray active if it is of the form $\gamma_{x,z}:[0,\infty)\to M$ for some $(x,z)\in \mathcal A$.

\begin{theorem}
\label{thm:metric-rigidity}
Assume the hypotheses of
Theorem~\ref{thm:quantitative-Willmore}.
Then equality holds in
\eqref{eq:quantitative-Willmore} if and only if, after removing a null
subset from $\mathcal A$, the following properties hold.

\begin{enumerate}

\item
With respect to the horizontal--vertical splitting
determined by the normal connection,
\begin{equation}
\label{eq:active-conical-metric}
\Phi_{\Sigma}^*\overline g
=
ds^2+s^2g_{\mathcal V}
+\left(1+s\langle-\sigma(x),z\rangle\right)^2\pi^*g_\Sigma
\end{equation}
on $C_{\mathcal A}(\infty)$. Here $\Phi_{\Sigma} :S^\perp \Sigma\times (0,\infty)\to M$ is the normal exponential map defined by  $\Phi_{\Sigma}(x,z,s)=\exp_x(sz)$, and $g_{\mathcal V}$ is the metric on the fiber of $\pi:S^\perp \Sigma\to \Sigma$.
\item
The normal exponential map is injective on
$C_{\mathcal A}(\infty)$.
\item
If $\theta>0$,
$$
\lim_{R\to\infty}
\frac{\operatorname{vol}\left(\Phi_{\Sigma} C_{\mathcal A}(R)\right)}
{\operatorname{vol}(T_R(\Sigma))}
=1.
$$
\end{enumerate}

\end{theorem}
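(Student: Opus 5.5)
The plan is to trace every inequality in the proof of Theorem~\ref{thm:quantitative-Willmore} and record when each becomes an equality in the limit $R\to\infty$. The chain has four lossy steps:
\begin{enumerate}
\item[(a)] the area-formula inequality $\operatorname{vol}(T_R(\Sigma))\le\int\int\int_0^{\min\{R,\tau_\Sigma\}}J_\Sigma$, which loses injectivity and the part of $T_R(\Sigma)$ not covered;
\item[(b)] the pointwise Jacobian bound of Corollary~\ref{cor:quantitative-Heintze-Karcher}, combining the AM--GM step for $\det B_t$, the index-lemma steps in Proposition~\ref{prop:gradient-normal-main} and Lemma~\ref{lem:gradient-normal-signed-ambient}, and the enlargement of $\min\{R,\tau_\Sigma\}$ to $R$;
\item[(c)] Lemma~\ref{lem:quantitative-binomial};
\item[(d)] the Fatou step producing $\mathcal T_n$.
\end{enumerate}
Equality in \eqref{eq:quantitative-Willmore} means the total normalized defect from all four steps tends to zero. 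Since each defect is nonnegative, each one must vanish asymptotically after division by $R^N$.

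For the ``only if'' direction, I would rescale $t=R\rho$ as in the proof of the theorem. By Fatou, a vanishing total defect forces the pointwise defects to vanish for almost every $(x,z,\rho)$. On an active direction we have $\mathcal B_{\overline{\operatorname{Ric}}}(x,z)<\infty$, so $q_t$ stays bounded below. We also have $h(x,z)<\langle-\sigma(x),z\rangle$, so for large $t$ the quantity $\min\{a_t,b_t\}$ equals $b_t$. The latter is exactly the regime where Lemma~\ref{lem:quantitative-binomial} is strict whenever $0<b<a$. The first task is therefore to show that equality in the chain forces $\mathcal H_{n,R}\equiv 0$ along the ray, so that the binomial loss $a^{n-1}(a-b)-(a-b)^n$ vanishes. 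The second is that $\mathcal B_{\overline{\operatorname{Ric}},R}\equiv 0$, which follows from the factor $e^{-\mathcal B}$ being matched exactly. It also requires $\tau_\Sigma(x,z)=\infty$, since otherwise a set of positive $\rho$-measure is lost when enlarging $\min\{R,\tau_\Sigma\}$ to $R$. This yields (2): the normal exponential map is minimizing, and hence injective, on $C_{\mathcal A}(\infty)$ up to a null set.

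Next I would use equality in the index lemma, in both Proposition~\ref{prop:gradient-normal-main} and Lemma~\ref{lem:gradient-normal-signed-ambient}, for all $t$. This forces the Jacobi fields to coincide with the comparison fields: $Y_i=(1-s/t)E_i$, and $\mathcal J_v(s)=s\cdot$(parallel transport) in the normal directions. Equality in AM--GM forces $B_t$ to be a multiple of the identity. Together these say the second fundamental form along an active direction is umbilic, $\langle \mathrm{II}(a,b),z\rangle=\langle\sigma,z\rangle\langle a,b\rangle$, and that $\overline R(\gamma',\cdot)\gamma'=0$ on the relevant spans. The tangential Jacobi fields with $Y(0)=e_i$ and $Y'(0)=-\langle\sigma,z\rangle e_i$ are then $(1+s\langle-\sigma,z\rangle)E_i$, and the vertical ones are $sE_\alpha$. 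Reading off $\Phi_\Sigma^*\overline g$ from these fields gives \eqref{eq:active-conical-metric}. The cross terms vanish because the fields are parallel multiples of an orthonormal frame. For (3), the defect in step (a) must be $o(R^N)$. Since inactive directions contribute to $\mathcal M_R$ only through $\langle-\sigma,z\rangle_+^n$ terms, which are absorbed exactly into the remainders, the volume of $T_R(\Sigma)$ must asymptotically come from $\Phi_\Sigma C_{\mathcal A}(R)$. Note that $\theta>0$ is needed to normalize by $\operatorname{vol}(T_R(\Sigma))\sim\theta|\mathbb B^N|R^N$.

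For the converse, I would compute $\operatorname{vol}(\Phi_\Sigma C_{\mathcal A}(R))$ exactly from \eqref{eq:active-conical-metric} and injectivity. The Jacobian is $s^{m-1}(1+s\langle-\sigma,z\rangle)^n$. Using (3), this gives $\theta|\mathbb B^N|R^N\sim\int_{\mathcal A}\int_0^R s^{m-1}(1+s\langle-\sigma,z\rangle)^n$, and the right-hand side is asymptotic to $\frac{R^N}{N}\int_{\mathcal A}\langle-\sigma,z\rangle^n$. I then need to identify this with the right-hand side of \eqref{eq:quantitative-Willmore}. On the complement of $\mathcal A$, either $\langle-\sigma,z\rangle\le0$, or $\mathcal B=\infty$ (the remainder integrand equals the full $\langle-\sigma,z\rangle^n$), or $h\ge\langle-\sigma,z\rangle$ (then $\mathcal T_n$ equals $e^{-\mathcal B}\langle-\sigma,z\rangle$, and the two remainders sum to $\langle-\sigma,z\rangle^n$). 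On $\mathcal A$ itself, the flat conical metric gives $\mathcal B=0$ and $\mathcal H=0$, so the remainders vanish there.

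The main obstacle is the ``only if'' passage from asymptotic equality, which only controls integrals after division by $R^N$, to pointwise rigidity for every $t$ rather than only in the limit. The defects at finite $t$ are weighted by $t^{m-1}$ and could a priori disappear in the normalization. I would first try the monotonicity in Proposition~\ref{prop:direct-riccati-normal-jacobian}: the ratio $J_\Sigma/(t^{m-1}(1+ct)^ne^{-R_t})$ is nonincreasing, so any strict loss at some finite $t_0$ persists as a fixed multiplicative factor $<1$ for all larger $t$. Such a factor produces a defect of order $R^N$, contradicting equality.
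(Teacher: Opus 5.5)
Your overall scheme matches the paper's: trace the chain, use the cut loss to get $\tau_\Sigma=\infty$ on active rays, upgrade asymptotic equality to equality for all $t$ via a monotone ratio, read off the metric from the rigid Jacobi fields, then do injectivity and the volume bookkeeping. For $n\ge2$ your specific route also closes. There is, however, a genuine gap at $n=1$, which the theorem covers.

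Your upgrade step relies on first getting $h=0$ from the strictness of Lemma~\ref{lem:quantitative-binomial}. For $n=1$ that lemma is an identity, $(a-b)_+=a-\min\{a,b\}$, so it yields nothing about $h$. Without $h=0$, the monotone ratio of Proposition~\ref{prop:direct-riccati-normal-jacobian} is compared with the wrong function. With $\kappa:=\langle-\sigma(x),z\rangle$ one has $R_t=\mathcal B_{\overline{\operatorname{Ric}},t}+\frac{nt\mathcal H_{n,t}}{1+\kappa t}$. The integrand actually used in the chain is $U_t:=t^{m-1}e^{-\mathcal B_{\overline{\operatorname{Ric}},t}}(1+\kappa t-t\mathcal H_{n,t})_+^n$, and it satisfies $U_t/\bigl(t^{m-1}(1+\kappa t)^ne^{-R_t}\bigr)\to\bigl[(1-h/\kappa)e^{h/\kappa}\bigr]^n<1$ whenever $h>0$. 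Hence $J_\Sigma/U_t\to1$ is compatible with the Riccati ratio decreasing strictly, and no contradiction arises.

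Likewise, the claim that $\mathcal B_{\overline{\operatorname{Ric}},R}\equiv0$ ``follows from the factor $e^{-\mathcal B}$ being matched exactly'' has no mechanism behind it. Matching at infinity does not give matching at finite $s$. And $\left|\det D(\exp_x)_{sz}\right|=e^{-\mathcal B_{\overline{\operatorname{Ric}},s}}$ does not by itself give $\mathcal B=0$.

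The missing ingredient is the monotone quantity the paper uses for the ambient factor alone:
$\phi_{x,z}(s):=e^{\mathcal B_{\overline{\operatorname{Ric}},s}(x,z)}\left|\det D(\exp_x)_{sz}\right|$.
\begin{itemize}
\item The index-form computation in the proof of Lemma~\ref{lem:gradient-normal-signed-ambient} shows $\phi_{x,z}$ is nonincreasing with $\phi_{x,z}(0^+)=1$.
\item By \eqref{eq:det-factorization}--\eqref{eq:normal-tangential-factor}, $J_\Sigma\le\phi_{x,z}(s)\,U_s$, with no dependence on $h$.
\item Your Fatou argument then gives $\lim_{s\to\infty}\phi_{x,z}(s)=1$, hence $\phi_{x,z}\equiv1$.
\item That is equality in the ambient index lemma: $\mathcal J_{x,z}(s)=sP_s$ on all of $\gamma'^\perp$, so $\overline R(\cdot,\gamma')\gamma'=0$.
\item This gives $\mathcal B=0$, $\mathcal R_n=0$ and $h=0$ for every $n\ge1$, with no use of the binomial lemma.
\end{itemize}
The paper then writes $J_\Sigma=s^{m-1}\det(I+sA_z)$, gets $A_z\ge0$ from the absence of focal points, and obtains umbilicity from AM--GM on the leading coefficient, $\det A_z\le\kappa^n$.

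For $n\ge2$, your ordering is a legitimate alternative: binomial strictness gives $h=0$, so the Riccati comparison function equals $U_t$, so $J_\Sigma=U_t$ for all $t$. It obtains flatness and umbilicity at once from the Riccati equality case, but it is dimension-dependent.

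A smaller point: in the converse you invoke (3), which is assumed only when $\theta>0$. For $\theta=0$ you need a separate observation. Injectivity and the Jacobian formula give $\operatorname{vol}(T_R(\Sigma))\ge\frac{R^N}{N}\int_{\mathcal A}\kappa^n$, which forces $\mathcal A$ to be null.
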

\begin{proof}
Assume first that equality holds in \eqref{eq:quantitative-Willmore}. We retain the notation from the proof of Theorem~\ref{thm:quantitative-Willmore}. Since $\mathcal H_{n, R}(x, z)$ is nondecreasing in $R$, the liminf in \eqref{eq:direct-tangential-limit} is in fact a limit.

From the proof of Theorem \ref{thm:quantitative-Willmore}, we see that
\begin{equation}
\label{eq:rigidity-total-loss}
\lim_{R\to\infty} \frac{ \mathcal M_R -\mathcal E_R^{\overline{\operatorname{Ric}}} -\mathcal E_R^n -\operatorname{vol}(T_R(\Sigma)) }{R^N}
=0.
\end{equation}

Since we replace $\tau_\Sigma(x,z)$ by $R$ when we apply Corollary \ref{cor:quantitative-tube-volume}, we have
\begin{align}
0
= \lim_{R\to\infty}\frac1{R^N} \int_\Sigma\int_{S_x^\perp\Sigma} \int_{\min\{R, \tau_\Sigma(x, z)\}}^R t^{m-1}q_t(a_t-b_t)_+^n \, dt\, dS_z\, d\operatorname{vol}_\Sigma.
\label{eq:vanishing-cut-loss}
\end{align}
Let $(x, z)\in \mathcal A$ and suppose that
$\tau_\Sigma(x, z)<\infty$. Letting $t=R\rho$ for $\rho\in(0, 1)$, we have
$$
q_{R\rho} \longrightarrow e^{-\mathcal B_{\overline{\operatorname{Ric}}}(x, z)}>0
\quad \text{ and }\quad
\frac{(a_{R\rho}-b_{R\rho})_+}{R} \longrightarrow \rho \left[ \langle-\sigma(x), z\rangle - \lim_{t\to\infty}\mathcal H_{n, t}(x, z) \right]>0
$$
as $R\to \infty$.
\eqref{eq:vanishing-cut-loss} then shows that the set of active directions with finite $\tau_\Sigma$ is of measure zero.
After removing this set, every active normal ray satisfies
$\tau_\Sigma(x, z)=\infty$.

Along an active ray, set
$$\phi_{x,z}(s) := e^{\mathcal B_{\overline{\operatorname{Ric}},s}(x,z)} \left|\det D(\exp_x)_{sz}\right|.$$
It is easy to see that $\phi_{x,z}(s)\to1$ as $s\downarrow0$. We claim that $\phi_{x,z}$ is nonincreasing. Indeed, direct differentiation gives
$$
\frac{d}{d s} \mathcal{B}_{\overline{\operatorname{Ric}}, s}(x, z)=\frac{1}{s^2} \int_0^s \tau^2 \overline{\operatorname{Ric}}\left(\gamma^{\prime}(\tau), \gamma^{\prime}(\tau)\right) d \tau
$$
whereas the proof of Lemma \ref{lem:gradient-normal-signed-ambient} shows that
$$
\frac{d}{d s} \log \left|\det D\left(\exp _x\right)_{s z}\right| \leq-\frac{1}{s^2} \int_0^s \tau^2 \overline{\operatorname{Ric}}\left(\gamma^{\prime}(\tau), \gamma^{\prime}(\tau)\right) d \tau.
$$
This implies $\phi_{x, z}$ is nonincreasing.

We claim that $\lim _{s \rightarrow \infty} \phi_{x, z}(s)=1$ for almost every active direction. Suppose not, if $\lim_{s\to\infty}\phi_{x,z}(s)<1$ on a positive-measure set of active directions, then monotonicity of $\phi_{x,z}$ and continuity implies that there exists $c<1$, $\delta>0$, $s_0>0$, and a positive-measure subset $E \subset \mathcal{A}$, such that for $(x,z)\in E$ and $s\ge s_0$,
$$
\phi_{x, z}(s) \leq c, \quad e^{-\mathcal{B}_{\overline {\operatorname{Ric}}}(x, z)} \geq \delta, \text{ and }\quad\langle-\sigma(x), z\rangle-h(x, z) \geq \delta .
$$
Let $U_s:=s^{m-1} q_s\left(a_s-b_s\right)_{+}^n$, which is the integrand in \eqref{eq:vol TR estimate}. Since $\mathcal{H}_{n, s}(x, z) \leq h(x, z)$ and $q_s \geq e^{-\mathcal{B}_{\overline{\operatorname{Ric}}}(x, z)}$, we have
$$
U_s \geq \delta^{n+1} s^{N-1} \quad \text { on } E \times\left[s_0, \infty\right)
$$
and so \eqref{eq:det-factorization} and \eqref{eq:normal-tangential-factor} give $J_\Sigma(x,z,s)\le cU_s$ there.

By \eqref{eq: q a b}, the integrand $I_s:=s^{m-1}\left(a_s^n-a_s^n\left(1-q_s\right)-q_s a_s^{n-1} \min \left\{a_s, b_s\right\}\right)$ defining
$\mathcal M_R-\mathcal E_R^{\overline{\operatorname{Ric}}}-\mathcal E_R^n$ satisfies $I_s\ge U_s$. Therefore, if we let
$d \mu(x, z):=d S_z d \operatorname{vol}_{\Sigma}$ and $ |E|:=\mu(E)$, then for $R>s_0$,
\begin{equation*}
\begin{aligned}
\mathcal{M}_R  -\mathcal{E}_R^{\overline{\operatorname{Ric}}}-\mathcal{E}_R^n-\operatorname{vol}\left(T_R(\Sigma)\right)
& \geq \int_E \int_{s_0}^R\left(U_s-J_{\Sigma}(x, z, s)\right) d s d \mu(x, z) \\
& \geq \frac{(1-c) \delta^{n+1}|E|}{N}\left(R^N-s_0^N\right)
\end{aligned}
\end{equation*}
Dividing this by $R^N$ and letting $R\to \infty$ implies \eqref{eq:rigidity-total-loss} is a strict inequality, a contradiction. Hence
$\lim_{s\to\infty}\phi_{x,z}(s)=1$ for almost every active direction and monotonicity then yields
$\phi_{x,z}(s)\equiv1$.
In other words,
\begin{equation}\label{eq:ambient-exp-equality}
e^{\mathcal B_{\overline{\operatorname{Ric}},s}(x,z)} \left|\det D(\exp_x)_{sz}\right| =1
\qquad\text{for every }s>0.
\end{equation}

We now use the equality case in the proof of
Lemma~\ref{lem:gradient-normal-signed-ambient}. Let $P_s$ denote
parallel transport along $\gamma_{x, z}$ and let
$\mathcal J_{x, z}$ be the Jacobi tensor. From its proof, we see that for fixed $s>0$, the
Jacobi field
$$
Y_E^{(s)}(\tau) := \mathcal J_{x, z}(\tau) \mathcal J_{x, z}(s)^{-1}P_sE
$$
is equal to
$$
V_E^{(s)}(\tau) := \frac{\tau}{s}P_\tau E.
$$

Differentiating this identity at $\tau=0$ gives
$$\mathcal J_{x, z}(s)^{-1}P_sE=\frac1sE, $$
and hence
\begin{equation}
\label{eq:point-Jacobi-rigidity}
\mathcal J_{x, z}(s)E=sP_sE
\qquad\text{for every }E\perp z.
\end{equation}

The Jacobi equation applied to
\eqref{eq:point-Jacobi-rigidity} gives
\begin{equation}\label{eq:raidal-curvature-identity}
\overline R({P_s}E, \gamma_{x, z}'(s))\gamma_{x, z}'(s)=0 \qquad\text{for every }E\perp\gamma_{x, z}'.
\end{equation}

From \eqref{eq:point-Jacobi-rigidity} and \eqref{eq:gradient-normal-point-exp-density}, we have
$\left|\det D(\exp_x)_{sz}\right|=1$, so
\eqref{eq:ambient-exp-equality} gives
$$\mathcal B_{\overline{\operatorname{Ric}}, s}(x, z)=0. $$
The identity \eqref{eq:raidal-curvature-identity} also gives
$$ \mathcal R_n(x, z, s)=0 \quad\text{and} \quad \mathcal H_{n, s}(x, z)=0.$$
It remains to determine the tangential Jacobi fields. Equations \eqref{eq:gradient-normal-det-factorization}, \eqref{eq:gradient-normal-B-index-form}, \eqref{eq:raidal-curvature-identity}, and the derivation of \eqref{eq:quantitative-HK-Jacobian} imply that the normal polar Jacobian along the ray is
$$J_\Sigma(x,z,s) = s^{m-1}\det(I+sA_z).$$
Here we use the convention
$A_zX=(\overline\nabla_Xz)^\top$.
Since the ray minimizes for all time, it has no focal point. Thus
$I+sA_z$ is nonsingular for every $s>0$, and hence
$A_z\ge0$.
Moreover, the arithmetic--geometric mean inequality gives
$$\det A_z \le \langle-\sigma(x),z\rangle^n,$$
with equality if and only if
$A_z=\langle-\sigma(x),z\rangle I$

If this determinant inequality were strict on a positive-measure set
of active directions, then the actual Jacobian would have a strictly
smaller leading coefficient than the upper Jacobian in
\eqref{eq:quantitative-HK-Jacobian}. Similar to the argument of \eqref{eq:ambient-exp-equality}, this would produce a positive
$R^N$-order loss in \eqref{eq:vol TR estimate}, contradicting
\eqref{eq:rigidity-total-loss}. Therefore,
\begin{equation*}
A_z=\langle-\sigma(x),z\rangle I
\end{equation*}
for almost every active direction.

Let $X\in T_x\Sigma$. The horizontal Jacobi field of the normal exponential map has initial conditions
$$J_X(0)=X, \qquad J_X'(0)=A_zX = \langle-\sigma(x),z\rangle X.$$
By \eqref{eq:raidal-curvature-identity} and the Jacobi equation, together with the above initial conditions,
$$J_X(s) = \left( 1+s\langle-\sigma(x),z\rangle \right)P_sX.$$
Likewise, if $\eta\in T_x^\perp\Sigma\cap z^\perp$, the angular Jacobi
field satisfies
$$
J_\eta(0)=0, \qquad J_\eta'(0)=\eta,
$$
and therefore
$$J_\eta(s)=sP_s\eta. $$
These Jacobi fields and the radial field are mutually orthogonal.
Consequently, with respect to the horizontal--vertical splitting,
$$
(\Phi_{\Sigma})^*\overline g = ds^2+s^2g_{\mathcal V} + \left( 1+s\langle-\sigma(x), z\rangle \right)^2\pi^*g_\Sigma,
$$
which is \eqref{eq:active-conical-metric}.

Two distinct active rays cannot meet. Indeed, if they first met, the
two minimizing segments would form a corner, contradicting the fact
that either ray remains minimizing beyond the meeting point. Thus the
normal exponential map is injective on $C_{\mathcal A}(\infty)$.

On the active set we have$$
\mathcal B_{\overline{\operatorname{Ric}}}(x, z)=0,
\qquad
\lim_{t\to\infty}\mathcal H_{n, t}(x, z)=0.
$$
Subtracting the two curvature remainders in \eqref{eq:quantitative-Willmore} from the identity in
Lemma~\ref{lem:spherical-beta-integral}, equality gives
\begin{equation*}
\int_{\mathcal A}
\langle-\sigma(x), z\rangle^n
\, dS_z\, d\operatorname{vol}_\Sigma
=
N|\mathbb B^N|\theta.
\end{equation*}
Using the Jacobian determined by the metric, we obtain
\begin{align*}
\operatorname{vol}\left(\Phi_{\Sigma} C_{\mathcal A}(R)\right)
& =
\int_{\mathcal A}\int_0^R
s^{m-1}
\left(
1+s\langle-\sigma(x), z\rangle
\right)^n
\, ds\, dS_z\, d\operatorname{vol}_\Sigma\\
& =
\theta|\mathbb B^N|R^N+O(R^{N-1}).
\end{align*}
If $\theta>0$, together with \eqref{eq:tube-asymptotic-volume}, this proves
$$
\lim_{R\to\infty}
\frac{\operatorname{vol}\left(\Phi_{\Sigma} C_{\mathcal A}(R)\right)}
{\operatorname{vol}(T_R(\Sigma))}
=1.
$$

Conversely, suppose that the two properties in the statement hold.
The metric formula implies that its Jacobian is
$$
J_\Sigma(x,z,s)=s^{m-1} \left( 1+s\langle-\sigma(x), z\rangle \right)^n.
$$
It is not hard to then deduce that \eqref{eq:quantitative-Willmore} is an equality.

\end{proof}

\begin{example}\label{ex:equality}
Equality in Theorem~\ref{thm:quantitative-Willmore} need not imply equality in the standard Fenchel--Willmore inequality \eqref{eq:standard-willmore}.

Let $M^{n+2}=\mathbb S^{n+1}\times\mathbb R$ with the product metric. Then $\overline{\operatorname{Ric}}_n\ge0$, while the volume growth is linear and hence $\theta=0$. Fix $r_0\in(0,\pi/2)$ and set
$$
\Sigma^n=\partial B_{\mathbb S^{n+1}}(p,r_0)\times\{0\}.
$$
If $\nu$ is the outward radial normal in $\mathbb S^{n+1}$, then $\sigma=-\cot(r_0)\nu$. Therefore
$$
\int_\Sigma|\sigma|^n\,d\operatorname{vol}_\Sigma
=\cos^n(r_0)|\mathbb S^n|>0
=\theta|\mathbb S^n|,
$$
so the standard Fenchel--Willmore inequality is strict.

For $z\in S_x^-$, write $z=\lambda\nu+\mu\partial_t$, where $\lambda\ge0$ and $\lambda^2+\mu^2=1$. Then $\langle-\sigma,z\rangle=|\sigma|\lambda$. Along the normal geodesic $\gamma_{x,z}(s)=\exp_x(sz)$, we have $\overline{\operatorname{Ric}}(\gamma_{x,z}',\gamma_{x,z}')=n\lambda^2$, and hence
$$
\mathcal B_{\overline{\operatorname{Ric}},R}(x,z)
=\frac{n\lambda^2R^2}{6}.
$$
Thus $\mathcal B_{\overline{\operatorname{Ric}}}(x,z)=+\infty$ whenever $\lambda>0$, so $1-e^{-\mathcal B_{\overline{\operatorname{Ric}}}(x,z)}=1$ for almost every $z\in S_x^-$. By Lemma~\ref{lem:spherical-beta-integral}, the first curvature remainder is
$$
\frac{1}{C_{n,2}}\int_\Sigma\int_{S_x^-}
\langle-\sigma,z\rangle^n
\left(1-e^{-\mathcal B_{\overline{\operatorname{Ric}}}(x,z)}\right)
\,dS_z\,d\operatorname{vol}_\Sigma
=\int_\Sigma|\sigma|^n\,d\operatorname{vol}_\Sigma.
$$

Moreover, for $\lambda>0$,
$$
0\le e^{-\mathcal B_{\overline{\operatorname{Ric}},R}(x,z)}
\min\{\mathcal H_{n,R}(x,z),|\sigma|\lambda\}
\le|\sigma|\lambda e^{-n\lambda^2R^2/6}\longrightarrow0,
$$
while for $\lambda=0$ the minimum vanishes. Hence $\mathcal T_n(x,z)=0$ and the second curvature remainder is zero.

Since $\theta=0$, both sides of \eqref{eq:quantitative-Willmore} equal $\int_\Sigma|\sigma|^n\,d\operatorname{vol}_\Sigma$. Thus equality holds in the quantitative inequality although the standard Fenchel--Willmore inequality is strict.
\end{example}

\section{Sobolev and isoperimetric inequalities under nonnegative intermediate Ricci curvature}
\label{sec:isoperimetric}
Brendle~\cite[Theorem~1.4]{Brendle2023} established the sharp Michael--Simon inequality for submanifolds in ambient manifolds with nonnegative sectional curvature. Ma and Wu~\cite[Theorem~1.1]{MaWu2024} subsequently proved the same inequality under the weaker condition $\overline{\operatorname{Ric}}_k\ge0$, where $k=\min\{n-1,m-1\}$. The result proved in this section removes the dependence of the intermediate Ricci curvature condition on the codimension, requiring only $\overline{\operatorname{Ric}}_{n-1}\ge0$.

\subsection{Sobolev inequality}
Let $M^{n+m}$ be a complete noncompact Riemannian manifold, and let
$\Sigma^n\subset M^{n+m}$ be compact, possibly with boundary, where
$n\ge2$ and $m\ge2$.

Let $f$ be a positive smooth function on
$\Sigma$.
We first assume that $\Sigma$ is connected.
After multiplying $f$ by a positive constant, we may assume that
\begin{equation}
\label{eq:Sobolev-normalization}
\int_\Sigma \sqrt{|\nabla^\Sigma f|^2+f^2|H|^2}\,d\operatorname{vol}_\Sigma
+
\int_{\partial\Sigma} f\,d\operatorname{vol}_{\partial\Sigma}
=
 n\int_\Sigma f^{\frac n{n-1}}\,d\operatorname{vol}_\Sigma.
\end{equation}
The compatibility condition gives a solution $u$ of
\begin{equation*}
\begin{cases}
\operatorname{div}_\Sigma(f\nabla^\Sigma u)
=nf^{\frac n{n-1}}-
\sqrt{|\nabla^\Sigma f|^2+f^2|H|^2},&\text{in }\Sigma,\\
\langle\nabla^\Sigma u,\nu\rangle=1,&\text{on }\partial\Sigma,
\end{cases}
\end{equation*}
where the boundary condition is omitted when
$\partial\Sigma=\varnothing$.  Set
\[
\Omega:=\{x\in\Sigma\setminus\partial\Sigma:|\nabla^\Sigma u(x)|<1\}
\]
and
\[
\mathcal U
:=
\left\{(x,y)\in T^\perp\Sigma:
 x\in\Omega,\ |\nabla^\Sigma u(x)|^2+|y|^2<1
\right\}.
\]
We use the gradient--normal exponential map $\Phi_t^u$ and the contact set
$A_r^u$ introduced in Section~\ref{sec:HK}.
Recall the following standard consequences of the ABP construction; See \cite[Lemmas 4.1, 4.2]{Brendle2023}.

\begin{lemma}
\label{lem:ABP-basic-Sobolev}
\begin{enumerate}
\item If $x\in\Omega$ and $y\in T_x^\perp\Sigma$ satisfy
$|\nabla^\Sigma u(x)|^2+|y|^2\le1$, then
\begin{equation*}
\Delta_\Sigma u(x)-\langle H(x),y\rangle
\le
n f(x)^{\frac1{n-1}}.
\end{equation*}
\item For $0<\alpha<1$ and $r>0$, define
\[
\mathcal C_{\alpha,r}
:=
\left\{p\in M:
\alpha r<d(p,x)<r
\text{ for every }x\in\Sigma
\right\}.
\]
Then
\[
\mathcal C_{\alpha,r}
\subset
\Phi_r^u
\left(
A_r^u\cap\mathcal U\cap
\left\{(x,y):|\nabla^\Sigma u(x)|^2+|y|^2>\alpha^2\right\}
\right).
\]
\end{enumerate}
\end{lemma}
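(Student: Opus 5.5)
The plan is to follow \cite[Lemmas~4.1, 4.2]{Brendle2023}. Part (1) is a pointwise algebraic consequence of the PDE for $u$. Part (2) comes from minimizing the function $F_r^u$ of Lemma~\ref{lem:general-contact-propagation}, but with the target point $p$ fixed in advance.

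\emph{Part (1).} At $x\in\Omega$, expand the equation:
\[
f\Delta_\Sigma u+\langle\nabla^\Sigma f,\nabla^\Sigma u\rangle
= nf^{\frac n{n-1}}-\sqrt{|\nabla^\Sigma f|^2+f^2|H|^2}.
\]
Subtract $f\langle H,y\rangle$ from both sides, which gives
\[
f\bigl(\Delta_\Sigma u-\langle H,y\rangle\bigr)
= nf^{\frac n{n-1}}-\sqrt{|\nabla^\Sigma f|^2+f^2|H|^2}-\bigl(\langle\nabla^\Sigma f,\nabla^\Sigma u\rangle+\langle fH,y\rangle\bigr).
\]
Apply Cauchy--Schwarz in $T_x\Sigma\oplus T_x^\perp\Sigma$ to the pair of vectors $(\nabla^\Sigma f,fH)$ and $(\nabla^\Sigma u,y)$. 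This gives
\[
\bigl|\langle\nabla^\Sigma f,\nabla^\Sigma u\rangle+\langle fH,y\rangle\bigr|
\le\sqrt{|\nabla^\Sigma f|^2+f^2|H|^2}\,\sqrt{|\nabla^\Sigma u|^2+|y|^2}
\le\sqrt{|\nabla^\Sigma f|^2+f^2|H|^2}.
\]
Hence the last two terms on the right of the previous identity together contribute at most $0$. Dividing by $f>0$ gives the claim.

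\emph{Part (2).} Fix $p\in\mathcal C_{\alpha,r}$ and let $\bar x$ minimize the continuous function $x\mapsto ru(x)+\tfrac12 d(x,p)^2$ over the compact set $\Sigma$.

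The first step is to exclude $\bar x\in\partial\Sigma$. Move from $\bar x$ along a curve with initial velocity $-\nu$. The term $ru$ decreases to first order at rate $r\langle\nabla^\Sigma u,\nu\rangle=r$. Since $d(\cdot,p)$ is $1$-Lipschitz, the term $\tfrac12d(\cdot,p)^2$ increases at rate at most $d(\bar x,p)<r$. So the function strictly decreases, contradicting minimality. Hence $\bar x$ is an interior point.

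Next, choose a minimizing geodesic $s\mapsto\exp_{\bar x}(s\bar w)$, $s\in[0,r]$, from $\bar x$ to $p$, so that $r|\bar w|=d(\bar x,p)$. Varying its initial point along $\Sigma$ (for instance, concatenating a short curve in $\Sigma$ with the fixed geodesic, or using the linear-interpolation variation of its endpoint) gives a function of $x\in\Sigma$ that is smooth near $\bar x$. This function is an upper barrier for $\tfrac12 d(\cdot,p)^2$ and touches it at $\bar x$. Its differential at $\bar x$ is $-r\langle\bar w,\cdot\rangle$. So $ru+(\text{barrier})$ also has a local minimum at $\bar x$, and the first-order condition yields $\nabla^\Sigma u(\bar x)=\bar w^{T}$.

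Set $\bar y:=\bar w^{\perp}\in T_{\bar x}^\perp\Sigma$. Then $\bar w=\nabla^\Sigma u(\bar x)+\bar y$ and $\Phi_r^u(\bar x,\bar y)=p$. Moreover
\[
|\nabla^\Sigma u(\bar x)|^2+|\bar y|^2=\frac{d(\bar x,p)^2}{r^2}\in(\alpha^2,1),
\]
because $p\in\mathcal C_{\alpha,r}$. In particular $|\nabla^\Sigma u(\bar x)|<1$, so $\bar x\in\Omega$ and $(\bar x,\bar y)\in\mathcal U$. Finally, minimality of $\bar x$ says that for every $q\in\Sigma$,
\[
ru(q)+\tfrac12 d(q,p)^2\ge ru(\bar x)+\tfrac{r^2}{2}|\bar w|^2,
\]
which is exactly $(\bar x,\bar y)\in A_r^u$.

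The main obstacle is the nonsmoothness of $d(\cdot,p)$ at $\bar x$, which may lie in the cut locus of $p$. Using the $1$-Lipschitz bound in the boundary step and the smooth upper barrier in the interior first-order step avoids ever differentiating $d$ itself.
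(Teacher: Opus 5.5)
Your proposal is correct and is essentially the argument of \cite[Lemmas~4.1, 4.2]{Brendle2023}, which the paper cites instead of giving a proof. For part (1) you apply Cauchy--Schwarz to $(\nabla^\Sigma f,fH)$ and $(\nabla^\Sigma u,y)$. For part (2) you minimize $ru+\frac12 d(\cdot,p)^2$ over $\Sigma$, rule out a boundary minimum because $ru$ decreases at rate $r$ along $-\nu$ while $\frac12 d(\cdot,p)^2$ grows at rate at most $d(\bar x,p)<r$, and read off $\nabla^\Sigma u(\bar x)=\bar w^{T}$ from a smooth upper barrier.

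One small caution about the barrier. Use the variation of the geodesic, or concatenate a short ambient geodesic from $x$ to $\exp_{\bar x}(\varepsilon\bar w)$ with the remaining segment. Concatenating a curve in $\Sigma$ from $x$ to $\bar x$ with the fixed geodesic only gives the cone-type bound $d_\Sigma(x,\bar x)+d(\bar x,p)$. That bound is not differentiable at $\bar x$, and it yields only $|\nabla^\Sigma u(\bar x)|\le|\bar w|$, not the identity.
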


\begin{lemma}
\label{lem:ABP-common-annulus-asymptotics}
    Suppose that $M$ has asymptotic volume ratio $\theta$. Then,
for every fixed $0<\alpha<1$,
\[
\lim_{r\to\infty}
\frac{\operatorname{vol}(\mathcal C_{\alpha,r})}{r^{n+m}}
=
|\mathbb B^{n+m}|(1-\alpha^{n+m})\theta.
\]
\end{lemma}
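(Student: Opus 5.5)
The plan is to sandwich $\mathcal C_{\alpha,r}$ between a difference of two balls centred at a fixed base point and then apply the definition of $\theta$. Fix $o\in M$. Since $\Sigma$ is compact, $R_0:=\max_{x\in\Sigma}d(o,x)<\infty$. The triangle inequality gives, for every $p\in M$ and $x\in\Sigma$,
\[
d(o,p)-R_0\le d(p,x)\le d(o,p)+R_0 .
\]

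\textbf{Inner bound.} If $\alpha r+R_0<d(o,p)<r-R_0$, then $\alpha r<d(p,x)<r$ for every $x\in\Sigma$. Hence
\[
B_o(r-R_0)\setminus \overline{B_o(\alpha r+R_0)}\subset\mathcal C_{\alpha,r}.
\]

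\textbf{Outer bound.} If $p\in\mathcal C_{\alpha,r}$, pick any $x\in\Sigma$. Then $d(o,p)\le d(p,x)+R_0<r+R_0$ and $d(o,p)\ge d(p,x)-R_0>\alpha r-R_0$. Hence
\[
\mathcal C_{\alpha,r}\subset B_o(r+R_0)\setminus \overline{B_o(\alpha r-R_0)}.
\]

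\textbf{Volume estimates.} These inclusions give
\[
\operatorname{vol}B_o(r-R_0)-\operatorname{vol}\overline{B_o(\alpha r+R_0)}\le\operatorname{vol}(\mathcal C_{\alpha,r})\le \operatorname{vol}B_o(r+R_0)-\operatorname{vol}B_o(\alpha r-R_0),
\]
for $r$ large, so that $\alpha r-R_0>0$.

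Two facts are needed. First, spheres $\partial B_o(\rho)$ have measure zero, because $d_o$ is Lipschitz with $|\nabla d_o|=1$ almost everywhere and the coarea formula applies. So closed and open balls have the same volume. Second, by the hypothesis that $M$ has asymptotic volume ratio $\theta$,
\[
\operatorname{vol}B_o(\rho)/(|\mathbb B^{n+m}|\rho^{n+m})\to\theta \quad\text{as }\rho\to\infty .
\]
With $\rho=r\pm R_0$ we have $\rho/r\to1$, and with $\rho=\alpha r\pm R_0$ we have $\rho/r\to\alpha$. Dividing by $r^{n+m}$, both bounds therefore converge to $|\mathbb B^{n+m}|\theta(1-\alpha^{n+m})$, which proves the claim.

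The case $\theta=0$ needs no separate treatment: the upper bound tends to $0$ and the volume is nonnegative. The only point requiring care is the existence of the limit defining $\theta$, which is part of the hypothesis (under $\overline{\operatorname{Ric}}\ge0$ it follows from Bishop--Gromov), together with the measure-zero property of distance spheres. I do not expect a real obstacle here.
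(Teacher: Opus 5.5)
Your proposal is correct and follows essentially the same argument as the paper: both sandwich $\mathcal C_{\alpha,r}$ between $B_o(r-R_0)\setminus\overline{B_o(\alpha r+R_0)}$ and a difference of balls of radii $r+R_0$ and $\alpha r-R_0$, with $R_0=\max_{x\in\Sigma}d(o,x)$, divide by $r^{n+m}$, and use the definition of $\theta$. Your extra remark that distance spheres are null is correct but not needed, since you can bound $\operatorname{vol}\overline{B_o(\alpha r+R_0)}\le\operatorname{vol}B_o(\alpha r+R_0+1)$ and get the same limit.
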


\begin{proof}
Fix $o\in M$ and put
\[
R_0:=\max_{x\in\Sigma}d(o,x).
\]
For all sufficiently large $r$,
\[
B_o(r-R_0)\setminus {\overline{B_o(\alpha r+R_0)}}
\subset
\mathcal C_{\alpha,r}
\subset
B_o(r+R_0)\setminus B_o(\alpha r-R_0).
\]
    Dividing by $r^{n+m}$ and using the definition of $\theta$ proves the claim.
\end{proof}

\begin{lemma}
\label{lem:mixed-direction-nonnegative}
Assume that $\overline{\operatorname{Ric}}_{n-1}\ge0$.  Let
\[
v:=\nabla^\Sigma u(x)+y,
\qquad
\gamma(s):=\exp_x(sv),
\]
and let $E_1(s),\ldots,E_n(s)$ be the parallel transports of an
orthonormal basis of $T_x\Sigma$.  Then
\[
\overline{\operatorname{Ric}}(\gamma'(s),\gamma'(s))\ge0
\qquad\text{and}\qquad
\mathcal R_n^u(x,y,s)\ge0.
\]
\end{lemma}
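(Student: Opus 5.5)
The plan is to reduce both inequalities to sums of $(n-1)$-Ricci curvatures in the direction $\gamma'(s)$ along subspaces of $\gamma'(s)^\perp$. If $\gamma'(s)=0$, i.e.\ $v=0$, both quantities vanish and there is nothing to prove. Since parallel transport preserves inner products, $\gamma'(s)=P_s v$ and $E_i(s)=P_s e_i$. Thus at each $s$ the configuration is isometric to the one at $s=0$: the vector $v=\nabla^\Sigma u(x)+y$ sits inside the $N$-dimensional space spanned by the $n$-plane $W_s:=\operatorname{span}\{E_i(s)\}$ and its complement. Fix $s$, write $w:=\gamma'(s)\neq0$, and normalize so that $|w|=1$; both quantities are quadratic in $w$, so this is harmless.

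\textbf{Step 1 (the $n$-trace term).} Consider $W_s\cap w^\perp$. Its dimension is at least $n-1$. If $w\perp W_s$ (the purely normal case $\nabla^\Sigma u(x)=0$), then $\mathcal R_n^u$ is an $n$-Ricci curvature, and $\overline{\operatorname{Ric}}_n\ge0$ follows from $\overline{\operatorname{Ric}}_{n-1}\ge0$ by monotonicity. Otherwise, write $w=\cos\phi\,e+\sin\phi\,\nu$ with $e\in W_s$ and $\nu\perp W_s$ unit vectors, and choose an orthonormal basis of $W_s$ of the form $e,f_2,\dots,f_n$ with $f_j\in W_s\cap w^\perp$. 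The trace is basis-independent, so
\[
\mathcal R_n^u=\sum_{j=2}^n\langle\overline R(w,f_j)f_j,w\rangle+\langle\overline R(w,e)e,w\rangle .
\]
The first sum equals $\overline{\operatorname{Ric}}_{n-1}(w,W_s\cap w^\perp)\ge0$. The last term is $\sin^2\phi\,K(e,\nu)$, a single sectional curvature, and its sign is not controlled by the hypothesis directly.

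\textbf{Step 2 (absorbing the sectional term; the main obstacle).} To handle this I will use the following averaging trick. Let $\tilde e:=\sin\phi\,e-\cos\phi\,\nu$, the unit vector in $\operatorname{span}\{e,\nu\}$ orthogonal to $w$. Then $\langle\overline R(w,e)e,w\rangle=\sin^2\phi\,\langle\overline R(w,\tilde e)\tilde e,w\rangle$, because in the $2$-plane $\operatorname{span}\{e,\nu\}=\operatorname{span}\{w,\tilde e\}$ we have $e=\cos\phi\,w+\sin\phi\,\tilde e$. Hence
\[
\mathcal R_n^u=\sum_{j=2}^n K_j+\sin^2\phi\,K_{\tilde e}, \qquad K_j:=\langle\overline R(w,f_j)f_j,w\rangle,\quad K_{\tilde e}:=\langle\overline R(w,\tilde e)\tilde e,w\rangle .
\]
The vectors $f_2,\dots,f_n,\tilde e$ are orthonormal in $w^\perp$. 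Call their sum of curvatures $S$; it is an $n$-Ricci curvature, so $S\ge0$. Moreover, omitting any one of the $n$ terms leaves an $(n-1)$-Ricci curvature, which is $\ge0$. Now write
\[
\sum_{j}K_j+\lambda K_{\tilde e}=(1-\lambda)\sum_j K_j+\lambda S ,\qquad \lambda=\sin^2\phi\in[0,1].
\]
This is a convex combination of two nonnegative quantities, so $\mathcal R_n^u\ge0$. This step is the crux, and is exactly where the $n-1$ (rather than $n$) is needed: since $\gamma'$ is not normal to $\Sigma$, a full $n$-plane orthogonal to $\gamma'$ inside $W_s$ is not available.

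\textbf{Step 3 (Ricci term).} Since $n-1\le N-1$, monotonicity gives $\overline{\operatorname{Ric}}_{N-1}\ge0$, i.e.\ $\overline{\operatorname{Ric}}(w,w)\ge0$, which holds at every $s$ and completes the proof. (If $n-1=0$ is not allowed, the standing assumption $n\ge2$ ensures that $\overline{\operatorname{Ric}}_{n-1}$ is defined.)
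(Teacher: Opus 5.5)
Your proof is correct and is essentially the paper's argument. The paper also writes $w=\cos\vartheta\,E_1+\sin\vartheta\,E_{n+1}$ and rotates to $\xi\perp w$. It then splits $a^{-2}\mathcal R_n^u$ as $\sin^2\vartheta$ times an $n$-Ricci trace plus $\cos^2\vartheta$ times an $(n-1)$-Ricci trace, which is exactly your convex combination $(1-\lambda)\sum_j K_j+\lambda S$. As in your Step 3, the Ricci bound comes from monotonicity.
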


\begin{proof}
    By the monotonicity of intermediate Ricci curvature, $\overline{\operatorname{Ric}}_{n-1} \geq 0$ implies $\overline{\operatorname{Ric}}_{n} \geq 0$ and $\overline{\operatorname{Ric}}(\gamma'(s),\gamma'(s))\geq0$.
We now show that $\mathcal R_n^u(x,y,s)\geq0.$

Put $a:=|v|$.  If $a=0$, both quantities vanish.  Assume $a>0$ and set
$w:=\gamma'/a$.  Choose the tangent basis so that the tangential projection
of $w(0)$ is parallel to $E_1(0)$.  Choose a parallel unit vector
$E_{n+1}$ in the direction of the normal projection when it is nonzero.
There are constants $\cos\vartheta,\sin\vartheta\ge0$ such that
\[
w=\cos\vartheta\,E_1+\sin\vartheta\,E_{n+1}.
\]
Set
\[
\xi=-\sin\vartheta\,E_1+\cos\vartheta\,E_{n+1}.
\]
Then $\xi,E_2,\ldots,E_n$ are orthonormal and perpendicular to $w$.
Writing
\[
K(w,e):=\langle\overline R(w,e)e,w\rangle,
\]
we obtain
\begin{align*}
\frac1{a^2}\mathcal R_n^u(x,y,s)
={}&
\sin^2\vartheta
\left(
K(w,\xi)+\sum_{i=2}^nK(w,E_i)
\right)
+
\cos^2\vartheta
\sum_{i=2}^nK(w,E_i).
\end{align*}
The first parenthesis is an $n$-Ricci trace and the second sum is an
$(n-1)$-Ricci trace.  Both are nonnegative.
\end{proof}

\begin{theorem}[Sobolev inequality under nonnegative $(n-1)$-Ricci curvature]
\label{thm:Sobolev}
Let $M^{n+m}$ be complete and noncompact with
$\overline{\operatorname{Ric}}_{n-1}\ge0$, and let $\theta$ be its
asymptotic volume ratio.  Let $\Sigma^n\subset M^{n+m}$ be compact,
possibly with boundary, and let $f$ be a positive smooth function on
$\Sigma$.  If $n\ge2$ and $m\ge2$, then
\begin{align*}
&
\int_\Sigma\sqrt{|\nabla ^{\Sigma}f|^2+f^2|H|^2}\,d\operatorname{vol}_\Sigma + \int_{\partial\Sigma}f\,d\operatorname{vol}_{\partial\Sigma}
\\
&\qquad\ge
n\left( \frac{(n+m)|\mathbb B^{n+m}|}{m|\mathbb B^m|} \right)^{\frac{1}{n}} \theta^{\frac{1}{n}} \left( \int_\Sigma f^{\frac n{n-1}}\,d\operatorname{vol}_\Sigma \right)^{\frac{n-1}{n}}.
\end{align*}
\end{theorem}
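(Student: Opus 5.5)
We follow Brendle's ABP scheme, replacing the Jacobian bound by Theorem~\ref{thm:gradient-normal-signed-main} with the curvature corrections discarded via Lemma~\ref{lem:mixed-direction-nonnegative}. First assume $\Sigma$ is connected and $f$ is normalized as in \eqref{eq:Sobolev-normalization}, with $u$ the solution of the Neumann problem above. The disconnected case is handled at the end.

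Fix $0<\alpha<1$ and $r$ large. For $(x,y)\in A_r^u\cap\mathcal U$, Lemma~\ref{lem:mixed-direction-nonnegative} shows that $\mathcal R_n^u\ge0$ and $\overline{\operatorname{Ric}}(\gamma',\gamma')\ge0$ along $\gamma(s)=\exp_x(sv)$. Hence $\mathcal B^u_{\overline{\operatorname{Ric}}}\ge0$, and the integral term in the bracket is nonnegative. Since $B_r^u\ge0$, the quantity $1+rc_u(x,y)$ is nonnegative; one can also see this because $\operatorname{tr}B_r^u\ge0$ together with the index comparison gives $0\le \frac1n\operatorname{tr}B_r^u\le 1+rc_u - (\text{nonneg})$. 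Theorem~\ref{thm:gradient-normal-signed-main} at $t=r$ therefore yields
\[
|\det D\Phi_r^u(x,y)|\le r^m\bigl(1+r c_u(x,y)\bigr)^n \le r^m\bigl(1+r f(x)^{\frac1{n-1}}\bigr)^n,
\]
where the second inequality uses Lemma~\ref{lem:ABP-basic-Sobolev}(1), i.e.\ $c_u\le f^{1/(n-1)}$ on $\mathcal U$.

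By Lemma~\ref{lem:ABP-basic-Sobolev}(2) and the area formula,
\[
\operatorname{vol}(\mathcal C_{\alpha,r})\le \int_\Omega\int_{\{y:\ \alpha^2<|\nabla^\Sigma u|^2+|y|^2<1\}} r^m\bigl(1+rf(x)^{\frac1{n-1}}\bigr)^n\,dy\,d\operatorname{vol}_\Sigma(x).
\]
The $y$-region has $m$-dimensional measure at most $|\mathbb B^m|(1-\alpha^2)^{m/2}$ whenever its inner radius is positive, and $|\mathbb B^m|\bigl((1-|\nabla u|^2)^{m/2}-(\alpha^2-|\nabla u|^2)_+^{m/2}\bigr)$ in general. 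Following Brendle, we use the bound $|\mathbb B^m|\,\frac m2(1-\alpha^2)$ valid for $m\ge2$, since $t\mapsto t^{m/2}$ has derivative at most $m/2$ on $[0,1]$. Dividing by $r^{n+m}$, letting $r\to\infty$, and invoking Lemma~\ref{lem:ABP-common-annulus-asymptotics} gives
\[
|\mathbb B^{n+m}|(1-\alpha^{n+m})\theta\le |\mathbb B^m|\,\frac m2(1-\alpha^2)\int_\Omega f^{\frac n{n-1}}\,d\operatorname{vol}_\Sigma.
\]
Dividing by $1-\alpha$ and letting $\alpha\to1$ gives $(n+m)|\mathbb B^{n+m}|\theta\le m|\mathbb B^m|\int_\Sigma f^{n/(n-1)}$. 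Combined with the normalization \eqref{eq:Sobolev-normalization}, this is equivalent to the claimed inequality after undoing the scaling: the left side equals $n\int f^{n/(n-1)}=n\bigl(\int f^{n/(n-1)}\bigr)^{(n-1)/n}\bigl(\int f^{n/(n-1)}\bigr)^{1/n}$.

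For disconnected $\Sigma$, apply the result to each component and combine using $\sum a_i^{(n-1)/n}\ge(\sum a_i)^{(n-1)/n}$.

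The main obstacle is ensuring that the Jacobian bound applies on the whole relevant set. The bound at $t=r$ relies on the continuity argument in Theorem~\ref{thm:gradient-normal-signed-main}, and Lemma~\ref{lem:mixed-direction-nonnegative} must hold for non-normal directions $v$; both are already provided. The only delicate remaining point is the positivity of $1+rc_u$, which we justify through $\operatorname{tr}B_r^u\ge0$ as indicated above.
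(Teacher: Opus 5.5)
Your argument is correct, and its ABP structure is the paper's: the normalization, the area formula over $\mathcal C_{\alpha,r}$, the fiber bound $\frac m2|\mathbb B^m|(1-\alpha^2)$, the limits $r\to\infty$ then $\alpha\uparrow1$, and the component-wise summation. The one real difference is the pointwise Jacobian bound.

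\begin{itemize}
\item The paper uses the scalar Riccati estimate, Proposition~\ref{prop:direct-riccati-normal-jacobian}. Its hypotheses $\overline{\operatorname{Ric}}\ge0$ and $\mathcal R_n^u\ge0$ come from Lemma~\ref{lem:mixed-direction-nonnegative}. Its conclusion already includes $1+ct>0$, via concavity of $h=\exp\int q_n$.
\item You use the factorization estimate, Theorem~\ref{thm:gradient-normal-signed-main}, which needs no curvature sign. The sign information serves only to discard $e^{-\mathcal B^u_{\overline{\operatorname{Ric}}}}\le1$ and the $\mathcal R_n^u$-integral. Nonnegativity of the bracket comes for free from $B_t^u\ge0$.
\end{itemize}

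For the inequality itself, both routes give the same bound $|\det D\Phi_r^u(x,y)|\le r^m(1+rc_u(x,y))_+^n$. Your route is arguably the more natural one in the paper's framework. The paper's choice also gives the monotonicity of $t\mapsto|\det D\Phi_t^u|/(t^m(1+ct)^ne^{-R_t})$ and of $R_t$. These are reused in Lemmas~\ref{lem:equality-jacobian-lower-bound} and~\ref{lem:equality-hessian} for the equality case, Theorem~\ref{thm:Sobolev-equality}, and your route does not supply them directly.

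Two small corrections.
\begin{itemize}
\item $B_r^u$ need not be defined at $t=r$. Proposition~\ref{prop:gradient-normal-decomposition} requires $E_t=D(\exp_x)_{tv}$ to be nonsingular, which is guaranteed only for $t<r$, since $q_r$ may be conjugate to $x$. So argue $0\le\frac1n\operatorname{tr}B_t^u\le 1+tc_u$ for $t<r$, and let $t\uparrow r$.
\item Your parenthetical claim that the $y$-region has measure at most $|\mathbb B^m|(1-\alpha^2)^{m/2}$ when its inner radius is positive is false for $m>2$. Superadditivity of $s\mapsto s^{m/2}$ gives the reverse inequality, and using the claim would force $\theta=0$ after dividing by $1-\alpha$. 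It is harmless only because you then use the correct bound $\frac m2|\mathbb B^m|(1-\alpha^2)$, so just delete it.
\end{itemize}
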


\begin{proof}
Assume first that $\Sigma$ is connected and that
\eqref{eq:Sobolev-normalization} holds.  Fix $r>0$ and
$(x,y)\in A_r^u\cap\mathcal U$.
    By Lemma~\ref{lem:mixed-direction-nonnegative},
$$
\overline{\operatorname{Ric}}\ge0
\qquad\text{and}\qquad
\mathcal R_n^u(x,y,t)\ge0
\quad\text{for }0<t<r.
$$
Thus Proposition~\ref{prop:direct-riccati-normal-jacobian}, together
with $R_t(x,y)\ge0$, gives the corresponding Jacobian estimate for
$0<t<r$.
Letting $t\uparrow r$ and using continuity, we obtain
\begin{align*}
|\det D\Phi_r^u(x,y)|
&\le
r^m
\left[
1+
\frac r n
\bigl(\Delta_\Sigma u(x)-\langle H(x),y\rangle\bigr)
\right]_+^n
\\
&\le
r^m
\left(
1+r f(x)^{\frac1{n-1}}
\right)^n.
\end{align*}

By Lemma~\ref{lem:ABP-basic-Sobolev} and the area formula,
\begin{align*}
\operatorname{vol}(\mathcal C_{\alpha,r})
\le{}&
\int_\Omega
\int_{\{y:\alpha^2<|\nabla^\Sigma u(x)|^2+|y|^2<1\}}
 r^m
\left(1+r f(x)^{\frac1{n-1}}\right)^n
\,dy\,d\operatorname{vol}_\Sigma(x).
\end{align*}
For each $x\in\Omega$, the normal-fiber region has volume
\[
|\mathbb B^m|
\left[
(1-|\nabla^\Sigma u(x)|^2)_+^{\frac{m}{2}}
-
(\alpha^2-|\nabla^\Sigma u(x)|^2)_+^{\frac{m}{2}}
\right]
\le
\frac m2|\mathbb B^m|(1-\alpha^2),
\]
where we used
$$
b^{\frac{m}{2}}-a^{\frac{m}{2}} \leq \frac{m}{2}(b-a),
\qquad 0\leq a\leq b\leq 1.
$$
Consequently,
\[
\operatorname{vol}(\mathcal C_{\alpha,r})
\le
\frac m2|\mathbb B^m|(1-\alpha^2)
\int_\Sigma
r^m\left(1+r f^{\frac1{n-1}}\right)^n
\,d\operatorname{vol}_\Sigma.
\]
Divide by $r^{n+m}$ and let $r\to\infty$.  By
Lemma~\ref{lem:ABP-common-annulus-asymptotics} and dominated convergence,
\[
|\mathbb B^{n+m}|(1-\alpha^{n+m})\theta
\le
\frac m2|\mathbb B^m|(1-\alpha^2)
\int_\Sigma f^{\frac n{n-1}}\,d\operatorname{vol}_\Sigma.
\]
Dividing by $1-\alpha$ and letting $\alpha\uparrow1$ gives
\begin{equation*}
(n+m)|\mathbb B^{n+m}|\theta
\le
m|\mathbb B^m|
\int_\Sigma f^{\frac n{n-1}}\,d\operatorname{vol}_\Sigma.
\end{equation*}
Together with \eqref{eq:Sobolev-normalization}, this gives the desired inequality when $\Sigma$ is connected.

If $\Sigma$ is disconnected, apply the connected case to each
component and sum. Since
$$
\sum_j a_j^{\frac{n-1}{n}} \geq \left(\sum_j a_j\right)^{\frac{n-1}{n}} \qquad (a_j\ge0),
$$
the result follows.
\end{proof}

    \begin{theorem}
\label{thm:Sobolev-equality}
Assume $\theta>0$ and equality holds in Theorem~\ref{thm:Sobolev} in the case $m=2$.
Then $f$ is constant and $M$ is isometric to $\mathbb R^{n+2}$.
Moreover, under this identification, $\Sigma$ is an $n$-dimensional round ball contained in an affine $n$-plane.
\end{theorem}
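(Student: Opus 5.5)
The plan is to follow Brendle's rigidity argument for the sharp Michael--Simon inequality (the $m=2$ equality case in \cite{Brendle2023}), tracking where the proof of Theorem~\ref{thm:Sobolev} loses information. For $m=2$ the constant is $\frac{(n+2)|\mathbb B^{n+2}|}{2|\mathbb B^2|}=|\mathbb B^n|$, and the elementary inequality $b^{m/2}-a^{m/2}\le\frac m2(b-a)$ is an identity. So equality in Theorem~\ref{thm:Sobolev} forces every remaining inequality in that proof to become asymptotically sharp as $r\to\infty$ and then $\alpha\uparrow1$.

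\textbf{Step 1: pointwise sharpness on the contact set.} First reduce to connected $\Sigma$, since the subadditivity step is strict if two components carry positive mass. After normalizing by \eqref{eq:Sobolev-normalization}, equality means that $\int_\Sigma f^{n/(n-1)}=|\mathbb B^n|\theta$ exactly. I will argue by contradiction, as in the proof of Theorem~\ref{thm:metric-rigidity}. Suppose that on a set of $(x,y)$ of positive measure with $|\nabla^\Sigma u|^2+|y|^2$ close to $1$ one of the following is strict: the bound $\Delta_\Sigma u-\langle H,y\rangle\le nf^{1/(n-1)}$ from Lemma~\ref{lem:ABP-basic-Sobolev}(1), the AM--GM step in Proposition~\ref{prop:gradient-normal-main}, or the curvature factor $e^{-R_t}\le1$ in Proposition~\ref{prop:direct-riccati-normal-jacobian}. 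Then the Jacobian bound loses a positive multiple of $r^{n+2}$, and the resulting inequality for $\theta$ becomes strict.

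From sharpness of Lemma~\ref{lem:ABP-basic-Sobolev}(1), Brendle's computation yields $|\nabla^\Sigma u|=0$ on $\Omega$, $\nabla^\Sigma f=0$, and $H=0$ (the case $|y|=1$ with $y$ ranging over the unit circle in $T^\perp_x\Sigma$ forces $\langle H,y\rangle$ to be independent of $y$). Hence $f$ is constant. Then $\Delta_\Sigma u=nf^{1/(n-1)}$ together with the AM--GM equality gives $\nabla^2_\Sigma u=f^{1/(n-1)}g_\Sigma$ and $\mathrm{II}=0$, so $\Sigma$ is totally geodesic. The Neumann condition $\langle\nabla^\Sigma u,\nu\rangle=1$ together with $\nabla^2 u=\lambda g$ makes $\Sigma$ a geodesic ball of radius $1/\lambda$ in a flat metric: an intrinsic Obata-type argument shows that $u$ is a quadratic in distance from its minimum point.

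\textbf{Step 2: ambient rigidity.} Sharpness of $R_t\ge0$ (in the form $R_t=0$ for all $t$) along almost every contact geodesic gives $\overline{\operatorname{Ric}}(\gamma',\gamma')=0$ and $\mathcal R_n^u=0$. More importantly, it gives equality in the index comparison of Lemma~\ref{lem:gradient-normal-signed-ambient}, i.e.\ $\mathcal J_v(s)=s P_s$, exactly as in \eqref{eq:point-Jacobi-rigidity}. This yields $\overline R(\cdot,\gamma')\gamma'=0$ along these geodesics and $|\det D(\exp_x)_{tv}|=1$.

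Because the images of these geodesics fill asymptotically all of $\mathcal C_{\alpha,r}$ (full measure, via the $\theta>0$ volume matching), I would show that $\exp_x$ for a fixed interior point $x$ is volume-preserving onto $M$. By Bishop--Gromov equality, this means $\theta=1$ and $M$ is isometric to $\mathbb R^{n+2}$. Once $M=\mathbb R^{n+2}$, $\Sigma$ is a totally geodesic piece of an affine $n$-plane and is a flat ball by Step 1.

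\textbf{Main obstacle.} The hardest part is Step 2: passing from ``equality of the Jacobian along almost every contact ray'' to global flatness. The radial curvature vanishing only holds along rays emanating from $\Sigma$ in directions $v$ with $|v|\to1$, not along all geodesics from one point. I would try to avoid a pointwise argument and instead use the volume identity
\[
\lim_{r\to\infty}\frac{\operatorname{vol}(\mathcal C_{\alpha,r})}{r^{n+2}}=|\mathbb B^{n+2}|(1-\alpha^{n+2})\theta.
\]
Combined with $\theta>0$ and the exact equality, this should force $\theta=1$, via the fact that the exact count equals the Euclidean tube count. Bishop--Gromov rigidity ($\overline{\operatorname{Ric}}\ge0$ and $\theta=1$) then gives $M\cong\mathbb R^{n+2}$. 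Verifying that the constant indeed forces $\theta=1$ (rather than merely being consistent with smaller $\theta$) is where I expect the real work, possibly requiring Brendle's argument that the unit circle of normal directions at each point is fully realized on the contact set.
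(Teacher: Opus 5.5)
\textbf{Main gap: the proof that $\theta=1$.} Step 2 does not establish $\theta=1$, and the route you propose cannot. Radial flatness $\overline R(\cdot,\gamma')\gamma'=0$ along the contact geodesics gives no volume information about balls around a fixed point. From a given $x\in\Omega$, the equality directions $\nabla^\Sigma u(x)+y$ with $|\nabla^\Sigma u(x)|^2+|y|^2=1$ form only a circle in the unit sphere of $T_xM$, so nothing forces $\exp_x$ to be volume-preserving. Likewise, the asymptotics of $\operatorname{vol}(\mathcal C_{\alpha,r})$ are already used up in the equality normalization $\int_\Sigma f^{n/(n-1)}=|\mathbb B^n|\theta$, which is consistent with any $\theta$.

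The missing step is intrinsic, and you already state both ingredients. Once $f\equiv\lambda^{n-1}$ is constant and $\nabla^2_\Sigma u=\lambda g$ with $\langle\nabla^\Sigma u,\nu\rangle=1$, the flow argument shows $\Sigma$ is isometric to a flat ball of radius $1/\lambda$, so $|\Sigma|=|\mathbb B^n|\lambda^{-n}$. The normalization gives $\lambda^n|\Sigma|=|\mathbb B^n|\theta$. Hence $\theta=1$. Bishop--Gromov rigidity (using $\overline{\operatorname{Ric}}_{n-1}\ge0\Rightarrow\overline{\operatorname{Ric}}\ge0$) then gives $M\cong\mathbb R^{n+2}$, and $\mathrm{II}=0$ places $\Sigma$ in an affine $n$-plane. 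This is how the paper, following Brendle, concludes.

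\textbf{Step 1 also needs repair.}
\begin{itemize}
\item The claim $|\nabla^\Sigma u|=0$ on $\Omega$ is false. In the model, $u=\frac{\lambda}{2}|x|^2$ on the ball of radius $1/\lambda$, so $\nabla^\Sigma u=\lambda x$. The claim also contradicts your own conclusion $\nabla^2_\Sigma u=\lambda g$.
\item Equality in Cauchy--Schwarz for all $y$ on the circle yields only $H=0$ and $\nabla^\Sigma f=0$ on $\Omega$ (using $|\nabla^\Sigma u|<1$ there).
\item To conclude that $f$ is constant on all of $\Sigma$ you need density of $\Omega$ (Brendle's Lemma 5.5), which you skip.
\item The assertion that any pointwise strict inequality ``loses a positive multiple of $r^{n+2}$'' requires a monotonicity. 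Without it, a defect at a fixed time does not carry over to all large $r$ on the $r$-dependent contact sets $A_r^u$. The paper supplies this in Lemma~\ref{lem:equality-jacobian-lower-bound}: $t\mapsto|\det D\Phi_t^u|/(t^2(1+at)^n)$ is nonincreasing, by Proposition~\ref{prop:direct-riccati-normal-jacobian} together with $c\le a$ and monotonicity of $R_t$. This gives the lower bound $|\det D\Phi_t^u|\ge t^2(1+at)^n$.
\item No such monotonicity is available for the AM--GM defect of $\det B_t^u$ in the factorization approach. That is why the paper works with the Riccati quantities. Equality $|\det P(t)|=t^2(1+at)^n$ forces $c=a$, $\delta_n=\delta_m=0$ and $q_n=a/(1+at)$. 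The traced Riccati equation then forces $Q_T=q_nI$, so $\nabla^2_\Sigma u-\langle\mathrm{II},y\rangle=ag$ as $t\to0$. Applying this to $\pm y$ gives $\mathrm{II}=0$.
\end{itemize}
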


To prove Theorem~\ref{thm:Sobolev-equality}, from now on until the end of this Subsection, we are going to assume that $\theta>0$, $m=2$, and the equality in Theorem \ref{thm:Sobolev} holds.

We follow the proof of \cite[Theorem~1.6]{Brendle2023}, indicating only the modifications needed under the weaker curvature assumption.

As in \cite{Brendle2023}, equality implies that $\Sigma$ is connected. After multiplying $f$ by a positive constant, we may assume that
$$
\int_\Sigma f^{\frac{n}{n-1}}\,d\operatorname{vol}_\Sigma =
|\mathbb B^n|\theta.
$$

We next identify the equality case in the tangential Riccati estimate.

\begin{lemma}
\label{lem:equality-jacobian-lower-bound}
Assume that $x\in\Omega$ and $y\in T_x^\perp\Sigma$ satisfy $|\nabla^\Sigma u(x)|^2+|y|^2=1.
$
Then, for every $t>0$,
$$
|\det D\Phi_t^u(x,y)|
\geq
t^2\left(1+t f(x)^{\frac1{n-1}}\right)^n.
$$
\end{lemma}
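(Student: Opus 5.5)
This lemma mirrors Brendle's \cite[Theorem~1.6]{Brendle2023} equality analysis, where a lower Jacobian bound on the unit sphere $|\nabla^\Sigma u|^2+|y|^2=1$ follows from the equality assumption. The plan is to argue by contradiction through the volume accounting in the proof of Theorem~\ref{thm:Sobolev}. First we upgrade the pointwise inequalities used there to equalities on the relevant set. In that proof the chain of estimates
\[
|\mathbb B^{n+2}|(1-\alpha^{n+2})\theta\le \operatorname{vol}(\mathcal C_{\alpha,r})r^{-(n+2)}+o(1)\le \tfrac{m}{2}|\mathbb B^m|(1-\alpha^2)\int_\Sigma f^{\frac n{n-1}}+o(1)
\]
becomes an equality in the limits $r\to\infty$, $\alpha\uparrow1$ when equality holds in Theorem~\ref{thm:Sobolev}. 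Hence every discarded quantity must be negligible at order $r^{n+2}(1-\alpha)$. These quantities are:
\begin{itemize}
\item the gap in $\Delta_\Sigma u-\langle H,y\rangle\le nf^{1/(n-1)}$ from Lemma~\ref{lem:ABP-basic-Sobolev};
\item the gap in the normal-fiber bound $b^{m/2}-a^{m/2}\le\frac m2(b-a)$, which is an identity when $m=2$ (this is why $m=2$ is assumed);
\item the loss in the Jacobian estimate of Proposition~\ref{prop:direct-riccati-normal-jacobian};
\item the non-covered part of $\Phi_r^u(A_r^u\cap\mathcal U)$.
\end{itemize}

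\textbf{Step 1 (contact set is everything near the sphere).} Following \cite{Brendle2023}, we show that for all $(x,y)$ with $x\in\Omega$ and $|\nabla^\Sigma u|^2+|y|^2<1$ close to $1$, we have $(x,y)\in A_r^u$ for every $r$. Suppose an open set of such $(x,y)$ were outside $A_r^u$ for large $r$. Then the area formula bound would be strict by an amount of order $r^{n+2}(1-\alpha)$, contradicting equality. Closedness of the contact condition then extends $(x,y)\in A_r^u$ for all $r>0$ to the boundary sphere $|\nabla^\Sigma u(x)|^2+|y|^2=1$.

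\textbf{Step 2 (equality in the Jacobian estimate).} For such $(x,y)$, Proposition~\ref{prop:direct-riccati-normal-jacobian} says that
\[
t\mapsto \frac{|\det D\Phi_t^u(x,y)|}{t^2(1+ct)^n e^{-R_t}}
\]
is nonincreasing with limit $1$ at $0$. We also have $R_t\ge0$ and $1+ct\le 1+tf^{1/(n-1)}$. So the ratio
\[
Q(t):=\frac{|\det D\Phi_t^u|}{t^2(1+tf^{1/(n-1)})^n}
\]
satisfies $Q(t)\le 1$ and, modulo checking monotonicity, is nonincreasing. If $Q(t_0)<1$ at some $t_0$, then by continuity $Q\le 1-\varepsilon$ on a neighborhood of $(x,y)$, for all $t\ge t_0$. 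Integrating this deficit over that neighborhood and over the annular region $\alpha^2<|\nabla^\Sigma u|^2+|y|^2<1$ gives a loss of order $r^{n+2}(1-\alpha)$ in the volume chain, again contradicting equality. Hence $Q\equiv1$, which is the claimed bound (the reverse inequality being automatic).

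\textbf{Main obstacle.} The delicate point is making the deficit uniform. We need $Q(t_0)<1$ at one point to propagate to an open set of $(x,y)$ and to all large $t$. For this we would use the monotonicity of $Q$, which requires combining the monotone quotient with the nondecreasing $R_t$ and $(1+ct)/(1+tf^{1/(n-1)})$. We would also use continuity of $D\Phi_t^u$ in $(x,y)$. Finally, we must ensure that the region where $|\nabla^\Sigma u|^2+|y|^2$ is near $1$ has measure comparable to $(1-\alpha)$ in the fiber, which is exactly where $m=2$ makes the fiber volume linear in $1-\alpha^2$.
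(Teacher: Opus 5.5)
Your strategy is the paper's. You argue by contradiction with equality through the area-formula bound on $\operatorname{vol}(\mathcal C_{\alpha,r})$, driven by monotonicity of $Q(t)=|\det D\Phi_t^u(x,y)|/\bigl(t^2(1+tf(x)^{\frac1{n-1}})^n\bigr)$. Your ``main obstacle'' paragraph describes exactly how the paper obtains that monotonicity: the nonincreasing quotient of Proposition~\ref{prop:direct-riccati-normal-jacobian}, times the positive nonincreasing factor $\bigl((1+ct)/(1+at)\bigr)^n$ with $a=f(x)^{\frac1{n-1}}\ge c$, times $e^{-R_t}$ with $R_t$ nondecreasing. Note that the paper proves this only for $(x,y)\in A_r^u\cap\mathcal U$ and only on $(0,r)$, and then defers to Brendle's Lemma~5.1.

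\textbf{The gap is Step~1.} Your Step~2 relies on it when you assert $Q\le1-\varepsilon$ on a whole neighborhood ``for all $t\ge t_0$''. The volume-deficit argument does not show that every $(x,y)$ with $|\nabla^\Sigma u(x)|^2+|y|^2<1$ close to $1$ lies in $A_r^u$ for every $r$:
\begin{itemize}
\item A small non-contact open set around a point with $|v|=\beta<1$ is disjoint from the fiber annulus $\{\alpha^2<|v|^2<1\}$ once $\alpha$ is close enough to $1$, so it produces no loss of order $r^{n+2}(1-\alpha)$.
\item A sequence of shrinking such sets accumulating at the sphere produces no uniform loss either.
\item At a fixed $\alpha$ the chain of estimates already has slack of order $(1-\alpha)^2$, so nothing is forced there.
\end{itemize}
Consequently your extension ``by closedness'' to the sphere has nothing to start from.

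\textbf{The missing observation} is that no contact information at or near $(\bar x,\bar y)$ is needed.
\begin{enumerate}
\item If $Q(t_0)<1$ at $(\bar x,\bar y)$, continuity of $D\Phi_{t_0}^u$ alone gives $Q_{(x,y)}(t_0)\le1-\varepsilon$ on a neighborhood $W$.
\item For $r>t_0$, every $(x,y)\in W\cap A_r^u\cap\mathcal U$ then satisfies $|\det D\Phi_r^u(x,y)|\le(1-\varepsilon)r^2(1+rf(x)^{\frac1{n-1}})^n$, by monotonicity on $(0,r)$ and continuity at $t=r$.
\item Points of $W\setminus A_r^u$ contribute nothing to the area formula at all.
\item Since $\bar x\in\Omega$ forces $\bar y\neq0$, the set $W\cap\{\alpha^2<|v|^2<1\}$ has measure bounded below by a multiple of $1-\alpha$. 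This yields a loss of order $r^{n+2}(1-\alpha)$ and contradicts equality.
\end{enumerate}
Neither the sphere point nor its neighbors ever needs to be a contact point. The upper bound $Q\le1$ at the sphere point is also not part of this lemma.

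A smaller point: that measure bound holds in every codimension. The role of $m=2$ is only that it makes the fiber estimate sharp to first order in $1-\alpha$, so that equality leaves no first-order slack.
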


\begin{proof}
It suffices to verify the monotonicity used in the proof of \cite[Lemma~5.1]{Brendle2023}.

Fix $r>0$ and $(x,y)\in A_r^u\cap\mathcal U$, and set
$$
a:=f(x)^{\frac{1}{n-1}},
\qquad
c:=\frac{1}{n}\bigl(\Delta_\Sigma u(x)-\langle H(x),y\rangle\bigr).
$$
By Lemma~\ref{lem:mixed-direction-nonnegative} and
Proposition~\ref{prop:direct-riccati-normal-jacobian},
$$
t\longmapsto
\frac{|\det D\Phi_t^u(x,y)|}
{t^2(1+ct)^n e^{-R_t(x,y)}}
$$
is nonincreasing on $(0,r)$.
Moreover,
Lemma~\ref{lem:ABP-basic-Sobolev} gives $c\le a$, and so
$$
t\longmapsto\frac{1+ct}{1+at}
$$
is nonincreasing.
By
Proposition~\ref{prop:direct-riccati-normal-jacobian},
$R_t(x,y)$ is nondecreasing, so
$$
\frac{|\det D\Phi_t^u(x,y)|}
{t^2(1+at)^n}
=
\frac{|\det D\Phi_t^u(x,y)|}
{t^2(1+ct)^n e^{-R_t(x,y)}}
\left(\frac{1+ct}{1+at}\right)^n
e^{-R_t(x,y)}
$$
is nonincreasing on $(0,r)$.

The remainder of the proof is identical to that of
\cite[Lemma~5.1]{Brendle2023}.
\end{proof}

\begin{lemma}
\label{lem:equality-hessian}
Assume that $x\in\Omega$ and $y\in T_x^\perp\Sigma$ satisfy $|\nabla^\Sigma u(x)|^2+|y|^2=1.$
Then
$$
\nabla^2_\Sigma u(x)-\langle\mathrm{II}(x),y\rangle
=
f(x)^{\frac1{n-1}}g.
$$
\end{lemma}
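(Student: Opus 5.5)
The plan is to follow the proof of \cite[Lemma~5.2]{Brendle2023}: pit the lower Jacobian bound of Lemma~\ref{lem:equality-jacobian-lower-bound} against the upper bound of Proposition~\ref{prop:gradient-normal-main} (equivalently Proposition~\ref{prop:direct-riccati-normal-jacobian}) and Lemma~\ref{lem:gradient-normal-signed-ambient}, and read off the equality case of the arithmetic--geometric mean inequality. Fix $x\in\Omega$ and $y$ with $|\nabla^\Sigma u(x)|^2+|y|^2=1$.

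\textbf{Step 1 (contact property).} As in Brendle's equality analysis, the equality assumption gives $(x,y)\in A_r^u$ for every $r>0$. Brendle proves this from the lower bound of Lemma~\ref{lem:equality-jacobian-lower-bound} together with the exhaustion argument of Theorem~\ref{thm:Sobolev}; that argument uses only the Jacobian estimates we have already checked under $\overline{\operatorname{Ric}}_{n-1}\ge0$. Consequently Theorem~\ref{thm:gradient-normal-signed-main} and Lemma~\ref{lem:mixed-direction-nonnegative} apply for all $t>0$. With $m=2$ and $c:=\frac1n(\Delta_\Sigma u(x)-\langle H(x),y\rangle)\le a:=f(x)^{\frac1{n-1}}$, we obtain
\[
t^2(1+at)^n\le|\det D\Phi_t^u(x,y)| = t^2\det B_t^u\,|\det D(\exp_x)_{tv}|\le t^2\det B_t^u\le t^2\Bigl(\tfrac1n\operatorname{tr}B_t^u\Bigr)^n\le t^2(1+ct)^n .
\]
Here $|\det D(\exp_x)_{tv}|\le1$ by Lemma~\ref{lem:gradient-normal-signed-ambient}, and the last inequality uses $\mathcal R_n^u\ge0$ together with the index comparison in the proof of Proposition~\ref{prop:gradient-normal-main}.

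\textbf{Step 2 (forcing equality).} Since $c\le a$, every inequality in the chain is an equality for all $t>0$. In particular $c=a$, and $B_t^u$ is a multiple of the identity, namely $B_t^u=(1+at)I$. Equality in the index lemma also forces $Y_i=V_i=(1-s/t)E_i$.

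\textbf{Step 3 (extracting the Hessian).} By \eqref{eq:gradient-normal-B-index-form},
\[
\langle B_t^u e_i,e_j\rangle=tI^t(Y_i,Y_j)+t\bigl(\nabla^2_\Sigma u-\langle \mathrm{II},y\rangle\bigr)(e_i,e_j).
\]
Taking $t\to0$, the term $tI^t(Y_i,Y_j)$ equals $\delta_{ij}+O(t^2)$, since $tI^t(V_i,V_j)=\delta_{ij}+O(t^2)$ and $Y=V$. Comparing the $O(t)$ terms with $B_t^u=(1+at)I$ gives $\nabla^2_\Sigma u(x)-\langle\mathrm{II}(x),y\rangle=a\,g$, which is the claim.

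\textbf{Main obstacle.} The delicate point is Step 1. The lemma does not assume a priori that $(x,y)$ lies in $A_r^u$ for all $r$, so transferring the upper bound to every $t>0$ requires care. I would first try Brendle's argument verbatim. If $(x,y)$ is only known to lie in $A_r^u$ for some $r$, I would instead let $r\to\infty$ along points of contact sets with $|\nabla^\Sigma u|^2+|y|^2\uparrow1$ and pass to the limit by continuity of $D\Phi_t^u$ and of the Hessian expression.
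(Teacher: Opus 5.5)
\textbf{Gap in Step 1.} Your Steps 2 and 3 are correct. Your route also differs from the paper's: you sandwich the lower bound of Lemma~\ref{lem:equality-jacobian-lower-bound} against the factorization $t^2\det B_t^u\,|\det D(\exp_x)_{tv}|$, then read off equality in AM--GM and in the index lemma. The problem is the assertion that $(x,y)\in A_r^u$ for every $r>0$. Neither the paper nor the argument of \cite[Lemma~5.1]{Brendle2023} behind Lemma~\ref{lem:equality-jacobian-lower-bound} establishes this. That lemma gives only the Jacobian lower bound for every $t>0$, and its proof uses monotonicity at contact points of $\mathcal U$, not membership of $(x,y)$ in any contact set. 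Indeed $(x,y)$ lies on the shell $|\nabla^\Sigma u(x)|^2+|y|^2=1$, outside $\mathcal U$. Your fallback would need contact points, for arbitrarily large $r$, accumulating at $(x,y)$, and you do not prove this.

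Your upper chain uses contact in exactly two places. It needs $B_t^u\ge0$ for AM--GM. It also needs no conjugate points on $\gamma|_{(0,t]}$, for the nonsingularity of $E_t$, for the index lemma in Proposition~\ref{prop:gradient-normal-main}, and for Lemma~\ref{lem:gradient-normal-signed-ambient}. So as written the chain is unjustified.

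\textbf{How to repair it.} Both requirements hold for all sufficiently small $t>0$ with no contact hypothesis. Conjugate points cannot occur near $s=0$. Formula \eqref{eq:Bt-boundary} uses only nonsingularity of $E_t$ in its derivation, and gives $B_t^u=I+tA+O(t^2)$ with $A=\nabla^2_\Sigma u(x)-\langle\mathrm{II}(x),y\rangle$; hence $B_t^u$ is positive definite. The lower bound holds for every $t>0$, so your sandwich closes at any such small $t$, and one $t$ suffices. Once $Y_i=(1-s/t)E_i$, \eqref{eq:Bt-boundary} gives exactly $\langle B_t^ue_i,e_j\rangle=\delta_{ij}+tA_{ij}$. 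Then $B_t^u=(1+at)I$ forces $A=a\,g$, with no limit needed.

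\textbf{How the paper handles it.} The paper avoids the contact issue differently. The lower bound gives $\det P(t)\neq0$ for every $t>0$, and that was the only use of contact in the scalar Riccati argument of Proposition~\ref{prop:direct-riccati-normal-jacobian}. Running that argument for all $t$ yields $c=a$ and $R_t\equiv0$, hence $\delta_n=\delta_m\equiv0$ and $q_n=a/(1+at)$. Equality in Cauchy--Schwarz for the traced Riccati equation then gives $Q_T=q_nI_n$, and letting $t\to0^+$ gives $A=a\,g$.

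\textbf{Comparison.} Your repaired argument is local and exact at a single small time. The paper's argument works along the whole ray and additionally records that the curvature terms vanish there.
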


\begin{proof}
We follow the proof of \cite[Lemma~5.2]{Brendle2023}, replacing the
matrix Riccati comparison by Proposition~\ref{prop:direct-riccati-normal-jacobian}.

Set
$$
a:=f(x)^{\frac{1}{n-1}},
\qquad
c:=\frac{1}{n}
\bigl(\Delta_\Sigma u(x)-\langle H(x),y\rangle\bigr).
$$
By Lemma~\ref{lem:ABP-basic-Sobolev}, $c\le a$.
Let $P(t)$ be the Jacobi matrix from the proof of
Proposition~\ref{prop:direct-riccati-normal-jacobian} along
$$
\gamma(t):=\exp_x\bigl(t(\nabla^\Sigma u(x)+y)\bigr).
$$
By Lemma~\ref{lem:equality-jacobian-lower-bound},
$$|\det P(t)|\geq t^2(1+at)^n >0
\qquad\text{for all }t>0.$$
Thus $P(t)$ is invertible for every $t>0$. We can then argue as in Proposition \ref{prop:direct-riccati-normal-jacobian} to show that
$$
|\det P(t)| \leq t^2(1+ct)^n e^{-R_t(x,y)} \leq t^2(1+at)^n.
$$
Therefore equality holds throughout, so
$$
c=a,\qquad R_t(x,y)=0, \qquad |\det P(t)|=t^2(1+at)^n
$$
for every $t>0$.

From \eqref{eq:Rs2}, we have
\begin{equation*}
R_t = \int_0^t W_m(t, \rho)\left[n\delta_n(\rho)+m\delta_m(\rho) \right]\, d\rho + \int_0^t \left[W_n(t, \rho)-W_m(t, \rho) \right] n\delta_n(\rho)\, d\rho = 0.
\end{equation*}
Both weights are strictly positive for $0<\rho<t$,
so the nonnegative continuous functions $n\delta_n$ and
$n\delta_n+m\delta_m$ vanish identically. Hence
$$
\delta_n(t)=\delta_m(t)=0
\qquad\text{for all }t>0.
$$
Consequently, the scalar Riccati comparisons with $a=c$ give
$$
q_n(t)\leq\frac{a}{1+at},
\qquad
q_m(t)\leq\frac{1}{t}.
$$

On the other hand, $|\det P(t)|=t^2(1+at)^n$ implies
$$
nq_n(t)+2q_m(t) = \frac{d}{dt}\log|\det P(t)| = \frac{na}{1+at}+\frac{2}{t}.
$$
Hence
$$
q_n(t)=\frac{a}{1+at},
\qquad
q_m(t)=\frac{1}{t}.
$$

Write the symmetric matrix $Q$ in block form as
$$ Q=
\begin{pmatrix}
Q_T&B\\
B^T&Q_N
\end{pmatrix}.
$$
Taking the tangential trace of the Riccati equation gives
$$
q_n' + \frac{1}{n}\operatorname{tr}(Q_T^2) + \frac{1}{n}|B|^2 + \delta_n=0.
$$
Since $q_n'+q_n^2=0$ and $\delta_n=0$, we obtain
$$
\operatorname{tr}(Q_T^2)+|B|^2=nq_n^2.
$$
Then the equality in Cauchy--Schwarz inequality
$\operatorname{tr}(Q_T^2)\ge nq_n^2$ holds, which implies
$$
Q_T(t)=q_n(t)I_n.
$$
Letting $t\to0^+$ gives
$$
\nabla^2_\Sigma u(x)-\langle\mathrm{II}(x),y\rangle = ag = f(x)^{\frac{1}{n-1}}g.
$$
\end{proof}

With Lemma~\ref{lem:equality-hessian} established, the remainder of the argument is the same as in the proof of
\cite[Theorem~1.6]{Brendle2023}.
Indeed, applying Lemma~\ref{lem:equality-hessian} to $y$ and $-y$ gives
$$
\nabla^2_\Sigma u=f^{\frac1{n-1}}g,
\qquad
\mathrm{II}=0
$$
on $\Omega$.
The argument of \cite[Lemmas~5.4 and~5.5]{Brendle2023}
then shows that $\nabla^\Sigma f=0$ on $\Omega$ and that $\Omega$ is dense in $\Sigma$. Hence $f$ is constant,
$$
\nabla^2_\Sigma u=f^{\frac{1}{n-1}}g,
\qquad
\mathrm{II}=0
$$
on $\Sigma$.
The remaining flow argument shows that $\theta=1$.
Since $\overline{\operatorname{Ric}}_{n-1}\geq 0$ implies
$\overline{\operatorname{Ric}}\geq 0$, the rigidity case of the
Bishop--Gromov theorem yields that $M$ is isometric to
$\mathbb R^{n+2}$. Finally, $\Sigma$ is an $n$-dimensional round ball contained in an affine $n$-plane. This completes the proof of Theorem~\ref{thm:Sobolev-equality}.

\subsection{Applications of the Sobolev inequality}

Taking $f\equiv1$ in Theorem~\ref{thm:Sobolev} gives the corresponding isoperimetric inequality for
minimal submanifolds.

\begin{corollary}[Isoperimetric inequality for minimal submanifolds]
Under the assumptions of Theorem~\ref{thm:Sobolev}, if $\Sigma$ is minimal,
then
\[
|\partial\Sigma|
\ge
n\left(
\frac{(n+m)|\mathbb B^{n+m}|}{m|\mathbb B^m|}
\right)^{\frac{1}{n}}
\theta^{\frac{1}{n}}|\Sigma|^{\frac{n-1}{n}}.
\]
For $m=2$, this becomes
\[
|\partial\Sigma|
\ge
n|\mathbb B^n|^{\frac{1}{n}}\theta^{\frac{1}{n}}
|\Sigma|^{\frac{n-1}{n}}.
\]
    Moreover, if $\theta>0$, $m=2$, and equality holds, then $M$ is isometric to $\mathbb R^{n+2}$ and, under this identification, $\Sigma$ is an $n$-dimensional round ball contained in an affine $n$-plane.
\end{corollary}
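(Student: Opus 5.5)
The plan is to specialize Theorem~\ref{thm:Sobolev} to the constant function $f\equiv1$ and then transfer the equality case from Theorem~\ref{thm:Sobolev-equality}.

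\textbf{Inequality.} With $f\equiv1$ we have $\nabla^\Sigma f=0$. Minimality gives $H=0$, so the integrand $\sqrt{|\nabla^\Sigma f|^2+f^2|H|^2}$ vanishes identically. The left-hand side of Theorem~\ref{thm:Sobolev} therefore reduces to $\int_{\partial\Sigma}1\,d\operatorname{vol}_{\partial\Sigma}=|\partial\Sigma|$. On the right-hand side, $\int_\Sigma f^{\frac n{n-1}}\,d\operatorname{vol}_\Sigma=|\Sigma|$, which gives the first inequality directly.

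\textbf{The case $m=2$.} Here one only has to simplify the constant, using
\[
(n+2)|\mathbb B^{n+2}|=|\mathbb S^{n+1}|,\qquad
|\mathbb S^{n+1}|=2\pi|\mathbb B^{n}|,\qquad
|\mathbb B^2|=\pi.
\]
Then
\[
\frac{(n+2)|\mathbb B^{n+2}|}{2|\mathbb B^2|}=\frac{2\pi|\mathbb B^n|}{2\pi}=|\mathbb B^n|,
\]
which is the stated constant. The identity $|\mathbb S^{n+1}|=2\pi|\mathbb B^n|$ is the standard recursion; it can be checked via Gamma functions or the Archimedes projection.

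\textbf{Equality.} Suppose $\theta>0$, $m=2$, and equality holds for minimal $\Sigma$. Then equality holds in Theorem~\ref{thm:Sobolev} with $f\equiv1$, so Theorem~\ref{thm:Sobolev-equality} applies verbatim. It yields that $M$ is isometric to $\mathbb R^{n+2}$ and that $\Sigma$ is a round $n$-ball in an affine $n$-plane. The conclusion that $f$ is constant is automatic here.

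The only step requiring any care is the constant identity in the $m=2$ case; everything else is direct substitution into earlier results.
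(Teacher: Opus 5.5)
Your proposal is correct and matches the paper's argument: the paper obtains the corollary by taking $f\equiv1$ in Theorem~\ref{thm:Sobolev} (with $H=0$), and the equality case follows from Theorem~\ref{thm:Sobolev-equality} exactly as you describe. Your simplification of the $m=2$ constant via $(n+2)|\mathbb B^{n+2}|=|\mathbb S^{n+1}|=2\pi|\mathbb B^n|$ and $|\mathbb B^2|=\pi$ is also correct.
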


We next introduce an intrinsic diameter estimate. Wu~\cite{wu2023diameter} obtained such an estimate from a Michael--Simon Sobolev inequality under nonnegative sectional curvature by combining a maximal-function argument with a Vitali-type covering lemma. Once the required Sobolev inequality is available, the remaining argument is intrinsic to the submanifold and does not use the sectional-curvature assumption on the ambient manifold. Thus, Theorem~\ref{thm:Sobolev} yields the following corollary under nonnegative ($n-1$)-Ricci curvature.

\begin{corollary}[Intrinsic diameter estimate]\label{cor:intrinsic-diameter}
Let $M^{n+m}$ be complete and noncompact with nonnegative $(n-1)$-Ricci curvature and asymptotic volume ratio $\theta >0$. Let $\Sigma^n$ be a closed connected submanifold of $M$, where $n\geq2$ and $m\geq2$.  Then there is a constant
$$
        C(n,m,\theta)
        :=
        \min\left\{
        \frac{(n+m)|\mathbb B^{n+m}|\theta}{2^n m|\mathbb B^m|},
        |\mathbb B^n|,
        1
        \right\}
$$
such that
\[\operatorname{diam}_{\Sigma}(\Sigma)
        \le
        4C(n,m,\theta)^{1-n}
        \int_{\Sigma}|H|^{n-1}\,d \mathrm{vol}_{\Sigma}.
\]
\end{corollary}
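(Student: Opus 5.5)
The plan is to follow Wu's argument \cite{wu2023diameter}, with Theorem~\ref{thm:Sobolev} supplying the Sobolev inequality. Since $\Sigma$ is closed there is no boundary term, and for every positive smooth $f$ we have
\[
\int_\Sigma\bigl(|\nabla^\Sigma f|+f|H|\bigr)\,d\operatorname{vol}_\Sigma
\ge
n\Bigl(\tfrac{(n+m)|\mathbb B^{n+m}|\theta}{m|\mathbb B^m|}\Bigr)^{1/n}
\Bigl(\int_\Sigma f^{\frac n{n-1}}\Bigr)^{\frac{n-1}{n}} ,
\]
using $\sqrt{a^2+b^2}\le a+b$. By approximation this extends to nonnegative Lipschitz $f$. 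The first step is to test this with the cutoff $f=(1-d_\Sigma(x,\cdot)/\rho)_+$ on an intrinsic ball $B^\Sigma_x(\rho)$, where $\rho$ is smaller than the diameter, or more precisely with cutoffs between radii $s$ and $s+\epsilon$ followed by $\epsilon\to0$. Writing $V(s):=\operatorname{vol}_\Sigma(B^\Sigma_x(s))$, this gives a differential inequality of the form
\[
V'(s)+\int_{B^\Sigma_x(s)}|H|
\ge
n\Bigl(\tfrac{(n+m)|\mathbb B^{n+m}|\theta}{m|\mathbb B^m|}\Bigr)^{1/n}V(s)^{\frac{n-1}{n}} .
\]

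The second step is a maximal-function dichotomy. Fix $x$ and compare the two terms on the left. If $\int_{B^\Sigma_x(s)}|H|\le\frac12(\cdots)V^{(n-1)/n}$ for all $s\le r$, we integrate $\frac{d}{ds}V^{1/n}\ge\frac12(\cdots)$. Together with $V(s)\sim|\mathbb B^n|s^n$ for small $s$, this yields $V(r)\ge C r^n$, where $C$ is essentially the first entry of $C(n,m,\theta)$; the factor $2^n$ comes from the $\frac12$. Otherwise there is a first radius $r_x$ where the $H$-term dominates, so that
\[
C\,r_x^n\le V(r_x)\le \bigl(C^{-1/n}\textstyle\int_{B_x(r_x)}|H|\bigr)^{\frac n{n-1}} .
\]
Rearranging, and using Hölder on the ball, $\int_{B}|H|\le V^{(n-2)/(n-1)}\bigl(\int_B|H|^{n-1}\bigr)^{1/(n-1)}$, gives $r_x\le C^{1-n}\int_{B_x(r_x)}|H|^{n-1}$, modulo bookkeeping of the constants. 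The minimum with $|\mathbb B^n|$ and $1$ in $C(n,m,\theta)$ absorbs the small-scale normalization and keeps the exponents consistent.

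The third step is the covering argument. Take a minimizing intrinsic geodesic realizing $\operatorname{diam}_\Sigma(\Sigma)$. If some point has no finite $r_x$ below the diameter, the volume lower bound combined with a Sobolev test on $\Sigma$ forces the estimate directly; alternatively, since $\Sigma$ is closed and $\theta>0$, the inequality with $f\equiv1$ gives $\int|H|\ge c\,|\Sigma|^{(n-1)/n}>0$, which rules this case out. Otherwise the balls $B_x(r_x)$, $x$ on the geodesic, cover it. A Vitali-type lemma selects disjoint balls whose $4$-fold enlargements still cover the geodesic, so that
\[
\operatorname{diam}_\Sigma(\Sigma)\le\sum_j 4r_j\le 4C^{1-n}\sum_j\int_{B_j}|H|^{n-1}\le 4C^{1-n}\int_\Sigma|H|^{n-1}.
\]
This is where the constant $4$ comes from.

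The main obstacle is step two, specifically the normalization of the constants so that exactly $C(n,m,\theta)$ appears. My first choice is to copy Wu's exponents verbatim, with the sectional-curvature Sobolev constant replaced by $\bigl((n+m)|\mathbb B^{n+m}|\theta/(m|\mathbb B^m|)\bigr)^{1/n}$. Since everything after the Sobolev inequality is intrinsic to $\Sigma$, no ambient curvature enters again.
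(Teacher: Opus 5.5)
Your proposal follows the paper's route: the paper proves this corollary only by invoking Wu's maximal-function and Vitali argument with Theorem~\ref{thm:Sobolev} in place of Brendle's inequality, noting that everything after the Sobolev step is intrinsic to $\Sigma$; your first-crossing-radius version of the dichotomy does close the bookkeeping, giving $r_x\le (n^{n-1}C_1)^{-1}\int_{B_x(r_x)}|H|^{n-1}$ with $C_1$ the first entry of the minimum, which implies the stated bound because $C(n,m,\theta)\le\min\{C_1,1\}$. One correction in the covering step: $4$-fold enlargements covering the geodesic would only give $\operatorname{diam}_\Sigma(\Sigma)\le\sum_j 8r_j$, so the factor $4$ must come instead from the one-dimensional lemma that a finite cover of the segment by intervals $(t_j-r_j,t_j+r_j)$ contains a disjoint subfamily of total length at least half the segment, and disjointness of these intervals along the minimizing geodesic then forces disjointness of the intrinsic balls.
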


The local argument for Corollary~\ref{cor:intrinsic-diameter} also gives
the following compactness criterion.

\begin{corollary}
    Let $M^{n+m}$ be complete, noncompact, simply connected with nonnegative $(n-1)$-Ricci curvature and asymptotic volume ratio $\theta >0$. Let $\Sigma^n$ be a complete connected submanifold in $M$.
    If there is a point $p \in \Sigma$ such that $$\mathrm{vol}_\Sigma(B_p^\Sigma(R)) = o(R^n) \text{ as } R \to \infty, \quad \int_\Sigma |H|^{n-1} d\operatorname{vol}_\Sigma < +\infty,$$
    then $\Sigma$ is compact.
\end{corollary}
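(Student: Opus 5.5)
The plan is to argue by contradiction, using the local estimate behind Corollary~\ref{cor:intrinsic-diameter}. That estimate gives a uniform lower volume bound for intrinsic balls of $\Sigma$ away from a set controlled by $|H|^{n-1}$. Concretely, let $h(x,r):=r^{1-n}\int_{B^\Sigma_x(r)}|H|^{n-1}\,d\operatorname{vol}_\Sigma$. Following Wu, we apply Theorem~\ref{thm:Sobolev} to cutoff functions $f$ supported in $B^\Sigma_x(r)$; this needs the intrinsic balls to be relatively compact, which holds since $\Sigma$ is complete. Taking $f\equiv1$ on $B^\Sigma_x(\rho)$ and letting $f$ approximate the characteristic function gives a differential inequality for $V(\rho):=\operatorname{vol}_\Sigma(B^\Sigma_x(\rho))$:
\[
V'(\rho)+\int_{B_x^\Sigma(\rho)}|H|\,d\operatorname{vol}_\Sigma\ \ge\ c\,V(\rho)^{\frac{n-1}{n}}, \qquad c=n\Bigl(\tfrac{(n+m)|\mathbb B^{n+m}|\theta}{m|\mathbb B^m|}\Bigr)^{1/n}.
\]
Hölder's inequality bounds $\int|H|\le(\int|H|^{n-1})^{1/(n-1)}\,\cdots$, and a maximal-function argument then yields the following. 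If $\sup_{\rho\le r}h(x,\rho)$ is below a threshold $\varepsilon(n,m,\theta)$ (expressed through $C(n,m,\theta)$), then $V(r)\ge C(n,m,\theta)\,r^n$, i.e.\ $\operatorname{vol}_\Sigma(B^\Sigma_x(r))\ge C r^n$.

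Now suppose $\Sigma$ is not compact. Being complete, it then contains a ray or at least points $x_k$ with $d_\Sigma(p,x_k)\to\infty$. Since $\int_\Sigma|H|^{n-1}<\infty$, for any $\varepsilon>0$ there is $R_1$ with $\int_{\Sigma\setminus B^\Sigma_p(R_1)}|H|^{n-1}<\varepsilon$. For $R$ large, choose $x$ with $d_\Sigma(p,x)=2R$ (possible by connectedness and unboundedness). Then $B^\Sigma_x(R)\subset B^\Sigma_p(3R)\setminus B^\Sigma_p(R)$. For $\rho\le R$ the localized quantity satisfies $h(x,\rho)\le\rho^{1-n}\varepsilon$, which is small for $\rho\ge1$ but not for small $\rho$. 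For small $\rho$ we would instead use the standard maximal-function and Vitali covering step: the set of bad centers where $\sup_\rho h$ exceeds the threshold is covered by balls whose radii sum to at most $C\varepsilon$. Hence $B^\Sigma_p(3R)\setminus B^\Sigma_p(R)$ contains a good center $x'$ within distance $C\varepsilon$ of $x$, with $B^\Sigma_{x'}(R/2)$ still inside the annulus. This gives $\operatorname{vol}_\Sigma(B^\Sigma_p(3R))\ge C(R/2)^n$, contradicting $\operatorname{vol}_\Sigma(B^\Sigma_p(R))=o(R^n)$.

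The simple connectivity hypothesis does not seem needed in this scheme. It is presumably inherited from Wu's setting, so I would not use it.

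The main obstacle is making the covering step rigorous for a noncompact $\Sigma$, namely that the bad set does not disconnect a path to $x$. I would handle this exactly as in Wu's diameter proof, applied along a minimizing segment from $p$ to $x$. Along that segment the bad set has total one-dimensional extent at most $4C^{1-n}\int_{\Sigma\setminus B_p(R_1)}|H|^{n-1}<R$, so some point of the segment in the annulus is good.
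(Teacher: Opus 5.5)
Your overall scheme is the one the paper relies on, namely the local argument behind Corollary~\ref{cor:intrinsic-diameter}, which the paper invokes via Wu. It combines a volume/mean-curvature dichotomy derived from Theorem~\ref{thm:Sobolev} with a Vitali covering along a minimizing segment, and you are right that simple connectivity is never used.

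There is, however, a concrete error in the quantity you threshold. Since $\int|H|^{n-1}\,d\operatorname{vol}_\Sigma$ scales like a length, the scale-invariant quantity is $\rho^{-1}\int_{B^\Sigma_x(\rho)}|H|^{n-1}\,d\operatorname{vol}_\Sigma$, not $\rho^{1-n}\int_{B^\Sigma_x(\rho)}|H|^{n-1}\,d\operatorname{vol}_\Sigma$; the two agree only for $n=2$. The $\rho^{-1}$ version is exactly what your H\"older step needs: $\int_{B_\rho}|H|^{n-1}\le\varepsilon\rho$ and $V(\rho)\ge\delta\rho^n$ give $\int_{B_\rho}|H|\le\varepsilon^{1/(n-1)}\delta^{-1/(n(n-1))}V(\rho)^{(n-1)/n}$.

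With your $h$, the dichotomy is false for $n\ge3$. The cylinder $\mathbb S^{n-1}(a)\times\mathbb R\subset\mathbb R^{n+1}\subset\mathbb R^{n+m}$ satisfies $\sup_{\rho>0}\rho^{1-n}\int_{B_\rho}|H|^{n-1}\le C_na^{2-n}$, which is below any fixed threshold once $a$ is large, while $\operatorname{vol}_\Sigma(B_\rho)/\rho^n\to0$. Your covering step breaks for the same reason: a bad radius would only satisfy $r_i^{n-1}\lesssim\int_{B_i}|H|^{n-1}$, and a bound on $\sum_i r_i^{n-1}$ does not control the length of the bad part of the segment. The bound $4C^{1-n}\int|H|^{n-1}$ you quote is what the $\rho^{-1}$ normalization produces. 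With that correction your argument goes through.

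The tail estimate and the annulus are also unnecessary. With the correct normalization, the bad part of any minimizing segment has length at most $C\int_\Sigma|H|^{n-1}$, a fixed finite number, so the argument can be run directly. If $\Sigma$ is noncompact it is unbounded, so there is a minimizing segment $\gamma$ of length $R$ starting at $p$. For every $x\in\gamma$ one has $\inf_{r\le R}r^{-n}\operatorname{vol}_\Sigma(B^\Sigma_x(r))\le R^{-n}\operatorname{vol}_\Sigma(B^\Sigma_p(2R))$, and this is below the threshold for large $R$ by the $o(R^n)$ hypothesis. Thus every point of $\gamma$ is bad at some scale $r(x)\le R$. Vitali, together with the fact that a ball centred on a minimizing segment meets it in length at most twice its radius, then yields $R\le C\int_\Sigma|H|^{n-1}$, which is impossible for large $R$. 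Your formulation (find a good point, deduce Euclidean volume growth) is just the contrapositive of this.

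Finally, $V(\rho)$ need not be differentiable; use difference quotients, as in the proof of the surface lemma in Section~\ref{sec:isoperimetric}.
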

We refer the reader to the proof of \cite[Corollary~3.2]{wu2023diameter}, since the same argument applies here. We therefore omit the proof.

Theorem~\ref{thm:Sobolev} also yields a boundary version of the intrinsic diameter estimate.
Wu~\cite[Theorem~1.4]{wu2023diameter} proved such an estimate for
compact \textit{convex}\footnote{To our understanding, the term “convex” in \cite{wu2023diameter} refers to geodesic convexity.} surfaces with boundary.
The same argument can be adapted without assuming that $\Sigma$ is convex.

For $m\geq 1$, let
\begin{equation*}
c(m):=\frac{1}{2}\left(\frac{m|\mathbb B^m|}{(m+2)|\mathbb B^{m+2}|}\right)^{\frac12},
\end{equation*}
the Sobolev constant appearing in Theorem~\ref{thm:Sobolev} for $n=2$. For $\theta>0$, define
\begin{equation}\label{delta-definition}
\delta(m,\theta):=
\begin{cases}
\frac{\theta}{16c(m)^2}, & \text{if } m\geq 2,\\
\frac{\theta}{16c(2)^2}, & \text{if }m=1.
\end{cases}
\end{equation}
\begin{corollary}\label{coro:surface-diameter}
    Let $M^{m+2}$ be a complete noncompact Riemannian manifold with nonnegative sectional curvature and asymptotic volume ratio $\theta>0$.
    Let $\Sigma^2 \subset M$ be a compact connected surface, possibly with boundary.
    If $m\geq 1$, then
    $$\operatorname{diam}_{\Sigma}(\Sigma) \leq \frac{4}{\delta(m,\theta)}\left( \int_\Sigma |H| d \mathrm{vol}_{\Sigma} + L(\partial \Sigma)\right),$$
    where $\delta(m,\theta)>0$ is defined in \eqref{delta-definition}.
\end{corollary}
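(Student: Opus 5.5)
The plan is to combine Theorem~\ref{thm:Sobolev} with $n=2$ with Wu's local volume-growth and covering argument, run intrinsically on $\Sigma$ so that convexity is never used.

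\textbf{Reduction to $m\ge2$.} Nonnegative sectional curvature is exactly $\overline{\operatorname{Ric}}_1\ge0$, which is the hypothesis $\overline{\operatorname{Ric}}_{n-1}\ge0$ for $n=2$. So for $m\ge2$, Theorem~\ref{thm:Sobolev} applies directly. For $m=1$, I would replace $M$ by the product $M\times\mathbb R$, which still has nonnegative sectional curvature, and view $\Sigma$ inside the slice $M\times\{0\}$. Since this slice is totally geodesic, $H$, $\partial\Sigma$ and the intrinsic distance are unchanged. Its asymptotic volume ratio is again $\theta$: computing $\operatorname{vol}B_{(o,0)}(R)=\int_{-R}^R\operatorname{vol}B_o(\sqrt{R^2-t^2})\,dt$ and using $|\mathbb B^4|=|\mathbb B^3|\int_{-1}^1(1-t^2)^{3/2}dt$ gives this. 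The codimension becomes $2$, which explains the constant $c(2)$ in \eqref{delta-definition}.

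\textbf{Sobolev inequality for $n=2$.} With $n=2$ the constant in Theorem~\ref{thm:Sobolev} equals $\theta^{1/2}/c(m)$, so for positive (by approximation, nonnegative Lipschitz) $f$,
\[
\int_\Sigma|\nabla^\Sigma f|+\int_\Sigma f|H|+\int_{\partial\Sigma}f\ \ge\ \frac{\theta^{1/2}}{c(m)}\Bigl(\int_\Sigma f^2\Bigr)^{1/2}.
\]

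\textbf{Local dichotomy.} Fix $x\in\Sigma$ and let $V(r):=\operatorname{vol}_\Sigma(B^\Sigma_x(r))$ and $h(r):=\int_{B_x(r)}|H|+L(\partial\Sigma\cap B_x(r))$. I would test with $f=\min\{1,(\rho-d_x)_+/\varepsilon\}$ and let $\varepsilon\to0$. Since $d_x$ is $1$-Lipschitz with $|\nabla d_x|\le1$ a.e., the coarea formula gives, for a.e.\ $r$,
\[
V'(r)+h(r)\ \ge\ \frac{\theta^{1/2}}{c}\,V(r)^{1/2}.
\]
No convexity is needed here; the boundary of $\Sigma$ only enters through $h$.

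Now suppose $h(s)\le \frac{\theta^{1/2}}{2c}V(s)^{1/2}$ for all $s\le r$. Then $(V^{1/2})'\ge\theta^{1/2}/(4c)$, so $V(r)\ge \frac{\theta}{16c^2}r^2=\delta r^2$. Since $V\le|\Sigma|<\infty$, this bound must fail for large $r$. So for each $x$ there is a radius $r_x\le\sqrt{|\Sigma|/\delta}$ at which $h(r_x)>\frac{\theta^{1/2}}{2c}V(r_x)^{1/2}$.

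I would choose $r_x$ as the first such radius. At $r_x$ one still has $V(r_x)\ge\delta r_x^2$, by continuity from the growth bound on $[0,r_x)$. Combining,
\[
h(r_x)\ \ge\ \frac{\theta^{1/2}}{2c}\,\delta^{1/2}r_x\ =\ 2\delta\, r_x .
\]

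\textbf{Covering.} Let $\gamma$ be an intrinsic minimizing curve of length $D=\operatorname{diam}_\Sigma\Sigma$; it exists because $\Sigma$ is compact with its length metric. Cover $\gamma$ by the balls $B_x(r_x)$ with $x\in\gamma$. A one-dimensional Vitali selection along $\gamma$ gives disjoint balls $B_{x_i}(r_{x_i})$ with $\sum_i 2r_{x_i}\ge D/2$. Summing the local bound over these disjoint balls gives
\[
\int_\Sigma|H|+L(\partial\Sigma)\ \ge\ \sum_i h(r_{x_i})\ \ge\ 2\delta\sum_i r_{x_i}\ \ge\ \frac{\delta}{2}D,
\]
which is within a factor $2$ of the stated bound $D\le\frac4\delta\bigl(\int_\Sigma|H|+L(\partial\Sigma)\bigr)$.

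\textbf{Main obstacle.} I expect the delicate step to be exactly this numerical bookkeeping. Getting the sharp constant $4/\delta$ depends on how the stopping radius $r_x$ is chosen and on the Vitali factor; for instance, using $\gamma$ restricted to $B_{x_i}(r_{x_i})$ having length comparable to $2r_{x_i}$ would recover the missing factor $2$. I would follow Wu's proof and adjust the threshold $\frac{\theta^{1/2}}{2c}$ there if needed.
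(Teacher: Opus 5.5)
Your proof is correct, and its architecture is the paper's: Theorem~\ref{thm:Sobolev} with $n=2$ (constant $\theta^{1/2}/c(m)$), a local area-growth dichotomy valid at every point of $\Sigma$ including boundary points, a one-dimensional Vitali selection along an intrinsic minimizing curve, and the product with $\mathbb R$ when $m=1$. Where you differ is the local lemma.

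The paper argues by contradiction at the first radius $r_0$ with $A(r_0)=\delta r_0^2$. It compares the finite differences $A(r_0)-A(r_0-s)$ and $v(r_0)-v(r_0-s)$ for one small $s$, so it never differentiates the area function. The threshold it extracts for the mean-curvature/boundary term is $\delta r$.

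You instead integrate the a.e.\ inequality $V'+h\ge\frac{\theta^{1/2}}{c}V^{1/2}$. This needs no absolute continuity of $V$, but only because $V^{1/2}$ is nondecreasing, so that $\int_a^b (V^{1/2})'\,ds\le V^{1/2}(b)-V^{1/2}(a)$. You should state this explicitly, since it is exactly the point the paper's discrete argument is designed to avoid. In exchange, your threshold $\frac{\theta^{1/2}}{2c}V^{1/2}$ is the optimal split and gives $h(r_x)\ge 2\delta r_x$, twice the paper's local constant.

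Consequently the ``main obstacle'' you describe does not exist. The inequality $\frac{\delta}{2}D\le\int_\Sigma|H|+L(\partial\Sigma)$ says $D\le\frac{2}{\delta}\bigl(\int_\Sigma|H|+L(\partial\Sigma)\bigr)$. This is \emph{stronger} than the stated bound with $\frac{4}{\delta}$ and implies it. The paper's $\frac4\delta$ comes from its local constant $\delta$ combined with the same covering bound $\sum_i r_{x_i}\ge D/4$.

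Two routine repairs are needed at the stopping radius.

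First, $V$ and $h$ are defined with open balls, so they are only left-continuous. At $r_x=\inf\{s: h(s)>\frac{\theta^{1/2}}{2c}V(s)^{1/2}\}$ you do get $V(r_x)\ge\delta r_x^2$, but the strict reverse inequality need not hold at $r_x$ itself. Take $s\downarrow r_x$ with $h(s)>\frac{\theta^{1/2}}{2c}V(s)^{1/2}\ge 2\delta r_x$. This gives $h(s)\ge(2\delta-\varepsilon)s$, and you let $\varepsilon\to0$ at the end; the supremum in the paper's $M(x,R)$ has the same issue.

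Second, this infimum can be $0$ at boundary points, where $h(s)/s\to2$ and $V(s)/s^2\to\pi/2$. There you simply use small radii directly: $V(s)\ge\delta s^2$ for small $s$, because $\delta\le\pi/4<\pi/2$.
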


\begin{remark}
When the dimension of the submanifold is two, the curvature
condition in Theorem~\ref{thm:Sobolev} becomes
$\operatorname{Ric}_{1}\geq0$, which is precisely nonnegative
sectional curvature. Thus, this corollary does not weaken the
ambient curvature assumption in Wu's theorem; the point here is
that no convexity assumption on $\Sigma$ is required.
We also mention that Flaim and Scharrer ~\cite[Lemma 2.4]{flaim2024diameter} obtained a related convexity-free diameter estimate for surfaces with boundary in conformally flat ambient spaces, using Miura's doubling construction.
\end{remark}

We begin with a modification of \cite[Lemma~4.1]{wu2023diameter}. In the proof of Wu’s lemma, convexity is used to ensure that the intrinsic distance function $d_\Sigma(x,\cdot)$ and the area function $A\bigl(B^\Sigma(x,r)\bigr)$ are Lipschitz. The first property, however, does not require convexity, since every distance function is $1$-Lipschitz with respect to the intrinsic metric. For the second, we modify the argument so that neither the local Lipschitz continuity nor the almost-everywhere differentiability of the area function is needed. This allows us to remove the convexity assumption. We also extend the conclusion from interior points to all points of $\Sigma$, including points on $\partial\Sigma$.
\begin{lemma}
    Let $M^{2+m}$ be a complete noncompact Riemannian manifold with nonnegative sectional curvature and asymptotic volume ratio $\theta>0$.
    Let $\Sigma^2 \subset M$ be a compact connected surface, possibly with boundary.
    Then for any $x \in \Sigma$ and $R>0$, at least one of the following holds:
    \begin{enumerate}
        \item $$M(x,R) := \sup_{r \in (0,R]} r^{-1}\left(\int_{B^\Sigma(x,r)}|H| d \mathrm{vol}_{\Sigma} + L(B^\Sigma(x,r) \cap \partial \Sigma)\right)\geq \delta(m,\theta);$$
        \item $$\kappa(x,R) := \inf_{r \in (0,R]}\frac{A(B^\Sigma(x,r))}{r^2}\geq \delta(m,\theta).$$
    \end{enumerate}
    Here, $\delta(m,\theta)>0$ is defined in \eqref{delta-definition}, $B^\Sigma(x,r)$ denotes the intrinsic open ball in $\Sigma$ centered at $x$ with radius $r$, and $A$ and $L$ denote the area and length with respect to the induced metric on $\Sigma$, respectively.
\end{lemma}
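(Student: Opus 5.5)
We argue by contradiction, following the scheme of Wu's maximal-function lemma. Fix $x\in\Sigma$ and $R>0$, and suppose both alternatives fail: $M(x,R)<\delta$ and $\kappa(x,R)<\delta$, where $\delta=\delta(m,\theta)$. Since $A(B^\Sigma(x,r))/r^2\to\pi$ for interior $x$ (and $\to\pi/2$ for $x\in\partial\Sigma$) as $r\to0$, and $\delta<\pi/2$ by the size of $\delta$, the ratio is at least $\delta$ for all small $r$. Set
\[
r_0:=\sup\{r\in(0,R]: A(B^\Sigma(x,s))\ge\delta s^2 \text{ for all } s\in(0,r]\}.
\]
The failure of (2) forces $r_0<R$ or a near-violation. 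To avoid any continuity or Lipschitz property of $A(B^\Sigma(x,\cdot))$, we only use monotonicity of $r\mapsto A(B^\Sigma(x,r))$ and pick radii $r_1<r_2$ close to $r_0$. We choose $r_1$ so that $A(B(x,r_1))\ge\delta r_1^2/4$ holds, using left limits and monotonicity. We choose $r_2\le R$ with $A(B(x,r_2))<\delta r_2^2$ and $r_2\le 2r_1$, which is possible by the definition of the supremum. For $m=1$, we first view $M^{3}$ as $M\times\mathbb R$ (still nonnegative sectional curvature, with the same $\theta$, codimension $2$). This explains why $\delta(1,\theta)$ uses $c(2)$.

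Next, apply Theorem~\ref{thm:Sobolev} with $n=2$ (so the hypothesis is $\overline{\operatorname{Ric}}_1\ge0$, i.e. nonnegative sectional curvature) to a cutoff $f_\varepsilon=\phi(d_\Sigma(x,\cdot))$. Here $\phi$ is a smooth positive approximation of the function equal to $1$ on $[0,r_1]$, decaying linearly to $0$ on $[r_1,r_2]$, and equal to $0$ beyond. The map $d_\Sigma(x,\cdot)$ is $1$-Lipschitz for the intrinsic metric without any convexity, so $|\nabla f|\le (r_2-r_1)^{-1}$ a.e. We then pass to the limit by approximating with smooth positive functions; the boundary term of the Sobolev inequality is kept and bounded by $L(B^\Sigma(x,r_2)\cap\partial\Sigma)$. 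This gives
\[
\frac{A(B_{r_2})-A(B_{r_1})}{r_2-r_1}+\int_{B_{r_2}}|H|+L(B_{r_2}\cap\partial\Sigma)\ \ge\ \frac{1}{c(m)}\,\theta^{1/2}A(B_{r_1})^{1/2},
\]
using $\sqrt{|\nabla f|^2+f^2|H|^2}\le|\nabla f|+f|H|$.

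Finally, insert the two failing hypotheses. The middle and boundary terms are at most $M(x,R)\,r_2<\delta r_2$. For the difference quotient, the bounds $A(B_{r_2})<\delta r_2^2$ and $A(B_{r_1})\ge\delta r_1^2/4$ are not directly enough. Instead we choose $r_1=r_2/2$ and use the crude bound $(A(B_{r_2})-A(B_{r_1}))/(r_2-r_1)\le 2\delta r_2$. The right-hand side is then at least $\theta^{1/2}\delta^{1/2}r_2/(4c(m))$. Hence
\[
3\delta r_2\ \ge\ \frac{\sqrt{\theta\delta}}{4c(m)}\,r_2,
\]
i.e. $\delta\ge\theta/(144c(m)^2)$. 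The constants must then be rebalanced so that the choice $\delta=\theta/(16c^2)$ gives a contradiction; this will likely require a sharper choice of $r_1$, namely taking $r_1$ to be the largest dyadic radius at which the lower area bound still holds. The main obstacle is this step: selecting $r_1,r_2$ using only monotonicity of the area function so that the lower bound at $r_1$ and the upper bound at $r_2$ are simultaneously available with constants matching exactly $\delta=\theta/(16c(m)^2)$. I would try first to set $r_2:=\inf\{r: A(B_r)<\delta r^2\}$-type choices with a small slack $\eta$, and then let $\eta\to0$.
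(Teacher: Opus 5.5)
\textbf{There is a genuine gap, at the step you flag yourself.} As written, your argument never produces a contradiction for $\delta=\theta/(16c(m)^2)$. With $r_1=r_2/2$, your final inequality $3\delta r_2\ge\sqrt{\theta\delta}\,r_2/(4c(m))$ is violated only when $\delta<\theta/(144c(m)^2)$, so at best you prove a weaker lemma.

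The dyadic remedy you suggest cannot fix this. Suppose you use the sharper bound $A(B_{r_1})\ge\delta r_1^2$, which holds for every $r_1$ below the threshold radius, so the factor $1/4$ is unnecessary. Insert it, together with $A(B_{r_2})<\delta r_2^2$ and $M(x,R)<\delta$, into your Sobolev estimate. Use also $c(m)^{-1}\theta^{1/2}\delta^{1/2}=4\delta$, which is exactly where the constant $16$ comes from. You get only
\[
4\delta r_1<\frac{\delta r_2^2-\delta r_1^2}{r_2-r_1}+\delta r_2=\delta(r_1+2r_2),
\]
that is, $3r_1<2r_2$. A contradiction therefore requires $r_2\le\frac{3}{2}r_1$, and any fixed ratio such as $2$ fails.

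\textbf{The missing idea is to let the cutoff annulus be thin and sit at the first radius where the area ratio drops to $\delta$.} The paper proceeds as follows.
\begin{enumerate}
\item It excludes $x\in\partial\Sigma$, since $L(B^\Sigma(x,r)\cap\partial\Sigma)/r\to2>\delta$.
\item It sets $r_0:=\inf\{r\in(0,R]:A(r)\le\delta r^2\}>0$ and uses continuity of $A$ to get $A(r_0)=\delta r_0^2$.
\item It applies Theorem~\ref{thm:Sobolev} to the cutoff equal to $1$ on $B^\Sigma(x,r_0-s)$ and vanishing outside $B^\Sigma(x,r_0)$. This yields $A(r_0)-A(r_0-s)>s(3\delta r_0-4\delta s)$.
\item Combined with $A(r_0-s)>\delta(r_0-s)^2$ and $A(r_0)=\delta r_0^2$, this gives $0>s(\delta r_0-3\delta s)$, which is false for $s<r_0/3$.
\end{enumerate}

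If you want to use only monotonicity of $A$, as you intended, that also works. Set $r_0:=\inf\{r\in(0,R]:A(r)<\delta r^2\}$. This is positive, because $A(r)/r^2$ tends to $\pi$ or $\pi/2$, both larger than $\delta\le\pi/4$. Then $A(r)\ge\delta r^2$ for all $r<r_0$. By the definition of the infimum, choose $r_2\in[r_0,\frac{5}{4}r_0)\cap(0,R]$ with $A(r_2)<\delta r_2^2$, and put $r_1:=\frac{5}{6}r_0$. Then $A(r_1)\ge\delta r_1^2$ and $2r_2<\frac{5}{2}r_0=3r_1$, which contradicts the display above.

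The rest of your setup matches the paper: the cutoff built from $d_\Sigma(x,\cdot)$, bounding the boundary term by $L(B^\Sigma(x,r_2)\cap\partial\Sigma)$, and the product with $\mathbb R$ when $m=1$. Your alternative handling of boundary points, via $A/r^2\to\pi/2>\delta$, is also fine.
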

\begin{proof}
Let $\delta:=\delta(m,\theta)$.
Suppose this is not true, then there exists $x\in \Sigma$ such that for some
$R>0$,
we have both
\begin{align}\label{eq:M<delta}
M(x,R)<\delta
\end{align}
and
\begin{equation}\label{eq:kappa}
\kappa(x,R)<\delta.
\end{equation}
It is easy to see that $x\in \Sigma\setminus\partial \Sigma$, for otherwise the fact that $\lim _{r \rightarrow 0^{+}} \frac{L\left(B^{\Sigma}(x, r) \cap \partial \Sigma\right)}{r}=2>\delta$ would violate \eqref{eq:M<delta}. Note also that $\delta(m,\theta)$ is non-increasing in $m$.

    Then, for every $r \in (0,R]$,
    \begin{equation}\label{ineq:r-eta'}
        \int_{B^\Sigma (x,r)}|H| d \mathrm{vol}_{\Sigma} + L(B^\Sigma (x,r) \cap \partial \Sigma)  <r \delta.
    \end{equation}

    Let $A(r):=A\bigl(B^\Sigma(x,r)\bigr)$
    and define $v(r) := \delta r^2$. Since $\delta < \pi$ and $\lim_{r\to 0^+}\frac{A(r)}{r^2} = \pi$, we have $A(r) > v(r)$ for all sufficiently small $r>0$.

By \eqref{eq:kappa}, there exists $r \in (0,R]$ such that $A(r)\leq v(r)$.
    Define
    $$r_0 : = \inf \{r \in (0, R] \, |\, A(r) \leq v(r)\}.$$
    Then $r_0 >0$ and $A(r) > v(r)$ for all $r \in (0, r_0)$.
    Since $A$ is continuous, $A(r_0) = v(r_0)$.
    Let us assume that $m\ge2$ at the moment.
    For small $s\in(0,r_0)$, define
    $$h_s(y) : = \begin{cases}
        1 &\mbox{ if }d_\Sigma (x,y) <r_0-s\\
        \frac{r_0 - d_\Sigma (x,y)}{s} &\mbox{ if }r_0-s \leq d_\Sigma (x,y) < r_0\\
        0 &\mbox{ if } d_\Sigma (x,y) \geq r_0.
    \end{cases}$$
    Since $d_\Sigma(x,\cdot)$ is 1-Lipschitz, $h_s$ is Lipschitz and $|\nabla^{\Sigma} h_s| \leq \frac{1}{s}$ almost everywhere.

     Applying Theorem \ref{thm:Sobolev} to \(h_s\) and using a standard smooth approximation if necessary, we obtain
    $$A(r_0-s)^\frac{1}{2} \leq c(m) \theta^{-\frac{1}{2}} \left( \frac{A(r_0) - A(r_0-s)}{s} + \int_{B^\Sigma (x,r_0)}|H| d \mathrm{vol}_{\Sigma}  + L(B^\Sigma (x,r_0) \cap \partial \Sigma) \right).$$
    Combined with \eqref{ineq:r-eta'}, we have
    \begin{equation}\label{eq:A difference}
    \begin{split}
        A(r_0) - A(r_0-s) &> s \left( A(r_0 -s)^\frac{1}{2} c(m)^{-1}\theta^\frac{1}{2} - r_0 \delta\right)\\
        & > s \left({{3}} \delta r_0-4\delta s \right),
    \end{split}
    \end{equation}
    where we used $A\left(r_0-s\right)>v\left(r_0-s\right)=\delta\left(r_0-s\right)^2$ and $c(m)^{-1} \theta^{\frac{1}{2}} {\delta}^{\frac{1}{2}}={{4}}  \delta$.

    Since $v\left(r_0-s\right)=v\left(r_0\right)-s\left(2 r_0 \delta-s \delta\right)$
    and $A(r_0) = v(r_0)$, it follows that from \eqref{eq:A difference} that
\begin{equation*}
\begin{split}
0>v\left(r_0-s\right)-A\left(r_0-s\right)
>&s\left(r_0 \delta-3 s \delta\right).
\end{split}
\end{equation*}
    Choosing
    $ 0 <s < {{\frac{r_0 }{3}}}$
    makes the last expression positive, which is a contradiction.

    Suppose now $m=1$. By taking the product of $M$ with $\mathbb R$, we can assume that $\Sigma$ is a codimension-two submanifold, which still has non-negative sectional curvature and the same asymptotic volume ratio $\theta>0$. The previous argument can then be carried out to show that either $M(x,R)\ge \delta(2,\theta)$ or $\kappa(x,R)\ge \delta(2,\theta)$ for any $x\in \Sigma$ and $R>0$.
\end{proof}

\begin{proof}[Proof of Corollary~\ref{coro:surface-diameter}]
    We follow the proof of \cite[Theorem~1.4]{wu2023diameter}. In that argument, the convexity of $\Sigma$ is used only to ensure that a curve realizing the intrinsic diameter is contained in $\Sigma\setminus\partial\Sigma$, where \cite[Lemma~4.1]{wu2023diameter} can be applied. Since the preceding lemma holds for every $p\in\Sigma$, including points of $\partial\Sigma$, the construction of the radius $s(p)$ remains valid along an intrinsic distance-minimizing curve even when the curve meets the boundary. Moreover, the Vitali-type covering argument used in the proof depends only on the metric properties of the minimizing curve and does not require it to be a smooth geodesic contained in the interior. The remainder of the proof is therefore identical to that of \cite[Theorem~1.4]{wu2023diameter}.
\end{proof}

\begin{remark}
{The convexity assumption can likewise be removed from the diameter
argument in \cite[Theorem~2]{paeng2014}, provided the hypotheses for the
global Hoffman--Spruck inequality are retained. This applies, in particular,
in a complete simply connected ambient manifold with nonpositive sectional
curvature; see \cite[Theorem~2.1 and Remark~2.2]{flaim2024diameter}.}
\end{remark}

\section{Curvature-decay extensions}\label{sec:decay}

    In this section, we extend the Willmore and Michael--Simon Sobolev inequalities obtained in Sections~\ref{sec:quantitative-Willmore} and~\ref{sec:isoperimetric} to the quadratic curvature-decay setting introduced in Section~\ref{sec:introduction}. We begin with some preliminaries for the $(\mathrm{QD})_k$ setting.
\begin{lemma}
\label{lem:QD-monotonicity}
Let $1\le k\le \ell\le N-1$.
If $(M^N,\overline g)$ satisfies $(\mathrm{QD})_k$ with respect to
$o\in M$ and $\lambda$, then it also satisfies $(\mathrm{QD})_\ell$
with respect to the same $o$ and $\lambda$.
\end{lemma}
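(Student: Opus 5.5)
We prove the statement by the standard averaging argument behind the monotonicity of intermediate Ricci curvature: an $\ell$-dimensional trace is an average of $k$-dimensional traces. It suffices, by induction on $\ell$, to treat $\ell=k+1\le N-1$. Fix $q\in M$, a unit vector $v\in T_qM$, and an $(k+1)$-dimensional subspace $W\subset v^\perp$. Choose an orthonormal basis $e_1,\dots,e_{k+1}$ of $W$. For each $j\in\{1,\dots,k+1\}$ let $W_j:=\operatorname{span}\{e_i: i\neq j\}$. Each $W_j$ is a $k$-dimensional subspace of $v^\perp$.

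The key identity is
\[
\sum_{j=1}^{k+1}\overline{\operatorname{Ric}}_k(v,W_j)
=\sum_{j=1}^{k+1}\sum_{i\neq j}\langle\overline R(v,e_i)e_i,v\rangle
=k\,\overline{\operatorname{Ric}}_{k+1}(v,W).
\]
It holds because each index $i$ is omitted from exactly one $W_j$, so it appears in exactly $k$ of the sums. Applying the hypothesis $(\mathrm{QD})_k$ to each pair $(v,W_j)$ at the same point $q$ gives $\overline{\operatorname{Ric}}_k(v,W_j)\ge -k\lambda(d(o,q))$.

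Summing over $j$ yields
\[
k\,\overline{\operatorname{Ric}}_{k+1}(v,W)\ge -k(k+1)\lambda(d(o,q)).
\]
Dividing by $k\ge1$ gives $\overline{\operatorname{Ric}}_{k+1}(v,W)\ge -(k+1)\lambda(d(o,q))$, which is exactly $(\mathrm{QD})_{k+1}$ with the same base point $o$ and the same function $\lambda$.

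Iterating from $k$ up to $\ell$ completes the proof. There is no real obstacle here. The only points to check are that $k+1\le N-1$ guarantees $W$ fits inside $v^\perp$, and that the normalization $-k\lambda$ scales linearly in $k$, so the bound is preserved exactly under averaging.
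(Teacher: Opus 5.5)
Your proposal is correct and is essentially the paper's argument: the paper sums the $(\mathrm{QD})_k$ bounds over all $k$-element subsets of an orthonormal $\ell$-frame in one step, where each sectional term occurs $\binom{\ell-1}{k-1}$ times. You run the same averaging one dimension at a time, using $\ell=k+1$ and induction. Both rely on the bound $-k\lambda$ scaling linearly in $k$, which is exactly what makes the averaging preserve the $(\mathrm{QD})$ normalization.
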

This follows by summing the $k$-trace bounds over all $k$-element
subsets of an orthonormal $\ell$-frame perpendicular to a unit vector:
each sectional term occurs $\binom{\ell-1}{k-1}$ times.

Let $h_1$ be the solution of
\begin{equation*}
h_1''(t)=\lambda (t) h_1 (t),
\qquad
h_1(0)=0,
\qquad
h_1'(0)=1.
\end{equation*}
By \cite[Lemma~2.7]{ChongLuoLu2025}, for every $c\in \mathbb R$, we have
\begin{equation}\label{property:h_1-limit}
    \lim_{t \to \infty} \frac{h_1(t+c)}{h_1(t)} = 1.
\end{equation}
We define the generalized asymptotic volume ratio associated with $\lambda$ by
\begin{equation}
\label{eq:model-AVR-decay}
\theta_\lambda
:=
\lim_{r\to\infty}
\frac{\operatorname{vol}(B_o(r))}
{N|\mathbb B^N|\int_0^r h_1(t)^{N-1}\,dt}.
\end{equation}
Since Lemma~\ref{lem:QD-monotonicity} with
$\ell=N-1$ gives
$$
    \overline{\operatorname{Ric}}
    \ge -(N-1)\lambda(d(o,\cdot)) \,\overline g.
$$
the radial Bishop--Gromov comparison \cite[Theorem 2.14]{pigola2008vanishing} implies that the ratio in
\eqref{eq:model-AVR-decay} is nonincreasing. In particular, $\theta_\lambda$ is well defined.
Moreover, by \eqref{property:h_1-limit},  the center $o$ may be replaced by any $x \in M$.

We now use the factorization \eqref{eq:gradient-normal-det-factorization} to estimate the gradient--normal exponential map.

Fix $(x,y) \in A_r^u$ and set $v:= \nabla^\Sigma u(x) + y$ and
$$\gamma (t) : = \exp_x (tv) \quad \text{ for } 0 \leq t \leq r.$$
To accommodate both the Willmore and Sobolev applications, we do not impose a specific intermediate Ricci curvature condition at this stage.
Instead, we assume along $\gamma$ that
\begin{equation}
\label{eq:QD-curvature-trace-assumptions}
\begin{aligned}
\mathcal R_n^u(x,y,t)
&\ge
-n\lambda(d(o,\gamma(t)))|v|^2,\\
\overline{\operatorname{Ric}}(\gamma'(t),\gamma'(t))
&\ge
-(N-1)\lambda(d(o,\gamma(t)))|v|^2,
\end{aligned}
\qquad 0\le t\le r.
\end{equation}

We first estimate the tangential factor $\mathrm{det} B_t^u$ in \eqref{eq:gradient-normal-det-factorization}.
Recall that $b_1$ is defined in \eqref{eq:b1}.
Set $\lambda_x(s):=\lambda(|d(o,x)-s|)$ and define
\[
\mu_x:=\int_0^\infty\lambda_x(s)\,ds
=b_1+\int_0^{d(o,x)}\lambda(s)\,ds.
\]

\begin{lemma}\label{lem:quadratic-B-estimate}
    For every $0<t<r$,
\begin{equation}
\label{eq:Bt-shifted-bound}
\det B_t^u
\le
\left[
1+t c_u(x,y)+t\mu_x|v|
\right]_+^n,
\end{equation}
where $B_t^u$ is defined in \eqref{eq:block-gradient-normal-decomposition} and $c_u(x, y):=\frac{1}{n}\left(\Delta_{\Sigma} u(x)-\langle H(x), y\rangle\right)$.
\end{lemma}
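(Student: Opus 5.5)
We start from Proposition~\ref{prop:gradient-normal-main}, specifically the intermediate trace bound obtained in its proof:
\[
\frac1n\operatorname{tr}B_t^u
\le
1+t c_u(x,y)
-\frac{t}{n}\int_0^t\left(1-\frac{s}{t}\right)^2\mathcal R_n^u(x,y,s)\,ds .
\]
This inequality uses only the index lemma along $\gamma|_{[0,t]}$, which has no conjugate points by Lemma~\ref{lem:general-contact-propagation}. It needs no curvature sign. Since $B_t^u$ is symmetric and positive semidefinite (Proposition~\ref{prop:gradient-normal-decomposition}), the arithmetic--geometric mean inequality gives $\det B_t^u\le[\frac1n\operatorname{tr}B_t^u]_+^n$. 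It therefore suffices to show
\[
-\frac{t}{n}\int_0^t\left(1-\frac{s}{t}\right)^2\mathcal R_n^u(x,y,s)\,ds\le t\mu_x|v|.
\]

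Next we insert the first assumption in \eqref{eq:QD-curvature-trace-assumptions}, $\mathcal R_n^u\ge -n\lambda(d(o,\gamma(s)))|v|^2$. Using $(1-s/t)^2\le1$, the left-hand side is at most
\[
t|v|^2\int_0^t\lambda\bigl(d(o,\gamma(s))\bigr)\,ds .
\]
To compare with $\lambda_x$, note that $\gamma$ has speed $|v|$. The triangle inequality gives $d(o,\gamma(s))\ge |d(o,x)-s|v||$, and since $\lambda$ is nonincreasing, $\lambda(d(o,\gamma(s)))\le\lambda_x(s|v|)$. Substituting $\sigma=s|v|$ (if $v=0$ the curvature term vanishes and the bound is trivial), we get
\[
|v|^2\int_0^t\lambda_x(s|v|)\,ds=|v|\int_0^{t|v|}\lambda_x(\sigma)\,d\sigma\le|v|\mu_x .
\]
Multiplying by $t$ gives the required bound $t\mu_x|v|$.

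Finally, $\mu_x=\int_0^\infty\lambda(|d(o,x)-s|)\,ds$ splits at $s=d(o,x)$ into $\int_0^{d(o,x)}\lambda+\int_0^\infty\lambda=\int_0^{d(o,x)}\lambda+b_1$, which confirms the stated formula. The main point to check is the distance lower bound $d(o,\gamma(s))\ge|d(o,x)-s|v||$: we need both directions of the reverse triangle inequality, and the velocity scaling must match the $|v|^2$ factor in the curvature assumption. Both are routine once the unit-speed reparametrization is made. Taking the positive part at the end is harmless because the left side $\det B_t^u$ is nonnegative.
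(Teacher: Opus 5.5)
Your proposal is correct and follows the paper's proof essentially step for step: the trace bound from Proposition~\ref{prop:gradient-normal-main} plus AM--GM, then the first assumption in \eqref{eq:QD-curvature-trace-assumptions}, the bound $(1-s/t)^2\le1$, the comparison $\lambda(d(o,\gamma(s)))\le\lambda_x(s|v|)$, and the substitution $\rho=s|v|$. One point needs stating precisely: the lower bound $d(o,\gamma(s))\ge s|v|-d(o,x)$ requires $d(x,\gamma(s))=s|v|$, and this comes from the minimality of $\gamma$ on $[0,r]$ given by Lemma~\ref{lem:general-contact-propagation} (which the paper invokes explicitly), not merely from $\gamma$ having speed $|v|$ or from a reparametrization.
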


\begin{proof}
    By Lemma~\ref{lem:general-contact-propagation}, $\gamma$ is minimizing
on $[0,r]$, and hence $d(x,\gamma(s))=s|v|$.
The triangle inequality and monotonicity of $\lambda$ imply
\[
\lambda(d(o,\gamma(s)))
\leq
\lambda\big(\big|d(o,x)-s|v|\big|\big)
=
\lambda_x(s|v|).
\]

Using the first inequality in \eqref{eq:QD-curvature-trace-assumptions} to estimate the curvature term of \eqref{eq:det Bt}, together with the above estimate, we obtain
\begin{align*}
-
\frac{t}{n}
\int_0^t
\left(1-\frac{s}{t}\right)^2
\mathcal R_n^u(x,y,s)\,ds
&\le
t|v|^2
\int_0^t
\left(1-\frac{s}{t}\right)^2
\lambda_x(s|v|)\,ds
\\
&\le
t|v|^2
\int_0^t
\lambda_x(s|v|)\,ds
\\
&=
t|v|
\int_0^{t|v|}
\lambda_x(\rho)\,d\rho
\\
&\le
t\mu_x|v|.
\end{align*}
Substituting this into \eqref{eq:det Bt} gives
\eqref{eq:Bt-shifted-bound}.
\end{proof}

Let $h_x$ be the solution of
\begin{equation*}
h_x''(s)=\lambda_x(s) h_x(s),
\qquad
h_x(0)=0,
\qquad
h_x'(0)=1.
\end{equation*}
Since $\lambda_x\ge0$, the function $h_x$ is positive, convex, and increasing on $(0,\infty)$.

\begin{lemma}
\label{lem:decay-ambient-exp}
For any
$0<t<r$,
\begin{equation*}
\left|\det D(\exp_x)_{tv}\right|
\le
\left(\frac{h_x(t|v|)}{t|v|}\right)^{N-1},
\end{equation*}
where the right-hand side is interpreted continuously at $v=0$, with value $1$.
\end{lemma}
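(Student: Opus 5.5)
The plan is to run the Jacobi-tensor argument from the proof of Lemma~\ref{lem:gradient-normal-signed-ambient}, but to compare with the model warped function $h_x$ instead of the Euclidean comparison fields $\frac{\tau}{s}E_i(\tau)$.

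\textbf{Reduction and setup.} If $v=0$ both sides equal $1$, so assume $a:=|v|>0$. Reparametrize by arclength: $\tilde\gamma(\rho):=\gamma(\rho/a)$ for $0\le\rho\le ra$. By Lemma~\ref{lem:general-contact-propagation}, $\tilde\gamma$ is minimizing, so $\tilde\gamma|_{(0,ta]}$ has no conjugate points for $t<r$, and $d(x,\tilde\gamma(\rho))=\rho$. The triangle inequality and monotonicity of $\lambda$ give $\lambda(d(o,\tilde\gamma(\rho)))\le\lambda_x(\rho)$, exactly as in Lemma~\ref{lem:quadratic-B-estimate}. The second inequality in \eqref{eq:QD-curvature-trace-assumptions}, rescaled to unit speed, then yields
\[
\overline{\operatorname{Ric}}(\tilde\gamma',\tilde\gamma')\ge-(N-1)\lambda_x(\rho).
\]
Let $\mathcal J(\rho)$ be the unit-speed Jacobi tensor with $\mathcal J(0)=0$ and $\mathcal J'(0)=I$. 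By \eqref{eq:gradient-normal-point-exp-density}, which is scale-invariant after substituting $s\mapsto ta$,
\[
|\det D(\exp_x)_{tv}|=\frac{\det\mathcal J(ta)}{(ta)^{N-1}}.
\]
It therefore suffices to show that $\rho\mapsto\det\mathcal J(\rho)/h_x(\rho)^{N-1}$ is nonincreasing on $(0,ta]$, since its limit at $0^+$ is $1$.

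\textbf{Index comparison.} Using \eqref{eq:gradient-normal-log-J-index} and the index lemma with the comparison fields $V_i(\tau):=\frac{h_x(\tau)}{h_x(\rho)}E_i(\tau)$, which have the same endpoint values as the Jacobi fields there because $h_x(0)=0$ and $h_x>0$ on $(0,\infty)$, we get
\[
\frac{d}{d\rho}\log\det\mathcal J(\rho)\le\frac{1}{h_x(\rho)^2}\int_0^\rho\Bigl[(N-1)h_x'(\tau)^2-h_x(\tau)^2\,\overline{\operatorname{Ric}}(\tilde\gamma',\tilde\gamma')\Bigr]\,d\tau.
\]
Insert the Ricci lower bound to replace the curvature term by $(N-1)\lambda_x h_x^2$. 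Then integrate by parts using $h_x''=\lambda_xh_x$:
\[
\int_0^\rho\bigl(h_x'^2+\lambda_xh_x^2\bigr)\,d\tau=\int_0^\rho\bigl(h_x'^2+h_xh_x''\bigr)\,d\tau=h_x(\rho)h_x'(\rho).
\]
Hence $\frac{d}{d\rho}\log\det\mathcal J\le(N-1)h_x'/h_x$, which is the claimed monotonicity. Evaluating at $\rho=ta$ gives the lemma.

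\textbf{Main obstacle.} The only delicate point is that $\lambda_x$ is the largest possible curvature defect along the geodesic, not the defect itself. This is handled by the sign $h_x^2\ge0$, which lets the pointwise bound $\overline{\operatorname{Ric}}\ge-(N-1)\lambda_x$ be inserted inside the integral. The remaining facts needed are that $h_x$ is positive on $(0,\infty)$ (stated in the paper) and that $\lambda_x$ is merely continuous, which suffices for the ODE and the integration by parts.
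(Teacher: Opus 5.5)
Your proposal is correct and follows essentially the same route as the paper: the Jacobi-tensor identity for $|\det D(\exp_x)_{tv}|$, the index-lemma comparison with fields $\frac{h_x(\tau)}{h_x(\rho)}E_i(\tau)$, the Ricci lower bound, and the identity $h_x''=\lambda_x h_x$ to evaluate the integral as $h_x h_x'$. The differences are cosmetic. You work with the unit-speed reparametrization instead of carrying the factors $|v|$. You also make explicit the bound $\lambda(d(o,\gamma))\le\lambda_x$, which the paper uses here only implicitly, having derived it in Lemma~\ref{lem:quadratic-B-estimate}.
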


\begin{proof}
If $v=0$, then $D(\exp_x)_0=I$, while
\[
\lim_{s\to 0^+}\frac{h_x(s)}{s}=1,
\]
so the assertion is immediate. We therefore assume that $|v|>0$.

Since $(x,y)\in A_r^u$, Lemma~\ref{lem:general-contact-propagation}
shows that $\gamma|_{[0,r]}$ is minimizing. Hence, for $0<t<r$,
there is no point conjugate to $x$ along $\gamma|_{(0,t]}$.

As in Lemma~\ref{lem:gradient-normal-signed-ambient}, define a Jacobi tensor
$$\mathcal J_v(s):v^\perp\longrightarrow\gamma'(s)^\perp$$
satisfying $\mathcal J_v(0) = 0$ and $\mathcal J_v'(0) = I$.
Then $\mathcal J_v$ is nonsingular and $\det \mathcal{J}_v(s) >0$ for $0 <s\leq t$.
Moreover, as in \eqref{eq:gradient-normal-point-exp-density}, we obtain
\begin{equation*}
        \left|\det D(\exp_x)_{sv}\right|
        =
        \frac{\det\mathcal J_v(s)}{s^{N-1}}.
\end{equation*}

To estimate $\det\mathcal J_v(s)$, we proceed as in the proof of Lemma \ref{lem:gradient-normal-signed-ambient} by replacing the vector field $V_i^{(s)}$ in \eqref{eq:comparison-field} with
\begin{align*}
V_i^{(s)}(\tau):=\frac{h_x(\tau|v|)}{h_x(s|v|)} E_i(\tau).
\end{align*}
The same index-form comparison, using
\eqref{eq:QD-curvature-trace-assumptions} and $h_x''=\lambda_x h_x$, gives
\begin{align*}
\frac{d}{ds}\log\det\mathcal J_v(s)
&\le
\frac{(N-1)|v|^2}{h_x(s|v|)^2}
\int_0^s
\bigl[h_x'(\tau|v|)^2+
\lambda_x(\tau|v|)h_x(\tau|v|)^2\bigr]d\tau\\
&=(N-1)|v|\frac{h_x'(s|v|)}{h_x(s|v|)}.
\end{align*}
Integrating and using
$\det\mathcal J_v(s)/h_x(s|v|)^{N-1}\to |v|^{-(N-1)}$
as $s\downarrow0$ yields
\begin{equation*}
\left| \det  D\left(\exp _x\right)_{t v}\right|=\frac{\det \mathcal{J}_v(t)}{t^{N-1}}    \leq\left(\frac{h_x(t|v|)}{t|v|}\right)^{N-1}.
\end{equation*}
\end{proof}

\begin{proposition}
\label{prop:quadratic-gradient-normal-jacobian}
For every $0<t\leq r$,
\begin{equation}
\label{eq:quadratic-gradient-normal-jacobian}
\left|\det D\Phi_t^u(x,y)\right|
\le
t^m
\left[
1+t c_u(x,y)+t\mu_x|v|
\right]_{{+}}^n
\left(
\frac{h_x(t|v|)}{t|v|}
\right)^{N-1},
\end{equation}
with the usual continuous interpretation when $v=0$.
\end{proposition}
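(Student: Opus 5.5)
The plan is to combine the factorization \eqref{eq:gradient-normal-det-factorization} from Proposition~\ref{prop:gradient-normal-decomposition} with the two factor estimates just proved. For $0<t<r$, that identity gives
\[
\left|\det D\Phi_t^u(x,y)\right| = t^m\det B_t^u\,\left|\det D(\exp_x)_{tv}\right|,
\]
with $B_t^u$ symmetric and positive semidefinite.

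The tangential factor is handled by Lemma~\ref{lem:quadratic-B-estimate}, which bounds $\det B_t^u$ by $\bigl[1+tc_u(x,y)+t\mu_x|v|\bigr]_+^n$. The positive part is harmless here, since $\det B_t^u\ge0$. The ambient factor is handled by Lemma~\ref{lem:decay-ambient-exp}, which bounds $|\det D(\exp_x)_{tv}|$ by $(h_x(t|v|)/(t|v|))^{N-1}$. Both bounds are nonnegative, so multiplying them gives \eqref{eq:quadratic-gradient-normal-jacobian} for every $0<t<r$.

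It remains to treat $v=0$ and the endpoint $t=r$.

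If $v=0$, then $D(\exp_x)_0=I$ and the ambient factor equals $1$. This matches the continuous interpretation of the right-hand side, because $h_x(s)/s\to1$ as $s\downarrow0$.

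For $t=r$, pass to the limit $t\uparrow r$. The left-hand side is continuous in $t$, since $\Phi_t^u(x,y)=\exp_x(t(\nabla^\Sigma u(x)+y))$ is smooth in $(t,x,y)$. The right-hand side is also continuous in $t$: $[\cdot]_+^n$ is continuous, and $h_x$ is continuous and positive on $(0,\infty)$. Taking $t\uparrow r$ in the strict-interior inequality therefore yields the estimate at $t=r$, exactly as in the proof of Theorem~\ref{thm:gradient-normal-signed-main}.

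The only point needing care is that both lemmas rest on the standing curvature assumptions \eqref{eq:QD-curvature-trace-assumptions} along $\gamma$, and on $(x,y)\in A_r^u$, which guarantees (via Lemma~\ref{lem:general-contact-propagation}) that there are no conjugate points on $(0,t]$ for $t<r$. These are exactly the hypotheses fixed before Lemma~\ref{lem:quadratic-B-estimate}, so no new input is required. The main obstacle is only the bookkeeping at the endpoint $t=r$, where $E_r$ may be singular and the factorization itself is not available. This is why the endpoint case goes through the continuity argument rather than through the factorization directly.
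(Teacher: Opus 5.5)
Your proposal is correct and follows the paper's proof exactly: for $0<t<r$ you factor $|\det D\Phi_t^u(x,y)|=t^m\det B_t^u\,|\det D(\exp_x)_{tv}|$, bound the two factors by Lemma~\ref{lem:quadratic-B-estimate} and Lemma~\ref{lem:decay-ambient-exp}, and obtain the endpoint $t=r$ by letting $t\uparrow r$ using continuity of both sides. One small imprecision: with only $u\in C^2(\Sigma)$ the map $\Phi_t^u$ is $C^1$ rather than smooth, but continuity of $D\Phi_t^u$ in $t$ is all your argument uses.
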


\begin{proof}
{Fix $0<t<r$.} By the factorization
\eqref{eq:gradient-normal-det-factorization},
\[
\left|\det D\Phi_t^u(x,y)\right|
=
t^m\det B_t^u
\left|\det D(\exp_x)_{tv}\right|.
\]
Combining Lemma~\ref{lem:quadratic-B-estimate} with
Lemma~\ref{lem:decay-ambient-exp} gives
\eqref{eq:quadratic-gradient-normal-jacobian}.
Both $D\Phi_t^u(x,y)$ and the right-hand side are continuous in $t$.
Letting $t\uparrow r$ proves the endpoint statement.
\end{proof}

To relate the Jacobian estimate in {Proposition~\ref{prop:quadratic-gradient-normal-jacobian}} to the generalized asymptotic volume ratio, we need to compare the point-dependent model function $h_x$ with the reference model function $h_1$. The following lemma gives such a comparison uniformly for $x \in \Sigma$.

For a compact submanifold $\Sigma$, set
$R_0:=\max_{x\in\Sigma}d(o,x)$ and
\begin{equation*}
L_\Sigma:=2e^{b_1R_0}-1.
\end{equation*}
\begin{lemma}
\label{lem:decay-shifted-model}
For every $\varepsilon>0$, there exists $T_\varepsilon>0$, independent of
$x\in\Sigma$, such that
\begin{equation*}
h_x(t)
\le
(L_\Sigma+\varepsilon)h_1(t)
\end{equation*}
for every $x\in\Sigma$ and every $t\ge T_\varepsilon$.
\end{lemma}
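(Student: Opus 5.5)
The plan is to reduce to a one-dimensional ODE comparison that is uniform in $d:=d(o,x)\in[0,R_0]$. Write $h:=h_x$. The potential $\lambda_x(s)=\lambda(|d-s|)$ is a reflected-and-shifted copy of $\lambda$, so I split $[0,\infty)$ at $s=d$. On $[0,d]$ the potential has total mass $\int_0^d\lambda_x\le b_1$. On $[d,\infty)$ we have $\lambda_x(s)=\lambda(s-d)$, so the ODE there is exactly the reference ODE translated by $d$.

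\textbf{Step 1: the initial segment.} Since $h$ is positive, convex and increasing with $h(0)=0$, we have $h(s)\le s\,h'(s)\le d\,h'(s)$ for $s\le d$. Hence $h''\le d\,\lambda_x\,h'$ there. Integrating this (Gr\"onwall) gives
\[
A:=h'(d)\le e^{d\int_0^d\lambda_x}\le e^{b_1R_0},
\qquad
B:=h(d)\le d\,e^{b_1R_0}.
\]

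\textbf{Step 2: the tail via a fundamental system.} For $s\ge d$, set $\tau=s-d$. Then
\[
h(s)=A\,h_1(\tau)+B\,k(\tau),
\]
where $k$ solves $k''=\lambda k$ with $k(0)=1$ and $k'(0)=0$. The Wronskian identity $k h_1'-k'h_1\equiv1$ gives $(k/h_1)'=-h_1^{-2}$. So $k/h_1$ is nonincreasing and converges to some $\kappa\ge0$, and $k(\tau)\le (\kappa+\eta)h_1(\tau)$ for $\tau$ large. Because $h_1$ is increasing, $h_1(t-d)\le h_1(t)$, which removes the shift uniformly in $d\in[0,R_0]$ without even needing \eqref{property:h_1-limit}. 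The term $B\,k(t-d)$ needs a little more care: $k(t-d)\le(\kappa+\eta)h_1(t-d)\le(\kappa+\eta)h_1(t)$ once $t-d$ exceeds a threshold independent of $x$, and $t\ge R_0+T$ suffices. This gives
\[
h_x(t)\le\bigl(A+B(\kappa+\eta)\bigr)h_1(t)
\]
for all $t\ge T_\varepsilon$, uniformly over $x\in\Sigma$.

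\textbf{Step 3: the constant.} It then remains to show
\[
A+B\kappa\le e^{b_1R_0}+\bigl(e^{b_1R_0}-1\bigr)=L_\Sigma .
\]
I expect this to be the main obstacle, since the crude bound $B\le R_0e^{b_1R_0}$ combined with a bound on $\kappa$ does not obviously produce exactly $e^{b_1R_0}-1$.

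What I would try first is not to separate $A$ and $B$ at all. Instead I would compare directly through the limit quotient
\[
\lim_{t\to\infty}\frac{h_x}{h_1}=\lim_{t\to\infty}\frac{h_x'}{h_1'},
\]
obtained by l'H\^opital, using $h_1\ge t\to\infty$. The goal is to bound the quotient $h_x'(d+\tau)/h_1'(\tau)$ via a Riccati comparison for $\log h'$. On $[0,d]$ this would give the factor $e^{b_1R_0}$. On the tail, the excess mass of $\lambda_x$ over the part of $\lambda$ already accounted for is at most $b_1$, and I would account for it by writing $2e^{b_1R_0}-1=e^{b_1R_0}+\int_0^{R_0}b_1e^{b_1\rho}\,d\rho$. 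This identity suggests a Gr\"onwall argument in the variable $d$ itself: differentiate the limit ratio with respect to the shift $d$ and bound that derivative by $b_1e^{b_1d}$.

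If the constant still does not close, I would fall back to mimicking the proof of \cite[Lemma~2.7]{ChongLuoLu2025}, which handles precisely such shifted potentials. In all cases, the uniformity in $x$ follows from the monotonicity of $h_1$ and $d\le R_0$, and the $\varepsilon$ absorbs the convergence of $k/h_1$ and of $h_x'/h_1'$.
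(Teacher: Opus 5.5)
There is a genuine gap: Step~3, which carries the whole content of the lemma, is never carried out. Your split at $s=d$ is correct, and so are the representation $h_x(d+\tau)=A\,h_1(\tau)+B\,k(\tau)$ and the unshifting via monotonicity of $h_1$ and $k$. Two things are missing.

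\begin{itemize}
\item Step~2 only shows that $\kappa=\lim k/h_1$ exists. You never bound it, and without $\kappa\le b_1$ no constant of the form $L_\Sigma$ can come out.
\item Even granting $\kappa\le b_1$, your Step~1 bound $B\le R_0e^{b_1R_0}$ is too lossy. It gives $A+B\kappa\le(1+b_1R_0)e^{b_1R_0}$. Since $(1+x)e^x-(2e^x-1)=xe^x-e^x+1>0$ for $x>0$, this strictly exceeds $L_\Sigma$ whenever $b_1R_0>0$.
\end{itemize}

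The alternatives in your last paragraph (l'H\^opital, a Riccati comparison for $\log h'$, Gr\"onwall in $d$) are not executed. As written, you prove the lemma only with the unspecified constant $e^{b_1R_0}\bigl(1+R_0(\kappa+\eta)\bigr)$.

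Both gaps close inside your own framework.

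\begin{itemize}
\item \emph{Sharper Step~1.} Do not replace $s$ by $d$. From $h_x\le s\,h_x'$ you get $(\log h_x')'\le s\lambda_x(s)$. For $s\le d$, $\int_0^s\sigma\lambda_x(\sigma)\,d\sigma\le s\int_{d-s}^{d}\lambda\le b_1 s$, so $h_x'(s)\le e^{b_1s}$ on $[0,d]$. Hence $A\le e^{b_1d}$ and $B=\int_0^d h_x'\le\int_0^d e^{b_1\rho}\,d\rho=(e^{b_1d}-1)/b_1$. This is precisely the second term in your identity for $L_\Sigma$.
\item \emph{Bound on $\kappa$.} Set $\phi:=1+b_1h_1-k$. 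Then $\phi(0)=0$ and $\phi'(t)=\int_t^\infty\lambda+\int_0^t\lambda\phi$, so a Gr\"onwall argument gives $\phi\ge0$. That is, $k\le 1+b_1h_1$, hence $\kappa\le b_1$. This is the input the paper imports from \cite[Lemma~2.6]{DongLinLu2024}.
\item \emph{Conclusion.} For $t\ge R_0$, $h_x(t)\le(A+b_1B)h_1(t)+B\le L_\Sigma h_1(t)+(e^{b_1R_0}-1)/b_1$. Since $h_1(t)\ge t$, this gives the claim with an explicit $T_\varepsilon$ (the case $b_1=0$ is trivial).
\end{itemize}

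With these repairs your route is the paper's. The paper quotes $h_x\le e^{R_0b_1}h_1+\frac{e^{R_0b_1}-1}{b_1}h_2$ from \cite[Lemma~2.5]{ChongLuoLu2025}, which your split-and-unshift argument reproduces for $t\ge R_0$. It then combines this with $\limsup h_2/h_1\le b_1$.
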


\begin{proof}
The case $b_1=0$ is immediate. Assume $b_1>0$, and let $h_2$ be the
companion solution
\[
h_2''=\lambda h_2,\qquad h_2(0)=1,\qquad h_2'(0)=0.
\]
Since $d(o,x)\le R_0$, \cite[Lemma 2.5]{ChongLuoLu2025} gives
\[
h_x(t)
\le
\frac{e^{R_0b_1}-1}{b_1}h_2(t)
+
e^{R_0b_1}h_1(t).
\]
Moreover, \cite[Lemma 2.6]{DongLinLu2024} implies
\[
\limsup_{t\to\infty}\frac{h_2(t)}{h_1(t)}
\le b_1.
\]
Consequently,
\[
\limsup_{t\to\infty}\sup_{x\in\Sigma}
\frac{h_x(t)}{h_1(t)}
\le
\frac{e^{R_0b_1}-1}{b_1}b_1+e^{R_0b_1}
=
2e^{R_0b_1}-1
=
L_\Sigma,
\]
which proves the claim.
\end{proof}

\begin{theorem}[Willmore-type inequality under quadratic curvature decay]
\label{thm:decay}
    Let $(M^N,\overline g)$ be a complete noncompact Riemannian manifold, where $N=n+m$, satisfying $(\mathrm{QD})_{n}$ with respect to $o\in M$ and $\lambda$.
Let
$\Sigma^n\subset M^N$ be a closed immersed submanifold. Then
\begin{equation}
\label{eq:shifted-Willmore-decay}
\int_\Sigma
\int_{S_x^\perp\Sigma}
\left(
\mu_x-\langle\sigma(x),\omega\rangle
\right)_+^n
\,d\omega\,d\operatorname{vol}_\Sigma(x)
\ge
\frac{N|\mathbb B^N|\theta_\lambda}{L_\Sigma^{N-1}}.
\end{equation}
Equivalently,
\begin{equation}\label{eq:Willmore-decay-error-form}
\int_\Sigma|\sigma|^n\,d\operatorname{vol}_\Sigma \geq
\frac{|\mathbb S^n|\theta_\lambda}{L_\Sigma^{N-1}}
-
\mathcal E_{\lambda,\Sigma},
\end{equation}
where
\begin{equation}\label{eq:decay-error-definition}
\mathcal E_{\lambda,\Sigma}
:=
\frac1{C_{n,m}}
\int_\Sigma
\int_{S_x^\perp\Sigma}
\bigg[
\left(
\mu_x-\langle\sigma(x),\omega\rangle
\right)_+^n
-
\left(
-\langle\sigma(x),\omega\rangle
\right)_+^n
\bigg]
\,d\omega\,d\operatorname{vol}_\Sigma(x).
\end{equation}
Moreover,
\[
\mathcal E_{\lambda,\Sigma}\ge0.
\]
\end{theorem}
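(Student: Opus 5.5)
The plan is to mirror the proof of Theorem~\ref{thm:quantitative-Willmore}, with Proposition~\ref{prop:quadratic-gradient-normal-jacobian} playing the role of Corollary~\ref{cor:quantitative-Heintze-Karcher}.

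\textbf{Step 1: curvature assumptions along normal rays.} Take $u\equiv0$ and $y=\omega$ with $(x,\omega)\in S^\perp\Sigma$, so that $v=\omega$, $|v|=1$ and $c_0(x,\omega)=-\langle\sigma(x),\omega\rangle$. We must verify \eqref{eq:QD-curvature-trace-assumptions} along $\gamma_{x,\omega}$.
\begin{itemize}
\item For the first inequality: $\mathcal R_n$ is an $n$-Ricci trace in the direction $\gamma'$ over the parallel transport of $T_x\Sigma$. This subspace lies in $(\gamma')^\perp$ because $\omega$ is normal. Hence $(\mathrm{QD})_n$ gives the bound directly.
\item For the second inequality: Lemma~\ref{lem:QD-monotonicity} with $\ell=N-1$ gives the Ricci bound.
\end{itemize}

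\textbf{Step 2: pointwise Jacobian bound.} For $0<t<\tau_\Sigma(x,\omega)$, pick $r\in(t,\tau_\Sigma)$, so that $(x,\omega)\in A_r^0$. Relate $|\det D\Phi_t^0|=tJ_\Sigma$ exactly as in Corollary~\ref{cor:quantitative-Heintze-Karcher}. Proposition~\ref{prop:quadratic-gradient-normal-jacobian} then yields
\[
J_\Sigma(x,\omega,t)\le t^{m-1}\bigl[1+t(\mu_x-\langle\sigma(x),\omega\rangle)\bigr]_+^n\Bigl(\frac{h_x(t)}{t}\Bigr)^{N-1}.
\]
Integrate over the normal cut domain as in Corollary~\ref{cor:quantitative-tube-volume}. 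Enlarging the $t$-range to $[0,R]$ gives $\operatorname{vol}(T_R(\Sigma))$ bounded by the integral of this right-hand side.

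\textbf{Step 3: large-$R$ asymptotics.} Apply Lemma~\ref{lem:decay-shifted-model} with $\varepsilon>0$.
\begin{itemize}
\item Split the $t$-integral at $T_\varepsilon$. The part over $[0,T_\varepsilon]$ is bounded independently of $R$, because $\Sigma$ is compact and $\mu_x,\sigma$ are bounded.
\item On $[T_\varepsilon,R]$, bound
\[
\bigl[1+t(\mu_x-\langle\sigma,\omega\rangle)\bigr]_+^n\le t^n\bigl[(\mu_x-\langle\sigma,\omega\rangle)_++1/t\bigr]^n .
\]
This gives an integrand at most $(L_\Sigma+\varepsilon)^{N-1}h_1(t)^{N-1}\bigl((\mu_x-\langle\sigma,\omega\rangle)_++1/t\bigr)^n$.
\item For the lower side, use $B_o(R-R_0)\subset T_R(\Sigma)$ together with \eqref{property:h_1-limit}. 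This gives
\[
\liminf_{R\to\infty}\frac{\operatorname{vol}(T_R(\Sigma))}{N|\mathbb B^N|\int_0^R h_1^{N-1}}\ge\theta_\lambda .
\]
\item Divide the upper bound by $\int_0^R h_1^{N-1}$. Since $h_1$ is convex and increasing, $\int_0^Rh_1^{N-1}\to\infty$. The bounded part and the $1/t$ corrections are then negligible.
\end{itemize}

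\textbf{Main obstacle.} The delicate point is the $1/t$ correction. It is weighted by $h_1(t)^{N-1}$, which only grows comparably to $t^{N-1}$ up to constants (as $h_1$ is linearly bounded when $b_1<\infty$). I would handle it by expanding the $n$-th power: the cross terms are $O(t^{-1})$ relative to the main term times $h_1^{N-1}$. A Cesàro-type argument shows their weighted integral is $o\bigl(\int_0^R h_1^{N-1}\bigr)$, using that $h_1(t)\ge t$ makes the integral diverge superlinearly.

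Letting $R\to\infty$ and then $\varepsilon\to0$ yields \eqref{eq:shifted-Willmore-decay} after rearranging.

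\textbf{Step 4: equivalent form and sign of the error.} Lemma~\ref{lem:spherical-beta-integral} converts $\int_{S^\perp_x}(-\langle\sigma,\omega\rangle)_+^n$ into $C_{n,m}|\sigma|^n$. Combined with $N|\mathbb B^N|=C_{n,m}|\mathbb S^n|$, this gives \eqref{eq:Willmore-decay-error-form} with $\mathcal E_{\lambda,\Sigma}$ as in \eqref{eq:decay-error-definition}. Finally, $\mathcal E_{\lambda,\Sigma}\ge0$ because $\mu_x\ge0$ and $s\mapsto s_+^n$ is nondecreasing, so the integrand is pointwise nonnegative.
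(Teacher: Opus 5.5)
Your proposal is correct and follows the paper's proof essentially step for step. It uses the same verification of \eqref{eq:QD-curvature-trace-assumptions} via $(\mathrm{QD})_n$ and Lemma~\ref{lem:QD-monotonicity}, the same identity $|\det D\Phi_t^0|=tJ_\Sigma$ combined with Proposition~\ref{prop:quadratic-gradient-normal-jacobian}, the same split at $T_\varepsilon$ from Lemma~\ref{lem:decay-shifted-model}, and the same weighted-average argument for the $1/t$ correction, which only needs $\int_0^R h_1^{N-1}\to\infty$. One parenthetical is wrong but harmless: $h_1$ need not be linearly bounded when only $b_1<\infty$ is assumed. For example, for $\lambda(t)=B/(1+t)^2$ it grows like $t^{\alpha}$ with $\alpha>1$ the larger root of $\alpha(\alpha-1)=B$; your argument never uses linear growth, so nothing breaks.
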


    \begin{remark}\label{rem:decay-correction-necessary}
When $\lambda\equiv 0$, the condition $(\mathrm{QD})_n$ reduces to
$\overline{\operatorname{Ric}}_n\ge 0$. In this case,
\[
    \mu_x=0,
    \qquad
    L_\Sigma=1,
\]
and $\theta_\lambda$ coincides with the usual asymptotic volume ratio
$\theta$. Hence $\mathcal E_{\lambda,\Sigma}=0$, and \eqref{eq:Willmore-decay-error-form}
recovers the corresponding Willmore inequality under nonnegative
$n$-intermediate Ricci curvature.

The correction term in \eqref{eq:Willmore-decay-error-form} is necessary in the
presence of curvature decay. Indeed, let
\[
    M^{n+1}=\mathbb R\times \mathbb S^n,
    \qquad
    g=dr^2+(1+r^2)g_{\mathbb S^n},
\]
where $g_{\mathbb S^n}$ is the round metric of sectional curvature $1$, and fix
$o=(0,p_0)$. The warped-product curvature formulas give
\[
    \overline{\operatorname{Ric}}(\partial_r,\partial_r)
    =-\frac{n}{(1+r^2)^2},
    \qquad
    \overline{\operatorname{Ric}}(U,U)
    =\frac{n-2}{(1+r^2)^2}
\]
for every unit vector $U$ tangent to the spherical factor. Hence
\[
    \overline{\operatorname{Ric}}_n
    \ge -\frac{n}{(1+r^2)^2}g.
\]
If $q=(r,p)$ and $\delta=d_{\mathbb S^n}(p_0,p)\le\pi$, a path that moves in both
factors gives
\[
    d(o,q)^2\le r^2+(1+r^2)\delta^2,
\]
and therefore
\[
    1+d(o,q)^2\le(1+\pi^2)(1+r^2).
\]
Thus $M$ satisfies $(\mathrm{QD})_n$ with
\[
    \lambda(t)=\frac{(1+\pi^2)^2}{(1+t^2)^2}.
\]
Moreover, $|r|\le d(o,q)\le |r|+\pi$ and
$d\operatorname{vol}_M=(1+r^2)^{\frac{n}{2}}dr\,d\operatorname{vol}_{\mathbb S^n}$, so
$M$ has Euclidean volume growth. Since
$\int_0^\infty t\lambda(t)\,dt<\infty$, the model function is
asymptotically linear, and hence $\theta_\lambda>0$. On the other hand,
\[
    \Sigma_0=\{0\}\times \mathbb S^n
\]
is totally geodesic, so that $\sigma\equiv0$ on $\Sigma_0$. Consequently,
no positive lower bound of the classical form
\[
    \int_\Sigma |\sigma|^n\,d\operatorname{vol}_\Sigma
    \ge C\theta_\lambda,
    \qquad C>0,
\]
can hold under the curvature-decay assumption alone. Thus the subtractive
correction term in \eqref{eq:Willmore-decay-error-form} reflects a genuine
feature of the curvature-decay setting.
\end{remark}

\begin{proof}[Proof of Theorem~\ref{thm:decay}]

Fix $(x,\omega)\in S^\perp\Sigma$ and
$0<t<\tau_\Sigma(x,\omega)$. Choose $t<r<\tau_\Sigma(x,\omega)$;
then $(x,\omega)\in A_r^0$. The condition $(\mathrm{QD})_n$ and
Lemma~\ref{lem:QD-monotonicity} give the two bounds in
\eqref{eq:QD-curvature-trace-assumptions}.

Since $v = \omega$, $|v| =1$, and
$c_0 (x, \omega) = - \langle \sigma(x) , \omega \rangle,$
Proposition~\ref{prop:quadratic-gradient-normal-jacobian} with
$u\equiv0$ and $y=\omega$
gives
$$\left|\det D\Phi_t^0(x,\omega)\right|
\le
t^m
\left[
1+t\bigl(\mu_x-\langle\sigma(x),\omega\rangle\bigr)
\right]_+^n
\left(\frac{h_x(t)}{t}\right)^{N-1}.$$
By the relation between $D\Phi_t^0$ and the normal polar Jacobian established in Corollary~\ref{cor:quantitative-Heintze-Karcher},
$$\left|\det D\Phi_t^0(x,\omega)\right|
=
tJ_\Sigma(x,\omega,t),$$
and hence
\begin{equation*}
J_\Sigma(x,\omega,t)
\le
h_x(t)^{N-1}
\left(
\mu_x-\langle\sigma(x),\omega\rangle+\frac1t
\right)_+^n
\end{equation*}

Using the normal-polar area formula as in the proof of Corollary~\ref{cor:quantitative-tube-volume}, and then enlarging the
$t$-integration interval, we obtain
\begin{align}
\operatorname{vol}(T_R(\Sigma))
\le
\int_\Sigma\int_{S_x^\perp\Sigma}\int_0^R
h_x(t)^{N-1}
\left(
\mu_x-\langle\sigma(x),\omega\rangle+\frac1t
\right)_+^n
\,dt\,d\omega\,d\operatorname{vol}_\Sigma .
\label{eq:shifted-tube-bound}
\end{align}

Fix $\varepsilon>0$. By
Lemma~\ref{lem:decay-shifted-model}, there exists
$T_\varepsilon>0$, independent of $x$, such that
\[
h_x(t)\le (L_\Sigma+\varepsilon)h_1(t)
\qquad
\text{for all }x\in\Sigma,\quad t\ge T_\varepsilon.
\]
Uniformly on the compact normal sphere bundle, the integrand in
\eqref{eq:shifted-tube-bound} is $O(t^{m-1})$ as $t\downarrow0$.
Thus its contribution from $0<t<T_\varepsilon$ is finite and independent
of $R$ for $R>T_\varepsilon$. Therefore
\begin{align}
\operatorname{vol}&(T_R(\Sigma))\notag\\
\le{}&
C_\varepsilon
+
(L_\Sigma+\varepsilon)^{N-1}
\int_\Sigma\int_{S_x^\perp\Sigma}\int_{T_\varepsilon}^R
h_1(t)^{N-1}
\left(
\mu_x-\langle\sigma(x),\omega\rangle+\frac1t
\right)_+^n
\,dt\,d\omega\,d\operatorname{vol}_\Sigma ,
\label{eq:shifted-tube-large}
\end{align}
where $C_\varepsilon$ is independent of $R$.

Set $F(R):=\int_0^R h_1(s)^{N-1}\,ds$, then by L'Hôpital's rule,
\begin{equation}\label{eq: F limit}
F(r+c)/F(r)\to1
\end{equation}
for every fixed $c$. So the ball inclusions argument in  \eqref{eq:tube-asymptotic-volume} gives
    \begin{equation*}
\lim_{R\to\infty}
\frac{\operatorname{vol}(T_R(\Sigma))}
{{N|\mathbb B^N|
\int_0^R h_1(t)^{N-1}\,dt}}
=
\theta_\lambda.
\end{equation*}
The factor
$(\mu_x-\langle\sigma(x),\omega\rangle+1/t)_+^n$
converges uniformly on $S^\perp\Sigma$ as $t\to\infty$.
Its averages with weight $h_1(t)^{N-1}$ have the same limit.
Dividing \eqref{eq:shifted-tube-large} by $F(R)$ and letting
$R\to\infty$ therefore gives
\[
N|\mathbb B^N|\theta_\lambda
\le
(L_\Sigma+\varepsilon)^{N-1}
\int_\Sigma\int_{S_x^\perp\Sigma}
\left(
\mu_x-\langle\sigma(x),\omega\rangle
\right)_+^n
\,d\omega\,d\operatorname{vol}_\Sigma.
\]
Letting $\varepsilon\to 0^+$ proves
\eqref{eq:shifted-Willmore-decay}.

Finally,
Lemma \ref{lem:spherical-beta-integral} gives
\[
\int_{S_x^\perp\Sigma}
\left(-\langle\sigma(x),\omega\rangle\right)_+^n\,d\omega
=
C_{n,m}|\sigma(x)|^n,
\]
where $C_{n,m} := \frac{|\mathbb S^{N-1}|}{|\mathbb S^n|}$.
By the definition~\eqref{eq:decay-error-definition},
\begin{align*}
\int_\Sigma
\int_{S_x^\perp\Sigma}
\left(
\mu_x-\langle\sigma(x),\omega\rangle
\right)_+^n
\,d\omega\,d\operatorname{vol}_\Sigma
=
C_{n,m}
\left(
\int_\Sigma|\sigma|^n\,d\operatorname{vol}_\Sigma
+
\mathcal E_{\lambda,\Sigma}
\right).
\end{align*}
Since
\[
\frac{N|\mathbb B^N|}{C_{n,m}}=|\mathbb S^n|,
\]
inequality~\eqref{eq:shifted-Willmore-decay} is equivalent to
\eqref{eq:Willmore-decay-error-form}. Moreover,
$\mathcal E_{\lambda,\Sigma}\ge0$ because $\mu_x\ge0$ and the map
$r\mapsto r_+^n$ is nondecreasing.
\end{proof}

\begin{lemma}\label{lem:annulus-decay-limit}
{Let $\Sigma$ be compact and nonempty, and let
$\mathcal C_{\alpha,r}$ be the region defined in
Lemma~\ref{lem:ABP-basic-Sobolev}.}
For every fixed $0<\alpha<1$,
    \begin{equation*}
\liminf_{r\to\infty}
\frac{\operatorname{vol}(\mathcal C_{\alpha,r})}
{N|\mathbb B^N|\displaystyle\int_{\alpha r}^r h_1(t)^{N-1}\,dt}
\ge
\theta_\lambda.
\end{equation*}
\end{lemma}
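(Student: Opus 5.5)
The plan is to reuse the ball-inclusion argument from the proof of Lemma~\ref{lem:ABP-common-annulus-asymptotics}, but with the Euclidean normalization $|\mathbb B^N|r^N$ replaced by $F(r):=\int_0^r h_1(t)^{N-1}\,dt$. Set $R_0:=\max_{x\in\Sigma}d(o,x)$. For $r$ large we have the inclusion
\[
B_o(r-R_0)\setminus\overline{B_o(\alpha r+R_0)}\subset\mathcal C_{\alpha,r}.
\]
Indeed, if $d(o,p)<r-R_0$, then $d(p,x)<r$ for every $x\in\Sigma$. Likewise, if $d(o,p)>\alpha r+R_0$, then $d(p,x)>\alpha r$ for every $x\in\Sigma$. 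This inclusion gives
\[
\operatorname{vol}(\mathcal C_{\alpha,r})\ge \operatorname{vol}(B_o(r-R_0))-\operatorname{vol}(B_o(\alpha r+R_0)).
\]

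The next step is to compare this difference with $N|\mathbb B^N|\bigl(F(r)-F(\alpha r)\bigr)$. This is the delicate point. The limit \eqref{eq:model-AVR-decay} only says that $\operatorname{vol}(B_o(s))=N|\mathbb B^N|F(s)(\theta_\lambda+o(1))$. Subtracting two such asymptotics leaves an error of size $o(1)\,F(\alpha r+R_0)$, and we need this error to be $o\bigl(F(r)-F(\alpha r)\bigr)$. I would avoid this subtraction problem by using the monotonicity from the radial Bishop--Gromov comparison \cite[Theorem 2.14]{pigola2008vanishing}. Since $\operatorname{Ric}\ge -(N-1)\lambda(d(o,\cdot))$, the ratio $\operatorname{vol}(B_o(s))/F(s)$ is nonincreasing. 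The relative-volume form of the same comparison yields the annular version
\[
\frac{\operatorname{vol}(B_o(s_2))-\operatorname{vol}(B_o(s_1))}{F(s_2)-F(s_1)}\ge \frac{\operatorname{vol}(B_o(s_2))}{F(s_2)}\ge N|\mathbb B^N|\theta_\lambda
\]
for $s_1<s_2$. This follows from the pointwise monotonicity of the area ratio $|\partial B_o(s)|/h_1(s)^{N-1}$, by the standard Gromov lemma that ratios of integrals of monotone ratios are monotone. The area-ratio monotonicity is exactly what the radial comparison provides.

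Applying this with $s_1=\alpha r+R_0$ and $s_2=r-R_0$ gives
\[
\operatorname{vol}(\mathcal C_{\alpha,r})\ge N|\mathbb B^N|\theta_\lambda\bigl(F(r-R_0)-F(\alpha r+R_0)\bigr).
\]
It then remains to show that
\[
\frac{F(r-R_0)-F(\alpha r+R_0)}{F(r)-F(\alpha r)}\to1 .
\]
The numerator differs from the denominator by $\int_{r-R_0}^r h_1^{N-1}+\int_{\alpha r}^{\alpha r+R_0}h_1^{N-1}$, which is at most $2R_0\,h_1(r)^{N-1}$ since $h_1$ is increasing. Meanwhile $F(r)-F(\alpha r)\ge (r-\alpha r)\,h_1(\alpha r)^{N-1}$. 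So it suffices to show $h_1(r)/h_1(\alpha r)$ is bounded. This holds because $h_1$ is convex with $h_1'$ bounded: $h_1'$ is increasing, and $h_1'(\infty)<\infty$ since $b_1<\infty$ (as in \cite{ChongLuoLu2025}). Hence $h_1(t)$ is comparable to $t$ for large $t$, and the ratio of the two $F$-differences tends to $1$, giving the stated liminf.

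The main obstacle is the annular monotonicity step. If that step cannot be cleanly cited, I would instead use the pointwise bound $h_1(t)\asymp t$ to show $F(\alpha r)/F(r)\to\alpha^N$. This limit makes the $o(1)F$ errors negligible relative to $F(r)-F(\alpha r)\sim(1-\alpha^N)F(r)$, and the direct subtraction argument then works.
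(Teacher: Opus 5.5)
Your skeleton (the inclusion $B_o(r-R_0)\setminus\overline{B_o(\alpha r+R_0)}\subset\mathcal C_{\alpha,r}$, then a comparison with $F(r)-F(\alpha r)$) is the paper's, but two steps fail as written.

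\textbf{The annular inequality is reversed.} With $V(s):=\operatorname{vol}(B_o(s))$, clearing denominators gives
\[
\frac{V(s_2)-V(s_1)}{F(s_2)-F(s_1)}\ge\frac{V(s_2)}{F(s_2)}
\quad\Longleftrightarrow\quad
\frac{V(s_2)}{F(s_2)}\ge\frac{V(s_1)}{F(s_1)}.
\]
This contradicts Bishop--Gromov unless $V/F$ takes the same value at $s_1$ and $s_2$. For a nonincreasing density ratio, Gromov's lemma gives the opposite chain: shell ratio $\le V(s_2)/F(s_2)\le V(s_1)/F(s_1)$.

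The bound you actually need, shell ratio $\ge N|\mathbb B^N|\theta_\lambda$, is still true, but by a different step. The area ratio $|\partial B_o(s)|/h_1(s)^{N-1}$ is nonincreasing, so it is at least its limit. Since $F(s)\to\infty$, that limit equals $\lim_{s\to\infty}V(s)/F(s)=N|\mathbb B^N|\theta_\lambda$ (L'H\^opital/Stolz--Ces\`aro). Integrating over $[s_1,s_2]$ gives the claim.

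\textbf{$h_1$ need not grow linearly.} The condition $b_1<\infty$ does not make $h_1'$ bounded, so $h_1(t)\asymp t$ is false under $(\mathrm{QD})$. Take the admissible choice $\lambda(t)=B/(1+t)^2$. Then $h_1(t)=\bigl((1+t)^{\beta_+}-(1+t)^{\beta_-}\bigr)/\sqrt{1+4B}$ with $\beta_\pm=\tfrac12\bigl(1\pm\sqrt{1+4B}\bigr)$, so $h_1\sim t^{\beta_+}$ with $\beta_+>1$. Linear growth needs something like $\int_0^\infty t\lambda\,dt<\infty$, which is why Remark~\ref{rem:decay-correction-necessary} checks it separately for its example.

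Consequently:
\begin{itemize}
\item your justification of $\sup_r h_1(r)/h_1(\alpha r)<\infty$ is invalid (the bound does hold when $B<\infty$, but it needs a Riccati argument for $th_1'/h_1$);
\item the fallback claim $F(\alpha r)/F(r)\to\alpha^N$ is false in general (in this example the limit is $\alpha^{(N-1)\beta_++1}$).
\end{itemize}

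Neither is needed. Convexity of $h_1$ with $h_1(0)=0$ gives $h_1(\alpha t)\le\alpha h_1(t)$, hence $F(\alpha r)\le\alpha^NF(r)$, i.e.\ $F(r)-F(\alpha r)\ge(1-\alpha^N)F(r)$. Meanwhile \eqref{eq: F limit} makes both $F(r)-F(r-R_0)$ and $F(\alpha r+R_0)-F(\alpha r)$ equal to $o(F(r))$. These two facts close either of your routes.

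\textbf{Comparison with the paper.} The paper's proof is exactly your fallback: direct subtraction of the defining asymptotics of $\theta_\lambda$ (after reducing to $\theta_\lambda>0$), with this convexity bound in place of linear growth. Your repaired main route additionally yields the non-asymptotic bound $\operatorname{vol}(\mathcal C_{\alpha,r})\ge N|\mathbb B^N|\theta_\lambda\bigl(F(r-R_0)-F(\alpha r+R_0)\bigr)$. The price is using the pointwise (area) form of the comparison rather than only the ball-ratio limit.
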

\begin{proof}
If $\theta_\lambda=0$, the conclusion is immediate. Assume
$\theta_\lambda>0$.
It follows from \eqref{eq: F limit} and the definition of $\theta_\lambda$ that as $r\to \infty$,
$$
\frac{\operatorname{vol}\left(B_o\left(r-R_0\right)\right)}
{\operatorname{vol}\left(B_o(r)\right)}
=
\frac{\theta_\lambda N|\mathbb B^N|F\left(r-R_0\right)(1+o(1))}
{\theta_\lambda N|\mathbb B^N|F(r)(1+o(1))}
=
1+o(1).
$$
Similarly,
$\operatorname{vol}(B_o(\alpha r+R_0))/\operatorname{vol}(B_o(\alpha r))\to1$ as $r\to \infty$.

For all sufficiently large $r$,
{$B_o(r-R_0)\setminus
\overline{B_o(\alpha r+R_0)}\subset\mathcal C_{\alpha,r}$.} Therefore,
$$
\begin{aligned}
\operatorname{vol}(\mathcal C_{\alpha,r})
&\geq\operatorname{vol}(B_o(r-R_0))-\operatorname{vol}(B_o(\alpha r+R_0))\\
&=\operatorname{vol}(B_o(r))(1+o(1))-\operatorname{vol}(B_o(\alpha r))(1+o(1))\\
&=N|\mathbb B^N|\theta_\lambda\bigl(F(r)-F(\alpha r)\bigr)+o(F(r)).
\end{aligned}
$$
Since $h_1$ is convex and $h_1(0)=0$, we have $F(\alpha r)\leq\alpha^N F(r)$ and hence $F(r)-F(\alpha r)\geq(1-\alpha^N)F(r)$. Dividing the preceding inequality by
$N|\mathbb B^N|\bigl(F(r)-F(\alpha r)\bigr)
=N|\mathbb B^N|\int_{\alpha r}^r h_1(t)^{N-1}\,dt$
and taking the lower limit proves the result.
\end{proof}

For the Sobolev inequality, set
\[
\mu_\Sigma:=b_1+\int_0^{R_0}\lambda(s)\,ds.
\]
Then $\mu_x\le\mu_\Sigma\le2b_1$ for every $x\in\Sigma$.

\begin{theorem}[Quadratic-decay Michael--Simon inequality]
\label{thm:QD-Sobolev}
Let $(M^N,\overline g)$ be a complete noncompact Riemannian manifold, where $N=n+m$ with $n\geq2$ and $m\geq2$, satisfying $(\mathrm{QD})_{n-1}$ with respect to $o\in M$ and $\lambda$.
Let $\Sigma^n\subset M^N$ be a compact embedded submanifold,
possibly with smooth boundary. Then, for every positive
$f\in C^\infty(\Sigma)$,
\begin{align*}
&\int_\Sigma
\sqrt{|\nabla^\Sigma f|^2+f^2|H|^2}
\,d\operatorname{vol}_\Sigma
+
\int_{\partial\Sigma}f\,d\operatorname{vol}_{\partial\Sigma}
+
n\mu_\Sigma\int_\Sigma f\,d\operatorname{vol}_\Sigma
\notag\\
&\qquad\ge
n
\left(
\frac{N|\mathbb B^N|\theta_\lambda}
{m|\mathbb B^m|L_\Sigma^{N-1}}
\right)^{\frac{1}{n}}
\left(
\int_\Sigma f^{\frac n{n-1}}
\,d\operatorname{vol}_\Sigma
\right)^{\frac{n-1}{n}}.
\end{align*}
\end{theorem}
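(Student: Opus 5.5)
We will run the ABP argument of Theorem~\ref{thm:Sobolev}, replacing the Jacobian bound there by Proposition~\ref{prop:quadratic-gradient-normal-jacobian}.

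\textbf{Setup.} First assume $\Sigma$ is connected. Normalize $f$ so that
\[
\int_\Sigma\sqrt{|\nabla^\Sigma f|^2+f^2|H|^2}+\int_{\partial\Sigma}f+n\mu_\Sigma\int_\Sigma f=n\int_\Sigma f^{\frac n{n-1}}.
\]
Solve the Neumann problem with the $\mu_\Sigma$ term absorbed into the source:
\[
\operatorname{div}_\Sigma(f\nabla^\Sigma u)=nf^{\frac n{n-1}}-\sqrt{|\nabla^\Sigma f|^2+f^2|H|^2}-n\mu_\Sigma f .
\]
The compatibility condition is exactly the normalization. Repeating the computation behind Lemma~\ref{lem:ABP-basic-Sobolev}(1) then gives, for $|\nabla^\Sigma u|^2+|y|^2\le1$,
\[
c_u(x,y)\le f(x)^{\frac1{n-1}}-\mu_\Sigma .
\]
Part (2) of that lemma, the covering of $\mathcal C_{\alpha,r}$, is purely metric and is unchanged.

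\textbf{Curvature hypotheses along $\gamma$.} For $(x,y)\in A^u_r\cap\mathcal U$ we must verify \eqref{eq:QD-curvature-trace-assumptions}. The Ricci bound follows from Lemma~\ref{lem:QD-monotonicity}. For $\mathcal R^u_n$, repeat the decomposition in Lemma~\ref{lem:mixed-direction-nonnegative}. The term $\sin^2\vartheta(\cdots)$ is an $n$-trace, bounded below by $-n\lambda$ via $(\mathrm{QD})_n$, which holds by monotonicity. The term $\cos^2\vartheta\sum_{i\ge2}$ is an $(n-1)$-trace, bounded below by $-(n-1)\lambda\ge -n\lambda$. Multiplying by $a^2=|v|^2$ gives the required bound.

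\textbf{Jacobian bound.} Proposition~\ref{prop:quadratic-gradient-normal-jacobian} at $t=r$, together with $|v|\le1$ and $\mu_x\le\mu_\Sigma$, gives
\[
|\det D\Phi^u_r|\le r^m\bigl(1+rf^{\frac1{n-1}}\bigr)^n\left(\frac{h_x(r|v|)}{r|v|}\right)^{N-1}.
\]
The $\mu_\Sigma$ terms cancel here. On the set in Lemma~\ref{lem:ABP-basic-Sobolev}(2) we have $\alpha<|v|<1$. Since $h_x(s)/s$ is nondecreasing (as $h_x$ is convex with $h_x(0)=0$), this gives
\[
\frac{h_x(r|v|)}{r|v|}\le\frac{h_x(r)}{\alpha r}.
\]
By Lemma~\ref{lem:decay-shifted-model}, for large $r$ this is at most $(L_\Sigma+\varepsilon)h_1(r)/(\alpha r)$.

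\textbf{Area formula and limit.} The area formula and the fiber-volume estimate of Theorem~\ref{thm:Sobolev} give
\[
\operatorname{vol}(\mathcal C_{\alpha,r})\le\frac m2|\mathbb B^m|(1-\alpha^2)\,\frac{(L_\Sigma+\varepsilon)^{N-1}h_1(r)^{N-1}}{\alpha^{N-1}r^{N-1}}\int_\Sigma r^m\bigl(1+rf^{\frac1{n-1}}\bigr)^n .
\]
Compare this with Lemma~\ref{lem:annulus-decay-limit}. We need the asymptotics of $\int_{\alpha r}^r h_1^{N-1}\,dt$ relative to $r h_1(r)^{N-1}$. Under $B<\infty$ and $b_1<\infty$, $h_1'$ increases to a finite limit $\kappa\ge1$, so $h_1(t)\sim\kappa t$. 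Hence
\[
\int_{\alpha r}^r h_1^{N-1}\sim\kappa^{N-1}\frac{r^N(1-\alpha^N)}{N},\qquad h_1(r)^{N-1}\sim\kappa^{N-1}r^{N-1}.
\]
Dividing by $r^N\kappa^{N-1}$ and letting $r\to\infty$ yields
\[
|\mathbb B^N|(1-\alpha^N)\theta_\lambda\le\frac m2|\mathbb B^m|(1-\alpha^2)\alpha^{1-N}(L_\Sigma+\varepsilon)^{N-1}\int_\Sigma f^{\frac n{n-1}} .
\]
Divide by $1-\alpha$, let $\alpha\uparrow1$, then let $\varepsilon\to0$. This gives
\[
N|\mathbb B^N|\theta_\lambda\le m|\mathbb B^m|L_\Sigma^{N-1}\int_\Sigma f^{\frac n{n-1}},
\]
which combined with the normalization is the claim. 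The disconnected case follows by summing over components, as in Theorem~\ref{thm:Sobolev}.

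\textbf{Main obstacle.} The main difficulty is the linear asymptotics of $h_1$ and the uniform replacement of $h_x$ by $h_1$ in the shell $\alpha r<t\le r$. If $h_1'(\infty)$ is not available directly from the hypotheses, I would avoid it by using only the ratio $h_1(r)^{N-1}r\big/\int_{\alpha r}^r h_1^{N-1}$. By monotonicity of $h_1$ this ratio is at most $h_1(r)^{N-1}\big/\bigl((1-\alpha)h_1(\alpha r)^{N-1}\bigr)$. I would then need $h_1(r)/h_1(\alpha r)\to1/\alpha$ in the limit, which again follows from $h_1'\to\kappa$. Finally, the source term must keep $f^{\frac n{n-1}}$ and not $f$ in the key inequality, which is why $\mu_\Sigma$ is absorbed into the PDE rather than into $c_u$.
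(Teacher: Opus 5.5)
\textbf{Gap in the limiting step.} The steps up to and including the Jacobian bound are fine, but the limit argument is not. You claim that $b_1<\infty$ and $B<\infty$ force $h_1'$ to increase to a finite limit $\kappa$, and this is false. Since $h_1'(\infty)=1+\int_0^\infty\lambda h_1\,dt$ and $h_1(t)\ge t$, finiteness requires $\int_0^\infty t\lambda(t)\,dt<\infty$. Quadratic decay does not give this; it is exactly the extra condition invoked in Remark~\ref{rem:decay-correction-necessary} to get a linear model. For instance, with $B>0$, the function $\lambda(t)=B/(1+t)^2$ satisfies all the hypotheses, and
\[
h_1(t)=\frac{(1+t)^{\beta}-(1+t)^{1-\beta}}{2\beta-1},\qquad \beta(\beta-1)=B,\ \beta>1,
\]
so $h_1$ grows like $t^{\beta}$. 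The asymptotics $\int_{\alpha r}^r h_1^{N-1}\sim\kappa^{N-1}r^N(1-\alpha^N)/N$ and $h_1(r)^{N-1}\sim\kappa^{N-1}r^{N-1}$, which you use to match your upper bound against Lemma~\ref{lem:annulus-decay-limit}, are therefore unavailable. Your fallback does not escape this: you justify $h_1(r)/h_1(\alpha r)\to1/\alpha$ by the same false fact, and in the example this limit is $\alpha^{-\beta}$.

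\textbf{How to repair it.} The fallback only needs $\lim_{\alpha\uparrow1}\limsup_{r\to\infty}h_1(r)/h_1(\alpha r)=1$, and this does follow from $B<\infty$. Set $\phi:=t h_1'/h_1$; it satisfies $t\phi'=\phi-\phi^2+t^2\lambda$. For large $t$, $\phi$ therefore decreases whenever it exceeds the positive root of $s^2-s=B+1$. Hence $\phi\le C$ on some $[T,\infty)$, and $h_1(r)/h_1(\alpha r)\le\alpha^{-C}$ once $\alpha r\ge T$. Inserting this into your estimate gives $N|\mathbb B^N|\theta_\lambda\le\frac m2|\mathbb B^m|(1+\alpha)\alpha^{-(1+C)(N-1)}(L_\Sigma+\varepsilon)^{N-1}\int_\Sigma f^{\frac n{n-1}}$, which yields the sharp constant as $\alpha\uparrow1$.

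\textbf{What the paper does instead.} The paper needs no growth information on $h_1$ at all. You replace $h_x(r|v|)$ by its maximum over the fiber; the paper instead keeps it inside the $y$-integral and passes to polar coordinates $a=|v|$, using $m\ge2$ and $a\ge\alpha$. This bounds the fiber integral by $\frac{|\mathbb S^{m-1}|}{\alpha^n r}\int_{\alpha r}^r h_x(s)^{N-1}\,ds$. After Lemma~\ref{lem:decay-shifted-model}, this is exactly the shell integral in the denominator of Lemma~\ref{lem:annulus-decay-limit}, so the two cancel directly.

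\textbf{Minor point.} To normalize $f$ you need the left-hand side to be positive. When $\theta_\lambda>0$ the paper obtains this by noting that otherwise $\partial\Sigma=\varnothing$, $H\equiv0$ and $\mu_\Sigma=0$. Then $\lambda\equiv0$, which contradicts \eqref{eq:shifted-Willmore-decay}.
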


\begin{proof}
If $\theta_\lambda=0$, the result is immediate. Assume
$\theta_\lambda>0$ and first suppose that $\Sigma$ is connected.
The left-hand side is positive: otherwise
$\partial\Sigma=\varnothing$, $H=0$, and $\mu_\Sigma=0$.
Then $\lambda\equiv0$, contradicting inequality \eqref{eq:shifted-Willmore-decay}.
After multiplying $f$ by a positive constant, we may therefore assume that
\begin{align}
\label{eq:QD-Sobolev-normalization}
\int_\Sigma
\sqrt{|\nabla^\Sigma f|^2+f^2|H|^2}\,d\operatorname{vol}_\Sigma
+
\int_{\partial\Sigma}f\,d\operatorname{vol}_{\partial\Sigma}
+
n\mu_\Sigma\int_\Sigma f\,d\operatorname{vol}_\Sigma
=
n\int_\Sigma f^{\frac n{n-1}}\,d\operatorname{vol}_\Sigma .
\end{align}
The compatibility condition gives a solution $u$ of
\[
\begin{cases}
\operatorname{div}_\Sigma(f\nabla^\Sigma u)
=
nf^{\frac n{n-1}}
-
\sqrt{|\nabla^\Sigma f|^2+f^2|H|^2}
-
n\mu_\Sigma f,
&\text{in }\Sigma,\\
\langle\nabla^\Sigma u,\nu\rangle=1,
&\text{on }\partial\Sigma.
\end{cases}
\]
The boundary condition is omitted when
$\partial\Sigma=\varnothing$.

We use the sets $\Omega$, $\mathcal U$, $\mathcal{C}_{\alpha,r}$ and $A_r^u$ introduced in
Section~\ref{sec:isoperimetric}.
Fix $0<\alpha<1$, $r>0$, and
\[
(x,y)\in
A_r^u\cap\mathcal U
\cap
\left\{(x,y):|\nabla^\Sigma u(x)|^2+|y|^2>\alpha^2\right\}.
\]
Set
$v:=\nabla^\Sigma u(x)+y.$
Then
$\alpha<|v|<1.$
The proof of the covering statement in
Lemma~\ref{lem:ABP-basic-Sobolev} is unchanged. Moreover, the same
calculation as in the first part of that lemma gives
\begin{equation*}
c_u(x,y) = \frac{1}{n}\left(\Delta_\Sigma u(x)-\langle H(x),y\rangle\right)
\le
f(x)^{\frac1{n-1}}-\mu_\Sigma.
\end{equation*}
Since $\mu_x\le\mu_\Sigma$ and $|v|<1$, we have
\[
c_u(x,y)+\mu_x|v|
\le
f(x)^{\frac1{n-1}}.
\]
An application of Lemma~\ref{lem:mixed-direction-nonnegative} with the same angle
$\vartheta$ gives along $\gamma(t)=\exp_x(tv)$
\begin{align*}
\mathcal R_n^u(x,y,t)
&\ge -|v|^2\bigl[n\sin^2\vartheta+(n-1)\cos^2\vartheta\bigr]
\lambda(d(o,\gamma(t)))\\
&\ge -n|v|^2\lambda(d(o,\gamma(t))).
\end{align*}
Here $(\mathrm{QD})_n$ and the ordinary Ricci lower bound follow from
Lemma~\ref{lem:QD-monotonicity}. Thus both assumptions in
\eqref{eq:QD-curvature-trace-assumptions} hold.
By applying Proposition~\ref{prop:quadratic-gradient-normal-jacobian}, we obtain

\begin{equation*}
|\det D\Phi_r^u(x,y)|
\le
r^m
\left(1+r f(x)^{\frac1{n-1}}\right)^n
\left(
\frac{h_x(r|v|)}{r|v|}
\right)^{N-1}.
\end{equation*}

Using the covering statement of
Lemma~\ref{lem:ABP-basic-Sobolev} and the area formula, we have
\begin{equation}\label{eq:vol C}
\operatorname{vol}(\mathcal C_{\alpha,r})\leq \int_{{\Omega}} r^{1-n}
\left(1+r f(x)^{\frac1{n-1}}\right)^n
\int_{\{y\,:\,\alpha^2 < |\nabla^\Sigma u(x)|^2 + |y|^2 <1\}}
\left(
\frac{h_x(r|v|)}{|v|}
\right)^{N-1} dy \,d{\mathrm{vol}}_\Sigma (x).
\end{equation}

For $x\in\Omega$, use polar coordinates $\rho=|y|$ and set
$a=\sqrt{|\nabla^\Sigma u(x)|^2+\rho^2}$. Since $m\ge2$, we obtain
\begin{align*}
    &\int_{\{y\,:\,\alpha^2 < |\nabla^\Sigma u(x)|^2 + |y|^2 <1\}}
\left(
\frac{h_x(r|v|)}{|v|}
\right)^{N-1} dy\\
&=\int_{\mathbb S^{m-1}}\int_{\sqrt{(\alpha^2 - |\nabla^\Sigma u|^2)_+}}^{\sqrt{1- |\nabla^\Sigma u|^2}} \left(\frac{h_x \bigl(r \sqrt{|\nabla^\Sigma u(x)|^2 + \rho^2}\bigr)}{\sqrt{|\nabla^\Sigma u(x)|^2 + \rho^2}}\right)^{N-1}
\rho^{m-1} \, d \rho \, d {\mathrm{vol}}_{\mathbb S^{m-1}}\\
& = |\mathbb S^{m-1}| \int_{\max\{\alpha,|\nabla^\Sigma u(x)|\}}^1 \frac{(h_x (r a))^{N-1}}{a^{N-2}} (a^2 - |\nabla^\Sigma u(x)|^2)^{\frac{m-2}{2}}\, da \\
& \leq |\mathbb S^{m-1}| \int_\alpha^1 \frac{(h_x (r a))^{N-1}}{a^n}\, da\\
& \leq \frac{|\mathbb S^{m-1}|}{\alpha^n r} \int_{r\alpha}^{r} (h_x(s))^{N-1}\, ds.
\end{align*}

Fix $\varepsilon>0$ and take $r\ge T_\varepsilon/\alpha$, so that
Lemma~\ref{lem:decay-shifted-model} applies throughout $[\alpha r,r]$.
Combining with \eqref{eq:vol C}, we obtain
\begin{equation}\label{eq:QD-Sobolev-annulus-upper}
    \operatorname{vol}(\mathcal C_{\alpha,r})\leq \frac{|\mathbb S^{m-1}|}{\alpha^n}(L_\Sigma + \varepsilon )^{N-1}
\left( \int_{r\alpha}^{r} (h_1(s))^{N-1}\, ds \right)
\int_\Sigma\left(\frac{1}{r}+ f(x)^{\frac1{n-1}}\right)^n d {\mathrm{vol}}_\Sigma (x)
\end{equation}

Dividing \eqref{eq:QD-Sobolev-annulus-upper} by
$\int_{\alpha r}^r h_1(t)^{N-1}\,dt$ and taking $\liminf_{r\to\infty}$, in view of Lemma~\ref{lem:annulus-decay-limit}, we obtain
\begin{align*}
N|\mathbb B^N|\theta_\lambda
\leq \frac{|\mathbb S^{m-1}|}{\alpha^n}(L_\Sigma + \varepsilon )^{N-1} \int_\Sigma f(x)^\frac{n}{n-1} d {\mathrm{vol}}_\Sigma (x).
\end{align*}

{Letting $\varepsilon\downarrow0$ and then
$\alpha\uparrow1$ gives}
$$\int_\Sigma f(x)^\frac{n}{n-1} d{\mathrm{vol}}_\Sigma (x) \geq \frac{N|\mathbb{B}^N| \theta_\lambda}{|\mathbb S^{m-1}| L_\Sigma^{N-1}}.$$

Since $|\mathbb S^{m-1}|=m|\mathbb B^m|$, this and
\eqref{eq:QD-Sobolev-normalization} give the stated inequality for the
normalized function. From this, Theorem \ref{thm:QD-Sobolev} follows in the case where $\Sigma$ is connected.

If $\Sigma$ is disconnected, apply the connected result to each component
$\Sigma_j$. Since $\mu_{\Sigma_j}\le\mu_\Sigma$ and
$L_{\Sigma_j}\le L_\Sigma$, each component also satisfies the inequality
with the global constants. Summing and using
\[
\sum_j\left(\int_{\Sigma_j}f^{\frac n{n-1}}\right)^{\frac{n-1}{n}}
\ge
\left(\int_\Sigma f^{\frac n{n-1}}\right)^{\frac{n-1}{n}}
\]
proves the result.
\end{proof}

\begin{remark}
Since $\mu_\Sigma\le2b_1$, the inequality remains valid with
$2nb_1\int_\Sigma f$ in place of $n\mu_\Sigma\int_\Sigma f$.
When $\lambda\equiv0$, we have $\mu_\Sigma=0$, $L_\Sigma=1$, and
$\theta_\lambda=\theta$, so Theorem~\ref{thm:QD-Sobolev} reduces to
Theorem~\ref{thm:Sobolev}.
\end{remark}


\begin{thebibliography}{10}

\bibitem{Abresch1985}
U. Abresch,
\newblock Lower curvature bounds, Toponogov's theorem, and bounded topology,
\newblock \emph{Annales scientifiques de l'\'Ecole Normale Sup\'erieure}
\textbf{18} (1985), no.~4, 651--670.

\bibitem{AFM}
V. Agostiniani, M. Fogagnolo, \& L. Mazzieri,
Sharp geometric inequalities for closed hypersurfaces in manifolds with nonnegative Ricci curvature {\em Inventiones Mathematicae}. \textbf{222}, 1033-1101 (2020)

\bibitem{Brendle-ABP}
S. Brendle,
\emph{Geometric inequalities and the Alexandrov--Bakelman--Pucci technique},
Sci. China Math. (2026),

\bibitem{Brendle2023}
S. Brendle,
\newblock Sobolev inequalities in manifolds with nonnegative curvature,
\newblock \emph{Communications on Pure and Applied Mathematics}
\textbf{76} (2023), no. ~9, 2192--2218.

\bibitem{ChongLuoLu2025}
T. Chong, H. Luo, and L. Lu,
\newblock Sobolev inequality in manifolds with lower quadratic curvature decay,
\newblock arXiv preprint arXiv:2503.19254, 2025.

\bibitem{DongLinLu2024}
Y. Dong, H. Lin, and L. Lu,
\newblock Sobolev inequalities in manifolds with asymptotically nonnegative curvature,
\newblock \emph{Calculus of Variations and Partial Differential Equations}
\textbf{63} (2024), no.~4, Paper No.~110, 18 pp.

\bibitem{flaim2024diameter}
M. Flaim and C. Scharrer,
\newblock Diameter estimates for surfaces in conformally flat spaces,
\newblock \emph{Manuscripta Mathematica}. \textbf{174}, 1005-1014 (2024)

\bibitem{HeintzeKarcher1978}
E. Heintze and H. Karcher,
\newblock A general comparison theorem with applications to volume estimates for submanifolds,
\newblock \emph{Annales scientifiques de l'\'Ecole Normale Sup\'erieure}
\textbf{11} (1978), no. ~4, 451--470.

\bibitem{JiKwong}
M. Ji and K.-K. Kwong,
\newblock Fenchel--Willmore inequality for submanifolds in manifolds with non-negative $k$-Ricci curvature,
\newblock arXiv preprint arXiv:2507.07655, 2025.

\bibitem{Jost2017}
J.~Jost,
\emph{Riemannian Geometry and Geometric Analysis},
7th ed., Universitext, Springer, Cham, 2017.

\bibitem{LeeRicci2024LogSobolev}
J. Lee and F. Ricci,
\newblock The Log-Sobolev inequality for a submanifold in manifolds with asymptotic non-negative intermediate Ricci curvature,
\newblock \emph{Journal of Geometric Analysis} \textbf{34} (2024), Paper No.~141.

\bibitem{LottShen2000}
J. Lott and Z. Shen,
\newblock Manifolds with quadratic curvature decay and slow volume growth,
\newblock \emph{Annales scientifiques de l'\'Ecole Normale Sup\'erieure}
\textbf{33} (2000), no.~2, 275--290.

\bibitem{MaWu2024}
H. Ma and J. Wu,
\newblock Sobolev inequalities in manifolds with nonnegative intermediate Ricci curvature,
\newblock \emph{Journal of Geometric Analysis}
\textbf{34} (2024), Article 63.

\bibitem{paeng2014}
S. Paeng,
Diameter of an immersed surface with boundary. {\em Differential Geometry And Its Applications}. \textbf{33} pp. 127-138 (2014)

\bibitem{PanYi}
S. Pan and C. Yi,
\emph{A comparison theorem with applications to sharp geometric inequalities for submanifolds},
arXiv:2605.06074, 2026.

\bibitem{pigola2008vanishing}S. Pigola, A. Setti \& M. Rigoli, Vanishing and finiteness results in geometric analysis: a generalization of the Bochner technique. (Springer,2008)

\bibitem{wang}
X. Wang, Remark on an inequality for closed hypersurfaces in complete manifolds with nonnegative Ricci curvature. {\em Annales De La Faculté Des Sciences De Toulouse: Mathématiques}. \textbf{32}, 173-178 (2023)

\bibitem{wang2013}
Y. Wang and X. Zhang,
An Alexandroff–Bakelman–Pucci estimate on Riemannian manifolds. {\em Advances In Mathematics}. \textbf{232}, 499-512 (2013)

\bibitem{wu2023diameter}
J. Wu,
\newblock Diameter estimates for submanifolds in manifolds with nonnegative curvature,
\newblock \emph{ Differential Geometry And Its Applications}. \textbf{90} pp. 102048 (2023)

\end{thebibliography}
\end{document}